\documentclass[11pt]{amsart}

\usepackage[a4paper,margin=1in]{geometry}
\usepackage{amsmath,amssymb,amsthm,mathrsfs}
\usepackage{booktabs,array,enumitem,placeins}
\usepackage[T1]{fontenc}
\usepackage{lmodern,microtype,esint}
\usepackage[hidelinks]{hyperref}

\hypersetup{
bookmarksdepth=2,
pdftitle={On Critical Dimensions for Compactness in the Boundary Yamabe Problem, I},
pdfsubject={Scalar-flat, minimal-boundary, and negative-mean-curvature constructions in all higher dimensions},
pdfkeywords={boundary Yamabe problem, noncompactness, blow-up, critical dimension, manifolds with boundary}
}

\allowdisplaybreaks
\numberwithin{equation}{section}
\newtheorem{theorem}{Theorem}[section]
\newtheorem{maintheorem}{Theorem}
\newtheorem{maincorollary}{Corollary}
\newtheorem{proposition}[theorem]{Proposition}
\newtheorem{lemma}[theorem]{Lemma}
\newtheorem{corollary}[theorem]{Corollary}

\theoremstyle{remark}
\newtheorem{remark}[theorem]{Remark}

\newcommand{\R}{\mathbb R}
\newcommand{\Q}{\mathbb Q}
\newcommand{\eps}{\varepsilon}
\newcommand{\dd}{\,\mathrm d}
\newcommand{\pa}{\partial}
\newcommand{\tr}{\operatorname{tr}}
\newcommand{\diag}{\operatorname{diag}}
\newcommand{\divg}{\operatorname{div}}
\newcommand{\cB}{\mathcal B}
\newcommand{\cL}{\mathcal L}
\newcommand{\norm}[1]{\lVert#1\rVert}
\newcommand{\inner}[2]{\left\langle #1,#2\right\rangle}
\newcommand{\Sph}{\mathbb S}
\newcommand{\av}{\fint_{\Sph^{m-1}}}
\newcommand{\Id}{\operatorname{Id}}

\makeatletter
\def\l@section{\@tocline{1}{.15em \@plus\p@}{0pt}{}{\normalfont}}
\def\l@subsection{\@tocline{2}{0pt}{1.5em}{}{\normalfont}}
\makeatother

\title[Critical Dimensions for the Boundary Yamabe Problem, I]{On Critical Dimensions for Compactness\\ in the Boundary Yamabe Problem, I}

\author{Liuwei Gong}
\address[Liuwei Gong]{Department of Mathematics, Chinese University of Hong Kong, Shatin, NT, Hong Kong}
\email{lwgong@math.cuhk.edu.hk}

\author{Seunghyeok Kim}
\address[Seunghyeok Kim]{Department of Mathematics and Research Institute for Natural Sciences, College of Natural Sciences, Hanyang University, 222 Wangsimni-ro Seongdong-gu, Seoul 04763, Republic of Korea}
\email{shkim0401@hanyang.ac.kr}
\email{shkim0401@gmail.com}

\author{Monica Musso}
\address[Monica Musso]{Department of Mathematical Sciences, University of Bath, Bath BA2 7AY, United Kingdom}
\email{mm2683@bath.ac.uk}

\author{Juncheng Wei}
\address[Juncheng Wei]{Department of Mathematics, Chinese University of Hong Kong, Shatin, NT, Hong Kong}
\email{wei@math.cuhk.edu.hk}

\subjclass[2020]{Primary 53C21, 35J61; Secondary 35B33, 35B44}
\keywords{Boundary Yamabe problem, noncompactness, blow-up, critical dimension, manifolds with boundary}

\begin{document}
\begin{abstract}
We construct smooth, non-locally-conformally-flat metrics on the closed ball for which the boundary Yamabe equation admits $L^\infty$-unbounded sequences of positive solutions.
The background metric and the prescribed scalar and boundary mean curvatures are fixed along each sequence.
For zero scalar curvature and positive boundary mean curvature, such examples exist with umbilic boundary in every dimension $N\ge22$ and with nonumbilic boundary in every $N\ge15$.
For positive scalar curvature and minimal boundary, the corresponding ranges are $N\ge21$ and $N\ge15$.
For every $N\ge9$, examples with either umbilic or nonumbilic boundary also exist when the scalar curvature is $N(N-1)$ and the prescribed boundary mean curvature is below a negative threshold depending on $N$.
The construction uses polynomial metric perturbations and corrected bubbles on the half-space, followed by conformal compactification to the ball.
We evaluate or estimate the correction solving the linearized Neumann or Robin boundary problem and its contribution to the quadratic term of the reduced energy.
We prove that this quadratic term has negative, nondegenerate local extrema with respect to tangential translations and scale in every stated dimension.
\end{abstract}

\maketitle

\tableofcontents

\section{Introduction}
Let $(M^N,g)$ be a smooth compact Riemannian manifold with boundary, $N\ge3$.
A natural extension of the classical Yamabe problem is to find a metric in the conformal class of $g$ with constant scalar curvature and constant boundary mean curvature.
After Cherrier's work on nonlinear Neumann problems~\cite{Cherrier}, Escobar studied two basic forms of this question: constant scalar curvature with minimal boundary \cite{EscobarJDG}, and zero scalar curvature with
constant boundary mean curvature~\cite{EscobarAnnals}.
The prescription of both curvature constants was subsequently developed by Escobar~\cite{EscobarIndiana} and Han--Li~\cite{HanLiDuke,HanLiCAG}; further existence results were obtained by Chen--Ruan--Sun~\cite{ChenRuanSun}.

Throughout the paper, $\sigma$ and $\eta$ are real constants.
For a positive function $u$, set $\hat g:=u^{4/(N-2)}g$.
The conditions $R_{\hat g}=N(N-1)\sigma$ and $H_{\hat g}=\eta$ are equivalent to finding a positive solution of
\begin{equation}\label{eq:intro-yamabe}
\begin{cases}
-\Delta_g u+c_NR_gu
=\dfrac{N(N-2)}4\sigma u^{(N+2)/(N-2)}&\text{in }M,\\
\partial_\nu u+\dfrac{N-2}{2}H_gu
=\dfrac{N-2}{2}\eta u^{N/(N-2)}&\text{on }\partial M.
\end{cases}
\end{equation}
Here $\nu$ is the outward unit normal, $H_g$ is the boundary mean curvature, and $c_N:=(N-2)/(4(N-1))$.
We call $(\sigma,\eta)=(1,\kappa)$ Type I and $(0,2)$ Type II.
The minimal Type-I problem has $\kappa=0$.
For $\sigma>0$, constant rescaling changes $(\sigma,2)$ to $(1,2/\sqrt\sigma)$, so the scalar-flat problem corresponds to the normalized limit $\kappa\to+\infty$ as $\sigma\downarrow0$.
A boundary is umbilic if its trace-free second fundamental form vanishes identically; this condition is preserved by conformal change.

Besides existence, one asks whether the full set of positive solutions is compact when the background metric and prescribed curvatures are fixed.
For closed manifolds, Brendle~\cite{Brendle} constructed smooth noncompact examples in dimensions $N\ge52$, and Brendle--Marques~\cite{BrendleMarques} extended the construction to every $N\ge25$.
Under the positive-mass hypothesis, Khuri--Marques--Schoen~\cite{KhuriMarquesSchoen} proved the complementary compactness theorem in dimensions at most $24$.
Their proof excludes blow-up by comparing the mass of the asymptotically flat metric obtained from a Green function with the local Pohozaev identity.

For scalar-flat metrics with boundary, compactness depends also on the boundary geometry.
Almaraz~\cite{AlmarazCompactness} proved compactness for $N\ge7$ when the trace-free second fundamental form is nowhere zero, under the positivity and conformal assumptions of that theorem.
Kim--Musso--Wei~\cite{KimMussoWei} proved compactness in dimensions four and five, and in dimension six under the same nonvanishing condition, assuming positive conformal type and excluding the conformal ball.
These results leave open the possibility of concentration at points where the trace-free second fundamental form vanishes.

Almaraz~\cite{AlmarazBlowup} constructed scalar-flat examples on the ball with umbilic boundary in every dimension $N\ge25$.
Disconzi--Khuri~\cite[Theorem~1.4]{DisconziKhuri} obtained noncompact examples in the same range with positive scalar curvature and minimal boundary.
For positive scalar and boundary mean curvatures, Chen--Wu~\cite{ChenWu} obtained umbilic examples for $N\ge62$ and every positive boundary parameter; Ho--Shin~\cite[Theorem~1.2]{HoShin} obtained examples for $N\ge35$
and sufficiently small positive parameter.

Our main result is the following.

\begin{maintheorem}
\label{thm:main}
Let $B^N:=\{x\in\R^N:|x|\le1\}$.
The following noncompactness statements hold.
\begin{enumerate}[label=\textup{(\roman*)},leftmargin=*]
\item For $(\sigma,\eta)=(0,2)$, an $L^\infty(B^N)$-unbounded sequence of positive solutions exists on a smooth metric with nonumbilic boundary for every integer $N\ge15$.
For every integer $N\ge22$, such a sequence also exists on a smooth metric with umbilic boundary.
\item For $(\sigma,\eta)=(1,0)$, the same conclusions hold with nonumbilic boundary for every integer $N\ge15$ and with umbilic boundary for every integer $N\ge21$.
\item For every integer $N\ge9$, there is a constant $L_N>0$ such that, for each fixed $\kappa\le-L_N$, an $L^\infty(B^N)$-unbounded sequence exists for $(\sigma,\eta)=(1,\kappa)$ on a smooth metric with umbilic boundary and on a smooth
metric with nonumbilic boundary.
\end{enumerate}
In each assertion, the metric $g$ is not locally conformally flat and is independent of the sequence index.
Every member $u_j\in C^\infty(B^N)$ is positive and solves \eqref{eq:intro-yamabe} with the same constants $(\sigma,\eta)$.
In fact, the boundary traces are unbounded:
\begin{equation}\label{eq:intro-blowup-sequence}
\max_{\partial B^N}u_j\to\infty
\qquad\text{as }j\to\infty.
\end{equation}
The metric may depend on $N$, the prescribed constants, and whether the boundary is required to be umbilic or nonumbilic.
\end{maintheorem}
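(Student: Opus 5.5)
The plan follows the Brendle--Marques / Almaraz scheme of a finite-dimensional Lyapunov--Schmidt reduction on the half-space $\R^N_+$, followed by conformal compactification to $B^N$. Since the boundary Yamabe problem is conformally invariant, a half-space metric that equals the Euclidean one outside a compact set pulls back under the Cayley transform to a smooth metric on $B^N$. Solutions of \eqref{eq:intro-yamabe} on $\R^N_+$ with finite energy and the correct decay at infinity correspond to positive smooth solutions on $B^N$. It therefore suffices to build, on a fixed metric $g=\exp(h)$ on $\R^N_+$ with $h$ trace-free and compactly supported, a sequence of solutions concentrating at boundary points with scales $\to0$.

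\textbf{Metric and bubbles.} Take a sum of localized perturbations
\[
h=\sum_{k}\mu_k^{\alpha}\,\chi\bigl((x-\xi_k)/\rho_k\bigr)\,H\bigl((x-\xi_k)/\mu_k\bigr),
\]
with $\xi_k\to$ a boundary point and small weights. Here $H$ is a polynomial symmetric-tensor field built from $x_N$ and a tangential Weyl-type tensor $W_{iajb}x_ax_b$. We choose $H$ so that it gives a nonumbilic boundary (a $x_N$-linear tangential trace-free second fundamental form term) or an umbilic one (trace-free second fundamental form vanishing identically, with $H$ of order $\ge2$), and so that $g$ is not locally conformally flat.

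For each case $(\sigma,\eta)$ the model bubble $U_{\xi,\epsilon}$ is explicit:
\begin{itemize}
\item for Type II it is the Escobar bubble, with its centre shifted below the boundary;
\item for Type I with $\kappa=0$ it is the standard half-bubble;
\item for $\kappa<0$ it is the bubble whose centre lies at height $x_N=t(\kappa)>0$ inside the half-space, with $t\to\infty$ as $\kappa\to-\infty$.
\end{itemize}
We then correct it by $\epsilon\,\mu\,V$, where $V$ solves the linearized problem $-\Delta V-\frac{N(N+2)}{4}\sigma U^{4/(N-2)}V=\partial_a\partial_b h_{ab}\,U$-type source. This comes with a Neumann boundary condition (minimal case) or a Robin boundary condition (the $\eta$ cases), plus boundary terms coming from $h$, orthogonal to the kernel. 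The polynomial structure of $H$ lets us solve for $V$ in closed form, or at least estimate it, by separation into spherical harmonics.

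\textbf{Reduction and energy expansion.} Using the nondegeneracy of the bubble for the half-space problem, which I take as the relevant earlier linear theory, a standard contraction yields $u=U+\epsilon V+\phi$ with $\phi$ small in weighted norms. The problem then reduces to critical points of
\[
F(\xi',\epsilon)=E(U+\epsilon V+\phi),\qquad \xi'\in\partial\R^N_+.
\]
Expanding gives $F=\text{const}+\mu^{2\alpha}\mathcal Q(\xi'/\mu,\epsilon/\mu)+o(\mu^{2\alpha})$. Here $\mathcal Q$ is the quadratic term:
\[
\mathcal Q = \tfrac14\!\int h_{ab}h_{bc}\,\partial U\partial U\text{-terms} \;-\;\tfrac12\!\int (\text{source})V \;+\; \text{boundary terms}.
\]
In each separate bump the perturbations decouple, so every bump $k$ contributes a critical point near its own centre. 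That critical point persists if it is a strict local extremum (in fact nondegenerate) of $\mathcal Q$ that is negative, so that it beats the positive constants. The result is a solution $u_k$ with $\max_{\partial}u_k\sim\epsilon_k^{-(N-2)/2}\to\infty$, which is \eqref{eq:intro-blowup-sequence}.

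\textbf{Main obstacle.} The hard step is proving that $\mathcal Q$ has a negative nondegenerate local extremum in $(\xi',\epsilon)$ exactly in the stated dimension ranges:
\begin{itemize}
\item $N\ge15$ (nonumbilic) and $N\ge22$ (umbilic) for Type II;
\item $N\ge15$ (nonumbilic) and $N\ge21$ (umbilic) for minimal Type I;
\item $N\ge9$ for $\kappa\le-L_N$.
\end{itemize}
I would first reduce $\mathcal Q$, by radial integration, to a polynomial in $|\xi'|^2,\epsilon^2$ with coefficients that are rational functions of $N$, using Beta-function integrals. For $V$ I would compute its exact contribution wherever possible, and elsewhere bound it from below through the variational characterization $-\int(\text{source})V\le0$, which makes $\mathcal Q$ more negative. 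One then checks the sign of the Hessian at the extremum for the stated $N$, with a free polynomial degree and a free coefficient ratio in $H$ tuned per dimension. In case (iii) the large-$|\kappa|$ limit pushes the bubble centre deep into the interior, so $\mathcal Q$ tends to the closed-manifold quadratic form of Brendle--Marques plus corrections of order $t^{-1}$. That limit lets interior-type degree choices work down to $N\ge9$, after $L_N$ is chosen large enough to control the corrections.
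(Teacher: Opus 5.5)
The reduction, localization and compactification you outline match what the paper does in Section~\ref{sec:common-analytic}. The step you call the main obstacle, however, would not go through as proposed.

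\textbf{The tensor class.} For the Weyl-type part $h=f(t)W_{aibj}x_ix_j$ you have $h\bar x=0$, $\operatorname{tr}h=0$ and $\operatorname{div}h=0$. So the source $h_{ab}\partial_{ab}U+(\partial_bh_{ab})\partial_aU+c_N(\partial_a\partial_bh_{ab})U$ vanishes at every centred bubble, and its translates are $O(|\xi|^2)$. The correction $V$ therefore contributes nothing to the reduced energy or to its translation--scale Hessian at the centre. In the umbilic cases of (i)--(ii) you are back in the Weyl-tensor setting of Almaraz and Disconzi--Khuri, where the known range is $N\ge25$, and nothing in your plan gets below it.

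The paper's ranges $N\ge15,22,21$ come from a different class: tensors generated by Hessians of harmonic polynomials. These are the rows $t^pS_N^{(k)}(b,e)$ built from $Q_A=\bar x^{\mathsf T}A\bar x$, plus the selected row $\tau tA$ or $c_{02}\tau t^2A$. They have $h\bar x\ne0$, and nonzero divergence in the umbilic profiles. Hence the correction is nonzero and enters both the energy and its Hessian, which is what makes degrees $d_*=6,8,9$, compatible with $N>2d_*+2$, sufficient.

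Degree counting also rules out your choice in the umbilic half of (iii) for $N=9,10$. There $N>2d_*+2$ forces $d_*\le3$, and umbilicity forces a factor $t^2$, so no quadratic Weyl polynomial fits. Rows $t^pA$ alone give an energy independent of $\xi$. The paper therefore adds $\beta_Nt^2\nabla^2_{\bar x}H_3/2$ with a harmonic cubic $H_3$. Weyl profiles $f(t)H^W$ do work in (iii), but only from $N\ge9$ with nonumbilic boundary and from $N\ge11$ with umbilic boundary (Appendix~\ref{app:normal-weyl-bounds}).

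\textbf{The correction and the sign checks.} The one-sided bound $-\int SV\le0$ only bounds the energy from above. Trapping an extremum needs two-sided control on a neighbourhood. Nondegeneracy needs the first and second parameter derivatives of the correction, which have no fixed sign. The bound also presupposes positivity of the Jacobi form; on the scalar-flat kernel complement this fails in the constant Steklov mode and holds only sector by sector.

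The paper instead evaluates $\langle S_i,\cL^{-1}S_j\rangle$ exactly. It builds polynomial interior solutions $X$ using the triangular operator $\mathcal T_q$. The Green identity of Lemma~\ref{lem:corrector-pairing-identity} turns $\cB(Z-X,Z-X)$ into boundary-response pairings, which are summed by finite beta sums or polynomial telescoping. Every form is then rational in $N$ and the gamma ratio $w_N$. Fixed exact coefficients, a quadratic equation for $\tau$, and eleven sign certificates valid on each whole integer range then finish the argument. Your per-dimension tuning leaves infinitely many dimensions unchecked.

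\textbf{The mechanism in (iii).} It is not a perturbation of the closed-manifold form. About the bubble centre, $\hat t^2A$ and $\hat t^3A$ do not pair, by normal parity. So centre-homogeneous rows of degree at most three give only $\eps^4$ and $\eps^6$ terms and no critical scale. The critical scale exists because the rows are homogeneous about the boundary point while the bubble sits at height $L\eps$. With $t=\hat t+L$, the row $-t^3A/(3L)$ contains $-\hat t^2A$, and this produces the cross term in $-C_N[\eps^4-2\tau\eps^5+c_L\tau^2\eps^6]$. The whole-space corrections of $\hat t^pA$ and $\hat t^pB$ are computed exactly. The finite-$L$ boundary residuals are then controlled by coercivity that is uniform in $L$.
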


A separate construction yields explicit sufficient bounds on $\kappa$.
\begin{maincorollary}
\label{cor:explicit-negative-bounds}
For every integer $N\ge9$ and every fixed
\[
\kappa\le-\frac6{\sqrt{N-8}},
\]
there exist a smooth metric $g$ on $B^N$ with nonumbilic boundary and a positive solution sequence for $(\sigma,\eta)=(1,\kappa)$ with all the properties in Theorem~\ref{thm:main}.
For every integer $N\ge11$ and every fixed
\[
\kappa\le-\frac{\sqrt{23+4\sqrt{35}}}{\sqrt{N-10}},
\]
the same conclusion holds with umbilic boundary.
\end{maincorollary}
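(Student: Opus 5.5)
We would prove Corollary~\ref{cor:explicit-negative-bounds} by running the same Lyapunov--Schmidt scheme behind Theorem~\ref{thm:main}(iii), with a specific, fully explicit choice of perturbation. The aim is to compute the quadratic term of the reduced energy in closed form, instead of only knowing its sign for $\kappa$ very negative.

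\emph{Setup.} Work on the half-space $\R^N_+$ with the Type-I bubble $W_{\kappa}$, i.e. the positive solution of $-\Delta W=\frac{N(N-2)}4W^{(N+2)/(N-2)}$ in $\R^N_+$ with $-\partial_N W=\frac{N-2}2\kappa W^{N/(N-2)}$ on $\{x_N=0\}$. This is a translated standard bubble whose centre lies at height proportional to $-\kappa$ above the boundary, since $\kappa<0$.

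\emph{Metric perturbation.} In the nonumbilic case, take $g=\exp(h)$ with $h_{ij}$ a trace-free tangential tensor of the form $\mu\, f(|x'|^2,x_N)\,H_{ij}(x')$. Here $H_{ij}$ is a constant trace-free symmetric matrix, so the boundary's second fundamental form is $\propto H$ at the origin. In the umbilic case, take $h_{ij}=\mu\, f\, W_{ikjl}x_kx_l$ with a Weyl-type tensor, vanishing to order two at the boundary so that the trace-free second fundamental form is zero. The function $f$ is chosen polynomial with a cutoff, exactly as in the general construction.

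\emph{Linearized problem.} Solve for the correction $z$ of the linearized problem
\[
-\Delta z - \tfrac{N(N+2)}4 W^{4/(N-2)}z=\mathcal L_h W\ \text{in }\R^N_+,\qquad \partial_\nu z-\tfrac N2\kappa W^{2/(N-2)} z=\mathcal B_h W\ \text{on }\partial\R^N_+,
\]
seeking $z$ as a linear combination of products of $h$-coefficients with radial profiles in the bubble variable. For these tensor structures the equation reduces to a Robin ODE in $r=|x-x_0|$ with explicit rational solutions.

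\emph{Reduced energy.} Substitute into the reduced energy
\[
F(\xi,\varepsilon)=-\tfrac12\int z\,\mathcal L_h W+\text{(boundary terms)}.
\]
This gives an explicit polynomial in the translation and scale parameters, whose coefficients are integrals $\int_{\R^N_+}(1+|y|^2)^{-a}y_N^b$ restricted by the plane $y_N>-t_\kappa$. After writing $t_\kappa\sim -\kappa/\sqrt{\cdot}$, these become Beta-type functions in $N$ and $t_\kappa$.

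\emph{Main obstacle.} The key step is checking that the quadratic form in $(\xi,\varepsilon)$ has a strict negative nondegenerate local maximum, equivalently a critical point that survives $C^1$-small perturbations. We would reduce this to positivity of an explicit quantity $P_N(t)$ that is quadratic in $t^2=\kappa^2(N-8)$ or $\kappa^2(N-10)$, respectively. In the nonumbilic case the threshold should come from requiring that a term $\kappa^2(N-8)-36$ be nonnegative, giving $\kappa\le-6/\sqrt{N-8}$. In the umbilic case the relevant quadratic $s^2-(23\cdot)s\ldots$ has root $s=23+4\sqrt{35}$, giving $\kappa\le-\sqrt{23+4\sqrt{35}}/\sqrt{N-10}$. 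The hardest part is organising the integrals so that the $N$-dependence factors cleanly and monotonicity in $|\kappa|$ beyond the root can be verified. We would first try to prove that the leading coefficient dominates the remainder uniformly for $N\ge9$ (resp.\ $N\ge11$), using the ratio identities for Beta functions.

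\emph{Conclusion.} With the stable critical point in hand, the standard gluing of bubbles at scales $\varepsilon_j\to0$ near infinitely many concentric points, followed by conformal compactification to $B^N$, yields $u_j$ with $\max_{\partial B^N}u_j\to\infty$. This gives the conclusions of Theorem~\ref{thm:main}.
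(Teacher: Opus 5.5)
There is a genuine gap, located exactly where the explicit constants have to come from.

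\emph{Nonumbilic case.} You take $h=\mu fH$ with $H$ a constant trace-free matrix. Such a tensor has a nonzero centered scalar source: for $f=t^p$ it equals $N(N-2)t^p(\bar x^{\mathsf T}H\bar x)\widetilde U D^{-2}$. The reduced energy therefore contains the inverse pairing for the half-space Robin problem with $\kappa<0$, and that problem does not reduce to an ODE in $r=|x-x_0|$:
\begin{enumerate}[label=(\alph*)]
\item the boundary $\{t=0\}$ is not a level set of $r$;
\item the whole-space corrector is a polynomial in $(D,\widehat t)$, not in $r$ alone, and it leaves a nonzero boundary residual;
\item for $\kappa\ne0$ there is no closed form for the response to that residual.
\end{enumerate}
This is precisely the route of Section~\ref{sec:TI-well}. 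There the residual can only be bounded by $O_N(L^{6-N})$ (Lemma~\ref{lem:cubic-half-space-comparison}), which is why Theorem~\ref{thm:cubic-maximum} gives only a non-explicit $L_N$. If in addition $f$ depends on $t$ alone, the energy is translation invariant and no nondegenerate full extremum exists.

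\emph{Other problems.} Your formula $F=-\frac12\int z\,\mathcal L_hW+\cdots$ omits the pure metric term $\mathcal Q[h]$. The thresholds $36$ and $23+4\sqrt{35}$ are read off from the statement rather than produced by any computation.

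\emph{Missing idea.} Use the Weyl quadratic $H^W_{ab}=W_{aibj}x_ix_j$ in \emph{both} cases, with $h=f(t)H^W(\bar x)$, and exploit its radial transversality $H^W\bar x=0$.
\begin{itemize}
\item Since $H^W$ is also trace free, divergence free and harmonic, the centered source vanishes identically: the tangential Hessian of $\widetilde U$ involves only $\delta_{ab}$ and $x_ax_b$.
\item The translated source is $O(|\xi|^2)$, so the corrector contributes to neither the energy nor the translation Hessian. Your umbilic tensor is the right one, but you still propose solving for a corrector that is zero.
\item The reduced energy is the raw term alone, $\widehat F_h(0,1)=-\frac14\int w_L\bigl(f^2|\nabla_{\bar x}H^W|^2+(f')^2|H^W|^2\bigr)$. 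Its translation Hessian is $-\frac12\int w_L(f')^2\langle\partial_aH^W,\partial_bH^W\rangle$, which is negative definite when $f'\not\equiv0$ (Lemma~\ref{lem:weyl-normal-translation}).
\item Take $f=1-t/L$, respectively $f=1-t^2/L^2$. Umbilicity needs only $f'(0)=0$, not vanishing to second order, and total degree $4$ is exactly what gives $N\ge11$.
\item Scaling reduces scale stationarity to a quadratic with discriminant $25B^2-24AC$, respectively $9B^2-8AC$. The Gram entries are incomplete moments $I_j=\int_{-L}^\infty\widehat t^{\,j}(1+\widehat t^{\,2})^{-(d+3)/2}\dd\widehat t$ with $d=N-8$ (exponent $-(d+5)/2$ and $d=N-10$ in the umbilic case).
\item Only $J=I_1$ is evaluated; the higher $I_j$ are reduced to $I,J$ by integration by parts. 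The discriminant becomes a quadratic form in $(I,J)$, for instance $(L^2-36/d)I^2+(26+36/d)LIJ+25J^2$, and positivity of $I,J$ yields $L\ge6/\sqrt{N-8}$.
\item In the umbilic case, $c_*=23+4\sqrt{35}$ is the root of $c^2-46c-31$ that keeps the $I^2$ coefficient positive. No monotonicity in $L$ is needed.
\item Since $H^W(0)=0$, nonumbilicity at the concentration center is obtained by adding a small $-2\theta tA_0$ and applying the implicit-function theorem. That is the only place a constant-matrix term enters, and there its corrector is needed only qualitatively.
\end{itemize}
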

We will prove Corollary~\ref{cor:explicit-negative-bounds} in Appendix~\ref{app:normal-weyl-bounds}.

\begin{maincorollary}\label{thm:positive-end-main}
The equation with $(\sigma,\eta)=(1,\kappa)$ admits metrics and solution sequences with the properties in Theorem~\ref{thm:main} in the following additional ranges.
\begin{enumerate}[label=\textup{(\roman*)},leftmargin=*]
\item For each fixed integer $N\ge15$, there is $\kappa_0>0$ such that the conclusion holds with nonumbilic boundary for every fixed $\kappa$ satisfying $|\kappa|<\kappa_0$.
For each fixed integer $N\ge21$, the same statement holds with umbilic boundary.
\item For each fixed integer $N\ge15$, there is $K>0$ such that the conclusion holds with nonumbilic boundary for every fixed $\kappa>K$.
For each fixed integer $N\ge22$, the same statement holds with umbilic boundary.
\end{enumerate}
The constants $\kappa_0$ and $K$ may depend on $N$ and on whether umbilic or nonumbilic boundary is required.
The metric is fixed along each sequence but may depend on the prescribed value of $\kappa$.
\end{maincorollary}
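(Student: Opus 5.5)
The plan is to obtain Corollary~\ref{thm:positive-end-main} as a perturbation of the two endpoint cases of Theorem~\ref{thm:main}. The construction behind Theorem~\ref{thm:main} reduces the existence of a blow-up sequence to a finite-dimensional reduced energy. On the half-space this energy depends on the scale $\varepsilon$, the tangential translation $\xi\in\R^{N-1}$, and the dilation parameter $\delta$. Its leading nontrivial term is a quadratic form $F_\kappa(\xi,\delta)$ in the polynomial metric perturbation. It comes from the corrected bubble and from the solution of the linearized Neumann or Robin problem. The conclusion follows once $F_\kappa$ has a negative, nondegenerate (hence $C^1$-stable) local extremum. The key observation is that every ingredient depends continuously on the boundary parameter $\kappa$. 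This includes the bubble profile, the linearized operator with its Robin coefficient, the kernel, the correction term, and the resulting coefficients of $F_\kappa$. A nondegenerate critical point therefore persists under small perturbation of $\kappa$.

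\emph{Part (i).} For $\kappa=0$ (Type I, minimal boundary), Theorem~\ref{thm:main}(ii) supplies, for $N\ge15$ (nonumbilic) and $N\ge21$ (umbilic), a fixed polynomial perturbation $h$ for which $F_0$ has a negative nondegenerate local extremum $(\xi_0,\delta_0)$. Keep $h$ fixed and vary $\kappa$. First check that the Type-I bubbles with boundary mean curvature $\kappa$ depend smoothly on $\kappa$ near $0$. These are the standard half-space solutions, obtained as spherical caps restricted to a shifted half-space. Then check that the linearized operator is uniformly invertible on the orthogonal complement of its kernel. This gives a correction $\psi_\kappa$ depending continuously on $\kappa$ in the weighted norms used in the paper, and hence $F_\kappa\to F_0$ in $C^2$ on a compact neighbourhood of $(\xi_0,\delta_0)$. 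By the implicit function theorem, or by degree stability, there is $\kappa_0>0$ such that $F_\kappa$ has a negative nondegenerate extremum for $|\kappa|<\kappa_0$. The Lyapunov--Schmidt reduction, with error estimates uniform in $\kappa$ on this interval, then produces the sequence. Conformal compactification to $B^N$ is as before, and it preserves umbilicity, non-local-conformal-flatness, and the values $(\sigma,\eta)=(1,\kappa)$.

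\emph{Part (ii).} Use the rescaling noted in the introduction: $(1,\kappa)$ is equivalent to $(\sigma,2)$ with $\sigma=4/\kappa^2$. So $\kappa\to+\infty$ corresponds to $\sigma\downarrow0$, a perturbation of the Type-II problem. Here Theorem~\ref{thm:main}(i) gives the extremum for $N\ge15$ (nonumbilic) and $N\ge22$ (umbilic). As before, I would show that the bubbles for $(\sigma,2)$, the linearized problem and its correction, and $F$ all depend continuously on $\sigma\in[0,\sigma_0)$, including at $\sigma=0$. Taking $K=2/\sqrt{\sigma_0}$ then gives the claim.

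The main obstacle is uniformity of the reduction near the endpoints. This is especially delicate in (ii), where the limit $\sigma\downarrow0$ changes the bubble from a cap to the scalar-flat half-space bubble, so the decay of the profile could degenerate. I would handle this by working on the boundary-normalized picture and writing the $(\sigma,2)$ bubbles explicitly. The $\sigma$-dependence then enters only through a shift of the center height, which varies smoothly and converges, together with the weighted $L^\infty$ norms, to the Type-II bubble. With this in hand, the coefficients of $F$ are given by convergent integrals and pass to the limit by dominated convergence.
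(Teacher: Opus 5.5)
Your proposal follows the paper's route: it proves $C^2$-continuity of $F_h$ in $\kappa$ near $0$, and in $\sigma\downarrow0$ via the explicit bubbles $V_{\sigma,\xi,\eps}$ satisfying $\sigma^{(N-2)/4}V_{\sigma,\xi,\eps}=U_{2/\sqrt\sigma,\xi,\sqrt\sigma\eps/2}$; it preserves the negative nondegenerate extremum by the implicit-function theorem, then applies the fixed-metric transfer for each frozen parameter and rescales $(\sigma,2)$ to $(1,2/\sqrt\sigma)$. Minor remarks: uniformity of the nonlinear reduction in $\kappa$ is not needed, since $\kappa$ is fixed before localizing; in the boundary normalization $\sigma$ enters through the additive constant $\sigma\eps^2/4$ rather than the center height; and the paper uses complement-independence of the reduced value to pass from the scalar-flat constraints to the Type-I ones.
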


\begin{remark}
In the companion paper~\cite{GongKimMussoWeiCompactness}, we will prove the complementary compactness results under positive conformal type, exclusion of the conformal round hemisphere, and the positive-mass and rigidity hypotheses for the associated Green function metrics.
The upper dimensions are $14$ in general and $21$ for umbilic boundary in the scalar-flat problem, and $14$ and $20$, respectively, in the minimal-boundary problem.
For every fixed $\kappa\in\R$, compactness holds through dimension eight.
Together with the complementary curvature intervals in Corollary~\ref{thm:positive-end-main}, these results establish the optimality of the dimension ranges proved here.
\end{remark}

\begin{remark}
For umbilic boundary, Theorem~\ref{thm:main} improves the previously known noncompactness range $N\ge25$ to $N\ge22$ for scalar-flat metrics and to $N\ge21$ for positive scalar curvature with minimal boundary.
After the normalization $u\mapsto2^{-(N-2)/2}u$, the latter examples in dimensions $21\le N\le24$ satisfy the hypotheses of \cite[Theorem~1.1]{DisconziKhuri} and have an $L^\infty$-unbounded set of positive solutions.
We have been unable to verify the argument for the Neumann condition in \cite[Proposition~5.1]{DisconziKhuri}, specifically the passage following \cite[(5.6)]{DisconziKhuri} from a zero normal derivative of the sum to zero normal derivatives of its individual harmonic components.
\end{remark}

\begin{remark}
To our knowledge, this is the first construction of noncompact solution sequences for the boundary Yamabe equation on a fixed non-locally-conformally-flat background using scalar-derived tensors generated by Hessians of harmonic polynomials.
The earlier boundary constructions in \cite{AlmarazBlowup,DisconziKhuri,ChenWu,HoShin} use polynomial tensors generated by an algebraic Weyl tensor.
Here harmonic quadratic and cubic polynomials generate the tensors in \eqref{eq:full-scalar-module} and \eqref{eq:scalar-cubic-seed}, respectively.
For these tensors, the correction in \eqref{eq:linear-corrector-inverse} need not vanish identically for a centered bubble and contributes to the energy and its translation--scale Hessian.
\end{remark}

\begin{remark}
Our second main ingredient is the evaluation of the scalar correction without an explicit formula for the exact half-space corrector.
We construct a rational multiple of the bubble that solves the interior linearized equation exactly but leaves a boundary residual.
The Green identity in Lemma~\ref{lem:corrector-pairing-identity} expresses the remaining contribution to the corrected energy as a boundary-response pairing.
For the scalar-flat and minimal-boundary problems, we evaluate these pairings exactly through finite beta sums and algebraic telescoping of the spectral series.
A precursor is the decomposition in \cite[Section~4.1]{KimMussoWei} into an explicit interior particular solution and a harmonic remainder satisfying a forced Robin condition, whose quadratic energy is estimated.
Here the exact pairings also determine all translation and scale derivatives required for the noncompactness construction.
\end{remark}

The construction places disjoint rescaled polynomial metric perturbations near distinct boundary points tending to a single boundary point.
The scalar coefficients multiplying these tensors are chosen relative to the localization scales so that the metric perturbation extends smoothly and vanishes to infinite order at the limiting point; see Lemma~\ref{lem:localized-energy}.
In the nonumbilic examples, the trace-free second fundamental form is nonzero at the boundary centers of these perturbations.
On the minimal backgrounds used here, \eqref{eq:intro-yamabe} implies $\partial_\nu u_j<0$ for negative prescribed mean curvature, so the maxima lie in the interior even though the boundary supremum diverges as in
\eqref{eq:intro-blowup-sequence}.

Table~\ref{tab:intro-profiles} summarizes the six constructions.
For the local construction, use half-space coordinates $x=(\bar x,t)\in\R^{N-1}\times[0,\infty)$, with inward normal coordinate $t:=x_N$.
Before cutoff, the metric is $g:=\exp(\mu h)$, where $\mu\in\R$ is a scalar coefficient and $h$ is a tangential polynomial tensor.
We call $h$ the metric profile and use total degree in $(\bar x,t)$.
\begin{table}[htbp]
\centering\small
\setlength{\tabcolsep}{5pt}\renewcommand{\arraystretch}{1.15}
\begin{tabular}{@{}llccl@{\hspace{1.4em}}l@{}}\toprule
$(\sigma,\eta)$ & Boundary & Dimension & Degree & Metric profile & Extremum\\\midrule
$(0,2)$ & nonumbilic & $N\ge15$ & $6$
& $h^{\mathrm{II,nu}}_{N,\tau}$, \eqref{eq:fixed-nonumbilic-families} & minimum\\
$(1,0)$ & nonumbilic & $N\ge15$ & $6$
& $h^{\mathrm{I,nu}}_{N,\tau}$, \eqref{eq:fixed-nonumbilic-families} & minimum\\
$(0,2)$ & umbilic & $N\ge22$ & $8$
& $h^{\mathrm{II,u}}_{N,\tau}$, \eqref{eq:fixed-umbilic-families} & minimum\\
$(1,0)$ & umbilic & $N\ge21$ & $9$
& $h^{\mathrm{I,u}}_{N,\tau}$, \eqref{eq:fixed-umbilic-families} & minimum\\
$(1,-L)$ & umbilic & $N\ge9$ & $3$
& $h^-_{N,L,\tau,0}$, \eqref{a:eq:hLmain} & maximum\\
$(1,-L)$ & nonumbilic & $N\ge9$ & $3$
& $h^-_{N,L,\tau,\theta}$, \eqref{a:eq:hLmain} & maximum\\\bottomrule
\end{tabular}
\caption{Polynomial metric profiles for the six cases of Theorem~\ref{thm:main}.
The pair $(\sigma,\eta)$ prescribes scalar curvature $N(N-1)\sigma$ and boundary mean curvature $\eta$.
In the last two rows, $L>0$ is fixed and sufficiently large, depending on $N$.
The last column refers to the corrected quadratic energy $F_h$ in \eqref{eq:common-Schur-formula}; every listed extremum has negative value and is nondegenerate in all translation and scale variables.}
\label{tab:intro-profiles}
\end{table}

\medskip
\noindent\textbf{Metric perturbations and the scalar correction.}
The construction requires a polynomial metric perturbation $h$ and the first-order correction to the conformal factor in \eqref{eq:projected-linear-corrector}--\eqref{eq:linear-corrector-inverse}, which we call the scalar correction.
For the prescribed curvatures, consider the corresponding bubble $U_z$ in \eqref{eq:common-TypeI-bubble} or \eqref{eq:common-TypeII-bubble}, with tangential center $\xi$ and scale $\eps$.
Perturbing the metric introduces an error in the equation for $U_z$.
The correction cancels the first-order error caused by $h$ and contributes to the quadratic energy in \eqref{eq:corrected-energy-Taylor}, which we use to select the bubble parameters.

For the scalar-flat and minimal-boundary problems, fix a symmetric trace-free matrix $A$ and set $Q_A(\bar x):=\bar x^{\mathsf T}A\bar x$.
The tensors $S_N^{(k)}(b,e)$ in \eqref{eq:full-scalar-module} are formed from the harmonic polynomial $Q_A$, its first two derivatives, and polynomial factors in $\bar x$, with the tangential trace removed.
We use the finite sums in \eqref{eq:full-scalar-profile},
\[
h(\bar x,t)=\sum_{k,p}c_{kp}t^pS_N^{(k)}(b_{kp},e_{kp}).
\]
The tensors $P_N^{(k)}=S_N^{(k)}(b_k,e_k)$ in \eqref{a:eq:Achain} are obtained from this family by choosing $b_k,e_k$ so that $\partial_a(P_N^{(k)})_{ab}=0$.
These divergence-free generators are used in the nonumbilic scalar-flat and minimal-boundary profiles.
The two umbilic profiles use other choices of $b_{kp},e_{kp}$ in the same tensor formula and have nonzero divergence.
The two pairs of profiles are defined in \eqref{eq:fixed-nonumbilic-families} and \eqref{eq:fixed-umbilic-families}, respectively.

Each $S_N^{(k)}(b,e)$ depends only on $\bar x$, so only the terms with $p=1$ contribute to $\partial_t h(\bar x,0)$.
Omitting those terms makes the boundary of $\exp(\mu h)$ totally geodesic, by Lemma~\ref{lem:polynomial-boundary-geometry}.
The nonumbilic profiles instead contain $\tau tA$ and satisfy $\partial_t h(0,0)=\tau A\ne0$ for the selected $\tau\ne0$.
For negative mean curvature, we also use the Hessian of the harmonic cubic in \eqref{eq:scalar-cubic-seed}; the resulting degree-three metric perturbation is \eqref{a:eq:hLmain}.

For a fixed $h$, Lemma~\ref{lem:source-kernel-orthogonality} computes the first-order error caused by perturbing the metric:
\[
S_z[h]=h_{ij}\partial_{ij}U_z
+(\partial_jh_{ij})\partial_iU_z
+c_N(\partial_i\partial_jh_{ij})U_z.
\]
The last two terms contain the divergence and double divergence of $h$ and must be retained for the umbilic profiles.
The associated functional is $\Lambda_z[h](\psi)=\int_{\R^N_+}S_z[h]\psi\dd x$ in \eqref{eq:polynomial-source-functional}.
The Jacobi form \eqref{eq:common-Jacobi-form} defines $\mathcal L_z$ on the complement specified by \eqref{eq:common-orthogonality-functionals}; the boundary condition is Neumann when the prescribed mean curvature is
zero and Robin otherwise.
The correction in \eqref{eq:linear-corrector-inverse} and the quadratic energy in \eqref{eq:common-Schur-formula} are
\begin{align*}
v_z[h]&=-\mathcal L_z^{-1}\Lambda_z[h],\\
F_h(z)&=\mathcal Q[h](z)-\Lambda_z[h]\bigl(\mathcal L_z^{-1}\Lambda_z[h]\bigr).
\end{align*}
Here $\mathcal Q[h](z)$ is the quadratic coefficient in \eqref{eq:H-quadratic-cutoff-exact}, with $h_R$ replaced by $h$, before correcting the bubble.
Proposition~\ref{prop:scalar-elimination} proves the energy expansion and justifies the cutoff limits for polynomial tensors.

The value of $F_h$ and its parameter derivatives depend on inverse pairings, which we compute as follows.
For a fixed bubble, put $S:=\Lambda_z[h]$ and $Z:=\mathcal L_z^{-1}S=-v_z[h]$, and let $X$ be an explicit approximation in the same kernel complement.
The diagonal case of \eqref{eq:corrector-pairing-identity} reads
\[
\langle S,Z\rangle
=2\langle S,X\rangle-\mathcal B_z(X,X)
+\mathcal B_z(Z-X,Z-X).
\]
The first two terms are explicit; the last term is determined by the residual of $X$.
For the scalar-flat and minimal-boundary profiles, $Z-X$ solves the homogeneous interior equation with the negative boundary residual of $X$; its pairings are evaluated in Lemma~\ref{lem:endpoint-boundary-response}.
For negative mean curvature, Proposition~\ref{prop:cubic-whole-space-energy} supplies explicit whole-space solutions, and Section~\ref{sec:cubic-comparison} estimates their boundary residuals after restriction to the half-space.

Once these pairings have been computed, a quadratic equation selects one coefficient of $h$ so that $(\xi,\eps)=(0,1)$ is a critical point of $F_h$; see \eqref{eq:larger-coefficient-root} and \eqref{eq:cubic-unit-coefficient}.
This coefficient is then held fixed during translation, dilation, and concentration.
We prove that the critical value is negative and that the full translation--scale Hessian is definite.
The nonlinear reduction and localization in Theorem~\ref{thm:fixed-metric-transfer} turn these strict extrema into positive solution sequences on one fixed smooth background, following
\cite{Brendle,BrendleMarques,AlmarazBlowup}.

\medskip
\noindent\textbf{Organization.}
Section~\ref{sec:variational} introduces the energy, the model bubbles, and their linearized equations.
Section~\ref{sec:polynomial-reduction} derives the correction to a bubble when the metric is perturbed and the resulting change in energy.
Sections~\ref{sec:algebra}--\ref{sec:certification} construct the polynomial metric perturbations and prove the energy minima for the scalar-flat and minimal-boundary problems.
Section~\ref{sec:TI-well} treats negative boundary mean curvature.
Section~\ref{sec:common-analytic} combines the local perturbations into a single smooth metric and constructs the unbounded solution sequences; Section~\ref{sec:applications} completes the proofs of the main results.
The appendices contain the defining coefficients, the supporting calculations and verification data, and the proof of the explicit negative-curvature bounds.
Only Proposition~\ref{prop:four-scalar-minima} uses computer-assisted calculations; these are documented in Appendices~\ref{app:sign-witnesses} and~\ref{app:verification}.

\medskip
\noindent\textbf{Conventions.}
Throughout the paper,
\[
m:=N-1,\qquad x=(\bar x,t)\in\R^m\times[0,\infty),\qquad
t:=x_N,\qquad s:=|\bar x|^2.
\]
Thus $|x|^2=s+t^2$, and $\nabla_{\bar x}$ and $\Delta_{\bar x}$ denote tangential derivatives.
Spatial indices $i,j,k$ range from $1$ to $N$; tangential indices $a,b,\ldots$ range from $1$ to $m$.
Parameter indices $\alpha,\beta$ range from $1$ to $N$ for $z\in\R^{N-1}\times(0,\infty)$.
Indices on homogeneous tensor rows are specified where they occur.
The letter $z$ is reserved for bubble parameters; shifted normal coordinates are denoted by $\widehat t$.
In the spectral calculations for the scalar-flat problem $(\sigma,\eta)=(0,2)$ and the minimal-boundary problem $(\sigma,\eta)=(1,0)$, $q$ is the degree of a tangential spherical harmonic and $n$ is the summation index.
The total spherical degree in those calculations is $j:=n+q$.
We write $\ell:=\log\eps$ for logarithmic bubble scale.
Coefficient vectors and matrices are written in bold.
Repeated spatial indices are summed.
A tangential tensor has $h_{iN}=0$, and $h^\top$ denotes its tangential block.
We denote the Euclidean metric by $g_{\mathrm{euc}}$; its components are $\delta_{ij}$, and $\delta^{ij}$ denotes the inverse matrix.
Unsubscripted gradients, Laplacians, and contractions are Euclidean.
The parameter $\mu$ multiplies the metric perturbation $h$ in \eqref{eq:metric-variation-path}, whereas $\eps$ is the bubble scale in \eqref{eq:common-TypeI-bubble}--\eqref{eq:common-TypeII-bubble}.
Tensor products and exponentials use the matrix representation; $A^{\mathsf T}$ denotes transpose.
We use $(\divg _g h)_j:=\nabla^ih_{ij}$, and $(X\lrcorner h)_j:=X^ih_{ij}$.
The measures $\dd v_g$, $\dd\sigma_g$, and $|\Sph^m|$ have their Riemannian meanings.
A boundary center $(\bar b,0)$ is identified with $\bar b$ in bubble subscripts, and $B_r^+(b):=B_r^N(b)\cap\R^N_+$.

Polynomial degree is total degree.
We write $d_*$ for a degree bound, $d_i:=\deg h_i$ for the degree of a homogeneous row, and $d_0$ for the lowest row degree.
The two shape coefficients in $S_N^{(k)}(b,e)$ are denoted by $b,e$, independently of polynomial degree.
Fix $\chi\in C^\infty([0,\infty);[0,1])$ with $\chi=1$ on $[0,1]$ and $\chi=0$ on $[2,\infty)$, and set $\chi_R(x):=\chi(|x|/R)$.
Constants may depend on the fixed dimension, curvature, tensor, parameter set, and cutoff, but not on the localization parameters.

We use the induced metric $g^\top:=g|_{T\partial M}$ and the second fundamental form $L_g(X,Y):=g(\nabla_X\nu,Y)$, with $H_g:=(N-1)^{-1}\operatorname{tr}_{g^\top}L_g$.
On the Euclidean half-space, $\pa_t$ points inward and $\nu=-\pa_t$.
In block coordinates with $g_{aN}=0$ and $g_{NN}=1$,
\[
L_{ab}=g(\nabla_{\pa_a}\nu,\pa_b)
=-\frac12\pa_tg_{ab},\qquad
H_g=\frac1{N-1}g^{ab}L_{ab}.
\]
In particular,
\begin{equation*}
\pi_g:=L_g-H_gg^\top.
\end{equation*}
For a symmetric matrix field $h$, $\exp(h)$ denotes its matrix exponential and the metric with that coefficient matrix.
If $h$ is tangential, this metric has the form
\begin{equation*}
\exp(h)=\dd t^2+
\sum_{a,b=1}^{N-1}[\exp(h)]_{ab}\dd x_a\dd x_b.
\end{equation*}
If $\tr h=0$, then
\[
\begin{gathered}
\det(\exp(h))=1,\qquad
\dd v_{\exp(h)}=\dd x,\qquad \dd\sigma_{\exp(h)}=\dd\bar x,\\
H_{\exp(h)}=-\frac{\pa_t\log\det(\exp(h))}{2(N-1)}=0.
\end{gathered}
\]

\medskip
\noindent\textbf{Statement on the Use of AI.}
During the preparation of this manuscript, ChatGPT and Codex (OpenAI) were used to assist with mathematical derivations, symbolic calculations, verification code, and the organization and wording of the text.
The arguments and computer-assisted checks are documented in the manuscript and its verification materials.
The authors are responsible for the mathematical content and references.

\medskip
\noindent\textbf{Acknowledgement.}
L. Gong and J. Wei gratefully acknowledge support from the Research Grants Council of Hong Kong (RGC) through the project: \emph{On Fujita equation in the critical or supercritical regime}.
S. Kim was supported by Basic Science Research Program through the National Research Foundation of Korea (NRF) funded by the Ministry of Science and ICT (RS2025-00558417), and by Samsung Science and Technology Foundation
under Project Number SSTF-BA2601-01.

\section{Preliminaries}\label{sec:variational}
We recall the variational setting, bubbles and linearized operators of the boundary Yamabe reduction.
The scalar-flat inverse follows from \cite[Proposition~2.7]{AlmarazBlowup}, and finite-mean-curvature nondegeneracy from \cite[Lemma~2.1]{AlmarazWang}.

\subsection{The variational formulation}

For fixed $(\sigma,\eta)$, define
\begin{equation}\label{eq:common-functional-expanded}
\begin{aligned}
E_g(u):={}&\int_M\bigl(|\nabla u|_g^2+c_NR_gu^2\bigr)\dd v_g
+\frac{N-2}{2}\int_{\pa M}H_gu^2\dd\sigma_g\\
&-\frac{(N-2)^2\sigma}{4}\int_M(u_+)^{2N/(N-2)}\dd v_g
-\frac{(N-2)^2\eta}{2(N-1)}
\int_{\pa M}(u_+)^{2(N-1)/(N-2)}\dd\sigma_g,
\end{aligned}
\end{equation}
where $u_+:=\max\{u,0\}$.
We use the conformal operators
\[
L_g^{\mathrm{conf}}:=-\frac{4(N-1)}{N-2}\Delta_g+R_g,\qquad
B_g^{\mathrm{conf}}:=\frac{2(N-1)}{N-2}\partial_\nu+(N-1)H_g.
\]
The positive critical points satisfy
\begin{equation*}
\begin{cases}
L_g^{\mathrm{conf}}u=N(N-1)\sigma u^{(N+2)/(N-2)}&\text{in }M,\\
B_g^{\mathrm{conf}}u=(N-1)\eta u^{N/(N-2)}&\text{on }\pa M.
\end{cases}
\end{equation*}
In particular, for $(\sigma,\eta)=(0,2)$,
\begin{equation}
\begin{aligned}
E_g(u)={}&\int_M\bigl(|\nabla u|_g^2+c_NR_gu^2\bigr)\dd v_g
+\frac{N-2}{2}\int_{\pa M}H_gu^2\dd\sigma_g\\
&-\frac{(N-2)^2}{N-1}\int_{\pa M}(u_+)^{2(N-1)/(N-2)}\dd\sigma_g.
\end{aligned}
\end{equation}
Multiplication $u\mapsto cu$, $c>0$, changes the prescribed parameters by
\[
(\sigma,\eta)\mapsto
\bigl(c^{-4/(N-2)}\sigma,c^{-2/(N-2)}\eta\bigr).
\]

\begin{proposition}\label{prop:geometric-action}
On a compact manifold with boundary define
\begin{equation}
\begin{aligned}
\mathcal A_{\sigma,\eta}(\hat{g}):={}&\int_M R_{\hat{g}}\dd v_{\hat{g}}
+2(N-1)\int_{\pa M}H_{\hat{g}}\dd\sigma_{\hat{g}}\\
&-(N-1)(N-2)\sigma\operatorname{Vol}_{\hat{g}}(M)
-2(N-2)\eta\operatorname{Area}_{\hat{g}}(\pa M).
\end{aligned}
\end{equation}
For $u>0$ and $\hat{g}=u^{4/(N-2)}g$,
\begin{equation}\label{eq:energy-geometric-action}
E_g(u)=c_N\mathcal A_{\sigma,\eta}(\hat{g}).
\end{equation}
With covariant metric variation $h$, its first variation is
\begin{equation}\label{eq:geometric-action-first-variation}
\begin{aligned}
D\mathcal A_{\sigma,\eta}(\hat{g})[h]
={}&-\int_M\left\langle\operatorname{Ric}_{\hat{g}}-\tfrac12R_{\hat{g}}\hat{g}
+\tfrac12(N-1)(N-2)\sigma \hat{g},h\right\rangle_{\hat{g}}\dd v_{\hat{g}}\\
&-\int_{\pa M}\left\langle L_{\hat{g}}-(N-1)H_{\hat{g}}\hat{g}^\top
+(N-2)\eta \hat{g}^\top,h^\top\right\rangle_{\hat{g}}\dd\sigma_{\hat{g}}.
\end{aligned}
\end{equation}
In particular, an Einstein model with $\operatorname{Ric}_{\hat{g}}=(N-1)\sigma \hat{g}$ and $L_{\hat{g}}=\eta \hat{g}^\top$ is stationary under all smooth metric variations.
\end{proposition}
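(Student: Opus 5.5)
Two things have to be shown: the identity \eqref{eq:energy-geometric-action}, and the first-variation formula \eqref{eq:geometric-action-first-variation}. The stationarity statement then follows by substitution.

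\textbf{The identity \eqref{eq:energy-geometric-action}.} We use the conformal transformation laws for $\hat g=u^{4/(N-2)}g$:
\[
R_{\hat g}=u^{-\frac{N+2}{N-2}}L_g^{\mathrm{conf}}u,\qquad
H_{\hat g}=\frac{1}{N-1}\,u^{-\frac N{N-2}}B_g^{\mathrm{conf}}u,
\]
together with
\[
\dd v_{\hat g}=u^{2N/(N-2)}\dd v_g,\qquad \dd\sigma_{\hat g}=u^{2(N-1)/(N-2)}\dd\sigma_g.
\]
These give
\[
\int_M R_{\hat g}\dd v_{\hat g}=\int_M u\,L_g^{\mathrm{conf}}u\dd v_g,\qquad
2(N-1)\int_{\pa M}H_{\hat g}\dd\sigma_{\hat g}=2\int_{\pa M}u\,B_g^{\mathrm{conf}}u\dd\sigma_g .
\]
Integrate $-\frac{4(N-1)}{N-2}\int_M u\Delta_g u$ by parts. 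The resulting boundary term $-\frac{4(N-1)}{N-2}\int_{\pa M}u\pa_\nu u$ cancels exactly against the normal-derivative part of $2u B_g^{\mathrm{conf}}u$. What remains is
\[
\frac{4(N-1)}{N-2}\Bigl(\int_M(|\nabla u|_g^2+c_NR_gu^2)\dd v_g+\frac{N-2}{2}\int_{\pa M}H_gu^2\dd\sigma_g\Bigr).
\]
Multiplying by $c_N=\frac{N-2}{4(N-1)}$ gives the quadratic part of $E_g$. For the potential terms, note that $u_+=u$ since $u>0$, and check the constants:
\[
c_N(N-1)(N-2)\sigma=\frac{(N-2)^2\sigma}4,\qquad
c_N\cdot2(N-2)\eta=\frac{(N-2)^2\eta}{2(N-1)} .
\]
These match the coefficients in \eqref{eq:common-functional-expanded}, which proves \eqref{eq:energy-geometric-action}.

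\textbf{The first variation.} We apply the standard variation formulas to the Einstein--Hilbert action with Gibbons--Hawking boundary term. For $\delta\hat g=h$:
\begin{itemize}
\item $\delta(R\dd v)=\bigl(-\langle\operatorname{Ric}-\frac12R\hat g,h\rangle+\divg V\bigr)\dd v$, where $V=\divg h-\dd\tr h$;
\item $\delta\dd v=\frac12\tr h\dd v$ and $\delta\dd\sigma=\frac12\tr h^\top\dd\sigma$.
\end{itemize}
Applying the divergence theorem to $\divg V$ gives $\int_{\pa M}\langle V,\nu\rangle$. This boundary flux must be combined with the variation of $2(N-1)H\,\dd\sigma=2\tr L\,\dd\sigma$.

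Combining these two boundary contributions is the main computational step, and we will do it in Fermi-type coordinates adapted to the boundary. The normal components $h(\nu,\cdot)$ are expected to enter only through a tangential divergence $\divg_{\pa M}(h(\nu,\cdot)^\top)$, which integrates to zero on the closed manifold $\pa M$. After this cancellation the net boundary integrand is
\[
-\langle L-(\tr L)\hat g^\top,h^\top\rangle .
\]
This is the classical computation behind the Gibbons--Hawking term, and it produces the factor $(N-1)H$.

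The remaining terms come from the volume and area pieces:
\[
-(N-1)(N-2)\sigma\cdot\tfrac12\tr h=-\tfrac12(N-1)(N-2)\sigma\langle\hat g,h\rangle,\qquad
-2(N-2)\eta\cdot\tfrac12\tr h^\top=-(N-2)\eta\langle\hat g^\top,h^\top\rangle.
\]
Together these give \eqref{eq:geometric-action-first-variation}.

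\textbf{Stationarity of the Einstein model.} Assume $\operatorname{Ric}=(N-1)\sigma\hat g$. Then $R=N(N-1)\sigma$, and the interior tensor becomes
\[
(N-1)\sigma\hat g-\tfrac12N(N-1)\sigma\hat g+\tfrac12(N-1)(N-2)\sigma\hat g=0 .
\]
Assume also $L=\eta\hat g^\top$. Then $H=\eta$, and the boundary tensor becomes
\[
\eta\hat g^\top-(N-1)\eta\hat g^\top+(N-2)\eta\hat g^\top=0 .
\]
Both integrands vanish, so the model is stationary under all smooth metric variations.
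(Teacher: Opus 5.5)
Your proposal is correct and follows the paper's proof: the conformal laws with cancelling boundary fluxes give \eqref{eq:energy-geometric-action}, and the first variation comes from the Einstein--Hilbert variation together with the mean-curvature and measure variations. The boundary step you leave as ``expected'' is exactly the identity the paper writes down, $2D\mathcal H_{\hat g}[h]=-V(\nu)-\divg_{\pa M}b-\langle L_{\hat g},h^\top\rangle$ with $\mathcal H=(N-1)H_{\hat g}$ and $b=h(\nu,\cdot)|_{T\pa M}$. Its first term cancels the flux of $\divg V$; the $h(\nu,\nu)$ terms cancel pointwise, and only $b$ survives, as a tangential divergence that integrates to zero. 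Stating this identity would complete your argument.
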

\begin{proof}
The conformal transformation laws in our outward-normal convention are
\[
R_{\hat{g}}u^{(N+2)/(N-2)}=-c_N^{-1}\Delta_gu+R_gu,\qquad
H_{\hat{g}}u^{N/(N-2)}=H_gu+\frac2{N-2}\pa_\nu u.
\]
Using the conformal volume and area measures, we obtain
\begin{align*}
\int R_{\hat{g}}\dd v_{\hat{g}}
&=c_N^{-1}\int|\nabla u|_g^2\dd v_g+\int R_gu^2\dd v_g
-c_N^{-1}\int_{\pa M}u\pa_\nu u\dd\sigma_g,\\
2(N-1)\int H_{\hat{g}}\dd\sigma_{\hat{g}}
&=2(N-1)\int H_gu^2\dd\sigma_g
+\frac{4(N-1)}{N-2}\int u\pa_\nu u\dd\sigma_g.
\end{align*}
The fluxes cancel.
After multiplication by $c_N$, the boundary coefficient is $(N-2)/2$, and the volume and area multipliers are $(N-2)^2\sigma/4$ and $(N-2)^2\eta/(2(N-1))$, respectively.
This proves \eqref{eq:energy-geometric-action}.

For completeness, put $\mathcal H:=(N-1)H_{\hat{g}}$, $b:=h(\nu,\cdot)|_{T\pa M}$, and $V_i:=\nabla^jh_{ij}-\nabla_i\tr_{\hat{g}} h$.
By differentiating the unit normal and the tangential trace of $L_{\hat{g}}$, we find
\[
2D\mathcal H_{\hat{g}}[h]=-V(\nu)-\divg_{\pa M}b
 -\langle L_{\hat{g}},h^\top\rangle_{\hat{g}}.
\]
The scalar-curvature variation is $DR_{\hat{g}}[h]=-\langle\operatorname{Ric}_{\hat{g}},h\rangle_{\hat{g}}+\divg_{\hat{g}} V$.
Its flux cancels the first term in $2D\mathcal H_{\hat{g}}[h]$; the integral of the tangential divergence is zero.
Differentiating the two measures leaves $-\langle L_{\hat{g}}-\mathcal H \hat{g}^\top,h^\top\rangle$ on the boundary.
The volume and area variations are one half of the respective traces of $h$, which proves \eqref{eq:geometric-action-first-variation}.
Both displayed tensors vanish at the stated model.
\end{proof}

The Type-I bubbles compactify to round caps of sectional curvature one, and the Type-II bubbles to a flat ball of radius $1/2$.
These are the stationary models of Proposition~\ref{prop:geometric-action}.
For a compactly supported metric perturbation and a positive scalar factor whose ratio to the bubble extends smoothly to the compactification, the artificial flux is $O(R^{2-N})$ as $R\to\infty$.

\subsection{Model bubbles and projected inverses}

From this subsection onward the analytic domain is $\R^N_+$, with boundary $\pa\R^N_+=\{t=0\}$ and Euclidean outward normal $\nu=-\pa_t$.
We use the same functional $E_g$ from \eqref{eq:common-functional-expanded} with $M=\R^N_+$.
In particular, a model bubble is a positive solution of
\[
\begin{cases}
-\Delta u=\dfrac{N(N-2)}4\sigma u^{(N+2)/(N-2)}
&\text{in }\R^N_+,\\[1mm]
-\pa_tu=\dfrac{N-2}{2}\eta u^{N/(N-2)}
&\text{on }\pa\R^N_+.
\end{cases}
\]
Its two bubble families are
\begin{align}
U_{\kappa,\xi,\eps}(x)
&:=\left(\frac{2\eps}{\eps^2+|\bar x-\xi|^2+(t+\kappa\eps)^2}
\right)^{(N-2)/2},\label{eq:common-TypeI-bubble}\\
U^{\mathrm{sf}}_{\xi,\eps}(x)
&:=\left(\frac{\eps}{(\eps+t)^2+|\bar x-\xi|^2}
\right)^{(N-2)/2}.\label{eq:common-TypeII-bubble}
\end{align}
Write $z:=(\xi,\eps)$, denote the bubble of the prescribed equation by $U_z$, and set $U_\kappa:=U_{\kappa,0,1}$.
The centered unit parameter is $z_0:=(0,1)$.

The energy space is
\begin{align*}
\Sigma:=\{w\in H^1_{\mathrm{loc}}(\overline{\R^N_+}):{}&
\nabla w\in L^2(\R^N_+),\quad
w\in L^{2N/(N-2)}(\R^N_+),\\
&w|_{\pa\R^N_+}\in L^{2(N-1)/(N-2)}(\pa\R^N_+)\},
\qquad \|w\|_\Sigma:=\|\nabla w\|_2.
\end{align*}
By the Sobolev and trace inequalities,
\begin{equation}\label{eq:interior-boundary-dual-bounds}
\begin{aligned}
\left|\int_{\R^N_+}f\psi\dd x\right|
&\le C\|f\|_{L^{2N/(N+2)}(\R^N_+)}\|\psi\|_\Sigma,\\
\left|\int_{\pa\R^N_+}b\psi\dd\bar x\right|
&\le C\|b\|_{L^{2(N-1)/N}(\pa\R^N_+)}\|\psi\|_\Sigma,
\qquad \psi\in\Sigma.
\end{aligned}
\end{equation}

With $z=(z_1,\ldots,z_N)=(\xi_1,\ldots,\xi_{N-1},\eps)$, define the parameter derivatives
\begin{equation*}
\Phi_{z,\alpha}:=\pa_{z_\alpha}U_z\quad(1\le\alpha\le N),\qquad
\mathcal K_z:=\operatorname{span}\{\Phi_{z,1},\ldots,\Phi_{z,N}\}.
\end{equation*}
Fix
\begin{equation}\label{eq:common-orthogonality-functionals}
\mathfrak l_{z,\alpha}(w):=
\begin{cases}
\displaystyle\int_{\R^N_+}U_z^{4/(N-2)}\Phi_{z,\alpha}w\dd x,
&\text{positive-scalar-curvature},\\[2mm]
\displaystyle\int_{\pa\R^N_+}U_z^{2/(N-2)}\Phi_{z,\alpha}w\dd\bar x,
&\text{scalar-flat}.
\end{cases}
\end{equation}
These are the interior $L^2$ constraints on the compactified cap and the boundary constraints of \cite[Section~2]{AlmarazBlowup}: the conformal volume and area weights, after division of both functions by $U_z$, are
$U_z^{4/(N-2)}$ and $U_z^{2/(N-2)}$, respectively.

Define
\begin{equation*}
\Sigma_z:=\bigcap_{\alpha=1}^N\ker\mathfrak l_{z,\alpha},\qquad
\mathbf G_z:=\bigl(\mathfrak l_{z,\alpha}(\Phi_{z,\beta})\bigr)_{\alpha,\beta=1}^N.
\end{equation*}
The space $\Sigma_z$ has the norm inherited from $\Sigma$.
A functional on $\Sigma$ is restricted to $\Sigma_z$ when it is an argument of $\cL_z^{-1}$.

The linearized bilinear form, or Jacobi form, is
$$
\cB_z(v,\psi):=\frac12D^2E_{g_{\mathrm{euc}}}(U_z)[v,\psi].
$$
In other words,
\begin{equation}\label{eq:common-Jacobi-form}
\begin{aligned}
\cB_z(v,\psi)
={}&\int_{\R^N_+}\nabla v\cdot\nabla\psi\dd x
-\frac{N(N+2)\sigma}{4}\int_{\R^N_+}U_z^{4/(N-2)}v\psi\dd x\\
&-\frac{N\eta}{2}\int_{\pa\R^N_+}U_z^{2/(N-2)}v\psi\dd\bar x.
\end{aligned}
\end{equation}
Differentiating the flat equation, we obtain $\cB_z(\Phi_{z,\alpha},\psi)=0$ for every $\psi\in\Sigma$.
The projected linearized operator on $\Sigma_z$ is
\begin{equation*}
\cL_z:\Sigma_z\to\Sigma_z^*,\qquad
(\cL_zv)(\psi):=\cB_z(v,\psi)\quad(v,\psi\in\Sigma_z).
\end{equation*}
\begin{proposition}
\label{prop:projected-inverses}
For either model bubble, the weighted Gram matrix $\mathbf G_z$ is positive definite.
The map
\begin{equation}\label{eq:explicit-Sigma-projection}
\Pi_zw:=w-\sum_{\alpha,\beta=1}^N\Phi_{z,\alpha}(\mathbf G_z^{-1})_{\alpha\beta}\mathfrak l_{z,\beta}(w)
\end{equation}
is a bounded projection onto $\Sigma_z$ with kernel $\mathcal K_z$, and
\begin{equation}\label{eq:energy-space-decomposition}
\Sigma=\Sigma_z\oplus\mathcal K_z.
\end{equation}

These projections depend smoothly on the bubble parameters, and $\Pi_z|_{\Sigma_{z_0}}$ is a local isomorphism onto $\Sigma_z$.
If the full kernel of $\cB_z$ is $\mathcal K_z$, then $\cL_z:\Sigma_z\to\Sigma_z^*$ is invertible.
Its inverse is uniformly bounded on compact parameter sets on which the weights vary smoothly and the full-kernel characterization holds.
This also applies to compact finite-curvature intervals for the Type-I family.
\end{proposition}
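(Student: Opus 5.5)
The plan is to prove the statements in order: positivity of $\mathbf G_z$, then the projection and splitting, then smooth dependence, and finally invertibility through a Fredholm argument.

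\textbf{Gram matrix.} Write $\mathbf G_z=(\mathfrak l_{z,\alpha}(\Phi_{z,\beta}))$. In each case this is a weighted $L^2$ inner product of the $\Phi_{z,\alpha}$:
\[
\langle f,g\rangle_w=\int_{\R^N_+}U_z^{4/(N-2)}fg\dd x
\quad\text{or}\quad
\langle f,g\rangle_w=\int_{\pa\R^N_+}U_z^{2/(N-2)}fg\dd\bar x .
\]
The weight is positive, so $\mathbf G_z$ is symmetric and positive semidefinite. Definiteness then reduces to linear independence of the $\Phi_{z,\alpha}$ on the support of the weight.

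\begin{itemize}
\item In the interior case this follows from their explicit form. The derivatives $\pa_{\xi_a}U_z$ are odd in $\bar x_a-\xi_a$, while $\pa_\eps U_z$ is even. The nonzero function $\pa_\eps U_z$ has a sign change.
\item In the boundary case we restrict to $t=0$ and use the same parity argument. For the Type~II bubble the boundary trace of $\pa_\eps U^{\mathrm{sf}}_z$ is proportional to $(|\bar x-\xi|^2-\eps^2)(\eps^2+|\bar x-\xi|^2)^{-N/2}$, which is nonzero and radial.
\end{itemize}

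\textbf{Projection and splitting.} Each $\mathfrak l_{z,\alpha}$ is bounded on $\Sigma$. For the interior weight, $U_z^{4/(N-2)}\Phi_{z,\alpha}\in L^{2N/(N+2)}$. For the boundary weight, $U_z^{2/(N-2)}\Phi_{z,\alpha}\in L^{2(N-1)/N}(\pa\R^N_+)$, by the decay $|\Phi_{z,\alpha}|\lesssim|x|^{2-N}$. Both then follow from \eqref{eq:interior-boundary-dual-bounds}.

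Hence $\Pi_z$ is bounded, since $\Phi_{z,\alpha}\in\Sigma$. Two direct checks remain:
\begin{itemize}
\item $\mathfrak l_{z,\gamma}(\Pi_zw)=\mathfrak l_{z,\gamma}(w)-\sum(\mathbf G_z)_{\gamma\alpha}(\mathbf G_z^{-1})_{\alpha\beta}\mathfrak l_{z,\beta}(w)=0$, so $\Pi_z$ maps into $\Sigma_z$.
\item $\Pi_z$ is the identity on $\Sigma_z$ and annihilates each $\Phi_{z,\alpha}$.
\end{itemize}
This gives $\Pi_z^2=\Pi_z$ with range $\Sigma_z$ and kernel $\mathcal K_z$, and \eqref{eq:energy-space-decomposition} follows.

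\textbf{Smooth dependence.} The map $z\mapsto\Phi_{z,\alpha}\in\Sigma$ is smooth, as is $z\mapsto U_z^{p}\Phi_{z,\alpha}$ in the dual Lebesgue spaces; this uses dominated convergence with uniform decay on compact parameter sets. Therefore $\mathbf G_z^{-1}$ and $\Pi_z$ are smooth in operator norm.

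$\Pi_z|_{\Sigma_{z_0}}$ is a local isomorphism onto $\Sigma_z$: at $z=z_0$ it is the identity, and $\Pi_{z_0}|_{\Sigma_z}$ gives an approximate inverse with $\Pi_{z_0}\Pi_z-\mathrm{Id}=O(|z-z_0|)$ on $\Sigma_{z_0}$. A Neumann series finishes this step.

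\textbf{Invertibility of $\cL_z$.} This is the main point. Split
\[
\cB_z(v,\psi)=\int\nabla v\cdot\nabla\psi-\mathcal W_z(v,\psi),
\]
where $\mathcal W_z$ collects the two weighted terms. The gradient part is the Riesz isomorphism on $\Sigma$.

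The weighted part is compact. Both weights $U_z^{4/(N-2)}\sim|x|^{-4}$ and $U_z^{2/(N-2)}\sim|x|^{-2}$ decay, so truncation to large balls and Rellich compactness there, with interior $H^1\to L^2$ and trace $H^1\to L^2(\pa)$, show that the induced operators $\Sigma\to\Sigma^*$ are norm limits of compact operators. The tails are controlled by Hölder's inequality with the Sobolev exponents.

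Consequently $\cL_z=P^*(\text{Riesz}-K)$ restricted to $\Sigma_z$ is Fredholm of index zero. It then suffices to prove injectivity.

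\begin{itemize}
\item Suppose $v\in\Sigma_z$ and $\cB_z(v,\psi)=0$ for all $\psi\in\Sigma_z$.
\item Every $\psi\in\Sigma$ decomposes as $\psi=\Pi_z\psi+k$ with $k\in\mathcal K_z$, and $\cB_z(v,k)=\cB_z(k,v)=0$ by symmetry of $\cB_z$ and the kernel identity.
\item Hence $v$ lies in the full kernel of $\cB_z$, which is $\mathcal K_z$ by hypothesis.
\item So $v\in\mathcal K_z\cap\Sigma_z=\{0\}$.
\end{itemize}

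Uniform bounds on compact parameter sets come from the standard contradiction argument. Take $z_k\to z_\ast$ and unit $v_k\in\Sigma_{z_k}$ with $\cL_{z_k}v_k\to0$. Pass to weak limits, using continuity of the weights and of $\Pi_z$ together with the compactness of $\mathcal W$. The limit lies in the kernel at $z_\ast$ and hence is zero. Then $\|\nabla v_k\|^2=\cB_{z_k}(v_k,v_k)+\mathcal W_{z_k}(v_k,v_k)\to0$, which is a contradiction.

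The same argument applies verbatim along a compact $\kappa$-interval for the Type~I family, since there the weights also depend continuously on $\kappa$.
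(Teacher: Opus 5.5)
Your proposal is correct and follows the paper's proof in all essentials. The shared steps are:
\begin{enumerate}
\item positivity of $\mathbf G_z$ from independence of the $\Phi_{z,\alpha}$;
\item the explicit projection and the splitting $\Sigma=\Sigma_z\oplus\mathcal K_z$;
\item compactness of the two potential terms, giving Fredholm index zero on the finite-codimensional complement;
\item injectivity, by extending the annihilation from $\Sigma_z$ to all of $\Sigma$ via symmetry of $\cB_z$ and the splitting.
\end{enumerate}
You also make explicit two points the paper leaves implicit: the parity argument for independence, including for the boundary traces in the scalar-flat case, and the dual-norm bounds on the $\mathfrak l_{z,\alpha}$.

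The one genuine difference is the uniform bound on the inverse. You argue by contradiction, using weak limits, norm convergence of the constraints and weights, and compactness of the weighted part. The paper instead transports $\cL_z$ to the fixed space $\Sigma_{z_0}$ via $\Pi_z|_{\Sigma_{z_0}}$, then uses operator-norm continuity and a Neumann series to keep bounded invertibility near each $z_0$, and finally covers the compact set. Your route is self-contained and needs only continuity of the weights, but it gives only boundedness. The paper's transported family also gives continuity of the inverse, and later $C^2$ dependence in the proof of Proposition~\ref{prop:scalar-elimination}; that is why it is set up that way.

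One small point needs fixing. For $\Pi_z|_{\Sigma_{z_0}}$ to map onto $\Sigma_z$, you need the Neumann series for $\Pi_z\Pi_{z_0}|_{\Sigma_z}$ as well. The series for $\Pi_{z_0}\Pi_z|_{\Sigma_{z_0}}$ that you wrote only gives injectivity with a left inverse. The missing direction follows from the same kind of estimate: for $w\in\Sigma_z$ one has $\Pi_z\Pi_{z_0}w-w=\Pi_z(\Pi_{z_0}-\Pi_z)w=O(|z-z_0|)\|w\|$. The paper uses both compositions.
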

\begin{proof}
The functions $\Phi_{z,\alpha}$ are independent and $\mathbf G_z$ is their weighted Gram matrix with a positive weight.
Thus \eqref{eq:explicit-Sigma-projection} has the asserted image and kernel and proves \eqref{eq:energy-space-decomposition}.
The weights and projections depend smoothly on $z$, and $\mathbf G_z$ is uniformly positive on compact parameter sets.
Near $z_0$, both $\Pi_{z_0}\Pi_z|_{\Sigma_{z_0}}$ and $\Pi_z\Pi_{z_0}|_{\Sigma_z}$ are invertible perturbations of the identity.
Thus $\Pi_z|_{\Sigma_{z_0}}:\Sigma_{z_0}\to\Sigma_z$ is an isomorphism.

The potential terms are compact relative to the Dirichlet form: use compact interior and trace embeddings on bounded half-balls, and H\"older's inequality on the complements with $U_z^{4/(N-2)}\in L^{N/2}$ and
$U_z^{2/(N-2)}\in L^{N-1}(\pa\R^N_+)$.
The full Jacobi operator and its restriction to the finite-codimensional complement are Fredholm of index zero.
If the full kernel is $\mathcal K_z$, a restricted kernel element $v\in\Sigma_z$ annihilates all of $\Sigma$ by symmetry and \eqref{eq:energy-space-decomposition}; hence $v\in\mathcal K_z\cap\Sigma_z=\{0\}$.
This proves invertibility of $\cL_z$ without a positivity assumption.

Transport the operators by $\Pi_z|_{\Sigma_{z_0}}$ to a fixed space.
The smooth weights ensure operator-norm continuity there, so bounded inversion persists locally.
Covering a compact parameter set by finitely many such neighborhoods proves the uniform bound.
The same argument includes a finite curvature parameter whenever the stated kernel hypothesis holds.
\end{proof}

For the scalar-flat bubble \eqref{eq:common-TypeII-bubble}, the Jacobi form \eqref{eq:common-Jacobi-form} reduces to
\begin{equation*}
\cB_z(v,\psi)=\int_{\R^N_+}\nabla v\cdot\nabla\psi\dd x
-N\int_{\pa\R^N_+}U_z^{2/(N-2)}v\psi\dd\bar x.
\end{equation*}
With the scalar-flat constraints in \eqref{eq:common-orthogonality-functionals}, translation--dilation invariance and \cite[Proposition~2.7]{AlmarazBlowup} identify the full kernel as $\mathcal K_z$.
Proposition~\ref{prop:projected-inverses} therefore applies.
The conformal ball identity \eqref{a:eq:ballforms}, proved in Lemma~\ref{a:lem:angular}, also exhibits the negative direction of this form; invertibility on $\Sigma_z$ does not require positivity.

For the positive-scalar-curvature model, fix $\kappa\in\R$ and set $(\sigma,\eta)=(1,\kappa)$.
The equation on the Euclidean half-space is
\begin{equation}\label{eq:TI-boundary-yamabe}
\begin{cases}
-\Delta u=\dfrac{N(N-2)}4u^{\frac{N+2}{N-2}}
&\text{in }\R^N_+,\\[1mm]
-\pa_tu=\dfrac{N-2}{2}\kappa u^{\frac N{N-2}}
&\text{on }\pa\R^N_+.
\end{cases}
\end{equation}

For $-1<a<1$, let
\begin{equation*}
\mathcal C_a:=\{X\in\Sph^N:X_{N+1}\ge a\}.
\end{equation*}
The parameter $a$ and prescribed mean curvature $\kappa$ in \eqref{eq:TI-boundary-yamabe} are related by
\begin{equation*}
a=\frac{\kappa}{\sqrt{1+\kappa^2}},
\qquad
\kappa=\frac{a}{\sqrt{1-a^2}}.
\end{equation*}
For the round metric $g_a$ on $\mathcal C_a$,
\begin{equation*}
R_{g_a}=N(N-1),
\qquad
L_{g_a}=\kappa g_a^\top,
\qquad
H_{g_a}=\kappa.
\end{equation*}
In particular, $a=0$ gives the hemisphere, and $a$ has the sign of the prescribed mean curvature.

On $\R^N_+$ set
\begin{equation}
\Omega_\kappa(x)
:=\frac{2}{1+s+(t+\kappa)^2},
\qquad
g_\kappa:=\Omega_\kappa^2g_{\mathrm{euc}},
\qquad
U_\kappa=\Omega_\kappa^{\frac{N-2}{2}}.
\end{equation}

\begin{lemma}
The metric $g_\kappa$ is the pullback of the round metric on $\mathcal C_a$.
With respect to the outward normal, its curvatures satisfy
\[
R_{g_\kappa}=N(N-1),\qquad H_{g_\kappa}=\kappa.
\]
Equivalently, $U_\kappa$ solves \eqref{eq:TI-boundary-yamabe}.
\end{lemma}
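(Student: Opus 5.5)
The plan is to write down an explicit conformal diffeomorphism from $\R^N_+$ onto $\mathcal C_a$ minus one point, and then read off the curvatures either from the geometry of $\mathcal C_a$ or by a direct check of the PDE.

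\textbf{Step 1: the map.} Let $\Psi:\R^N\to\Sph^N\setminus\{\text{pole}\}$ be the inverse stereographic projection based at the shifted center $(0,-\kappa)$:
\[
\Psi(\bar x,t)=\Bigl(\Omega_\kappa\bar x,\ \Omega_\kappa(t+\kappa),\ 1-\Omega_\kappa\Bigr)
\]
after a suitable rotation of coordinates on $\R^{N+1}$, so that $\Psi^*g_{\Sph^N}=\Omega_\kappa^2g_{\mathrm{euc}}$. Indeed, with $y:=(\bar x,t+\kappa)$, the map $y\mapsto(2y,1-|y|^2)/(1+|y|^2)$ pulls the round metric back to $\bigl(2/(1+|y|^2)\bigr)^2|dy|^2$, and translation in $t$ is an isometry of the Euclidean metric.

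\textbf{Step 2: the image.} The hyperplane $\{t=0\}$, i.e.\ $\{y_N=\kappa\}$, is sent into a round $(N-1)$-sphere. The coordinate $Y_N=2y_N/(1+|y|^2)$ alone does not describe this set, so I instead use the linear combination
\[
\kappa(1+Y_{N+1})-Y_N,\qquad 1+Y_{N+1}=\frac{2}{1+|y|^2},
\]
which vanishes exactly when $y_N=\kappa$. Thus the image of the boundary is the intersection of $\Sph^N$ with the hyperplane $\{Y_N-\kappa Y_{N+1}=\kappa\}$. After a rotation in the $(Y_N,Y_{N+1})$-plane, this hyperplane becomes $\{X_{N+1}=a\}$ with $a=\kappa/\sqrt{1+\kappa^2}$, because the distance from the origin to the hyperplane is $|\kappa|/\sqrt{1+\kappa^2}$. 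The side $\{t>0\}$ maps to the side containing $\Psi(0,\infty)$, and the sign conventions then give $\mathcal C_a$. Since $\mathcal C_a$ is a round cap, the boundary mean curvature of $\partial\mathcal C_a$ with respect to the outward normal is $a/\sqrt{1-a^2}=\kappa$, and $R=N(N-1)$. By Step 1 these curvature values pull back to $g_\kappa$.

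\textbf{Step 3: the PDE.} As an independent check, I will verify the equation directly. The interior equation $-\Delta U_\kappa=\tfrac{N(N-2)}4U_\kappa^{(N+2)/(N-2)}$ is the standard identity for the spherical bubble $\bigl(2/(1+|y|^2)\bigr)^{(N-2)/2}$, translated in $t$. On the boundary,
\[
-\partial_tU_\kappa=\frac{N-2}{2}\,\Omega_\kappa^{(N-4)/2}\cdot\frac{2\cdot2(t+\kappa)}{(1+s+(t+\kappa)^2)^2}.
\]
At $t=0$ this equals $\frac{N-2}{2}\,\kappa\,\Omega_\kappa^{N/2}=\frac{N-2}{2}\,\kappa\,U_\kappa^{N/(N-2)}$, which is \eqref{eq:TI-boundary-yamabe}. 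By the conformal transformation laws in the proof of Proposition~\ref{prop:geometric-action}, applied with $g=g_{\mathrm{euc}}$, $R_g=H_g=0$ and $u=U_\kappa$, this gives $R_{g_\kappa}=N(N-1)$ and $H_{g_\kappa}=\kappa$, which establishes the claimed equivalence.

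\textbf{Main obstacle.} The only delicate point is the sign and orientation bookkeeping in Step 2: that $\{t\ge0\}$ maps to $\{X_{N+1}\ge a\}$ rather than to the complementary cap, and that $a$ carries the sign of $\kappa$. I will fix this by checking that $\Psi(0,t)$ tends to the pole as $t\to\infty$, and by comparing the result with the PDE computation in Step 3, which fixes the sign of $H$ independently.
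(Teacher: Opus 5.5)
Your argument is correct and follows essentially the paper's route: shifted stereographic projection, converting $t\ge0$ into $Y_N-\kappa Y_{N+1}\ge\kappa$ followed by a rotation, and reading off $H$ from the boundary conformal law. Your computation $-\partial_tU_\kappa=\frac{N-2}{2}\kappa U_\kappa^{N/(N-2)}$ at $t=0$ is the paper's $\nu(\log\Omega_\kappa)=\kappa\Omega_\kappa$ in another normalization.

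One sub-check does not work as stated. $\Psi(0,t)$ tends to a point on the hyperplane $\{Y_N-\kappa Y_{N+1}=\kappa\}$, since the omitted point is a boundary point of the cap, so that limit cannot select the side. Instead, keep the sign in the identity $Y_N-\kappa(1+Y_{N+1})=2(y_N-\kappa)/(1+|y|^2)$, as the paper does. Your fallback of matching the sign of $H$ from Step 3 also suffices.
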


\begin{proof}
The conformal factor is the standard stereographic factor translated by $-\kappa e_N$, so the interior metric is round.

To identify the portion of the sphere, set $q:=(\bar x,t+\kappa)$ and use stereographic coordinates
\[
X_i:=\frac{2q_i}{1+|q|^2}\ (1\le i\le N),\qquad
X_{N+1}:=\frac{1-|q|^2}{1+|q|^2}.
\]
The half-space condition $q_N\ge\kappa$ becomes $X_N-\kappa X_{N+1}\ge\kappa$.
The vector normal to this hyperplane has length $\sqrt{1+\kappa^2}$; rotating its unit normal to the last coordinate direction transforms the cap condition into $X_{N+1}\ge\kappa/\sqrt{1+\kappa^2}=a$.
On $t=0$ the Euclidean outward normal is $-\pa_t$ and
\[
\nu(\log\Omega_\kappa)
=\frac{2\kappa}{1+s+\kappa^2}
=\kappa\Omega_\kappa.
\]
By the conformal transformation law for averaged mean curvature, $H_{g_\kappa}=\kappa$.
\end{proof}

\begin{lemma}[Almaraz--Wang]
For every fixed finite $\kappa$, the full Jacobi form satisfies
\[
\ker\cB_z=\mathcal K_z
=\operatorname{span}\{\partial_{z_\alpha}U_z:1\le\alpha\le N\}.
\]
Consequently $\cL_z:\Sigma_z\to\Sigma_z^*$ is invertible, uniformly on compact bubble-parameter sets and compact finite-$\kappa$ intervals.
\end{lemma}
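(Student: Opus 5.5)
By translation and dilation invariance of \eqref{eq:TI-boundary-yamabe}, it suffices to treat the centered bubble $U_\kappa=U_{\kappa,0,1}$. The maps $w\mapsto \eps^{(N-2)/2}w(\xi+\eps\,\cdot)$ carry $\ker\cB_{z_0}$ onto $\ker\cB_z$ and $\mathcal K_{z_0}$ onto $\mathcal K_z$. The plan is to pull the problem back to the round cap $\mathcal C_a$ and solve the resulting eigenvalue problem there.

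\textbf{Step 1: transfer to the cap.} Write $v=U_\kappa\varphi$. By the conformal covariance of $(L^{\mathrm{conf}},B^{\mathrm{conf}})$, the Euclidean Jacobi equations
\[
-\Delta v=\tfrac{N(N+2)}4U_\kappa^{4/(N-2)}v,\qquad -\pa_t v=\tfrac{N\kappa}{2}U_\kappa^{2/(N-2)}v
\]
become, on $(\mathcal C_a,g_a)$,
\[
-\Delta_{g_a}\varphi=N\varphi,\qquad \pa_\nu\varphi=\kappa\varphi .
\]
These constants come from $R_{g_a}=N(N-1)$ and $H_{g_a}=\kappa$, since $c_N^{-1}\bigl(\tfrac{N+2}{N-2}-1\bigr)c_N N(N-1)\tfrac{N-2}{4}=N$. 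Finite energy $v\in\Sigma$ translates into $\varphi\in H^1(\mathcal C_a)$, because the conformal weights are integrable and the point at infinity is removable. Elliptic regularity then makes $\varphi$ smooth on the compact cap.

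\textbf{Step 2: separate variables.} Use the coordinate $\theta$, the angle from the pole $X_{N+1}=1$, so that $\cos\theta_0=a$ on $\pa\mathcal C_a$, and expand $\varphi$ in spherical harmonics $Y_q$ on $\Sph^{N-1}$. Each mode $f_q(\theta)Y_q$ satisfies a Legendre-type ODE,
\[
f''+(N-1)\cot\theta\, f'-\frac{q(q+N-2)}{\sin^2\theta}f+Nf=0,
\]
with $f$ regular at $\theta=0$ and the Robin condition at $\theta_0$. The regular solution is unique up to scaling, namely a Gegenbauer/associated Legendre function of degree one. For $q\ge2$ the regular solution behaves like $\sin^q\theta$ near $0$, and since $N<q(q+N-2)/\sin^2\theta$ for all $\theta$ it cannot satisfy the eigenvalue $N$ on the whole interval.

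\textbf{Step 3: the modes $q\le1$.} Here one uses restrictions of linear functions. The coordinate functions $X_1,\dots,X_N$ restricted to the cap satisfy $-\Delta\varphi=N\varphi$. The tangential ones $X_1,\dots,X_{N-1}$ (mode $q=1$) satisfy the Robin condition exactly: with $f=\sin\theta$,
\[
-f'(\theta_0)/f(\theta_0)=-\cot\theta_0=-\tfrac{a}{\sqrt{1-a^2}}=-\kappa,
\]
and I will verify that the sign matches $\nu$, which points toward increasing $\theta$ on the boundary. For $q=0$ the regular solution is $\cos\theta$, and $\pa_\nu\cos\theta=-\sin\theta_0\ne\kappa\cos\theta_0=a\kappa/\sqrt{1-a^2}\cdot\dots$ unless a degeneracy occurs. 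The dilation derivative instead corresponds to $X_{N+1}-a$ together with a constant contribution, and the check must be done carefully there. The kernel elements are therefore the $N-1$ tangential modes plus one radial mode; mapping them back yields $\pa_{\xi_b}U_\kappa$ and $\pa_\eps U_\kappa$, a space of dimension $N$.

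\textbf{Main obstacle.} The hardest step is the radial mode $q=0$ (and ruling out a second solution for $q=1$). The constraint $-\Delta\varphi=N\varphi$ holds only for functions of the form $\cos\theta$ plus nothing else regular, so the radial kernel must be exactly the conformal-dilation direction. The cleanest approach is to verify directly that $\pa_\eps U_{\kappa,0,\eps}|_{\eps=1}/U_\kappa$ solves the cap problem. Uniqueness of the regular ODE solution in each mode then gives one-dimensionality for $q=0$ and $q=1$, while the Sturm comparison from Step 2 excludes $q\ge2$; this is exactly the argument of Almaraz--Wang, which I would cite for the spectral comparison. Once $\ker\cB_z=\mathcal K_z$ is established, invertibility with uniform bounds follows from Proposition~\ref{prop:projected-inverses}, since $U_z$ and its weights depend smoothly on $(z,\kappa)$.
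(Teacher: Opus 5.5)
\textbf{Your Step 3 puts the dilation field in the wrong mode, and the count does not close.} In your polar decomposition about the pole of $\mathcal C_a$, the link is $\Sph^{N-1}\subset\R^N$. So the degree-one sector $q=1$ is $N$-dimensional, not $(N-1)$-dimensional. The regular solution $f=\sin\theta$ satisfies the Robin condition for \emph{every} degree-one harmonic, so this sector contributes the span of all $N$ coordinate functions orthogonal to the cap's axis. The dilation field is one of them. With $q=(\bar x,t+\kappa)$ one finds
\[
\frac{\pa_\eps U_{\kappa,0,\eps}|_{\eps=1}}{U_\kappa}=-\frac{N-2}{2}\bigl(X_{N+1}+\kappa X_N\bigr),\qquad
\frac{\pa_{\xi_b}U_{\kappa,\xi,1}|_{\xi=0}}{U_\kappa}=\frac{N-2}{2}X_b,
\]
and $X_{N+1}+\kappa X_N$ is orthogonal, in the $(X_N,X_{N+1})$-plane, to the cap axis $X_N-\kappa X_{N+1}$. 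Your own computation with $\cos\theta$ already shows the radial sector $q=0$ is trivial for every $\theta_0\in(0,\pi)$. Indeed, $\pa_\nu\cos\theta=-\sin\theta_0$ never equals $\kappa\cos\theta_0=\cos^2\theta_0/\sin\theta_0$, so there is no degeneracy. Also, $X_{N+1}-a$ plus a constant is not an eigenfunction of eigenvalue $N$.

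As written, your bookkeeping yields either $(N-1)$ tangential modes plus a nonexistent radial one, or $N+1$ if the $q=1$ sector is counted at full multiplicity. Neither proves $\ker\cB_z=\mathcal K_z$. The confusion comes from mixing two decompositions. In the tangential $\Sph^{N-2}$ decomposition in $\bar x$, used elsewhere in the paper, the dilation has degree zero. In the decomposition about the cap's pole, all $N$ Jacobi fields have degree one. The correct conclusion is: $q=0$ gives nothing, $q=1$ gives exactly $\mathcal K$, and $q\ge2$ gives nothing. The obstacle you single out does not exist.

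\textbf{The $q\ge2$ exclusion needs a sharper comparison.} The inequality $N<q(q+N-2)/\sin^2\theta$ only shows that the regular solution is positive and increasing. That rules out $f'(\theta_0)=\kappa f(\theta_0)$ when $\kappa\le0$, but not when $\kappa>0$. Compare instead with the $q=1$ solution $\sin\theta$. Since $q(q+N-2)>N-1$, one has
\[
\bigl(\sin^{N-1}\theta\,(f'\sin\theta-f\cos\theta)\bigr)'>0\qquad\text{wherever }f>0 .
\]
This keeps $f>0$ on $(0,\theta_0]$ and gives $f'(\theta_0)/f(\theta_0)>\cot\theta_0=\kappa$, so the Robin condition fails. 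Separately, your constant check in Step 1 evaluates to $N(N-1)$; you must still divide by $4(N-1)/(N-2)$, although the stated cap equation is correct.

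\textbf{Comparison with the paper.} With these repairs, your route is a valid and more self-contained alternative. The paper cites Almaraz--Wang's Lemma~2.1 for the classification of decaying kernel elements. It proves only that finite-energy kernel functions decay like $O(|x|^{2-N})$: transfer to the cap via Hardy's inequality, zero capacity of the omitted point, elliptic regularity, then transfer back. You need only the first half of that regularity argument, smoothness on the compact cap, and then carry out the spectral classification yourself.
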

\begin{proof}
We use the nondegeneracy lemma of Almaraz--Wang \cite[Lemma~2.1]{AlmarazWang}, with their parameter $T=-\kappa$.
Its bubble is $2^{-(N-2)/2}U_\kappa$, so its linearized differential equation and Robin condition agree exactly with \eqref{eq:common-Jacobi-form}.
The lemma characterizes every decaying solution as a linear combination of the tangential translations and the dilation.
A finite-energy kernel function has the required decay: conformal transfer to the round cap produces an $H^1$ weak solution, the missing stereographic boundary point has zero $H^1$ capacity, and local elliptic regularity
makes the transformed function smooth at that point.
The Hardy and capacity estimates are detailed in the proof of Proposition~\ref{prop:curvature-completion}.
Transferring back, we obtain $O(|x|^{2-N})$ decay.
This proves the full-kernel hypothesis independently of the projected inverse.
Invertibility and the stated uniform bounds now follow from Proposition~\ref{prop:projected-inverses}.
\end{proof}

\section{The metric expansion and scalar correction}
\label{sec:polynomial-reduction}
We express the reduction of \cite{Brendle,BrendleMarques,AlmarazBlowup} as a second-order expansion in the metric and conformal factor.
The scalar correction is the stationary point of this quadratic expression on the Jacobi complement.

For a smooth tangential symmetric trace-free tensor $h$, fix $R>0$, put $h_R:=\chi_Rh$ and denote its components by $h_{R,ij}$.
For $\mu\in\R$ near zero, consider the metric path
\begin{equation}\label{eq:metric-variation-path}
\exp(\mu h_R)
=\dd t^2+
\sum_{a,b=1}^{N-1}[\exp(\mu h_R)]_{ab}\dd x_a\dd x_b.
\end{equation}
At $(g_{\mathrm{euc}},U_z)$, define the pure metric and mixed Taylor coefficients by
\begin{align*}
\mathcal Q[h_R](z)&:=\left.\frac12\pa_\mu^2
E_{\exp(\mu h_R)}(U_z)\right|_{\mu=0},\\
\Lambda_z[h_R](\psi)&:=\left.\frac12\pa_\mu
DE_{\exp(\mu h_R)}(U_z)[\psi]\right|_{\mu=0},
\qquad \psi\in\Sigma.
\end{align*}
The pure conformal coefficient is the Jacobi form $\cB_z(v,\psi)=\tfrac12D^2E_{g_{\mathrm{euc}}}(U_z)[v,\psi]$ from \eqref{eq:common-Jacobi-form}.

Let $v_z[h_R]\in\Sigma$ be the unique corrector for $U_z$ determined by
\begin{equation}\label{eq:projected-linear-corrector}
\begin{cases}
\cB_z(v_z[h_R],\psi)=-\Lambda_z[h_R](\psi)
&\text{for every }\psi\in\Sigma_z,\\
\mathfrak l_{z,\alpha}(v_z[h_R])=0&1\le\alpha\le N.
\end{cases}
\end{equation}
Equivalently,
\begin{equation}\label{eq:linear-corrector-inverse}
v_z[h_R]:=-\cL_z^{-1}\bigl(\Lambda_z[h_R]|_{\Sigma_z}\bigr).
\end{equation}
For fixed $R$ and $z$, set $F_{h_R}(z):=\mathcal Q[h_R](z)-\Lambda_z[h_R](\cL_z^{-1}\Lambda_z[h_R])$.
Proposition~\ref{prop:scalar-elimination} proves the expansion along the corrected path as $\mu\to0$:
\begin{equation}\label{eq:corrected-energy-Taylor}
E_{\exp(\mu h_R)}(U_z+\mu v_z[h_R])
=E_{g_{\mathrm{euc}}}(U_z)+\mu^2F_{h_R}(z)+o(\mu^2).
\end{equation}

\subsection{The energy expansion}
\begin{proposition}
\label{prop:scalar-elimination}
For fixed $R$ and $z$, with $w\in\Sigma_z$, the joint expansion as $(\mu,w)\to(0,0)$ in $\R\times\Sigma$ is
\begin{equation}\label{eq:metric-scalar-quadratic-expansion}
\begin{aligned}
E_{\exp(\mu h_R)}(U_z+w)
={}&E_{g_{\mathrm{euc}}}(U_z)+\mu^2\mathcal Q[h_R](z)
+2\mu\Lambda_z[h_R](w)+\cB_z(w,w)\\
&+o\bigl((|\mu|+\|w\|_\Sigma)^2\bigr).
\end{aligned}
\end{equation}

If $h$ is a tangential symmetric trace-free polynomial with $\deg h\le d_*$ and $N>2d_*+2$, the following cutoff limits exist:
\begin{align*}
\mathcal Q[h](z)&:=\lim_{R\to\infty}\mathcal Q[h_R](z),\\
\Lambda_z[h]&:=\lim_{R\to\infty}\Lambda_z[h_R]\quad\text{in }\Sigma^*,\\
F_h(z)&:=\lim_{R\to\infty}F_{h_R}(z).
\end{align*}
Set $v_z[h]:=-\cL_z^{-1}(\Lambda_z[h]|_{\Sigma_z})$.
The resulting coefficient is
\begin{equation}\label{eq:common-Schur-formula}
F_h(z)=\mathcal Q[h](z)
-\Lambda_z[h]\bigl(\cL_z^{-1}\Lambda_z[h]\bigr).
\end{equation}
The same formula holds for $h_R$.
These coefficients are the stationary values of the corresponding quadratic expressions on $\Sigma_z$.
Moreover, $F_{h_R}\to F_h$ locally in $C^2$ of $z$ as $R\to\infty$.
The function $F_h$ depends continuously in $C^2$ on the coefficients of $h$ and on auxiliary parameters for which the bubbles and projected inverses have this regularity.
\end{proposition}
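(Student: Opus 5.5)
The plan has three parts. First comes the joint expansion \eqref{eq:metric-scalar-quadratic-expansion} for fixed $R$. Then we pass to the cutoff limits under the decay condition $N>2d_*+2$. Finally we identify the stationary values and prove the $C^2$ convergence.

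\emph{Step 1: the joint expansion.} For fixed $R$, the perturbation $h_R$ is smooth, tangential, trace free and compactly supported. Hence $\det\exp(\mu h_R)=1$, the volume and area measures are Euclidean, and $H_{\exp(\mu h_R)}=0$ to the extent that the tangential trace of $\partial_t$ vanishes. In general $H$ and $R_g$ are smooth in $\mu$ with compact support. The map $(\mu,u)\mapsto E_{\exp(\mu h_R)}(u)$ is therefore $C^2$ on $\R\times(U_z+\Sigma)$. The quadratic terms are $C^\infty$ with coefficients bounded on a compact set. The nonlinear terms $\int(u_+)^{2N/(N-2)}$ and the corresponding boundary term are $C^2$ on $\Sigma$ by \eqref{eq:interior-boundary-dual-bounds}, since their exponents exceed $2$ and the measures do not depend on $\mu$.

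We then Taylor expand to second order at $(0,U_z)$. The linear term $\partial_\mu E(U_z)$ vanishes by the stationarity in Proposition~\ref{prop:geometric-action}: the flux from the compact support is absent for fixed $R$, since the variation has compact support. The term $DE_{g_{\mathrm{euc}}}(U_z)[w]$ vanishes because $U_z$ solves the flat equation. The three second-order coefficients are exactly $\mathcal Q$, $2\Lambda_z$ and $\cB_z$ by their definitions. This gives \eqref{eq:metric-scalar-quadratic-expansion}.

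For the stationary value, we minimize or maximize $\mu^2\mathcal Q+2\mu\Lambda_z(w)+\cB_z(w,w)$ over $w\in\Sigma_z$. The critical point is $w=\mu v_z[h_R]$, which exists uniquely by Proposition~\ref{prop:projected-inverses} and the full-kernel lemmas. Substituting it gives the value $\mu^2(\mathcal Q-\Lambda_z(\cL_z^{-1}\Lambda_z))$. This proves \eqref{eq:corrected-energy-Taylor} and the formula for $h_R$.

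\emph{Step 2: the cutoff limits.} We write $\Lambda_z[h_R](\psi)=\int S_z[h_R]\psi\,dx$ plus possible boundary terms. From the first-variation formula, $S_z[h_R]$ involves $h_R\,\partial^2U_z$, $\partial h_R\,\partial U_z$ and $\partial^2h_R\,U_z$. For $|x|$ large, the worst of these is $O(|x|^{d_*-N})$ (the term $h\,\partial^2U_z$). This lies in $L^{2N/(N+2)}(\R^N_+)$ iff $(N-d_*)\frac{2N}{N+2}>N$, that is, iff $N>2d_*+2$. The boundary terms are similar with $L^{2(N-1)/N}$ and are weaker. By \eqref{eq:interior-boundary-dual-bounds}, $\Lambda_z[h_R]\to\Lambda_z[h]$ in $\Sigma^*$, with the cutoff derivatives supported in $R\le|x|\le2R$ contributing $o(1)$.

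The quantity $\mathcal Q[h_R]$ is an integral of terms quadratic in $h_R$ and its first two derivatives, times $U_z^2$, $|\nabla U_z|^2$ and similar factors. Its integrand is $O(|x|^{2d_*-2N})$, which is integrable since $2N-2d_*>N$. Dominated convergence then gives the limit. The flux remark after Proposition~\ref{prop:geometric-action} controls any artificial boundary-at-infinity terms, which are $O(R^{2-N})$. Continuity of $\cL_z^{-1}$ then gives $F_{h_R}(z)\to F_h(z)$ and \eqref{eq:common-Schur-formula}.

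\emph{Step 3: $C^2$ convergence in $z$ and continuity.} We differentiate up to twice in $z$. The $z$-derivatives of $U_z$ have the same or better decay uniformly on compact sets of $z$, so the dominated convergence bounds persist. For the inverse term, we transport $\cL_z^{-1}$ to the fixed space $\Sigma_{z_0}$ via $\Pi_z$ (Proposition~\ref{prop:projected-inverses}); there it depends smoothly on $z$ in operator norm, so derivatives of $\Lambda_z(\cL_z^{-1}\Lambda_z)$ converge. Continuity in the coefficients of $h$ is immediate because all objects are bilinear or quadratic in $h$. The same holds for auxiliary parameters such as $\kappa$, using the uniform inverse bounds.

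The main obstacle is Step 1's verification that the first-order metric term vanishes and that $E$ is genuinely $C^2$ jointly in $\Sigma$. The difficulty is mainly bookkeeping with the nonsmooth $u_+$ powers. Beyond that, Step 2's decay threshold $N>2d_*+2$ for the $h\,\partial^2U$ source term is the essential quantitative point.
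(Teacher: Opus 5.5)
Your proposal is correct and follows the paper's proof essentially step for step. The shared steps are the $C^2$ Taylor expansion at $(0,U_z)$, the stationary point $\mu v_z[h_R]$ from invertibility of $\cL_z$, the cutoff limits from the $C(1+|x|)^{d_*-N}$ source bound in $L^{2N/(N+2)}$, and the $C^2$ convergence obtained by transporting $\cL_z^{-1}$ to $\Sigma_{z_0}$ through $\Pi_z$. The one difference is how you kill the first-order metric term: you use stationarity of the compactified model in Proposition~\ref{prop:geometric-action}, whereas the paper uses the direct trace-free computation \eqref{eq:exact-linear-cancellation}; your route is equally valid because the variation $U_z^{4/(N-2)}h_R$ has compact support.

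There are two small slips. First, the integrands of $\mathcal Q[h]$ decay like $|x|^{2d_*+2-2N}$ (e.g.\ $|\pa h|^2U_z^2$), not $|x|^{2d_*-2N}$, so their integrability uses exactly $N>2d_*+2$. Second, $\cB_z$ is not definite on $\Sigma_z$, so $\mu v_z[h_R]$ is a stationary point rather than a minimizer or maximizer; that is all your argument actually uses.
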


We first compute the pure metric and mixed terms, including the estimates needed to pass to polynomial tensors.

\begin{proposition}\label{prop:metric-bubble-expansion}
For either model bubble $U_z$ and every cutoff tensor $h_R$ above,
\begin{equation}\label{eq:exact-linear-cancellation}
\left.\pa_\mu E_{\exp(\mu h_R)}(U_z)\right|_{\mu=0}=0.
\end{equation}
The pure metric quadratic coefficient is
\begin{equation}\label{eq:H-quadratic-cutoff-exact}
\begin{aligned}
\mathcal Q[h_R](z)={}&\frac12\int_{\R^N_+}
h_{R,ik}h_{R,jk}\pa_iU_z\pa_jU_z\dd x
-\frac{c_N}{4}\int_{\R^N_+}|\pa h_R|^2U_z^2\dd x\\
&+\frac{c_N}{2}\int_{\R^N_+}
(\pa_i h_{R,ik})(\pa_j h_{R,jk})U_z^2\dd x
+c_N\int_{\R^N_+}h_{R,ik}\pa_jh_{R,jk}
\pa_i(U_z^2)\dd x.
\end{aligned}
\end{equation}
For polynomial $h$, let $\mathcal Q[h](z)$ denote the same four integrals with $h_R$ replaced by $h$.
If $\deg h\le d_*$ and $N>2d_*+2$, they are finite and
\begin{equation}\label{eq:Fraw-general}
\mathcal Q[h_R](z)\to \mathcal Q[h](z)\qquad\text{as }R\to\infty.
\end{equation}
For fixed $R$,
\[
E_{\exp(\mu h_R)}(U_z)
=E_{g_{\mathrm{euc}}}(U_z)+\mu^2\mathcal Q[h_R](z)+O(|\mu|^3).
\]
The remainder and convergence are uniform on compact bubble-parameter sets.
When $\divg h=0$, the last two integrals vanish in $\mathcal Q[h]$, and their cutoff-generated terms in $\mathcal Q[h_R]$ tend to zero as $R\to\infty$.
\end{proposition}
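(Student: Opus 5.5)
We will expand the energy $E_{g}(U_z)$ for $g=\exp(\mu h_R)$ directly, using the conventions fixed in the introduction. Since $h_R$ is tangential and trace-free, we have $\det g=1$, $\dd v_g=\dd x$, $\dd\sigma_g=\dd\bar x$ and $H_g=0$. Consequently the volume and boundary nonlinear terms, and the boundary mean-curvature term in \eqref{eq:common-functional-expanded}, do not depend on $\mu$. Only the Dirichlet term $\int g^{ij}\pa_iU_z\pa_jU_z\dd x$ and the scalar-curvature term $c_N\int R_gU_z^2\dd x$ vary.

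\textbf{First order.} We write $g^{-1}=\exp(-\mu h_R)=\Id-\mu h_R+\tfrac{\mu^2}{2}h_R^2+O(\mu^3)$. The scalar curvature of $g=\Id+\mu h_R+\tfrac{\mu^2}2h_R^2+\cdots$ has the standard expansion
\[
R_g=\mu(\pa_i\pa_jh_{R,ij}-\Delta\tr h_R)+\mu^2R^{(2)}+O(\mu^3),
\]
and the trace term vanishes. Hence the linear coefficient is
\[
-\int h_{R,ij}\pa_iU_z\pa_jU_z\dd x+c_N\int \pa_i\pa_jh_{R,ij}\,U_z^2\dd x.
\]
Rather than evaluating it directly, we will invoke Proposition~\ref{prop:geometric-action}. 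By \eqref{eq:energy-geometric-action}, this coefficient equals $c_ND\mathcal A_{\sigma,\eta}(\hat g)[U_z^{4/(N-2)}h_R]$ with $\hat g=U_z^{4/(N-2)}g_{\mathrm{euc}}$. The model $\hat g$ is the round cap or the flat ball, and both are stationary. The perturbation $h_R$ is compactly supported, so there is no artificial flux at infinity, and \eqref{eq:exact-linear-cancellation} follows. As a cross-check, one can integrate by parts using the bubble equation; the tangential structure $h_{iN}=0$ kills the boundary flux terms.

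\textbf{Second order.} The quadratic coefficient is
\[
\tfrac12\int (h_R^2)_{ij}\pa_iU_z\pa_jU_z\dd x+c_N\int R^{(2)}U_z^2\dd x.
\]
We will insert the standard second variation of scalar curvature for $\det g=1$, tangential $h$. Up to divergence terms, $R^{(2)}$ is a combination of $-\tfrac14|\pa h|^2$, $\tfrac12|\divg h|^2$, $h_{ij}\pa_i\pa_kh_{jk}$-type terms and total derivatives such as $\pa_i\pa_j(h^2)_{ij}$. Multiplying by $U_z^2$, we integrate by parts to move all second derivatives off $h$. The terms $\pa_i\pa_j(h^2)_{ij}U_z^2$ and $\tfrac12(h^2)_{ij}\pa_iU_z\pa_jU_z$ are then recombined with the bubble equation. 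Here we expect a cancellation analogous to the first-order one: the second variation of $\mathcal A$ along the path $\exp(\mu h)$ differs from $D^2\mathcal A[h,h]$ by $D\mathcal A[h^2/2]$, which vanishes by stationarity. This should leave exactly the four integrals in \eqref{eq:H-quadratic-cutoff-exact}. The main obstacle is this bookkeeping step: tracking boundary terms at $t=0$, where $h_{iN}=0$ and $\pa_t h\ne0$ in the nonumbilic case, so that no boundary integrals survive. Our first attempt will be to do all integrations by parts only in tangential directions, or in the $i$ index of $\pa_ih_{ik}$, which is tangential. Then no $t$-flux arises, and the term $h_{R,ik}\pa_jh_{R,jk}\pa_i(U_z^2)$ appears from integrating $\pa_i\pa_j(\cdots)$ once. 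Smoothness in $\mu$ of the exponential path gives the $O(|\mu|^3)$ remainder, uniformly for $z$ in compact sets, since all integrands are compactly supported in $B_{2R}$.

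\textbf{Polynomial limits.} For $\deg h\le d_*$ we have $|h|\lesssim|x|^{d_*}$, $|\pa h|\lesssim|x|^{d_*-1}$, $U_z^2\lesssim|x|^{4-2N}$ and $|\nabla U_z|^2\lesssim|x|^{2-2N}$. Every integrand is then $O(|x|^{2d_*+2-2N})$, which is integrable at infinity because $N>2d_*+2$. The cutoff introduces terms carrying $\pa\chi_R$, of size $R^{-1}$ on $R\le|x|\le2R$. These contribute $O(R^{2d_*+2-N})\to0$, so dominated convergence gives \eqref{eq:Fraw-general}, uniformly on compact parameter sets. Finally, if $\divg h=0$, the last two integrals vanish identically for $h$. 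For $h_R$ we have $\divg h_R=(\pa_i\chi_R)h_{ik}$, which is supported in the annulus and of size $R^{-1}|h|$, so those terms are also $O(R^{2d_*+2-N})\to0$.
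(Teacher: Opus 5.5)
You handle the first-order term, the polynomial tails and the divergence-free case correctly.

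For \eqref{eq:exact-linear-cancellation} your route differs from the paper's. You use stationarity of the cap/ball model in Proposition~\ref{prop:geometric-action}. This is legitimate because $U_z^{4/(N-2)}\exp(\mu h_R)$ extends smoothly to the compact model and the flux of $U_z\pa_rU_z$ at infinity is $O(r^{2-N})$. The paper instead integrates by parts tangentially and observes that the integrand vanishes pointwise: for trace-free $h_R$ one has $h_{R,ab}\pa_{ab}U_z=\frac N{N-2}U_z^{-1}h_{R,ab}\pa_aU_z\pa_bU_z$, and $-1+2c_N+2Nc_N/(N-2)=0$.

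The genuine gap is the second-order term, which is the actual content of \eqref{eq:H-quadratic-cutoff-exact}.

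You never determine $R^{(2)}$; you describe it only ``up to divergence terms''. Those terms are not negligible here: after multiplication by $U_z^2$ they integrate to terms in $\nabla(U_z^2)$, and the fourth integral in \eqref{eq:H-quadratic-cutoff-exact} is exactly such a term.

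The missing ingredient is the exact second-order expansion for a determinant-one exponential metric, Brendle's formula, which the paper uses:
\[
R_{\exp(\mu h)}=\mu\pa_i\pa_jh_{ij}+\mu^2\Bigl[-\pa_i\bigl(h_{ik}\pa_jh_{jk}\bigr)+\tfrac12(\pa_ih_{ik})(\pa_jh_{jk})-\tfrac14|\pa h|^2\Bigr]+O(|\mu|^3).
\]
With this, the $\mu^2$-coefficient $\frac12\int(h_R^2)_{ij}\pa_iU_z\pa_jU_z+c_N\int R^{(2)}U_z^2$ becomes \eqref{eq:H-quadratic-cutoff-exact} after one integration by parts of $-\pa_i(h_{R,ik}\pa_jh_{R,jk})U_z^2$ in the index $i$. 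There is no flux at $t=0$ because $h_{R,Nk}=0$.

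The bubble equation plays no role at second order. Your planned ``recombination'' of $\frac12(h_R^2)_{ij}\pa_iU_z\pa_jU_z$ with $\pa_i\pa_j(h_R^2)_{ij}U_z^2$ is therefore misdirected: that term survives verbatim as the first integral of the target.

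If you instead split $R^{(2)}=\frac12\bigl(\pa_i\pa_j(h^2)_{ij}-\Delta|h|^2\bigr)+Q(h,h)$, you face more bookkeeping. Integrating $\Delta|h_R|^2\,U_z^2$ by parts produces normal fluxes at $t=0$ involving $\pa_t|h_R|^2$ and $\pa_t(U_z^2)$. These are nonzero in the nonumbilic and scalar-flat cases. They would have to be matched against the fluxes coming from $Q(h,h)$, and you have not done this.

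Your stationarity remark does not close the gap either. It only shows
\[
\mathcal Q[h_R](z)=\tfrac{c_N}2D^2\mathcal A_{\sigma,\eta}(U_z^{4/(N-2)}g_{\mathrm{euc}})[U_z^{4/(N-2)}h_R,U_z^{4/(N-2)}h_R],
\]
and evaluating that Hessian explicitly is the same computation you are missing.
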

\begin{proof}
For $|\mu|\|h_R\|_\infty\le1$, the determinant-one condition implies $\dd v_{\exp(\mu h_R)}=\dd x$, $\dd\sigma_{\exp(\mu h_R)}=\dd\bar x$, and $H_{\exp(\mu h_R)}=0$.
Thus, for fixed $u$,
\begin{align*}
\int_{\R^N_+}(u_+)^{2N/(N-2)}\dd v_{\exp(\mu h_R)}
&=\int_{\R^N_+}(u_+)^{2N/(N-2)}\dd x,\\
\int_{\pa\R^N_+}(u_+)^{2(N-1)/(N-2)}\dd\sigma_{\exp(\mu h_R)}
&=\int_{\pa\R^N_+}(u_+)^{2(N-1)/(N-2)}\dd\bar x.
\end{align*}
Only the gradient and scalar-curvature terms depend on the metric.

Expanding the Christoffel symbols for this determinant-one metric, we find
\begin{equation*}
R_{\exp(\mu h_R)}
=\mu\pa_i\pa_jh_{R,ij}+\mu^2\left[
-\pa_i\bigl(h_{R,ik}\pa_jh_{R,jk}\bigr)
+\frac12(\pa_i h_{R,ik})(\pa_j h_{R,jk})
-\frac14|\pa h_R|^2\right]+\mathscr E_3,
\end{equation*}
where the remainder satisfies the pointwise bound
\[
|\mathscr E_3|
\le C|\mu|^3
\bigl(|h_R|^2|\pa^2h_R|+|h_R||\pa h_R|^2\bigr).
\]
Together with the inverse-metric expansion
\[
[\exp(-\mu h_R)]_{ij}=\delta_{ij}-\mu h_{R,ij}
+\tfrac12\mu^2h_{R,ik}h_{R,jk}+O(|\mu|^3|h_R|^3),
\]
this is the metric expansion used in the reductions of \cite[Section~3]{AlmarazBlowup} and \cite{Brendle,BrendleMarques}.

For either bubble $U_z$ in \eqref{eq:common-TypeI-bubble}--\eqref{eq:common-TypeII-bubble}, with $z$ fixed, the first variation along \eqref{eq:metric-variation-path} cancels exactly.
Integrating twice by parts in the tangential variables, we obtain
\[
\left.\pa_\mu E_{\exp(\mu h_R)}(U_z)\right|_{\mu=0}
=-\int_{\R^N_+}h_{R,ab}\pa_aU_z\pa_bU_z\dd x
+c_N\int_{\R^N_+}h_{R,ab}\pa_{ab}(U_z^2)\dd x.
\]
Since $\tr h_R=0$,
\[
h_{R,ab}\pa_{ab}U_z=\frac{N}{N-2}U_z^{-1}h_{R,ab}\pa_aU_z\pa_bU_z,
\qquad -1+2c_N+\frac{2Nc_N}{N-2}=0.
\]
This proves \eqref{eq:exact-linear-cancellation}.

The second-order terms are those in \eqref{eq:H-quadratic-cutoff-exact}.
The integration by parts in the divergence term has no flux at infinity because $h_R$ is compactly supported, and no flux on $t=0$ because $h_{R,Nk}=0$.
For $h_R$, the polynomial tail and cutoff terms are bounded by
\[
C R^{2d_*+2-N}\to0\qquad\text{as }R\to\infty.
\]
This proves the limit \eqref{eq:Fraw-general}.
\end{proof}

\begin{lemma}
\label{lem:source-kernel-orthogonality}
Let $h$ be a tangential symmetric trace-free polynomial with $\deg h\le d_*$ and $N>2d_*+2$.
Set
\[
S_z[h]:=h_{ij}\pa_{ij}U_z+(\pa_jh_{ij})\pa_iU_z
 +c_N(\pa_i\pa_jh_{ij})U_z.
\]
The mixed coefficient extends to a functional $\Lambda_z[h]\in\Sigma^*$ given, for every $\psi\in\Sigma$, by
\begin{equation}\label{eq:polynomial-source-functional}
\Lambda_z[h](\psi)
=-\int_{\R^N_+}h_{ij}\pa_iU_z\pa_j\psi\dd x
+c_N\int_{\R^N_+}(\pa_i\pa_jh_{ij})U_z\psi\dd x
=\int_{\R^N_+}S_z[h]\psi\dd x.
\end{equation}
It annihilates the Jacobi kernel:
\begin{equation}\label{eq:polynomial-source-kernel-orthogonality}
\Lambda_z[h](\Phi_{z,\alpha})=0\qquad(1\le\alpha\le N).
\end{equation}
Consequently the projected corrector satisfies the equation on the full space:
\begin{equation}\label{eq:full-linear-corrector}
\cB_z(v_z[h],\psi)=-\Lambda_z[h](\psi)
\qquad\text{for every }\psi\in\Sigma.
\end{equation}
The cutoff sources converge in $\Sigma^*$ with the rate in \eqref{eq:cutoff-source-dual-convergence}, uniformly together with their first two parameter derivatives on compact sets.
The source formula, kernel orthogonality, and full-test equation also hold for smooth compactly supported $h_R$.
\end{lemma}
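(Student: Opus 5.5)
We first compute $\Lambda_z[h_R]$ for a smooth compactly supported tangential trace-free $h_R$, and then pass to the polynomial limit. Since $\det\exp(\mu h_R)=1$, the measures, the boundary mean curvature and the nonlinear terms of $E$ do not depend on $\mu$. Only the Dirichlet term and $c_N R_g u^2$ contribute to the mixed derivative. From the expansions in the proof of Proposition~\ref{prop:metric-bubble-expansion}, the inverse metric is $\delta_{ij}-\mu h_{R,ij}+O(\mu^2)$ and $R=\mu\pa_i\pa_jh_{R,ij}+O(\mu^2)$. Differentiating once in $\mu$ and once in $u$, the factor $\tfrac12$ cancels the $2$ coming from the quadratic dependence on $u$. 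This gives
\[
\Lambda_z[h_R](\psi)=-\int h_{R,ij}\pa_iU_z\pa_j\psi+c_N\int(\pa_i\pa_jh_{R,ij})U_z\psi .
\]
Next we integrate the first term by parts in $x_j$. There is no boundary flux: on $t=0$ the outward normal is $-e_N$, and $h_{R,iN}=0$, so the boundary term $h_{R,iN}\pa_iU_z\psi$ vanishes. The flux at infinity vanishes by compact support. The result is $\int(\pa_j(h_{R,ij}\pa_iU_z))\psi=\int(h_{ij}\pa_{ij}U_z+\pa_jh_{ij}\pa_iU_z)\psi$, which is the formula for $S_z$.

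For polynomial $h$, we bound $S_z[h]$ pointwise by $C(1+|x|)^{d_*-N}$. Indeed, each term has a polynomial factor of degree at most $d_*$ times $U_z$ or a derivative of $U_z$, and these decay like $|x|^{2-N}$, $|x|^{1-N}$, $|x|^{-N}$. For instance, $h_{ij}\pa_{ij}U_z$ is $O(|x|^{d_*-N})$, and the other terms are $O(|x|^{d_*-1-N+1})$ or $O(|x|^{d_*-2+2-N})$. We then need $\|S_z[h]\|_{L^{2N/(N+2)}}<\infty$, which requires $(N-d_*)\tfrac{2N}{N+2}>N$, that is $N>2d_*+2$. This is exactly the hypothesis. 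By \eqref{eq:interior-boundary-dual-bounds}, $\Lambda_z[h]\in\Sigma^*$.

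For convergence, $S_z[h_R]-S_z[h]$ is supported in $|x|\ge R$. There it is bounded by $C(1+|x|)^{d_*-N}$: derivatives falling on $\chi_R$ produce factors $R^{-1}\sim|x|^{-1}$, so the same decay estimate holds. The $L^{2N/(N+2)}$ norm of the tail is therefore $O(R^{d_*+1-N/2})\to0$, and this is the rate we record as \eqref{eq:cutoff-source-dual-convergence}. Parameter derivatives in $z$ only replace $U_z$ by $\pa_zU_z$ or $\pa_z^2U_z$, which have the same decay uniformly on compact sets. So the same bound holds for the first two derivatives.

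The main step is the kernel orthogonality \eqref{eq:polynomial-source-kernel-orthogonality}. We plan to deduce it from the exact first-order cancellation \eqref{eq:exact-linear-cancellation}, which holds for every $z$. Define $f(\mu,z):=E_{\exp(\mu h_R)}(U_z)$. Then $\pa_\mu f(0,z)=0$ for all $z$, so $\pa_{z_\alpha}\pa_\mu f(0,z)=0$. On the other hand, $\pa_{z_\alpha}\pa_\mu f(0,z)=\pa_\mu DE_{\exp(\mu h_R)}(U_z)[\Phi_{z,\alpha}]|_{\mu=0}=2\Lambda_z[h_R](\Phi_{z,\alpha})$. Interchanging these derivatives is legitimate for compactly supported $h_R$, since $f$ is smooth in $(\mu,z)$ there. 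This proves the identity for $h_R$. For polynomial $h$, we let $R\to\infty$, using the dual convergence together with $\Phi_{z,\alpha}\in\Sigma$. The delicate point is this limit: $\Phi_{z,\alpha}$ is fixed while the functionals converge in $\Sigma^*$, so it is enough.

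Finally, we derive the full-space equation \eqref{eq:full-linear-corrector}. Decompose $\psi=\Pi_z\psi+k$ with $k\in\mathcal K_z$, using \eqref{eq:energy-space-decomposition}. By definition of $v_z$, we have $\cB_z(v_z,\Pi_z\psi)=-\Lambda_z(\Pi_z\psi)$. Moreover, $\cB_z(v_z,k)=0$ because $\cB_z(\Phi_{z,\alpha},\cdot)=0$ and $\cB_z$ is symmetric, and $\Lambda_z(k)=0$ by the orthogonality just proved. Adding these gives the equation for every $\psi\in\Sigma$.
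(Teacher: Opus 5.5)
Your proposal is correct and follows essentially the same route as the paper's proof. You compute $\Lambda_z[h_R]$ by differentiating and integrating by parts, with no flux because $h_{R,iN}=0$, and you bound the cutoff tail by $C(1+|x|)^{d_*-N}$ in $L^{2N/(N+2)}$ to get the rate $R^{-(N-2d_*-2)/2}$; kernel orthogonality comes from differentiating the exact first-order cancellation in $z$ and passing to the cutoff limit, and the full-space equation from the decomposition $\Sigma=\Sigma_z\oplus\mathcal K_z$.
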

\begin{proof}
Differentiating at fixed $(h_R,z,\psi)$ and integrating by parts in the tangential variables, we obtain
\begin{equation}\label{eq:linear-metric-residual-components}
\begin{aligned}
\Lambda_z[h_R](\psi)
&=-\int_{\R^N_+} h_{R,ij}\pa_iU_z\pa_j\psi\dd x
+c_N\int_{\R^N_+}(\pa_i\pa_jh_{R,ij})U_z\psi\dd x\\
&=\int_{\R^N_+}\bigl[h_{R,ij}\pa_{ij}U_z
+(\pa_jh_{R,ij})\pa_iU_z
+c_N(\pa_i\pa_jh_{R,ij})U_z\bigr]\psi\dd x.
\end{aligned}
\end{equation}
There is no boundary term because $h_{R,iN}=0$, and no term at infinity because $h_R$ is compactly supported.

All three density terms have the bound $C(1+|x|)^{d_*-N}$, uniformly on compact parameter sets.
Thus $\Lambda_z[h]\in\Sigma^*$ by \eqref{eq:interior-boundary-dual-bounds}.
Fix $R$ and differentiate \eqref{eq:exact-linear-cancellation} on the full space, keeping $h_R$ and $R$ fixed:
\begin{equation}\label{eq:metric-variation-kernel-chain-rule}
0
=\pa_{z_\alpha}\left[
\left.\pa_\mu E_{\exp(\mu h_R)}(U_z)
\right|_{\mu=0}\right]
=\left.\pa_\mu
\bigl(DE_{\exp(\mu h_R)}(U_z)[\pa_{z_\alpha}U_z]\bigr)
\right|_{\mu=0}
=2\Lambda_z[h_R](\Phi_{z,\alpha}).
\end{equation}
Compact support and smooth parameter dependence justify interchange of the derivatives.

For the cutoff limit, we use symmetry to write the full product identities
\begin{align*}
\pa_jh_{R,ij}
&=\chi_R\pa_jh_{ij}+(\pa_j\chi_R)h_{ij},\\
\pa_i\pa_jh_{R,ij}
&=\chi_R\pa_i\pa_jh_{ij}
+2(\pa_i\chi_R)(\pa_jh_{ij})
+(\pa_i\pa_j\chi_R)h_{ij}.
\end{align*}
On $R<|x|<2R$, derivatives of $\chi_R$ of orders one and two are bounded by $CR^{-1}$ and $CR^{-2}$.
By subtracting \eqref{eq:polynomial-source-functional} from \eqref{eq:linear-metric-residual-components}, we obtain, for every $\psi\in\Sigma$,
\begin{equation}\label{eq:cutoff-source-difference}
\begin{aligned}
&(\Lambda_z[h_R]-\Lambda_z[h])(\psi)\\
&\quad=\int_{\R^N_+}\Bigl[
(\chi_R-1)S_z[h]
+(\pa_j\chi_R)h_{ij}\pa_iU_z
+c_N\{2(\pa_i\chi_R)(\pa_jh_{ij})
+(\pa_i\pa_j\chi_R)h_{ij}\}U_z
\Bigr]\psi\dd x.
\end{aligned}
\end{equation}
Each term in brackets is bounded in absolute value by $C(1+|x|)^{d_*-N}\mathbf1_{\{|x|>R\}}$.
For $p:=2N/(N+2)$, radial integration shows that
\[
\bigl\|(1+|x|)^{d_*-N}\mathbf1_{\{|x|>R\}}\bigr\|_{L^p}
\le CR^{d_*-N+N/p}=CR^{-(N-2d_*-2)/2}\to0\qquad\text{as }R\to\infty.
\]
Applying \eqref{eq:interior-boundary-dual-bounds} to \eqref{eq:cutoff-source-difference} proves
\begin{equation}\label{eq:cutoff-source-dual-convergence}
\|\Lambda_z[h_R]-\Lambda_z[h]\|_{\Sigma^*}
\le CR^{-(N-2d_*-2)/2}\to0\qquad\text{as }R\to\infty,
\end{equation}
uniformly for $z$ in a compact set.
The same bound holds after one or two derivatives with respect to $z$.
Indeed, for $|\alpha|\le2$ and $0\le j\le2$, direct differentiation of either bubble shows that
\[
|\pa_z^\alpha\pa_x^jU_z(x)|\le C(1+|x|)^{2-N-j}.
\]
Together with $|\pa^jh|\le C(1+|x|)^{d_*-j}$ this bounds every differentiated summand of \eqref{eq:cutoff-source-difference} by the same integrable tail.
Since $\|\Phi_{z,\alpha}\|_\Sigma$ is uniformly bounded on compact parameter sets, we deduce $\Lambda_z[h](\Phi_{z,\alpha})=0$ from \eqref{eq:metric-variation-kernel-chain-rule} and
\eqref{eq:cutoff-source-dual-convergence}, as asserted in \eqref{eq:polynomial-source-kernel-orthogonality}.

We next prove \eqref{eq:full-linear-corrector} from the projected linear equation \eqref{eq:projected-linear-corrector}.
For an arbitrary $\psi\in\Sigma$, we use the projection \eqref{eq:explicit-Sigma-projection} to write
\begin{equation*}
\psi=\Pi_z\psi+
\sum_{\alpha,\beta=1}^N \Phi_{z,\alpha}(\mathbf G_z^{-1})_{\alpha\beta}\mathfrak l_{z,\beta}(\psi).
\end{equation*}
The corrector equation applies to $\Pi_z\psi$.
Both $\cB_z(v_z[h],\Phi_{z,\alpha})$ and $\Lambda_z[h](\Phi_{z,\alpha})$ vanish, by the kernel identity and the orthogonality just proved.
Hence \eqref{eq:full-linear-corrector} holds.
\end{proof}

\begin{proof}[Proof of Proposition~\ref{prop:scalar-elimination}]
For fixed $R$, the map $(\mu,w)\mapsto E_{\exp(\mu h_R)}(U_z+w)$ is $C^2$ on a neighborhood of $(0,0)$ in $\R\times\Sigma$: the metric coefficients are smooth in $\mu$, and both nonlinear energy exponents exceed two.
Its linear terms vanish because $U_z$ is critical and \eqref{eq:exact-linear-cancellation} holds.
The definitions of $\mathcal Q[h_R]$, $\Lambda_z[h_R]$ and $\cB_z$ identify its second derivatives, proving \eqref{eq:metric-scalar-quadratic-expansion}.

Substitute $w=\mu v$.
For fixed $h_R$, the quadratic coefficient is $\mathcal Q[h_R](z)+2\Lambda_z[h_R](v)+\cB_z(v,v)$.
Its derivative in a direction $\psi\in\Sigma_z$ is $2\Lambda_z[h_R](\psi)+2\cB_z(v,\psi)$, so its unique stationary point is \eqref{eq:projected-linear-corrector} by invertibility of $\cL_z$.
This proves the Taylor identity \eqref{eq:corrected-energy-Taylor}.

For polynomial $h$, the limits of $\mathcal Q[h_R]$ and $\Lambda_z[h_R]$ as $R\to\infty$ follow from Proposition~\ref{prop:metric-bubble-expansion} and Lemma~\ref{lem:source-kernel-orthogonality}.
By bounded inversion, $v_z[h_R]\to v_z[h]$ in $\Sigma$ as $R\to\infty$.
The limiting quadratic coefficient, for $v\in\Sigma_z$, is
\begin{equation}
\mathcal Q[h](z)+2\Lambda_z[h](v)+\cB_z(v,v).
\end{equation}
Its stationary point is $v_z[h]$.
Testing the corrector equation yields
\begin{align*}
\cB_z(v_z[h],v_z[h])&=-\Lambda_z[h](v_z[h]),\\
2\Lambda_z[h](v_z[h])+\cB_z(v_z[h],v_z[h])
&=-\Lambda_z[h](\cL_z^{-1}\Lambda_z[h]).
\end{align*}
Thus the cutoff limit defining $F_h$ exists and equals \eqref{eq:common-Schur-formula}; the same calculation applies directly to $h_R$.
This is a stationary value and requires invertibility, not positivity, of $\cL_z$.

It remains to prove $C^2$ convergence and parameter dependence.
The four integrands defining $\mathcal Q[h]$ and their first two parameter derivatives have integrable majorant $C(1+|x|)^{2d_*+2-2N}$.
Their cutoff tails, including derivatives of the cutoff, are $O(R^{2d_*+2-N})$.
By Lemma~\ref{lem:source-kernel-orthogonality}, the source converges in $\Sigma^*$ through two parameter derivatives as $R\to\infty$.
To differentiate $\cL_z^{-1}$ on its varying domain, fix $z_0$ and use
\[
T_z:=\Pi_z|_{\Sigma_{z_0}}:\Sigma_{z_0}\to\Sigma_z.
\]
For $z$ near $z_0$, $T_z$ is an isomorphism by the projection argument after \eqref{eq:energy-space-decomposition}.
Define $A_z:\Sigma_{z_0}\to\Sigma_{z_0}^*$ by
\[
(A_zv)(\psi):=\cB_z(T_zv,T_z\psi),
\qquad v,\psi\in\Sigma_{z_0}.
\]
Thus $A_z$ represents the prescribed family $\cL_z$ on $\Sigma_{z_0}$.
The smooth dependence of $\Pi_z$ and the weights in \eqref{eq:common-Jacobi-form} makes $A_z$ a $C^2$ family in operator norm.
Since $A_{z_0}$ is invertible, a Neumann series bounds $A_z^{-1}$ uniformly near $z_0$, and
\begin{align*}
A_z^{-1}-A_{z_0}^{-1}
&=A_z^{-1}(A_{z_0}-A_z)A_{z_0}^{-1},\\
\pa_{z_\alpha}A_z^{-1}
&=-A_z^{-1}(\pa_{z_\alpha}A_z)A_z^{-1}.
\end{align*}
Differentiating the second identity proves that $A_z^{-1}$ depends $C^2$-smoothly on $z$.
In these coordinates,
\[
v_z[h]=-T_zA_z^{-1}
\bigl(\psi\mapsto\Lambda_z[h](T_z\psi)\bigr).
\]
The same formula with $h_R$ in place of $h$ and the source estimates imply $v_z[h_R]\to v_z[h]$ locally in $C^2$ of $z$ as $R\to\infty$.
Pairing with the source in \eqref{eq:common-Schur-formula}, we conclude that $F_{h_R}\to F_h$ locally in $C^2$ as $R\to\infty$.
These estimates also prove continuous $C^2$ dependence on the coefficients of $h$ and on the auxiliary parameters in the statement.
\end{proof}

\subsection{Geometric interpretation}
The corrector equation against all test functions has the following geometric interpretation.
For a smooth compactly supported tangential trace-free tensor $h_R$, write $g_z:=U_z^{4/(N-2)}g_{\mathrm{euc}}$ and $v:=v_z[h_R]$.
The half-space metric $g_z$ extends, under the bubble compactification, to the compact cap or ball just described.

\begin{proposition}\label{prop:curvature-completion}
The tensor
\begin{equation}\label{eq:completed-geometric-variation}
h_{R,z}:=U_z^{4/(N-2)}h_R+\frac4{N-2}\frac v{U_z}g_z
\end{equation}
extends smoothly to the compact model and satisfies
\begin{equation}\label{eq:completed-curvature-constraints}
DR_{g_z}(h_{R,z})=0,\qquad DH_{g_z}(h_{R,z})=0.
\end{equation}
It is the first derivative of the positive conformal metric path
\[
\hat{g}_{\mu,R}:=(U_z+\mu v)^{4/(N-2)}\exp(\mu h_R)
\]
for sufficiently small $|\mu|$, with $R$ fixed.
\end{proposition}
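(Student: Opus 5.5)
The plan is to identify $h_{R,z}$ as the velocity of the path $\hat g_{\mu,R}$, and then to read \eqref{eq:completed-curvature-constraints} off the full-test corrector equation \eqref{eq:full-linear-corrector}. The regularity of $v/U_z$ at the point at infinity is the one genuinely analytic input, and it is also what guarantees positivity along the path.

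\emph{Step 1: the path and its velocity.} Differentiate $(U_z+\mu v)^{4/(N-2)}\exp(\mu h_R)$ at $\mu=0$:
\[
\frac{4}{N-2}U_z^{4/(N-2)-1}v\,g_{\mathrm{euc}}+U_z^{4/(N-2)}h_R
=\frac4{N-2}\frac v{U_z}\,g_z+U_z^{4/(N-2)}h_R=h_{R,z}.
\]
Positivity of $U_z+\mu v$ for small $|\mu|$ at fixed $R$ follows once $\sup|v/U_z|<\infty$. That bound is part of the extension statement proved in Step 3. Since $h_R$ has compact support, $\exp(\mu h_R)$ is a metric for $|\mu|\|h_R\|_\infty\le1$.

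\emph{Step 2: the curvature constraints.} Set $g_\mu:=\exp(\mu h_R)$, $u_\mu:=U_z+\mu v$ and $\hat g_\mu:=u_\mu^{4/(N-2)}g_\mu$. Because $g_\mu$ has determinant one and $H_{g_\mu}=0$, the conformal laws recalled in the proof of Proposition~\ref{prop:geometric-action} give, for $\psi\in C^\infty_c(\overline{\R^N_+})$,
\[
DE_{g_\mu}(u_\mu)[\psi]
=2c_N\int_{\R^N_+}\bigl(R_{\hat g_\mu}-N(N-1)\sigma\bigr)u_\mu^{\frac{N+2}{N-2}}\psi\dd x
+(N-2)\int_{\pa\R^N_+}\bigl(H_{\hat g_\mu}-\eta\bigr)u_\mu^{\frac N{N-2}}\psi\dd\bar x .
\]
At $\mu=0$ both brackets vanish, since $g_z$ is the model with $R=N(N-1)\sigma$ and $H=\eta$. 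Hence the $\mu$-derivative at $0$ of the right side equals the same expression with the brackets replaced by $DR_{g_z}(h_{R,z})$ and $DH_{g_z}(h_{R,z})$. By the definitions of $\Lambda_z$ and $\cB_z$, the $\mu$-derivative of the left side equals $2\Lambda_z[h_R](\psi)+2\cB_z(v,\psi)$. This vanishes for every $\psi\in\Sigma$ by \eqref{eq:full-linear-corrector} for compactly supported $h_R$ (Lemma~\ref{lem:source-kernel-orthogonality}). Choosing $\psi$ supported in the interior gives $DR_{g_z}(h_{R,z})=0$, and then arbitrary boundary traces give $DH_{g_z}(h_{R,z})=0$. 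Each identity holds first weakly and then pointwise, using the smoothness of $v$: $v$ is a weak solution of a linear Neumann/Robin problem with a smooth compactly supported source, so local elliptic regularity up to $t=0$ makes $v$ smooth on $\overline{\R^N_+}$.

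\emph{Step 3: extension to the compact model (the main obstacle).} On $\{|x|>2R\}$ we have $h_R=0$, so $v$ solves the homogeneous Jacobi equation there. Pull back by the compactifying conformal map, which is the stereographic chart onto the cap $\mathcal C_a$ in Type I and the inversion onto the ball of radius $1/2$ in Type II. By conformal covariance of $L^{\mathrm{conf}}$ and $B^{\mathrm{conf}}$, the function $w:=v/U_z$ then solves the linearized model equation $-\frac{4(N-1)}{N-2}\Delta_{g_z}w-\frac4{N-2}N(N-1)\sigma w=0$ with the corresponding linear boundary condition. Its coefficients are smooth near the pole $P$ of the compact model.

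Since $v\in\Sigma$, the identity $\int|\nabla v|^2=c\int\bigl(|\nabla_{g_z}w|^2+\dots\bigr)\dd v_{g_z}$, combined with a Hardy inequality to control the flux and cross terms, shows $w\in H^1$ on a punctured neighbourhood of $P$. A point has zero $H^1$-capacity in dimension $N\ge3$, including at a boundary point of the half-ball model. So the equation holds weakly across $P$, and elliptic regularity (with the Robin or Neumann condition at a boundary pole) makes $w$ smooth at $P$. Therefore $w=v/U_z$ is smooth and bounded on the compact model, which also closes the positivity claim in Step 1.

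The tensor $U_z^{4/(N-2)}h_R$ is compactly supported away from $P$, so $h_{R,z}=U_z^{4/(N-2)}h_R+\frac4{N-2}w\,g_z$ extends smoothly. I expect the delicate point to be making the capacity argument rigorous at a boundary pole, where the Robin condition must survive removal of the point. I would handle this with a cutoff $\phi_\delta$ of vanishing $H^1$ norm around $P$, tested against $w$ in the weak formulation.
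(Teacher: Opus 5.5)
Your proposal is correct and essentially reproduces the paper's argument. The paper also obtains \eqref{eq:completed-curvature-constraints} by differentiating the conformal laws for $R_{\hat g}$ and $H_{\hat g}$ against the full-test corrector equation: it first rewrites that equation as the strong system $J_zv+S_z[h_R]=0$, $B_zv=0$, whereas you work in weak form through $DE_{g_\mu}(u_\mu)[\psi]$. It then proves smoothness and boundedness of $v/U_z$ exactly as you plan to, via conformal transport, Hardy's inequality, zero $H^1$ capacity of the omitted boundary point, and interior and Robin boundary regularity, with boundedness giving positivity of the path.
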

\begin{proof}
Kernel orthogonality and the full-test argument in Lemma~\ref{lem:source-kernel-orthogonality} apply directly to compact support.
There are consequently no undetermined kernel multipliers in the interior or boundary equation for $v$.
Set
\begin{align*}
\mathscr P_g(u)&:=-\Delta_gu+c_NR_gu
-\frac{N(N-2)\sigma}{4}u^{(N+2)/(N-2)},\\
\mathscr T_g(u)&:=\pa_{\nu_g}u+\frac{N-2}{2}H_gu
-\frac{N-2}{2}\eta u^{N/(N-2)}.
\end{align*}
At the Euclidean bubble their scalar linearizations are
\[
J_zv:=-\Delta v-\frac{N(N+2)\sigma}{4}U_z^{4/(N-2)}v,
\qquad
B_zv:=\pa_\nu v-\frac{N\eta}{2}U_z^{2/(N-2)}v.
\]
Their metric derivatives are respectively the full source $S_z[h_R]$ in \eqref{eq:linear-metric-residual-components} and zero: the normal and the averaged mean curvature do not change along the raw metric path.
Hence the full-test equation is exactly
\[
J_zv+S_z[h_R]=0\quad\text{in }\R^N_+,\qquad
B_zv=0\quad\text{on }\pa\R^N_+.
\]
For $\hat{g}=u^{4/(N-2)}g$, the conformal laws read
\begin{align*}
\mathscr P_g(u)&=c_Nu^{(N+2)/(N-2)}
\bigl(R_{\hat{g}}-N(N-1)\sigma\bigr),\\
\mathscr T_g(u)&=\frac{N-2}{2}u^{N/(N-2)}(H_{\hat{g}}-\eta).
\end{align*}
Differentiating at $(g,u)=(g_{\mathrm{euc}},U_z)$, we obtain
\begin{align*}
DR_{g_z}(h_{R,z})
&=c_N^{-1}U_z^{-(N+2)/(N-2)}(J_zv+S_z[h_R])=0,\\
DH_{g_z}(h_{R,z})
&=\frac2{N-2}U_z^{-N/(N-2)}B_zv=0.
\end{align*}

To justify smoothness, put $\varphi:=v/U_z$.
By conformal transport,
\[
\int\varphi^2\dd v_{g_z}=\int U_z^{4/(N-2)}v^2\dd x,\qquad
\int|\nabla\varphi|_{g_z}^2\dd v_{g_z}
=\int|\nabla v-v\nabla\log U_z|^2\dd x.
\]
Since $U_z^{4/(N-2)}\le C(1+|x|)^{-4}$ and $|\nabla\log U_z|\le C(1+|x|)^{-1}$, Hardy's inequality bounds both integrals by $C\|v\|_\Sigma^2$.
Thus $\varphi\in H^1$ of the compact model.
The transformed forcing is smooth and vanishes near the omitted boundary point, since $h_R$ does.
The full weak equation transports to a smooth elliptic equation with its Robin boundary condition.
A point has zero $H^1$ capacity for $N>2$: cutoffs vanishing in a ball of radius $r$ can have squared gradient integral $O(r^{N-2})$.
Using such cutoffs in the weak equation and then density extends it across the omitted point.
Interior and Robin boundary regularity imply $v/U_z\in C^\infty$ on the entire model.
The metric term $U_z^{4/(N-2)}h_R$ is also smooth, and vanishes near that point.
Finally $v/U_z$ is bounded, so $1+\mu v/U_z>0$ for sufficiently small $|\mu|$.
\end{proof}

\begin{proposition}
For $h_R$ and its completed variation $h_{R,z}$ in Proposition~\ref{prop:curvature-completion}, let $F_{h_R}$ be the quadratic coefficient in \eqref{eq:corrected-energy-Taylor}.
Then
\begin{equation}\label{eq:geometric-reduced-hessian}
F_{h_R}(z)=\frac{c_N}{2}D^2\mathcal A_{\sigma,\eta}(g_z)
[h_{R,z},h_{R,z}].
\end{equation}
The same identity holds after polarization.
For a polynomial $h$ of degree at most $d_*$ with $N>2d_*+2$, its meaning is the specified cutoff limit
\begin{equation}\label{eq:geometric-hessian-cutoff-limit}
F_h(z)=\lim_{R\to\infty}\frac{c_N}{2}
D^2\mathcal A_{\sigma,\eta}(g_z)
[h_{R,z},h_{R,z}],
\end{equation}
locally in $C^2$ of the bubble parameters, with $h_{R,z}$ as defined in \eqref{eq:completed-geometric-variation}.
\end{proposition}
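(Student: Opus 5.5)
The plan is to pass everything through the conformal identity \eqref{eq:energy-geometric-action}, $E_g(u)=c_N\mathcal A_{\sigma,\eta}(u^{4/(N-2)}g)$. I apply it along the positive path $\hat g_{\mu,R}=(U_z+\mu v)^{4/(N-2)}\exp(\mu h_R)$ of Proposition~\ref{prop:curvature-completion}, with $v=v_z[h_R]$, which gives
\[
E_{\exp(\mu h_R)}(U_z+\mu v)=c_N\mathcal A_{\sigma,\eta}(\hat g_{\mu,R}).
\]
The half-space has a point missing from the compact model, so I will work on the compactified cap or ball. There the forcing vanishes near the omitted point, and by Proposition~\ref{prop:curvature-completion} the function $v/U_z$ is smooth. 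Hence $\hat g_{\mu,R}$ is a smooth family of metrics on a compact manifold with boundary, and the identity holds with no flux at infinity. By \eqref{eq:corrected-energy-Taylor}, the left side equals $E(U_z)+\mu^2F_{h_R}(z)+o(\mu^2)$.

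Next I expand the right side in $\mu$. With $\gamma(\mu):=\hat g_{\mu,R}$, we have $\gamma(0)=g_z$ and $\gamma'(0)=h_{R,z}$. The chain rule gives
\[
\tfrac{d^2}{d\mu^2}\mathcal A(\gamma)\big|_0=D^2\mathcal A(g_z)[h_{R,z},h_{R,z}]+D\mathcal A(g_z)[\gamma''(0)].
\]
The last term vanishes by Proposition~\ref{prop:geometric-action}, because $g_z$ is the stationary Einstein model with $\operatorname{Ric}=(N-1)\sigma g_z$ and $L=\eta g_z^\top$; this covers both the round cap and the flat ball of radius $1/2$. Comparing the $\mu^2$ coefficients then yields \eqref{eq:geometric-reduced-hessian}. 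The linear terms agree automatically: both vanish, one by \eqref{eq:exact-linear-cancellation} together with criticality of $U_z$, the other by stationarity. For polarization, I apply the same argument to $h_R+h'_R$; $F_{h_R}$ and the right side are both quadratic forms in $h_R$, since $v$ and $h_{R,z}$ depend linearly on $h_R$. So the polarization identity follows.

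For the polynomial statement \eqref{eq:geometric-hessian-cutoff-limit}, the right side at each fixed $R$ equals $F_{h_R}(z)$, and Proposition~\ref{prop:scalar-elimination} gives $F_{h_R}\to F_h$ locally in $C^2$. The limit therefore exists and equals $F_h(z)$.

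The main obstacle is justifying the second-order Taylor expansion of $\mathcal A$ along $\gamma$ on the compact model. This needs $\gamma$ to be $C^2$ in $\mu$ in a topology where $\mathcal A$ is $C^2$, say $C^2$ metrics up to the boundary. Smoothness of $v/U_z$ and of $U_z^{4/(N-2)}h_R$ at the omitted point from Proposition~\ref{prop:curvature-completion} supplies this. The conformal factor $(1+\mu v/U_z)^{4/(N-2)}$ stays positive and smooth in $\mu$, and $\exp(\mu h_R)$ is a compactly supported smooth perturbation. I would also check that \eqref{eq:energy-geometric-action} holds exactly on the half-space, without the artificial $O(R^{2-N})$ flux mentioned earlier. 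Since $h_R$ has compact support and the ratio $(U_z+\mu v)/U_z$ extends smoothly, the flux terms cancel as in the proof of Proposition~\ref{prop:geometric-action} on the compact model. This cancellation is what makes the identity exact at fixed $R$.
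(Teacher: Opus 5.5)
Your proposal is correct and follows the paper's proof essentially step by step. You apply \eqref{eq:energy-geometric-action} along $\hat g_{\mu,R}$, use the chain rule with model stationarity to remove $D\mathcal A_{\sigma,\eta}(g_z)[\hat g''_{0,R}]$, read off the $\mu^2$ coefficient, get polarization from linearity in $h_R$, and pass to the cutoff limit via Proposition~\ref{prop:scalar-elimination}; your remarks on the compact model (smoothness of $v/U_z$ from Proposition~\ref{prop:curvature-completion} and the vanishing flux at the omitted point) only make explicit the regularity the paper uses implicitly.
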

\begin{proof}
Substitute $w=\mu v_z[h_R]$ in \eqref{eq:metric-scalar-quadratic-expansion}.
Its quadratic coefficient is $F_{h_R}$.
By \eqref{eq:energy-geometric-action} the same expansion equals $c_N\mathcal A_{\sigma,\eta}(\hat{g}_{\mu,R})$.
By the chain rule, its second derivative is
\[
c_ND^2\mathcal A(g_z)[h_{R,z},h_{R,z}]
+c_ND\mathcal A(g_z)[\hat{g}''_{0,R}].
\]
The last term vanishes by model stationarity, and the Taylor coefficient is one half of this second derivative.
This proves \eqref{eq:geometric-reduced-hessian}; the corrector is linear in $h_R$, so polarization follows as well.

By Proposition~\ref{prop:scalar-elimination}, $F_{h_R}\to F_h$ locally in $C^2$ as $R\to\infty$.
Passing to the limit in \eqref{eq:geometric-reduced-hessian} proves \eqref{eq:geometric-hessian-cutoff-limit}.
\end{proof}

\begin{corollary}
\label{cor:boundary-gauge-directions}
For every smooth symmetric tensor $h$ on the compact model and every smooth vector field $X$ tangent to the boundary, one has
\[
D^2\mathcal A_{\sigma,\eta}(g_z)[\mathcal L_Xg_z,h]=0.
\]

The reduced value is also independent of the scalar complement when the two correctors solve the same full linear equation.
\end{corollary}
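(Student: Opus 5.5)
The plan is to use diffeomorphism invariance of the action $\mathcal A_{\sigma,\eta}$ together with the stationarity of the model $g_z$ from Proposition~\ref{prop:geometric-action}.

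\emph{Step 1.} Let $\phi_s$ be the flow of $X$. Because $X$ is tangent to $\partial M$, each $\phi_s$ is a diffeomorphism of the compact model that preserves the boundary. Every term of $\mathcal A_{\sigma,\eta}$ is a natural Riemannian quantity: scalar curvature, mean curvature with respect to the outward normal, volume and area. Hence
\[
\mathcal A_{\sigma,\eta}(\phi_s^*g)=\mathcal A_{\sigma,\eta}(g)
\]
for every metric $g$ near $g_z$ and every small $s$.

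\emph{Step 2.} Differentiate this identity in $s$ at $s=0$ to get
\[
D\mathcal A_{\sigma,\eta}(g)[\mathcal L_Xg]=0
\]
for all $g$ near $g_z$. Now put $g=g_z+\mu h$ and differentiate in $\mu$ at $\mu=0$. This gives
\[
D^2\mathcal A_{\sigma,\eta}(g_z)[\mathcal L_Xg_z,h]+D\mathcal A_{\sigma,\eta}(g_z)[\mathcal L_Xh]=0.
\]
The second term vanishes, because $g_z$ is Einstein with umbilic boundary, $L_{g_z}=\eta g_z^\top$, and is therefore stationary under all smooth variations by Proposition~\ref{prop:geometric-action}. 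This proves the displayed identity. The step needing care is smoothness: $\mathcal A$ must be $C^2$ along the two-parameter family $\phi_s^*(g_z+\mu h)$, and the derivatives in $s$ and $\mu$ must commute. On the compact model with smooth $h$ and $X$ this is standard, since everything is a finite-dimensional smooth family of smooth metrics.

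\emph{Step 3 (the final sentence).} Suppose two correctors $v_1,v_2$, normalized by different scalar complements, both solve the full equation \eqref{eq:full-linear-corrector}. Their difference $w$ satisfies $\cB_z(w,\psi)=0$ for all $\psi\in\Sigma$, so $w\in\mathcal K_z$ by the kernel characterization. Since $\Phi_{z,\alpha}=\partial_{z_\alpha}U_z$ generates the conformal changes induced by tangential translations and dilations, the term $\frac4{N-2}(w/U_z)g_z$ equals $\mathcal L_Yg_z$ for some conformal Killing field $Y$ of the model that is tangent to the boundary. Hence the two completed variations in \eqref{eq:completed-geometric-variation} differ by a boundary-tangent Lie derivative.

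By \eqref{eq:geometric-reduced-hessian} and the first part, applied twice (also to $h=\mathcal L_Yg_z$), we get
\[
D^2\mathcal A[h_1,h_1]=D^2\mathcal A[h_2,h_2].
\]
Alternatively, this follows directly from \eqref{eq:metric-scalar-quadratic-expansion}. The quadratic form $2\Lambda_z(v)+\cB_z(v,v)$ changes, under $v\mapsto v+w$ with $w\in\mathcal K_z$, by $2\Lambda_z(w)+2\cB_z(v,w)+\cB_z(w,w)$. Each of these terms is zero by \eqref{eq:polynomial-source-kernel-orthogonality} and the kernel identity. For polynomial $h$, the same conclusion passes to the cutoff limit through Proposition~\ref{prop:scalar-elimination}.

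The main obstacle is the identification in Step 3 of $\mathcal K_z$ with boundary-tangent conformal Killing deformations. I would sidestep it with the direct algebraic argument, which uses only the two kernel identities.
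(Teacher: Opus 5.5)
Your proof is correct and matches the paper's. The first part is the same argument: differentiate the invariance identity $D\mathcal A(\hat g)[\mathcal L_X\hat g]=0$ in the direction $h$, and use stationarity of $g_z$ to remove $D\mathcal A(g_z)[\mathcal L_Xh]$. Your direct treatment of the second part is also exactly the paper's reasoning: the two correctors differ by a Jacobi field, which pairs to zero with $\Lambda_z[h]$ and lies in the kernel of $\cB_z$. The conformal Killing identification you sketch is valid but not needed.
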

\begin{proof}
By boundary-preserving diffeomorphism invariance, $D\mathcal A(\hat{g})[\mathcal L_X\hat{g}]=0$.
Differentiate in the direction $h$; the additional term $D\mathcal A(g_z)[\mathcal L_Xh]$ vanishes by stationarity.
Thus boundary-preserving Lie derivatives are null directions of the Hessian.
Changing the scalar complement changes a solution of the same full equation by a Jacobi field.
That field pairs to zero with $\Lambda_z[h]$, so the reduced value is unchanged.
\end{proof}

\begin{remark}
Corollary~\ref{cor:boundary-gauge-directions} applies to smooth boundary-preserving diffeomorphisms; an arbitrary change of polynomial representative must also respect the boundary conditions and the cutoff limit \eqref{eq:geometric-hessian-cutoff-limit}.
Complement independence concerns the reduced value, not the coercivity constant.
The curvature constraints \eqref{eq:completed-curvature-constraints} impose neither zero trace nor zero divergence on $h_{R,z}$.
\end{remark}

\section{Polynomial tensors and parameter derivatives}\label{sec:algebra}
We first describe the polynomial tensors and the four scalar-flat and minimal-boundary profiles.
We then establish their source, curvature, and angular identities.
The corrected matrix $\mathbf M$ in \eqref{eq:coefficient-matrix-entries} determines the centered energy and its scale derivatives through \eqref{a:eq:scalingmatrix}.
The coefficients $\alpha(\eps),\gamma(\eps)$ in \eqref{eq:explicit-alpha-gamma} determine the translation Hessian.
The translation gradient and the mixed translation--scale derivatives vanish at the center.
This section derives the formulas for these quantities; Section~\ref{sec:certification} evaluates them and proves the signs needed for the four minima.

\subsection{Scalar tensors and metric profiles}
Fix $A\in\operatorname{Sym}^2_0(\R^m)$ and use $Q_A(\bar x)=\bar x^{\mathsf T}A\bar x$ from the introduction.
Then $\Delta_{\bar x}Q_A=0$ and $A=\tfrac12\nabla_{\bar x}^2Q_A$.
For $k\ge2$ define the two-parameter family
\begin{equation}\label{eq:full-scalar-module}
\begin{split}
S_N^{(k)}(b,e):=\operatorname{tf}_m\bigl[
s^kA&+b s^{k-1}(\bar x\otimes A\bar x+A\bar x\otimes\bar x)\\
&+e s^{k-2}Q_A\bar x\otimes\bar x\bigr],
\end{split}
\end{equation}
where $\operatorname{tf}_mT:=T-\frac{\tr T}{m}I_m$ denotes the tangential trace-free part.
In particular, its trace correction is $-(2b+e)s^{k-1}Q_AI_m/m$.
For $k=1$ require $e=0$ and omit the last term; for $k=0$ set $S_N^{(0)}(0,0):=A$.
The generator $A$ is suppressed in $S_N^{(k)}$ and $P_N^{(k)}$; brackets are used only when a different generator is needed.

The coefficient of $s^kA$ is normalized to one, while $b,e$ are free.
The linear span of these tensors is generated by $Q_A$ and its first two derivatives; no divergence or radial-transversality condition is imposed.
Our scalar-flat and minimal-boundary profiles are defined by
\begin{equation}\label{eq:full-scalar-profile}
h(\bar x,t):=\sum_{(k,p)\in\mathcal I}
c_{kp}t^pS_N^{(k)}(b_{kp},e_{kp}),\qquad 2k+p\le d_*.
\end{equation}
Here $\mathcal I$ is a finite set of pairs of nonnegative integers, $c_{kp}$ is the overall coefficient of the row, and $b_{kp},e_{kp}$ are its two relative coefficients.
All nonzero rows used below admit this normalization.
We take $b_{0p}=e_{0p}=0$ and $e_{1p}=0$; unlisted rows have $c_{kp}=0$ and contribute nothing.
The normal powers are independent of the tangential family: the $p=0$ and $p=1$ rows determine the boundary value and first normal derivative, respectively.
After setting the normal components to zero, we have
\begin{equation}\label{eq:admissible-polynomial}
h_{Ni}=0,\qquad \tr h=0,\qquad \deg h\le d_*.
\end{equation}

\begin{lemma}\label{lem:scalar-divergence}
For $k\ge2$ set
\[
u_k:=2k+(m+2k)b-\frac{2(2b+e)}m,\qquad
v_k:=2(k-1)\left(b-\frac{2b+e}m\right)+(m+2k-1)e.
\]
Then
\begin{equation}
\begin{aligned}
\divg S_N^{(k)}(b,e)&=u_ks^{k-1}A\bar x+v_ks^{k-2}Q_A\bar x,\\
\divg\divg S_N^{(k)}(b,e)
&=[2(k-1)u_k+(m+2k-2)v_k]s^{k-2}Q_A.
\end{aligned}
\end{equation}
At $k=1$ use the same $u_1$ with $e=0$; then $\divg S_N^{(1)}=u_1A\bar x$ and $\divg\divg S_N^{(1)}=0$.
At $k=0$ both divergences vanish.
\end{lemma}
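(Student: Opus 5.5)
The plan is a direct componentwise computation in the tangential variables $\bar x\in\R^m$. The tensors depend only on $\bar x$ and are tangential, so $\divg$ means $(\divg T)_b=\pa_aT_{ab}$. Throughout I will use the elementary identities
\[
\pa_a s=2x_a,\qquad \pa_aQ_A=2(A\bar x)_a,\qquad \pa_a(A\bar x)_b=A_{ab},\qquad \tr A=0,\qquad \Delta_{\bar x}Q_A=0,
\]
together with $\bar x\cdot A\bar x=Q_A$ and $\pa_ax_a=m$. I first write
\[
S_N^{(k)}(b,e)=s^kA+b s^{k-1}(\bar x\otimes A\bar x+A\bar x\otimes\bar x)+e s^{k-2}Q_A\,\bar x\otimes\bar x-\frac{2b+e}{m}s^{k-1}Q_AI_m,
\]
using the trace correction already recorded after \eqref{eq:full-scalar-module}. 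I then take the divergence term by term.

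For the four pieces I expect the following divergences.
\begin{itemize}
\item $\pa_a(s^kA_{ab})=2k s^{k-1}(A\bar x)_b$.
\item $\pa_a\bigl(s^{k-1}(x_a(A\bar x)_b+(A\bar x)_ax_b)\bigr)$ equals
\[
2(k-1)s^{k-2}\bigl(s(A\bar x)_b+Q_Ax_b\bigr)+s^{k-1}\bigl(m(A\bar x)_b+(A\bar x)_b+0+(A\bar x)_b\bigr),
\]
that is $(m+2k)s^{k-1}A\bar x+2(k-1)s^{k-2}Q_A\bar x$.
\item $\pa_a(s^{k-2}Q_Ax_ax_b)=[2(k-2)+2+(m+1)]s^{k-2}Q_Ax_b=(m+2k-1)s^{k-2}Q_A\bar x$.
\item $\pa_b(s^{k-1}Q_A)=2(k-1)s^{k-2}Q_A\bar x+2s^{k-1}A\bar x$.
\end{itemize}
Collecting the coefficients of $s^{k-1}A\bar x$ and $s^{k-2}Q_A\bar x$ gives exactly $u_k$ and $v_k$.

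For the double divergence I apply $\pa_b$ to the two vector fields.
\begin{itemize}
\item $\pa_b(s^{k-1}(A\bar x)_b)=2(k-1)s^{k-2}Q_A+s^{k-1}\tr A=2(k-1)s^{k-2}Q_A$.
\item $\pa_b(s^{k-2}Q_Ax_b)=[2(k-2)+2+m]s^{k-2}Q_A=(m+2k-2)s^{k-2}Q_A$, using Euler's relation for the degree-two polynomial $Q_A$.
\end{itemize}
This gives the stated bracket.

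The degenerate cases follow from the same formulas.
\begin{itemize}
\item For $k=1$ with $e=0$, the term $s^{k-2}$ never appears. The computation gives $u_1A\bar x$, since the $Q_A\bar x$ contributions carry the factor $(k-1)=0$. Its divergence is $u_1\tr A=0$.
\item For $k=0$, $S_N^{(0)}=A$ is constant, so both divergences vanish.
\end{itemize}

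The only real obstacle is bookkeeping: keeping track of the factor $(k-1)$ versus $(k-2)$ in the derivatives of the powers of $s$. I will also check that the $s^{k-2}$ expressions remain polynomial for $k\ge2$.
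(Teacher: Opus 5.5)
Your proposal is correct and follows the paper's own route: expand the trace-free projection, differentiate term by term using $\nabla s=2\bar x$, $\nabla Q_A=2A\bar x$ and $\tr A=0$, and collect the coefficients of $s^{k-1}A\bar x$ and $s^{k-2}Q_A\bar x$. For the double divergence you then use $\divg(s^{k-1}A\bar x)=2(k-1)s^{k-2}Q_A$ and $\divg(s^{k-2}Q_A\bar x)=(m+2k-2)s^{k-2}Q_A$, exactly as the paper does; your term-by-term coefficients and the degenerate cases $k=1$ and $k=0$ all check out.
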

\begin{proof}
Expand the trace-free projection in \eqref{eq:full-scalar-module} and differentiate, using $\nabla s=2\bar x$, $\nabla Q_A=2A\bar x$, and $\tr A=0$.
The coefficients of $s^{k-1}A\bar x$ and $s^{k-2}Q_A\bar x$ are $u_k$ and $v_k$.
A second differentiation uses
\[
\divg(s^{k-1}A\bar x)=2(k-1)s^{k-2}Q_A,\qquad
\divg(s^{k-2}Q_A\bar x)=(m+2k-2)s^{k-2}Q_A.
\]
The low-degree cases follow directly from their definitions.
\end{proof}

The following choices of $b,e$ make $S_N^{(k)}(b,e)$ divergence free.
For $k\ge1$ put
\begin{equation}\label{a:eq:Achain}
\begin{gathered}
P_N^{(k)}:=S_N^{(k)}(b_k,e_k),\qquad
\Delta_k:=(m-1)(m+2k)^2-2m,\\
b_k:=-\frac{2k((m-1)(m+2k)+2)}{\Delta_k},\qquad
e_k:=\frac{4k(k-1)(m-2)}{\Delta_k}.
\end{gathered}
\end{equation}
Set $b_0:=0$, $e_0:=0$, and $P_N^{(0)}:=A$; the formula also implies $e_1=0$.
For $b_{kp}=b_k$ and $e_{kp}=e_k$, \eqref{eq:full-scalar-profile} reduces to $h=\sum c_{kp}t^pP_N^{(k)}$.

\begin{lemma}\label{a:lem:Achain}
Each $P_N^{(k)}$ is tangential, trace-free, divergence-free, and homogeneous of degree $2k$.
Set
\[
\mathsf s_k:=1+2b_k+e_k-\frac{2b_k+e_k}m,\qquad
\vartheta_k:=2k(2k+m-2)+4b_k.
\]
Then
\begin{equation}\label{a:eq:Acontractions}
\bar x^{\mathsf T}P_N^{(k)}\bar x=\mathsf s_k s^kQ_A,\qquad
\Delta_{\bar x}P_N^{(k)}=\vartheta_kP_N^{(k-1)}\quad(k\ge1).
\end{equation}
\end{lemma}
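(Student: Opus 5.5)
The first three properties follow from the construction, and the two identities in \eqref{a:eq:Acontractions} reduce to Lemma~\ref{lem:scalar-divergence} plus a coefficient comparison. Tangentiality holds because \eqref{eq:full-scalar-module} defines an $m\times m$ block, which we place in the tangential slots with $h_{Ni}=0$. Trace-freeness is built in through $\operatorname{tf}_m$. Homogeneity of degree $2k$ holds because each of $s^kA$, $s^{k-1}\bar x\otimes A\bar x$, $s^{k-2}Q_A\bar x\otimes\bar x$ and the trace correction $s^{k-1}Q_AI_m$ is homogeneous of degree $2k$.

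For divergence-freeness I would use Lemma~\ref{lem:scalar-divergence}. Since $s^{k-1}A\bar x$ and $s^{k-2}Q_A\bar x$ are linearly independent for $A\ne0$ trace-free, it suffices to show $u_k=v_k=0$ at $(b,e)=(b_k,e_k)$. This is a $2\times2$ linear system in $(b,e)$. Its determinant, after clearing the factor $m$, should be a multiple of $\Delta_k=(m-1)(m+2k)^2-2m$. Cramer's rule then gives the stated $b_k,e_k$, or one can simply substitute and simplify. As a check at $k=1$, where $e=0$: $u_1=0$ gives $b_1=-2m/(m^2+2m-4)$. Since $\Delta_1=(m+1)(m^2+2m-4)$, this agrees with the formula for $b_1$, and $e_1=0$ is immediate. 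The case $k=0$ is trivial.

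For the quadratic contraction I would use $\bar x^{\mathsf T}A\bar x=Q_A$, $\bar x^{\mathsf T}(\bar x\otimes A\bar x)\bar x=sQ_A$, $\bar x^{\mathsf T}(\bar x\otimes\bar x)\bar x=s^2$ and $\bar x^{\mathsf T}I_m\bar x=s$. These give
\[
\bar x^{\mathsf T}P_N^{(k)}\bar x
=\Bigl(1+2b_k+e_k-\tfrac{2b_k+e_k}{m}\Bigr)s^kQ_A=\mathsf s_k s^kQ_A .
\]

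The Laplacian identity is the main step. My strategy is structural rather than a full expansion. The componentwise Euclidean Laplacian $\Delta_{\bar x}$ commutes with trace and divergence, so $\Delta_{\bar x}P_N^{(k)}$ is trace-free, divergence-free and homogeneous of degree $2k-2$. Expanding $\Delta_{\bar x}$ on the three generators with $\nabla s=2\bar x$, $\nabla Q_A=2A\bar x$, $\Delta Q_A=0$ and $\tr A=0$ shows that the result again lies in the span of $s^{k-1}A$, $s^{k-2}(\bar x\otimes A\bar x+A\bar x\otimes\bar x)$, $s^{k-3}Q_A\bar x\otimes\bar x$ and $s^{k-2}Q_AI_m$. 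So, after taking the trace-free part, it equals $\lambda\,S_N^{(k-1)}(b',e')$ or a tensor with vanishing $s^{k-1}A$ coefficient. I then need a uniqueness claim: a trace-free, divergence-free tensor in this span is determined by its $s^{k-1}A$ coefficient. If that coefficient is nonzero, the system $u_{k-1}=v_{k-1}=0$ has nonzero determinant ($\Delta_{k-1}\ne0$, since $\Delta_j\ge(m-1)m^2-2m>0$ for $m\ge3$), so $(b',e')=(b_{k-1},e_{k-1})$. If it is zero, the analogous homogeneous system in the remaining two coefficients only has the trivial solution. In low degree ($k=1,2$) some generators are absent, and I would check those cases by hand.

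It remains to read off the $s^{k-1}A$ coefficient of $\Delta_{\bar x}P_N^{(k)}$. The term $\Delta(s^kA)=2k(2k+m-2)s^{k-1}A$ contributes $2k(2k+m-2)$. In $\Delta(s^{k-1}x_a(A\bar x)_b)$ only $s^{k-1}\Delta(x_a(A\bar x)_b)=2s^{k-1}A_{ab}$ produces $A$ undifferentiated by $\bar x$; after symmetrization this contributes $4b_k$. The $e$-term and the trace correction produce no $s^{k-1}A$ component. Hence $\lambda=\vartheta_k$.

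I expect the main obstacle to be the closure-and-uniqueness step: confirming that $\Delta$ preserves the span and that the degenerate cases (zero leading coefficient, and small $k$) force vanishing. If that becomes messy, the fallback is to expand $\Delta_{\bar x}P_N^{(k)}$ completely and compare the $b$- and $e$-coefficients with $\vartheta_kb_{k-1}$ and $\vartheta_ke_{k-1}$ directly, as rational identities in $m$ and $k$.
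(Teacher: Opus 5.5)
Your proposal is correct and follows essentially the same route as the paper: divergence-freeness comes from solving $u_k=v_k=0$ in Lemma~\ref{lem:scalar-divergence}, whose determinant is indeed $\Delta_k/m$, the contraction is evaluated directly, and the Laplacian identity follows because $\Delta_{\bar x}P_N^{(k)}$ is again a trace-free, divergence-free tensor of the same equivariant form with $s^{k-1}A$ coefficient $\vartheta_k$, so the nonsingular divergence system forces it to equal $\vartheta_kP_N^{(k-1)}$. Your treatment of the zero-leading-coefficient case, which uses the same nonsingular matrix, and of the low-degree cases $k=1,2$ supplies details that the paper leaves implicit.
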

\begin{proof}
Trace freeness follows from the projection in \eqref{eq:full-scalar-module}.
By substituting \eqref{a:eq:Achain} into Lemma~\ref{lem:scalar-divergence}, we find
\[
2k+(m+2k)b_k-\frac{2(2b_k+e_k)}m=0,\qquad
2(k-1)\left(b_k-\frac{2b_k+e_k}m\right)+(m+2k-1)e_k=0.
\]
These two equations determine $b_k,e_k$ uniquely when $m>2$; at $k=1$ the second coefficient is zero.
The radial contraction follows directly from the trace-free formula.
The Laplacian has the same equivariant form in degree $2k-2$ and remains trace-free and divergence-free.
Its coefficient of $s^{k-1}A$ is $\vartheta_k$, so the same two equations identify it with $\vartheta_kP_N^{(k-1)}$.
The case $k=1$ follows directly.
\end{proof}

\begin{lemma}
Let $m>2$, $A\ne0$, and let $h:=\sum c_{kp}t^pP_N^{(k)}$ be nonzero.
Then
\[
\Delta_{\bar x}Q_A=0,\qquad A=\tfrac12\nabla_{\bar x}^2Q_A,
\qquad x^{\mathsf T}hx
=Q_A\sum_{k,p}c_{kp}\mathsf s_k s^kt^p\not\equiv0.
\]
In particular, $hx\not\equiv0$, and its centered scalar source in \eqref{a:eq:Asource} is nonzero.
\end{lemma}
\begin{proof}
The first two identities follow from $\tr A=0$.
By \eqref{a:eq:Achain},
\[
\mathsf s_0=1,\qquad
\mathsf s_k=\frac{m(m-2)(m+1)}{(m-1)(m+2k)^2-2m}>0\quad(k\ge1).
\]
The radial contraction follows from Lemma~\ref{a:lem:Achain}.
The monomials $s^kt^p$ are linearly independent, and $Q_A\not\equiv0$.
By \eqref{a:eq:Asource}, the source is also nonzero.
\end{proof}

\begin{lemma}
For $k\ge2$,
\begin{equation}
S_N^{(k)}(b,e)\bar x
=(1+b)s^kA\bar x+
\frac{(m-2)b+(m-1)e}{m}s^{k-1}Q_A\bar x.
\end{equation}
In particular, $S_N^{(k)}(b_{k0},e_{k0})\bar x=0$ for
\begin{equation}\label{eq:longitudinal-boundary-rule}
b_{k0}=-1,\qquad e_{k0}=\frac{m-2}{m-1},\qquad k\ge2.
\end{equation}
Its overall coefficient $c_{k0}$ remains free.
\end{lemma}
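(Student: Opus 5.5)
The identity is a direct computation: apply the untraced tensor to $\bar x$, then subtract the contribution of the trace correction. I would expand \eqref{eq:full-scalar-module} as
\[
S_N^{(k)}(b,e)=s^kA+bs^{k-1}(\bar x\otimes A\bar x+A\bar x\otimes\bar x)+es^{k-2}Q_A\,\bar x\otimes\bar x-\frac{2b+e}{m}s^{k-1}Q_AI_m ,
\]
which uses the trace correction already recorded after \eqref{eq:full-scalar-module}. That correction comes from $\tr A=0$, $\tr(\bar x\otimes A\bar x)=Q_A$ and $\tr(\bar x\otimes\bar x)=s$.

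Next I apply each term to $\bar x$, using $(u\otimes w)\bar x=u\,(w\cdot\bar x)$ in the matrix representation:
\[
(\bar x\otimes A\bar x)\bar x=Q_A\bar x,\qquad (A\bar x\otimes\bar x)\bar x=s\,A\bar x,\qquad (\bar x\otimes\bar x)\bar x=s\,\bar x .
\]
Because $A$ is symmetric, the two mixed products are handled the same way. The $A\bar x$ terms therefore collect to $(1+b)s^kA\bar x$. The $Q_A\bar x$ terms collect to
\[
\Bigl(b+e-\frac{2b+e}{m}\Bigr)s^{k-1}Q_A\bar x=\frac{(m-2)b+(m-1)e}{m}\,s^{k-1}Q_A\bar x .
\]
This is the displayed formula.

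For the second claim I need $S_N^{(k)}\bar x$ to vanish identically. Since $m>2$ and $A\ne0$, I would argue that $A\bar x$ and $Q_A\bar x$ are linearly independent as vector polynomials: the first is homogeneous of degree $1$, the second of degree $3$, and neither vanishes identically. So both coefficients must vanish. The condition $1+b=0$ gives $b=-1$. Then $(m-2)(-1)+(m-1)e=0$ gives $e=(m-2)/(m-1)$, which is \eqref{eq:longitudinal-boundary-rule}.

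The overall coefficient $c_{k0}$ only multiplies the row, so it stays free. The only step needing care is the bookkeeping of the trace-correction coefficient and the transpose convention for $\otimes$. The result does not depend on the latter, because both orderings of $\bar x\otimes A\bar x$ appear symmetrically. I expect no genuine obstacle.
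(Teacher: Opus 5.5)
Your computation is correct and matches the paper's proof: contract \eqref{eq:full-scalar-module} with $\bar x$, use the trace correction $-(2b+e)s^{k-1}Q_AI_m/m$, and substitute \eqref{eq:longitudinal-boundary-rule} so that both coefficients vanish. The lemma only claims that these values give $S_N^{(k)}\bar x=0$, so your added necessity argument is not needed, and its justification is wrong as written: the terms that actually appear, $s^kA\bar x$ and $s^{k-1}Q_A\bar x$, are both homogeneous of degree $2k+1$. Their independence comes instead from the fact that $sA\bar x=cQ_A\bar x$ for all $\bar x$ would force $A\bar x\parallel\bar x$ everywhere, i.e.\ $A$ would be a multiple of $I_m$, and then $\tr A=0$ gives $A=0$.
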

\begin{proof}
Contract \eqref{eq:full-scalar-module} with $\bar x$ and use the trace correction $-(2b+e)s^{k-1}Q_AI_m/m$.
Substitution of \eqref{eq:longitudinal-boundary-rule} makes both coefficients vanish.
\end{proof}

These tensors are written in Gaussian normal coordinates: $g_{aN}=0$ and $g_{NN}=1$ for $g=\exp(h)$, but the tangential coordinates on the boundary are not assumed geodesic.
\begin{lemma}\label{lem:polynomial-boundary-geometry}
Let $h$ satisfy \eqref{eq:admissible-polynomial}.
Its exponential metric satisfies
\[
g=\dd t^2+[\exp(h)]_{ab}\dd x_a\dd x_b,\qquad H_g=0.
\]
Moreover,
\begin{equation}\label{eq:umbilic-polynomial-condition}
\pa_t h(\bar x,0)=0\quad\Leftrightarrow\quad
\left.\pa_t\exp(h(\bar x,t))\right|_{t=0}=0\quad\Leftrightarrow\quad
\pa\R^N_+\text{ is totally geodesic}.
\end{equation}
The same equivalence holds for $\exp(\mu h)$ whenever $\mu\ne0$.
\end{lemma}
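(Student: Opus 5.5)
The plan is to read off the metric structure first, and then compute the second fundamental form in block coordinates.

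Since $h$ satisfies \eqref{eq:admissible-polynomial}, $h_{Ni}=0$, and $\exp(h)=\sum_j h^j/j!$ preserves the block form. Hence $[\exp(h)]_{NN}=1$, $[\exp(h)]_{aN}=0$, and $g=\dd t^2+[\exp(h)]_{ab}\dd x_a\dd x_b$. The coordinate field $\pa_t$ is therefore $g$-unit and $g$-orthogonal to the boundary, so $\nu=-\pa_t$. By the conventions, $L_{ab}=-\tfrac12\pa_tg_{ab}$. For the mean curvature, observe that $\tr h=0$ gives $\det\exp(h)=e^{\tr h}=1$, so $\log\det g^\top=0$ identically. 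Then
\[
H_g=\tfrac1{N-1}g^{ab}L_{ab}=-\tfrac1{2(N-1)}\pa_t\log\det g^\top=0,
\]
as already recorded in the Conventions.

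For \eqref{eq:umbilic-polynomial-condition}, the last equivalence is immediate. Since $L_{ab}=-\tfrac12\pa_t[\exp(h)]_{ab}$ at $t=0$, the boundary is totally geodesic ($L\equiv0$) exactly when $\pa_t\exp(h)|_{t=0}=0$. It remains to compare $\pa_t\exp(h)$ with $\pa_th$ at $t=0$. Here I would use the Duhamel formula
\[
\pa_t\exp(h)=\int_0^1 e^{\lambda h}(\pa_th)e^{(1-\lambda)h}\dd\lambda=:\mathcal D_h(\pa_th).
\]
The forward direction is clear: $\pa_th=0$ gives $\pa_t\exp(h)=0$. For the converse, fix $\bar x$ and diagonalize the symmetric matrix $h(\bar x,0)=O\diag(\lambda_i)O^{\mathsf T}$. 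In that basis $\mathcal D_h$ acts on matrix entries by multiplying the $(i,j)$ entry by
\[
\frac{e^{\lambda_i}-e^{\lambda_j}}{\lambda_i-\lambda_j}>0,
\]
interpreted as $e^{\lambda_i}$ when $\lambda_i=\lambda_j$. So $\mathcal D_h$ is invertible and $\pa_t\exp(h)=0$ forces $\pa_th=0$ pointwise.

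The statement for $\exp(\mu h)$ follows by applying the same argument to $\mu h$. This tensor still satisfies \eqref{eq:admissible-polynomial}, and for $\mu\ne0$ we have $\pa_t(\mu h)=0$ if and only if $\pa_th=0$.

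The only step with real content is the invertibility of the Duhamel operator, and it reduces to the positivity of divided differences of the exponential. No global issue arises: the whole argument is pointwise in $\bar x$ and needs no bound on $h$.
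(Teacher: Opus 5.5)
Your proposal is correct and follows the paper's proof essentially verbatim. Both arguments get $H_g=0$ from $\log\det\exp(h)=\tr h=0$ in the product coordinates. Both then deduce the equivalence from $L_{ab}=-\tfrac12\partial_t g_{ab}$ together with injectivity of the derivative of the matrix exponential, which holds because in an eigenbasis it multiplies each entry by a positive divided difference of $e^{x}$.
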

\begin{proof}
In these product coordinates, the averaged mean curvature is
\[
H_g=-\frac1{2m}\partial_t\log\det(\exp(h))
=-\frac1{2m}\partial_t\tr h=0.
\]
For symmetric matrices $B,C$,
\[
\left.\frac{\dd}{\dd u}\exp(B+uC)\right|_{u=0}
=\int_0^1\exp((1-r)B)C\exp(rB)\dd r.
\]
In a basis diagonalizing $B$, this map multiplies each entry of $C$ by a positive number and is therefore injective.
Since the second fundamental form is $-\tfrac12\partial_tg$, this proves the equivalence, also after multiplication of $h$ by a nonzero constant.
\end{proof}

We now specify $A$ and the coefficients in \eqref{eq:full-scalar-profile} for each equation and umbilicity condition.
All generating matrices have trace zero and norm one.
The two nonumbilic profiles use
\begin{equation}\label{a:eq:normalizedA}
A_{\mathrm r}:=\frac1{\sqrt2}\diag(1,-1,0,\ldots,0).
\end{equation}
The scalar-flat umbilic profile uses the orthogonally equivalent matrix
\begin{equation}
A_{\mathrm c}:=\frac1{\sqrt2}\diag(0,1,-1,0,\ldots,0).
\end{equation}
For the minimal umbilic profile, put $m_+:=\lfloor m/2\rfloor$, $m_-:=m-m_+$, and use
\begin{equation}
A_{\mathrm b}:=\frac1{\sqrt m}\diag\left(
\underbrace{\sqrt{\frac{m_-}{m_+}},\ldots,\sqrt{\frac{m_-}{m_+}}}_{m_+\text{ entries}},
\underbrace{-\sqrt{\frac{m_+}{m_-}},\ldots,-\sqrt{\frac{m_+}{m_-}}}_{m_-\text{ entries}}\right).
\end{equation}
At $N=21$, this is $\diag(I_{10},-I_{10})/\sqrt{20}$.
In each family $A$ denotes its assigned generating matrix, with the fixed dimension suppressed.

For both nonumbilic families take $A=A_{\mathrm r}$ and $(b_{kp},e_{kp})=(b_k,e_k)$ from \eqref{a:eq:Achain}.
The overall coefficients $c^{\mathrm{II},15}_{kp}$ and $c^{\mathrm I,15}_{kp}$ are the two columns of Table~\ref{a:tab:N15coeff}.
With $\star\in\{\mathrm{II},\mathrm I\}$, define
\begin{equation}\label{eq:fixed-nonumbilic-families}
h^{\star,\mathrm{nu}}_{N,\tau}
:=\tau tA+\sum_{1<2k+p\le6}c^{\star,15}_{kp}t^p
S_N^{(k)}(b_k,e_k),\qquad N\ge15.
\end{equation}
Thus only the $(0,1)$ coefficient is replaced by $\tau$.
These are the divergence-free profiles, since $S_N^{(k)}(b_k,e_k)=P_N^{(k)}$.

For the umbilic families use the same definition with the triples $(c^{\star}_{kp},b^{\star}_{kp},e^{\star}_{kp})$ specified by the products in Table~\ref{tab:umbilic-coefficients}:
\begin{equation}\label{eq:fixed-umbilic-families}
h^{\star,\mathrm u}_{N,\tau}
:=c^{\star}_{02}\tau t^2A
+\sum_{(k,p)\ne(0,2)}c^{\star}_{kp}t^p
S_N^{(k)}(b^{\star}_{kp},e^{\star}_{kp}).
\end{equation}
The sum runs over the other listed rows.
There are no $p=1$ rows.
Only the $(0,2)$ row is multiplied by $\tau$.
More explicitly, the data and dimension ranges are
\[
\begin{array}{c|c|c|c}
\text{equation}&A&c_{02}&\text{range}\\ \hline
\mathrm{II},\mathrm u&A_{\mathrm c}&-2&N\ge22\\
\mathrm I,\mathrm u&A_{\mathrm b}&-1.2158794046&N\ge21.
\end{array}
\]
The scalar-flat family has total degree eight and $c^{\mathrm{II}}_{k0}=0$ for every $k$.
The minimal family has total degree nine and its only nonzero boundary coefficients are
\[
c^{\mathrm I}_{20}=-0.002306568,\qquad
c^{\mathrm I}_{30}=0.0003657766,\qquad
c^{\mathrm I}_{40}=-0.000009126.
\]
These three rows use \eqref{eq:longitudinal-boundary-rule}, so $h(\bar x,0)\bar x=0$; the scalar-flat profile satisfies this identity because its boundary value vanishes.
All independent coefficients in the two tables are fixed exact rationals throughout the indicated dimension ranges.
The structural coefficients in \eqref{a:eq:Achain} and \eqref{eq:longitudinal-boundary-rule} retain their explicit dependence on $m$.
The stationarity equation selects $\tau$, which is then held fixed under bubble variations and localization.
Both umbilic families have nonzero divergence and use the full source \eqref{eq:polynomial-source-functional} and raw coefficient $\mathcal Q[h]$.
\begin{proposition}
The two nonumbilic profiles satisfy
\[
\partial_t h(0,0)=\tau A,
\]
so their exponential metrics have nonumbilic boundary when $\tau\ne0$.
Both umbilic profiles satisfy
\[
\partial_t h(\bar x,0)=0;
\]
their boundaries are totally geodesic, also after multiplication by a radial cutoff.
The scalar-flat umbilic profile has zero boundary value and, for $\tau\ne0$, nonzero linearized normal Weyl curvature at the origin.
For the minimal profile, the squared eigenvalues of $A$ lie in $[0,1/2]$; they coincide precisely when $N$ is odd.
\end{proposition}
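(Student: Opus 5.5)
The four claims are read off the explicit profiles \eqref{eq:fixed-nonumbilic-families} and \eqref{eq:fixed-umbilic-families}, with Lemma~\ref{lem:polynomial-boundary-geometry} translating statements about $\partial_t h(\bar x,0)$ into statements about the second fundamental form.

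\emph{Nonumbilic profiles.} Differentiate once in $t$ and set $t=0$. Every row of the sum has the form $t^pS_N^{(k)}$, and $S_N^{(k)}$ depends only on $\bar x$, so only the rows with $p=1$ contribute to $\partial_t h(\bar x,0)$.
\begin{itemize}
\item The $(0,1)$ row is $\tau tA$ and contributes $\tau A$.
\item The only other $p=1$ rows are $c_{k1}tP_N^{(k)}$ with $k\ge1$. Each $P_N^{(k)}$ is homogeneous of degree $2k\ge2$ by Lemma~\ref{a:lem:Achain}, so it vanishes at $\bar x=0$.
\end{itemize}
Hence $\partial_t h(0,0)=\tau A$. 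Since $h(0,0)$ is determined by the $p=0$ rows, it is a sum of homogeneous terms of positive degree and so $h(0,0)=0$. Along the path $\exp(\mu h)$ we therefore have $g=\mathrm{Id}$ at the origin and $\partial_t g(0,0)=\mu\tau A$. This gives $L=-\tfrac12\mu\tau A$ at the origin, $H=0$ by Lemma~\ref{lem:polynomial-boundary-geometry}, and trace-free part $\pi_g(0)=-\tfrac12\mu\tau A\neq0$ when $\tau\neq0$. Because umbilicity is a pointwise, conformally invariant condition, the boundary is nonumbilic.

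\emph{Umbilic profiles.} These profiles contain no $p=1$ rows, so $\partial_t h(\bar x,0)=0$ identically. Lemma~\ref{lem:polynomial-boundary-geometry} then gives a totally geodesic boundary, also for $\mu h$. Multiplying by a radial cutoff $\chi_R(|x|)$ does not change this: $\partial_t(\chi_R h)=\chi_R\partial_t h+(\partial_t\chi_R)h$, and at $t=0$ we have $\partial_t\chi_R=\chi'(|x|/R)\,t/(R|x|)=0$. The identity therefore persists, and the same lemma applies.

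\emph{Scalar-flat umbilic profile: boundary value and normal Weyl curvature.} All boundary coefficients $c^{\mathrm{II}}_{k0}$ vanish, so $h(\bar x,0)=0$. For the Weyl claim I would linearize $W_{aNbN}$ at the Euclidean metric. For a tangential perturbation, $R_{aNbN}=-\tfrac12\partial_t^2 h_{ab}$ to first order.
\begin{itemize}
\item The traces $\mathrm{Ric}_{NN}$ and $\mathrm{Ric}_{ab}$ contribute only through $\operatorname{tr}$ and divergence terms and through tangential second derivatives of $h$.
\item The tangential second derivatives vanish at the origin, since every $S^{(k)}$ with $k\ge1$ has degree $\ge2$.
\item The trace $\operatorname{tr}\partial_t^2 h=0$.
\end{itemize}
The linearized $W_{aNbN}(0)$ is then the trace-free part of $-\tfrac12\partial_t^2h(0,0)$. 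Only the $(0,2)$ row contributes at the origin, giving $-\tfrac12\cdot2c^{\mathrm{II}}_{02}\tau A_{\mathrm c}=2\tau A_{\mathrm c}\neq0$. \textbf{Main obstacle.} The step needing most care is confirming that the Ricci correction terms in the Weyl decomposition really drop out at the origin, and that the $p=2$ rows with $k\ge1$ vanish there. Both follow from homogeneity: such rows are $t^2$ times a tensor of degree $2k\ge2$ in $\bar x$, so their $t$-Hessian at $0$ is zero.

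\emph{Eigenvalues of $A_{\mathrm b}$.} The squared eigenvalues are $m_-/(m\,m_+)$ and $m_+/(m\,m_-)$. Since $m=N-1$, we have $m_+=m_-$ exactly when $m$ is even, i.e.\ when $N$ is odd; in that case both equal $1/m$. Otherwise $m_+=(m-1)/2$ and $m_-=(m+1)/2$. The larger value is then $(m+1)/(m(m-1))$, which is at most $1/2$ once $m\ge3$ (it equals $1/3$ at $m=3$ and decreases). All values are positive, so they lie in $[0,1/2]$.
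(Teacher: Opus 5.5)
Your route is the same as the paper's: read the first normal jet off the rows, apply Lemma~\ref{lem:polynomial-boundary-geometry}, and get the normal Weyl component from the leading term $-2\tau t^2A$. The nonumbilic, umbilic, cutoff and zero-boundary-value parts are correct. The Weyl step, however, rests on a false claim: the linearized Ricci tensor does \emph{not} lose its normal second derivatives. For a tangential trace-free $h$ at the flat metric,
\[
\operatorname{Ric}^{\mathrm{lin}}_{ab}=\tfrac12\bigl(\partial_a(\divg h)_b+\partial_b(\divg h)_a-\Delta h_{ab}\bigr),\qquad \Delta=\Delta_{\bar x}+\partial_t^2 .
\]
Equivalently, $\operatorname{Ric}_{ab}=\sum_cR_{acbc}+R_{aNbN}$ contains the very component you are computing. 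At the origin every nonzero row carries $t^p$ with $p\ge2$, so the tangential and divergence terms vanish there. That factor, not the degree of $S_N^{(k)}$, is the right reason: $S_N^{(1)}$ has a constant nonzero Hessian. One therefore gets $\operatorname{Ric}^{\mathrm{lin}}_{ab}(0)=R^{\mathrm{lin}}_{aNbN}(0)=2\tau A_{ab}$, together with $\operatorname{Ric}^{\mathrm{lin}}_{NN}(0)=0$ and $R^{\mathrm{lin}}(0)=0$. Hence
\[
W^{\mathrm{lin}}_{aNbN}(0)=R^{\mathrm{lin}}_{aNbN}(0)-\frac{1}{N-2}\operatorname{Ric}^{\mathrm{lin}}_{ab}(0)=2\tau\frac{N-3}{N-2}A_{ab}.
\]
This is the paper's value, and also the case $\alpha=0$, $\zeta=-2\tau$ of \eqref{eq:full-degree2-curvature}. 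Your $2\tau A$ is off by the factor $(N-3)/(N-2)$. Since that factor is positive for $N\ge4$, the nonvanishing conclusion survives, but the justification must be this computation rather than the assertion that the Ricci terms drop out.

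There is also a smaller arithmetic slip in the eigenvalue part. For odd $m$ the larger squared eigenvalue $(m+1)/(m(m-1))$ equals $2/3$ at $m=3$, not $1/3$, so the bound $\le 1/2$ holds only for odd $m\ge5$. This is harmless for the statement, because the minimal umbilic profile is used only for $N\ge21$, i.e.\ $m\ge20$. You should state the range that way instead of claiming the bound from $m\ge3$.
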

\begin{proof}
In \eqref{eq:fixed-nonumbilic-families}, the terms other than $\tau tA$ have degree greater than one.
Thus the first normal derivative at the origin is $\tau A$, a nonzero trace-free matrix when $\tau\ne0$.
Section~\ref{sec:certification} selects $\tau>0$.

For the scalar-flat umbilic profile, every boundary row is zero.
Thus every nonzero row has $p\ge2$, the tensor has a factor $t^2$, and
\[
h(\bar x,0)=\partial_t h(\bar x,0)=0.
\]
The induced boundary metric is Euclidean and the exponential metric has exactly totally geodesic boundary, also after a radial cutoff.
The leading metric term is $-2\tau t^2A$.

The tensor is generally not divergence free.
Its divergence has the term
\[
\left(0.0836m+0.9836-\frac{0.3344}{m}\right)t^2A\bar x
\]
with no additional factor of $s$; the coefficient is positive for $m\ge21$.
The full source and both divergence contributions to the raw energy are therefore essential.
The leading metric term also determines
\[
W'_{aNbN}(0)=2\tau\frac{N-3}{N-2}A_{ab}.
\]
The selected coefficient is positive, so this curvature is nonzero.
The degree condition is $N>18$; the sign proof starts at $N=22$.

For the minimal umbilic profile, the boundary rows need not vanish.
For example, the $(1,2)$ row alone contributes
\[
\operatorname{div}\bigl(c_{12}t^2S_{21}^{(1)}(b_{12},0)\bigr)
=\frac{50899050721}{5\cdot10^9}t^2A_{\rm b}\bar x\ne0.
\]
This follows from Lemma~\ref{lem:scalar-divergence} with $m=20$ and the listed exact coefficients.
Thus the divergence terms in $S_z[h]$ must be retained for this tensor.
The boundary value is generally nonzero, but there is no $p=1$ row, so $\partial_t h(\bar x,0)=0$.
Odd normal powers $p\ge3$ are retained; no reflection symmetry is imposed.

The two squared eigenvalues of the generating matrix are $m_-/(m_+m)$ and $m_+/(m_-m)$.
They lie in $[0,1/2]$ throughout $N\ge21$.
For every odd ambient dimension, $m_+=m_-$ and $A_{\mathrm b}^2=I/(N-1)$, so the invariant translation Hessian is scalar.
In even ambient dimensions the two squared eigenvalues are distinct and both enter its evaluation.
Multiplication by a smooth radial cutoff preserves the zero first normal derivative at $t=0$.
Total geodesicity follows from Lemma~\ref{lem:polynomial-boundary-geometry}.
The Weyl identity used above is also a specialization of Proposition~\ref{prop:polynomial-geometric-jets} below.
\end{proof}

\subsection{Sources, curvature, and angular identities}
\label{subsec:polynomial-identities}
We compute directly with the rows in \eqref{eq:full-scalar-profile}.
We write $\widetilde S_z[h]$ for the source formed with the normalized bubble $\widetilde U_z$, and $\widetilde S[h]:=\widetilde S_{z_0}[h]$ at the centered unit bubble.
The source $S_z[h]$ continues to use the geometric bubble $U_z$; their relation is stated in \eqref{eq:source-corrector-normalization-bridge}.
For a fixed row $t^pS_N^{(k)}(b,e)$, write $u:=u_k$, $v:=v_k$ for the divergence coefficients of Lemma~\ref{lem:scalar-divergence}, and put
\[
\mathsf r_k:=b+e-\frac{2b+e}m,\qquad
\mathsf s_k:=1+b+\mathsf r_k,\qquad
\mathsf d_k:=2(k-1)u+(m+2k-2)v.
\]
The dependence of $\mathsf r_k,\mathsf s_k,\mathsf d_k$ on the row coefficients $(b,e)$ is suppressed.
For $(b,e)=(b_k,e_k)$, $\mathsf s_k$ agrees with the contraction coefficient in \eqref{a:eq:Acontractions}.
For $k=0$ take $u=v=\mathsf d_k=0$; for $k=1$ take $v=\mathsf d_k=0$.
In every formula below, terms whose coefficient is zero are omitted before evaluating a power of $s$.

\begin{lemma}
\label{lem:scalar-centered-source}
For $h=t^pS_N^{(k)}(b,e)$,
\begin{align*}
h\bar x&=t^p\bigl[(1+b)s^kA\bar x+\mathsf r_k s^{k-1}Q_A\bar x\bigr],\\
\bar x^{\mathsf T}h\bar x&=\mathsf s_k s^kt^pQ_A,\\
\divg h&=t^p\bigl[us^{k-1}A\bar x+vs^{k-2}Q_A\bar x\bigr],\\
\divg\divg h&=\mathsf d_k s^{k-2}t^pQ_A.
\end{align*}
For the normalized centered bubble $\widetilde U:=D^{-(N-2)/2}$, with $D=s+(1+t)^2$ in Type II and $D=1+s+t^2$ in minimal Type I, the full source is
\begin{equation}\label{eq:full-scalar-centered-source}
\widetilde S[h]
=\widetilde U Q_A t^p\left[
\frac{N(N-2)\mathsf s_k s^k}{D^2}
-\frac{(N-2)(u+v)s^{k-1}}D+c_N\mathsf d_k s^{k-2}\right].
\end{equation}
For a profile of the form \eqref{eq:full-scalar-profile}, multiply each row formula by $c_{kp}$ and sum.
Proposition~\ref{sec:certificate-normalization} relates this normalization to the geometric bubble $U_z$.
\end{lemma}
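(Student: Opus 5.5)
The argument is a direct computation, carried out in two stages. First come the four tensor identities for the single row $h=t^pS_N^{(k)}(b,e)$. Then these are substituted into the source formula $S_z[h]=h_{ij}\pa_{ij}U+(\pa_jh_{ij})\pa_iU+c_N(\pa_i\pa_jh_{ij})U$ of Lemma~\ref{lem:source-kernel-orthogonality}, evaluated at the normalized centered bubble.

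\emph{Row identities.} Contracting \eqref{eq:full-scalar-module} with $\bar x$ gives the computation already done in the lemma on $S_N^{(k)}(b,e)\bar x$. The term $s^kA$ contributes $s^kA\bar x$. The symmetrized term contributes $b(s^kA\bar x+s^{k-1}Q_A\bar x)$. The term $e$ contributes $e\,s^{k-1}Q_A\bar x$. The trace correction contributes $-(2b+e)s^{k-1}Q_A\bar x/m$. Collecting these gives the coefficients $1+b$ and $\mathsf r_k$, and contracting once more with $\bar x$ gives $\mathsf s_k=1+b+\mathsf r_k$. The factor $t^p$ is inert in all of this. Because $h$ is tangential and $t^p$ depends only on $t$, the divergences $\divg h=\pa_ah_{ab}$ and $\divg\divg h$ are just $t^p$ times the tangential divergences. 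Lemma~\ref{lem:scalar-divergence} supplies these with coefficients $u,v$ and $\mathsf d_k$. The low cases $k=0,1$ follow from the stated conventions.

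\emph{Source.} Write $\widetilde U=D^{-(N-2)/2}$. In both cases $\pa_a\widetilde U=-(N-2)x_a\widetilde U/D$, since $\pa_aD=2x_a$. Because $h_{iN}=0$, only tangential derivatives of $\widetilde U$ enter. This is where the normal shift in the Type II $D$ becomes irrelevant, so the two bubbles are handled at once. Next,
\[
\pa_{ab}\widetilde U=\widetilde U\Bigl[\frac{N(N-2)x_ax_b}{D^2}-\frac{(N-2)\delta_{ab}}{D}\Bigr],
\]
and the $\delta_{ab}$ term is killed by $\tr h=0$. Hence $h_{ab}\pa_{ab}\widetilde U=N(N-2)\widetilde U\,\bar x^{\mathsf T}h\bar x/D^2=N(N-2)\mathsf s_ks^kt^pQ_A\widetilde U/D^2$. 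The divergence term is $(\divg h)_a\pa_a\widetilde U=-(N-2)\widetilde U\,\bar x\cdot\divg h/D$. Using $\bar x\cdot A\bar x=Q_A$ and $\bar x\cdot\bar x=s$, this equals $-(N-2)(u+v)s^{k-1}t^pQ_A\widetilde U/D$. Finally, the last term is $c_N\mathsf d_ks^{k-2}t^pQ_A\widetilde U$. Summing the three pieces yields \eqref{eq:full-scalar-centered-source}. Linearity of $h\mapsto\widetilde S[h]$ then gives the statement for profiles \eqref{eq:full-scalar-profile}.

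The only delicate point is bookkeeping at small $k$, where formally negative powers of $s$ appear. The convention of omitting zero-coefficient terms handles this: for $k=1$, $v=\mathsf d_k=0$, so $s^{-1}$ never appears, and for $k=0$, $u=v=\mathsf d_k=0$. I would also check that $\mathsf r_k$ and $\mathsf s_k$ for $k=0,1$ match the definitions, with $b=e=0$ at $k=0$ and $e=0$ at $k=1$. Beyond this, the proof is routine.
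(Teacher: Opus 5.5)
Your proposal is correct and follows the paper's own proof: the contraction and divergence identities come from \eqref{eq:full-scalar-module} and Lemma~\ref{lem:scalar-divergence}, and the source follows by substituting the first and second tangential derivatives of $\widetilde U$ into the three-term source, with the trace term vanishing. Your remarks on why the Type~II normal shift is irrelevant and on the $k=0,1$ conventions are accurate and make explicit bookkeeping that the paper leaves implicit.
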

\begin{proof}
The contraction identities follow from \eqref{eq:full-scalar-module}, and the divergence identities follow from Lemma~\ref{lem:scalar-divergence}.
At the centered bubble,
\[
\pa_a\widetilde U=-\frac{(N-2)x_a}{D}\widetilde U,\qquad
\pa_{ab}\widetilde U=
\left[\frac{N(N-2)x_ax_b}{D^2}
-\frac{(N-2)\delta_{ab}}D\right]\widetilde U.
\]
Substitute these expressions in
\[
\widetilde S[h]=h_{ab}\pa_{ab}\widetilde U
+(\pa_ah_{ab})\pa_b\widetilde U
+c_N(\pa_a\pa_bh_{ab})\widetilde U.
\]
The trace term vanishes.
The remaining three terms are those in \eqref{eq:full-scalar-centered-source}; the formula for their sum follows by linearity.
In the divergence-free specialization, $u=v=\mathsf d_k=0$, so only the first term remains.
\end{proof}

\begin{proposition}
\label{prop:polynomial-geometric-jets}
Use the outward normal.
For a tangential symmetric trace-free tensor $h$ and $g=\exp(\mu h)$,
\[
\begin{gathered}
L_{ab}=-\frac12\pa_tg_{ab},\\
h=-2tP(\bar x),\ \tr P=0\ \Rightarrow\ L_g|_{t=0}=\mu P(\bar x),\\
\pa_th(\bar x,0)=0\ \Rightarrow\ L_g|_{t=0}=0.
\end{gathered}
\]

Let $A\in\operatorname{Sym}^2_0(\R^{N-1})$ and $h_2:=\alpha P_N^{(1)}+\zeta t^2A$, with $\alpha,\zeta\in\R$.
Then
\begin{equation}\label{eq:degree2Weyl}
W^{\mathrm{lin}}_{atbt}
=\frac{\alpha(m+2b_1)-(N-3)\zeta}{N-2}A_{ab}.
\end{equation}
For a general normalized scalar row, take $h_2:=\alpha S_N^{(1)}(b,0)+\zeta t^2A$ and put $u_1:=2+(m+2-4/m)b$.
Then
\begin{equation}\label{eq:full-degree2-curvature}
\begin{aligned}
(\divg h_2)_a&=\alpha u_1(A\bar x)_a,& R^{\mathrm{lin}}&=0,\\
R^{\mathrm{lin}}_{atbt}&=-\zeta A_{ab},&
\operatorname{Ric}^{\mathrm{lin}}_{tt}&=0,\\
\operatorname{Ric}^{\mathrm{lin}}_{ab}
&=[\alpha(u_1-m-2b)-\zeta]A_{ab},\\
W^{\mathrm{lin}}_{atbt}
&=\frac{\alpha(m+2b-u_1)-(N-3)\zeta}{N-2}A_{ab}.
\end{aligned}
\end{equation}
If $|A|=1$ and $h=-2ctA+O(|x|^3)$, the exact metric $\exp(h)$ satisfies at the origin
\begin{equation}\label{eq:linear-normal-Weyl}
W_{aNbN}=c^2\left(\frac{\delta_{ab}}{N-1}-(A^2)_{ab}\right).
\end{equation}
\end{proposition}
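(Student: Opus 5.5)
The plan is to compute everything in the Gaussian normal coordinates already fixed by the tangential form of $h$, and to use the linearized curvature formulas for $g=\delta+\mu h+O(\mu^2)$.

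\textbf{Second fundamental form.} Since $g_{aN}=0$ and $g_{NN}=1$, the Christoffel symbols give $\nabla_{\pa_a}\nu=-\nabla_{\pa_a}\pa_t=-\Gamma^c_{at}\pa_c$ with $\Gamma^c_{at}=\tfrac12g^{cd}\pa_tg_{ad}$. Hence $L_{ab}=-\tfrac12\pa_tg_{ab}$. For $g=\exp(\mu h)$ with $h=-2tP$, the formula for the derivative of the matrix exponential (as in Lemma~\ref{lem:polynomial-boundary-geometry}) gives $\pa_tg|_{t=0}=\mu\pa_th|_{t=0}=-2\mu P$, because $h(\bar x,0)=0$. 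Therefore $L=\mu P$. If $\pa_th(\bar x,0)=0$, the same derivative formula shows that $\pa_t\exp(\mu h)$ vanishes at $t=0$, so $L=0$.

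\textbf{Linearized curvature of $h_2$.} I will use $R^{\mathrm{lin}}_{ijkl}=\tfrac12(\pa_j\pa_kh_{il}+\pa_i\pa_lh_{jk}-\pa_i\pa_kh_{jl}-\pa_j\pa_lh_{ik})$.

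\begin{enumerate}[label=\textup{(\alph*)},leftmargin=*]
\item For the $t^2A$ part, only $\pa_t^2$ survives. This gives $R^{\mathrm{lin}}_{atbt}=-\tfrac12\pa_t^2(\zeta t^2A_{ab})=-\zeta A_{ab}$, and its tangential contributions vanish at the origin because they carry factors of $t$.
\item The part $\alpha S^{(1)}_N(b,0)$ is quadratic in $\bar x$ and independent of $t$, so it contributes nothing to $R_{atbt}$. Its tangential curvature has constant second derivatives.
\item Contracting, $\operatorname{Ric}_{ab}=\tfrac12(\pa_c\pa_ah_{cb}+\pa_c\pa_bh_{ca}-\Delta h_{ab}-\pa_a\pa_b\tr h)$ for trace-free $h$. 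Here I use $\divg h=\alpha u_1A\bar x$ from Lemma~\ref{lem:scalar-divergence}, and $\Delta_{\bar x}S^{(1)}_N(b,0)=(2m+4b)A$, which follows from a direct computation with $s A+b(\bar x\otimes A\bar x+A\bar x\otimes\bar x)$ minus its trace. This yields $\alpha(u_1-m-2b)A_{ab}$, and the $t^2A$ part adds $-\zeta A_{ab}$.
\item $\operatorname{Ric}_{tt}=-\tfrac12\pa_t^2\tr h=0$ by trace-freeness. Since $\operatorname{Ric}_{ab}\propto A$ is trace-free and $\operatorname{Ric}_{tt}=0$, we get $R^{\mathrm{lin}}=0$.
\item The Weyl decomposition gives $W_{atbt}=R_{atbt}-\tfrac1{N-2}(\operatorname{Ric}_{ab}+\operatorname{Ric}_{tt}\delta_{ab})$, the scalar term being zero. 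This is $-\zeta A-\tfrac1{N-2}[\alpha(u_1-m-2b)-\zeta]A$, which equals \eqref{eq:full-degree2-curvature}.
\end{enumerate}

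Specializing to $b=b_1$, divergence-freeness gives $u_1=0$ and hence \eqref{eq:degree2Weyl}. The main bookkeeping risk is in step (c): the sign conventions and the Laplacian coefficient $2m+4b$, which I will double-check on $m=2$.

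\textbf{Exact quadratic Weyl term.} Write $g_{ab}=\delta-2ctA+2c^2t^2A^2+O(|x|^3)$. The order-$t^2$ part of $h$ is absent, so in $\exp(h)$ only the quadratic exponential term contributes at second order. Gauss–Codazzi-type formulas for the normal metric $dt^2+g_t$ give
\[
R_{atbt}=-\tfrac12\pa_t^2g_{ab}+\tfrac14(\pa_tg\,g^{-1}\pa_tg)_{ab}=-2c^2A^2+c^2A^2=-c^2A^2.
\]
The tangential curvature at the origin is the Gauss term $L_{ac}L_{bd}-L_{ad}L_{bc}$ with $L=cA$; the tangential second derivatives vanish at order $|x|^2$ because $h=-2ctA+O(|x|^3)$.

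Then $\operatorname{Ric}_{tt}=-c^2\tr A^2=-c^2$, and $\operatorname{Ric}_{ab}=-c^2A^2_{ab}+(\text{Gauss: }\tr L\,L-L^2=-c^2A^2)$, giving $-2c^2A^2$. The scalar curvature is $R=-2c^2-2c^2=-4c^2$, up to the Gauss contribution $-c^2$.

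Assembling $W_{atbt}=R_{atbt}-\tfrac1{N-2}(\operatorname{Ric}_{ab}+\operatorname{Ric}_{tt}\delta_{ab})+\tfrac{R}{(N-1)(N-2)}\delta_{ab}$ should produce $c^2(\delta_{ab}/(N-1)-A^2_{ab})$. A consistency check is that $W_{atbt}$ must be trace-free in $a,b$, which forces the $\delta$ coefficient once the $A^2$ coefficient $-c^2$ is known.

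I expect the main obstacle to be exactly this careful tracking of the Gauss contributions and normalizations. Using the trace-free constraint to fix the $\delta_{ab}$ coefficient should give a robust shortcut.
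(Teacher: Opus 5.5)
Your derivation of $L_{ab}=-\tfrac12\pa_tg_{ab}$ and of the two boundary consequences is correct. So is your linearized curvature of $h_2$, including $\Delta h_2=[\alpha(2m+4b)+2\zeta]A$, $\operatorname{Ric}^{\mathrm{lin}}_{ab}=[\alpha(u_1-m-2b)-\zeta]A_{ab}$ and the Schouten subtraction. These parts are essentially the paper's argument.

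The gap is in the exact quadratic Weyl term. There the Gauss-equation contribution to the tangential Ricci curvature has the wrong sign.

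For the slices $\{t=\mathrm{const}\}$ and orthonormal $X,Y$, one has $K_g(X,Y)=K_{\mathrm{slice}}(X,Y)-\bigl(L(X,X)L(Y,Y)-L(X,Y)^2\bigr)$. At the origin the slice is flat to second order and $L=cA$. So in an eigenbasis of $A$ the tangential sectional curvatures are $-c^2A_{aa}A_{bb}$. The Gauss part of $\operatorname{Ric}_{ab}$ is therefore $-(\tr L)L_{ab}+(L^2)_{ab}=+c^2(A^2)_{ab}$, not $\tr L\,L-L^2=-c^2A^2$. You can test this on $\dd t^2+e^{2t}|\dd\bar x|^2$: in an orthonormal frame your sign gives tangential Ricci $m-2$ instead of the correct $-m$.

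With the correct sign, the Gauss term cancels $R_{atbt}=-c^2(A^2)_{ab}$. Hence $\operatorname{Ric}_{ab}=0$, $\operatorname{Ric}_{tt}=-c^2$, $R=-c^2$, and
\[
W_{atbt}=-c^2(A^2)_{ab}+\frac{c^2}{N-2}\delta_{ab}-\frac{c^2}{(N-1)(N-2)}\delta_{ab}
=c^2\Bigl(\frac{\delta_{ab}}{N-1}-(A^2)_{ab}\Bigr),
\]
which is the paper's computation. It diagonalizes $A$ and reads off normal sectional curvatures $-c^2A_{aa}^2$ and tangential ones $-c^2A_{aa}A_{bb}$.

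Your own values do not give the claimed formula:
\begin{itemize}
\item $\operatorname{Ric}_{ab}=-2c^2(A^2)_{ab}$ forces $R=\tr\operatorname{Ric}=-3c^2$. Your stated $-4c^2$ is not consistent even with your own Ricci values.
\item With these values the assembly gives $\frac{N-4}{N-2}\,c^2\bigl(\delta_{ab}/(N-1)-(A^2)_{ab}\bigr)$, not the target.
\end{itemize}

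The trace-free shortcut you propose can neither repair nor detect this. It fixes the $\delta_{ab}$ coefficient only after the $A^2$ coefficient of $W_{atbt}$ is known. That coefficient equals $-c^2$ exactly when the trace-free part of $\operatorname{Ric}_{ab}$ vanishes, which your computation denies. Moreover, the incorrect result above is itself trace-free, so the check would pass.
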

\begin{proof}
The boundary identities follow from
\[
L_{ab}=-\frac12\pa_tg_{ab}
\]
and \eqref{eq:umbilic-polynomial-condition}.
For $h_2=\alpha S_N^{(1)}(b,0)+\zeta t^2A$, direct differentiation shows that
\[
\divg h_2=\alpha u_1A\bar x,\qquad
\Delta h_2=[\alpha(2m+4b)+2\zeta]A.
\]
Since $\tr A=0$, the double divergence vanishes even when $u_1\ne0$.
The first five identities in \eqref{eq:full-degree2-curvature} follow from the flat curvature linearization; the last follows by subtracting the Schouten tensor.
For the divergence-free specialization $b=b_1$ and $u_1=0$, which recovers \eqref{eq:degree2Weyl} without discarding a divergence term in the larger family.

For the linear normal term, the exponential metric has at the origin
\[
R_{aNbN}=-c^2(A^2)_{ab},\qquad
\operatorname{Ric}_{ab}=0,\qquad
\operatorname{Ric}_{NN}=-c^2,\qquad R=-c^2.
\]
To check these formulas, diagonalize $A$ and use the two-jet of $\dd t^2+\sum_a \exp(-2ctA_{aa})(\dd x^a)^2$.
Its normal sectional curvatures are $-c^2A_{aa}^2$ and its tangential sectional curvatures are $-c^2A_{aa}A_{bb}$; the trace-free condition cancels the tangential Ricci terms.
Subtracting the Schouten part proves \eqref{eq:linear-normal-Weyl}.
Terms of degree at least three do not affect the metric two-jet.
\end{proof}

In the nonumbilic constructions, $A=P(0)$ determines the leading trace-free second fundamental form.
The nonzero Weyl coefficients for all four scalar families are checked in Section~\ref{sec:applications}.
For $A=2^{-1/2}\diag(1,-1,0,\ldots,0)$, \eqref{eq:linear-normal-Weyl} is nonzero whenever $c\ne0$.
A change of curvature-sign convention changes both curvature formulas by a sign.

For the angular calculations, write
\[
\bar x=\sqrt{s}\,\omega,\qquad
\omega=(\omega_1,\ldots,\omega_m)\in\Sph^{m-1},\qquad
\sum_{i=1}^m\omega_i^2=1.
\]
Thus $\omega_i$ is the $i$-th coordinate on the tangential unit sphere, and $\dd\omega$ denotes its Euclidean surface measure.
We use
\[
(b)_0:=1,\qquad (b)_j:=\prod_{r=0}^{j-1}(b+r),\qquad
\av f:=\frac1{|\Sph^{m-1}|}\int_{\Sph^{m-1}}f\dd\omega.
\]
\begin{lemma}\label{lem:angular-gram}
A spherical monomial with an odd exponent has average zero, while
\begin{equation}\label{eq:common-spherical-moment}
\av\prod_{i=1}^m\omega_i^{2r_i}
=\frac{\prod_{i=1}^m(1/2)_{r_i}}{(m/2)_{r_1+\cdots+r_m}},
\qquad r_i\in\{0,1,2,\ldots\}.
\end{equation}
For $|A|=1$, let $S:=S_N^{(k)}(b,e)$ and $\widetilde S:=S_N^{(l)}(\widetilde b,\widetilde e)$ on the unit sphere.
Put $q_A:=2/[m(m+2)]$, and let $\mathsf r_k,\widetilde{\mathsf r}_l$ be the corresponding radial coefficients defined above.
Then
\begin{align*}
\av\langle S,\widetilde S\rangle
={}&1+\frac{2(b+\widetilde b)+2b\widetilde b}{m}\\
&+q_A\left[e+\widetilde e+2b\widetilde b
+2(b\widetilde e+\widetilde b e)+e\widetilde e
-\frac{(2b+e)(2\widetilde b+\widetilde e)}m\right],\\
\av\langle S\bar x,\widetilde S\bar x\rangle
={}&\frac{(1+b)(1+\widetilde b)}m
+q_A\bigl[(1+b)\widetilde{\mathsf r}_l+(1+\widetilde b)\mathsf r_k
+\mathsf r_k\widetilde{\mathsf r}_l\bigr].
\end{align*}
For the divergence-free specialization, define
\[
C_{kl}:=\av\langle P_N^{(k)},P_N^{(l)}\rangle,\quad
R_{kl}:=\av\langle P_N^{(k)}\bar x,P_N^{(l)}\bar x\rangle,\quad
G_{kl}:=\av\langle\nabla_{\bar x}P_N^{(k)},\nabla_{\bar x}P_N^{(l)}\rangle.
\]
The first two are given by the preceding formulas with $(b,e)=(b_k,e_k)$ and $(\widetilde b,\widetilde e)=(b_l,e_l)$.

Furthermore,
\begin{equation}\label{a:eq:Gkl}
2G_{kl}=d(d+m-2)C_{kl}-\vartheta_kC_{k-1,l}-\vartheta_lC_{k,l-1},
\qquad d:=2k+2l,
\end{equation}
with terms containing a negative index omitted.
For the divergence-free generators, $C_{kl}$, $R_{kl}$ and $G_{kl}$ are rational functions of $m$.
\end{lemma}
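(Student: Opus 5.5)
The plan is to prove the three claims of Lemma~\ref{lem:angular-gram} in order: the moment formula, the two Gram averages, and then \eqref{a:eq:Gkl}.

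\emph{Moment formula.} I would use the Gaussian trick. Integrate $\prod_i x_i^{2r_i}e^{-|x|^2}$ over $\R^m$ in two ways: as a product of one-dimensional Gaussian moments, and in polar coordinates. The one-dimensional moment $\int x^{2r}e^{-x^2}\dd x$ equals $\Gamma(r+\tfrac12)=\sqrt\pi\,(1/2)_r$. The polar integral is the spherical average times $|\Sph^{m-1}|\int_0^\infty\rho^{2|r|+m-1}e^{-\rho^2}\dd\rho$, which equals the average times $|\Sph^{m-1}|\Gamma(|r|+m/2)/2$. Dividing by the case $r=0$ leaves exactly $(m/2)_{|r|}$ in the denominator. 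Odd monomials average to zero because of the reflection $\omega_i\mapsto-\omega_i$.

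\emph{Gram averages.} On the unit sphere I would write $S=A+b(\omega\otimes A\omega+A\omega\otimes\omega)+eQ_A\,\omega\otimes\omega-\tfrac{2b+e}mQ_AI$, with $Q_A=\omega^{\mathsf T}A\omega$, and expand $\langle S,\widetilde S\rangle$ term by term. Using $|A|=1$ and $\tr A=0$, every contraction reduces to one of three scalars: $1$, $|A\omega|^2$, or $Q_A^2$. For instance, $\langle A,\omega\otimes A\omega\rangle=|A\omega|^2$, $\langle\omega\otimes A\omega,\omega\otimes A\omega\rangle=|A\omega|^2$, $\langle\omega\otimes A\omega,A\omega\otimes\omega\rangle=Q_A^2$, and $\langle I,\omega\otimes\omega\rangle=1$. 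I would then average using \eqref{eq:common-spherical-moment}, diagonalizing $A$ by orthogonal invariance:
\[
\av|A\omega|^2=\frac{|A|^2}m=\frac1m,\qquad
\av Q_A^2=\frac{2|A|^2+(\tr A)^2}{m(m+2)}=q_A.
\]
Collecting the coefficients of $1/m$ and $q_A$ should give the first formula. The second formula follows in the same way from the first identity of Lemma~\ref{lem:scalar-centered-source}: with $s=1$ it reads $S\omega=(1+b)A\omega+\mathsf r_kQ_A\omega$, so the inner product only involves $|A\omega|^2$ and $Q_A^2$, using $\langle A\omega,\omega\rangle=Q_A$ and $|\omega|=1$. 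I expect this bookkeeping to be mechanical. The main risk is missing a cross term in the first formula, so I would check the result at $b=e=\widetilde b=\widetilde e=0$, where the average must be $1$.

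\emph{The identity for $G_{kl}$.} This is the substantive step, and I would base it on Green's identity on the unit ball $B^m$. Put $F:=\langle P^{(k)},P^{(l)}\rangle$, which is homogeneous of degree $d=2k+2l$. Two ingredients are needed.
\begin{enumerate}
\item Euler's relation for homogeneous $F$ gives $\int_{B^m}\Delta F=\int_{\Sph^{m-1}}\pa_\rho F=d\int_{\Sph^{m-1}}F$.
\item Since $\Delta F$ is homogeneous of degree $d-2$, its integral over the ball is $\int_{B^m}\Delta F=\frac1{d+m-2}\int_{\Sph^{m-1}}\Delta F$.
\end{enumerate}
Combining these gives $\av\Delta F=d(d+m-2)\av F$. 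On the other hand, pointwise
\[
\Delta F=2\langle\nabla P^{(k)},\nabla P^{(l)}\rangle+\langle\Delta P^{(k)},P^{(l)}\rangle+\langle P^{(k)},\Delta P^{(l)}\rangle.
\]
Substituting $\Delta P^{(k)}=\vartheta_kP^{(k-1)}$ from \eqref{a:eq:Acontractions} and taking spherical averages yields \eqref{a:eq:Gkl}. Here $G_{kl}$ is the full Euclidean gradient pairing, taken in $\bar x$ and restricted to the sphere, rather than the tangential gradient on $\Sph^{m-1}$. This is the interpretation I would check carefully, together with the index convention: the $k=0$ term drops out because $\Delta A=0$.

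\emph{Rationality.} For the divergence-free generators, $b_k,e_k$ are rational in $m$ by \eqref{a:eq:Achain}, and so are $q_A$, $\mathsf r_k$ and $\vartheta_k$. Hence $C_{kl}$ and $R_{kl}$ are rational functions of $m$, and $G_{kl}$ is rational by \eqref{a:eq:Gkl}.
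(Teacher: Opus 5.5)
Your proposal is correct and follows the paper's proof. Both arguments compute the spherical moments with Gaussians, expand term by term so that only the averages $1/m$ of $|A\omega|^2$ and $q_A$ of $Q_A^2$ are needed, and apply the product rule to $\Delta\langle P_N^{(k)},P_N^{(l)}\rangle$ together with the fact that the spherical average of $\Delta F$ is $d(d+m-2)$ times that of $F$. The only difference is how you get that last fact: you use the divergence theorem on the unit ball and the homogeneity of $\Delta F$, while the paper splits off the spherical Laplacian and uses that its integral vanishes; the two are equivalent.
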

\begin{proof}
Reflection symmetry makes the odd moments vanish.
Integrating a Gaussian monomial in Cartesian and polar coordinates proves \eqref{eq:common-spherical-moment}.
In particular, $\av|A\omega|^2=1/m$ and $\av Q_A(\omega)^2=q_A$.
Expanding the trace-free formulas for $S$ and $\widetilde S$ and using these two moments proves the displayed inner products.
Substituting $(b,e)=(b_k,e_k)$ and $(\widetilde b,\widetilde e)=(b_l,e_l)$ then determines $C_{kl}$ and $R_{kl}$.

For $G_{kl}$, apply the Laplacian to $\langle P_N^{(k)},P_N^{(l)}\rangle$ and use \eqref{a:eq:Acontractions}.
This scalar polynomial is homogeneous of degree $d=2k+2l$.
The spherical average of its Laplacian is $d(d+m-2)C_{kl}$, since the integral of the spherical Laplacian is zero.
The product rule now yields \eqref{a:eq:Gkl}.
\end{proof}

\subsection{Translation and scale derivatives}
\label{subsec:polynomial-parameter-derivatives}
For the four scalar-flat and minimal-boundary profiles, use the normalized bubbles
\[
\widetilde U_{0,\xi,\eps}:=2^{-(N-2)/2}U_{0,\xi,\eps},
\qquad \widetilde U^{\mathrm{sf}}_{\xi,\eps}:=U^{\mathrm{sf}}_{\xi,\eps}.
\]
At the centered unit bubble, $D=1+s+t^2$ in the minimal-boundary case and $D=s+(1+t)^2$ in the scalar-flat case.
The Jacobi forms remain \eqref{eq:common-Jacobi-form}, with interior potential $N(N+2)D^{-2}$ in the minimal case and boundary potential $N(1+s)^{-1}$ in the scalar-flat case.
For $(\sigma,\eta)\in\{(0,2),(1,0)\}$, define the rescaled quadratic functional
\begin{equation}\label{eq:certificate-functional-normalization}
\widehat F_h:=
\begin{cases}
c_N^{-1}2^{2-N}F_h,&(\sigma,\eta)=(1,0),\\
c_N^{-1}F_h,&(\sigma,\eta)=(0,2).
\end{cases}
\end{equation}
For $z=(\xi,\eps)$, write $\widetilde U:=\widetilde U_z$ and use the projected Jacobi operator $\cL_z:\Sigma_z\to\Sigma_z^*$ defined by \eqref{eq:common-Jacobi-form}.
It is linearized at the geometric bubble $U_z$.
Define the normalized source by
\[
\widetilde S_z[h]:=h_{ab}\widetilde U_{ab}+(\pa_bh_{ab})\widetilde U_a
+c_N(\pa_a\pa_bh_{ab})\widetilde U.
\]
Define the normalized corrector by
\[
Z:=Z_z[h]:=\cL_z^{-1}\widetilde S_z[h]\quad\Leftrightarrow\quad
\begin{cases}
Z\in\Sigma_z,\\
\displaystyle\cB_z(Z,\psi)=\int_{\R^N_+}\widetilde S_z[h]\psi\dd x
&\text{for every }\psi\in\Sigma_z,
\end{cases}
\]
and $\langle \widetilde S_z[h],\cL_z^{-1}\widetilde S_z[h]\rangle:=\int_{\R^N_+}\widetilde S_z[h]Z\dd x$.
Thus the inverse is taken on the complement of the translation and scale kernel, with the source formed from $\widetilde U_z$.

The normalized source and the geometric scalar correction satisfy
\begin{equation}\label{eq:source-corrector-normalization-bridge}
\widetilde S_z[h]=\frac{\widetilde U_z}{U_z}S_z[h],\qquad
v_z[h]=-\frac{U_z}{\widetilde U_z}Z_z[h].
\end{equation}
The ratios are constant in space and in $z$.
The sign follows from \eqref{eq:linear-corrector-inverse}, while the energy normalizations are \eqref{eq:certificate-functional-normalization}.

\begin{proposition}\label{sec:certificate-normalization}
For the scalar-flat or minimal-boundary equation, the normalized quadratic functional satisfies
\begin{equation}\label{a:eq:F}
\begin{aligned}
\widehat F_h={}&\frac{c_N^{-1}}{2}\int(h^2)_{ab}\widetilde U_a\widetilde U_b
-\frac14\int|\pa h|^2\widetilde U^2
+\frac12\int|\operatorname{div}h|^2\widetilde U^2\\
&+\int h_{ak}(\pa_bh_{bk})\pa_a(\widetilde U^2)
-c_N^{-1}\langle \widetilde S_z[h],\cL_z^{-1}\widetilde S_z[h]\rangle.
\end{aligned}
\end{equation}
The rescaling \eqref{eq:certificate-functional-normalization} preserves critical points and Hessian inertia.
The normalized source retains the kernel orthogonality of Lemma~\ref{lem:source-kernel-orthogonality}.
\end{proposition}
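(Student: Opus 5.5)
The proof is a rescaling of \eqref{eq:common-Schur-formula} combined with \eqref{eq:H-quadratic-cutoff-exact}. Set $\lambda:=\widetilde U_z/U_z$. This ratio is a constant, equal to $2^{-(N-2)/2}$ in the minimal case and $1$ in the scalar-flat case. Then $U_z=\lambda^{-1}\widetilde U_z$, and every term of $\mathcal Q[h](z)$ is quadratic in $U_z$, so $\mathcal Q[h](z)=\lambda^{-2}\mathcal Q_{\widetilde U}[h]$. Here $\mathcal Q_{\widetilde U}$ denotes the four integrals with $U_z$ replaced by $\widetilde U_z$. Since $\lambda^{-2}=2^{N-2}$ or $1$, we have $c_N^{-1}\lambda^{2}\mathcal Q[h]=c_N^{-1}\mathcal Q_{\widetilde U}[h]$. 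Distributing $c_N^{-1}$ over the four integrals gives exactly the first four terms of \eqref{a:eq:F}. For the fourth term we use $h_{ik}h_{jk}=(h^2)_{ij}$, the tangentiality $h_{iN}=0$ (so only tangential indices survive), and $|\pa_i h_{ik}|^2=|\divg h|^2$.

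For the corrector term we use the linearity of the source in the bubble. We have $S_z[h]=\lambda^{-1}\widetilde S_z[h]$ pointwise, hence $\Lambda_z[h]=\lambda^{-1}\widetilde S_z[h]$ as functionals on $\Sigma$ by \eqref{eq:polynomial-source-functional}. The operator $\cL_z$ is unchanged, since the Jacobi form and the constraints $\mathfrak l_{z,\alpha}$ are built from $U_z$ and do not depend on the normalization. The functionals $\mathfrak l_{z,\alpha}$ have $\Phi_{z,\alpha}$ rescaled only by a constant, so $\Sigma_z$ is unchanged. Therefore
\[
\Lambda_z[h](\cL_z^{-1}\Lambda_z[h])=\lambda^{-2}\langle\widetilde S_z[h],\cL_z^{-1}\widetilde S_z[h]\rangle,
\]
which also confirms the bridge \eqref{eq:source-corrector-normalization-bridge}. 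Multiplying \eqref{eq:common-Schur-formula} by $c_N^{-1}\lambda^2$ then yields \eqref{a:eq:F}, and this agrees with \eqref{eq:certificate-functional-normalization}. The only real check is that $\lambda^2=2^{2-N}$ in Type I, which matches the prefactor, and that no $z$-dependence enters. The latter holds because both bubbles carry the same scale factor for all $z$.

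The remaining claims are immediate. $\widehat F_h$ is a positive constant multiple of $F_h$, so the two functions have the same critical points, and their Hessians differ by a positive factor, which preserves inertia. Kernel orthogonality of $\widetilde S_z[h]$ follows from \eqref{eq:polynomial-source-kernel-orthogonality}, since it is a constant multiple of $\Lambda_z[h]$. The polynomial cutoff limits are inherited from Proposition~\ref{prop:scalar-elimination}, since rescaling commutes with $R\to\infty$.

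The main obstacle is pure bookkeeping: keeping the signs and the placement of $c_N^{-1}$ consistent in each term. In particular, the third and fourth integrals of \eqref{eq:H-quadratic-cutoff-exact} lose their $c_N$ factor exactly, while the first term acquires the factor $c_N^{-1}$.
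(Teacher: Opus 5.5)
Your proposal is correct and follows the paper's own proof. Rescaling the bubble by the constant $\lambda=\widetilde U_z/U_z$ multiplies each raw term by $\lambda^2$, and by linearity of the source it also multiplies the inverse pairing by $\lambda^2$. The Jacobi form and its constraints stay at the geometric bubble, and division by $c_N$ then gives \eqref{a:eq:F}. Because the normalizing factor is positive and independent of $z$, it settles the critical-point, inertia and kernel-orthogonality claims. One minor bookkeeping slip: the identity $h_{ik}h_{jk}=(h^2)_{ij}$ belongs to the first term, not the fourth, and the second integral also loses its factor $c_N$.
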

\begin{proof}
Substitute the constant multiples defining $\widetilde U$ into the raw coefficient $\mathcal Q[h]$ defined by \eqref{eq:H-quadratic-cutoff-exact} and the scalar correction of Proposition~\ref{prop:scalar-elimination}.
Multiplying the bubble by a constant multiplies each raw term by the square of that constant.
The source is linear in the bubble, so its inverse pairing has the same scaling.
Dividing by $c_N$, we obtain \eqref{a:eq:F}.
The Jacobi form is unchanged, since its potentials are those of the original geometric bubble.
The factors in \eqref{eq:certificate-functional-normalization} are positive and independent of the bubble parameters, proving the remaining claims.
\end{proof}

For logarithmic scale $\ell:=\log\eps$,
\begin{equation}
\partial_\ell=\eps\partial_\eps,\qquad
\partial_{\ell\ell}\widehat F_h
=\eps^2\partial_{\eps\eps}\widehat F_h+\eps\partial_\eps\widehat F_h.
\end{equation}
At a stationary scale the two second derivatives have the same sign; at scale one they agree.

\begin{lemma}
\label{lem:polynomial-integrability-scaling}
Let $h$ be a symmetric, tangential, trace-free polynomial tensor with $\deg h\le d_*$ and
\begin{equation}\label{a:eq:degreecondition}N>2d_*+2.
\end{equation}
All terms in \eqref{a:eq:F} are finite and depend $C^2$ on the bubble parameters, uniformly on compact parameter sets.

For a decomposition into homogeneous tangential, trace-free tensors $h_i$ (with repeated degrees allowed), put $\ell:=\log\eps$ and
\[
h=\sum_i c_i h_i,\qquad \deg h_i=d_i,\qquad
\mathbf c(\ell):=\exp(\ell\mathbf D)\mathbf c,\qquad
\mathbf c:=(c_i),\qquad \mathbf D:=\diag(d_i).
\]
At the centered unit bubble, write $\widetilde U:=\widetilde U_{(0,1)}$ and $\cL:=\cL_{(0,1)}$.
Set $S_i:=\widetilde S[h_i]$ and define the symmetric pairing
\begin{equation}\label{eq:coefficient-matrix-pairing}
\begin{aligned}
|\Sph^{N-2}|\mathcal M(h_i,h_j):={}&
\frac1{2c_N}\int(h_i\nabla\widetilde U)\cdot(h_j\nabla\widetilde U)
-\frac14\int\langle\pa h_i,\pa h_j\rangle\widetilde U^2\\
&+\frac12\int(\divg h_i)\cdot(\divg h_j)\widetilde U^2\\
&+\frac12\int\bigl[h_i(\divg h_j)+h_j(\divg h_i)\bigr]
  \cdot\nabla(\widetilde U^2)
-c_N^{-1}\langle S_i,\cL^{-1}S_j\rangle.
\end{aligned}
\end{equation}
All integrals are over $\R^N_+$.
The same formula applies when either row is replaced by a tangential derivative of that row.
Let $\mathbf M^{\rm raw}_{ij}$ denote the sum of the four integral terms divided by $|\Sph^{N-2}|$.
The corrected matrix has entries
\begin{equation}\label{eq:coefficient-matrix-entries}
\mathbf M_{ij}:=\mathcal M(h_i,h_j)
=\mathbf M^{\rm raw}_{ij}
-\frac{c_N^{-1}}{|\Sph^{N-2}|}\langle S_i,\cL^{-1}S_j\rangle.
\end{equation}
Then
\begin{equation}\label{a:eq:scalingmatrix}
\begin{aligned}
\frac{\widehat F_h(0,\eps)}{|\Sph^{N-2}|}&=\mathbf c(\ell)^{\mathsf T}\mathbf M\mathbf c(\ell),\\
\frac{\widehat F_{h,\ell}(0,\eps)}{|\Sph^{N-2}|}
&=\mathbf c(\ell)^{\mathsf T}(\mathbf D\mathbf M+\mathbf M\mathbf D)\mathbf c(\ell),\\
\frac{\widehat F_{h,\ell\ell}(0,\eps)}{|\Sph^{N-2}|}
&=\mathbf c(\ell)^{\mathsf T}(\mathbf D^2\mathbf M+2\mathbf D\mathbf M\mathbf D+\mathbf M\mathbf D^2)\mathbf c(\ell),
\qquad \ell=\log\eps.
\end{aligned}
\end{equation}
The translation Hessian is $\eps^{-2}$ times a quadratic form in $\mathbf c(\ell)$.
\end{lemma}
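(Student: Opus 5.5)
Finiteness and $C^2$ dependence come first, from decay bounds. Under \eqref{a:eq:degreecondition}, each integrand of the four raw terms in \eqref{a:eq:F}, together with its first two parameter derivatives, is bounded by $C(1+|x|)^{2d_*+2-2N}$, which is integrable on $\R^N_+$. This uses $|\pa_z^\alpha\pa_x^j\widetilde U_z|\le C(1+|x|)^{2-N-j}$, as in the proof of Lemma~\ref{lem:source-kernel-orthogonality}. The inverse pairing $\langle\widetilde S_z[h],\cL_z^{-1}\widetilde S_z[h]\rangle$ is finite and $C^2$ in $z$ by Proposition~\ref{prop:scalar-elimination}. There the source lies in $\Sigma^*$ with two parameter derivatives, and $\cL_z^{-1}$ is $C^2$ in $z$ through the transported operators $A_z$. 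By \eqref{eq:source-corrector-normalization-bridge} the normalized source differs from $S_z[h]$ only by a constant factor. Uniformity on compact sets is inherited from those statements.

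The scaling identities \eqref{a:eq:scalingmatrix} follow from bilinearity and dilation covariance. Since $h\mapsto\widetilde S_z[h]$ and $\cL_z^{-1}$ are linear, \eqref{a:eq:F} is a quadratic form in $h$. Substituting $h=\sum_ic_ih_i$ therefore gives $\widehat F_h(0,1)/|\Sph^{N-2}|=\mathbf c^{\mathsf T}\mathbf M\mathbf c$ with $\mathbf M_{ij}=\mathcal M(h_i,h_j)$. This is the polarization of \eqref{a:eq:F}. The cross term in the fourth integral symmetrizes to the $\tfrac12[\cdots]$ expression in \eqref{eq:coefficient-matrix-pairing}, and $\langle S_i,\cL^{-1}S_j\rangle$ is symmetric because $\cB$ is.

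Next comes the dilation. Let $\delta_\eps x:=x/\eps$. Both bubbles satisfy $\widetilde U_{0,\eps}(x)=\eps^{-(N-2)/2}\widetilde U(x/\eps)$, and a homogeneous row satisfies $h_i(x)=\eps^{d_i}h_i(x/\eps)$. Changing variables $x=\eps y$ in each raw integral, the factors $\eps^{-(N-2)}$, $\eps^N$ and the two derivative factors cancel. Each term is therefore multiplied exactly by $\eps^{d_i+d_j}$. For the inverse pairing, I will check that the unitary rescaling $w\mapsto\eps^{-(N-2)/2}w(\cdot/\eps)$ preserves $\cB$. It also maps $\Sigma_{(0,1)}$ onto $\Sigma_{(0,\eps)}$: the weights $U^{4/(N-2)}$ and $U^{2/(N-2)}$ rescale compatibly, and the kernel functions map to multiples of each other. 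Under this rescaling, $\widetilde S_{(0,\eps)}[h_i]$ is $\eps^{d_i}$ times the transported $S_i$, with the correct dual weight. Hence $\langle\cdot,\cL^{-1}\cdot\rangle$ scales by $\eps^{d_i+d_j}$. This gives $\widehat F_h(0,\eps)/|\Sph^{N-2}|=\mathbf c(\ell)^{\mathsf T}\mathbf M\mathbf c(\ell)$. Differentiating $\mathbf c(\ell)=e^{\ell\mathbf D}\mathbf c$ once and twice in $\ell$ yields the other two lines of \eqref{a:eq:scalingmatrix}.

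For translations, $\pa_{\xi}$ of a term can be moved onto $h$, since translating the bubble is the same as translating $h$ in the opposite direction. Two $\xi$-derivatives thus replace rows by tangential derivatives, which are again tangential, trace-free and homogeneous of degree $d_i-1$. This is why the statement allows $\mathcal M$ to take derivatives of rows. Applying the same dilation argument with degrees $d_i-1$ and $d_j-1$ produces $\eps^{-2}$ times a quadratic form in $\mathbf c(\ell)$.

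The main obstacle is checking that the projected complement and the orthogonality functionals are dilation-equivariant, so that the inverse pairing really scales homogeneously. I would verify this first, directly from \eqref{eq:common-orthogonality-functionals}. If the equivariance were only up to kernel directions, I would fall back on Corollary~\ref{cor:boundary-gauge-directions}: complement independence together with kernel orthogonality \eqref{eq:polynomial-source-kernel-orthogonality} makes the pairing insensitive to the choice of complement.
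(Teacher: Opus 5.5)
Your proposal is correct and follows essentially the same route as the paper. In both, finiteness and $C^2$ dependence are inherited from Proposition~\ref{prop:scalar-elimination}, and the scaling identities come from the common dilation covariance of the bubble, Jacobi form, orthogonality constraints and sources, together with uniqueness of the projected corrector. The $\eps^{-2}$ factor in the translation Hessian comes, in both, from the tangential shift of the tensor.

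Your explicit check that $w\mapsto\eps^{-(N-2)/2}w(\cdot/\eps)$ preserves $\cB$ and maps $\Sigma_{(0,1)}$ onto $\Sigma_{(0,\eps)}$ is exactly the equivariance the paper asserts, so the fallback to Corollary~\ref{cor:boundary-gauge-directions} is unnecessary. One small imprecision: the second/zeroth translation terms pair degrees $d_i-2$ and $d_j$, not $d_i-1$ and $d_j-1$. This still gives the total factor $\eps^{d_i+d_j-2}$, so the conclusion is unaffected.
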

\begin{proof}
Proposition~\ref{prop:scalar-elimination} establishes integrability, cutoff convergence, and $C^2$ dependence of the corrected coefficient under \eqref{a:eq:degreecondition}; its proof also establishes these properties
for the separate metric and source terms.
Proposition~\ref{sec:certificate-normalization} preserves them under its fixed positive rescaling.
Rescale the spatial variables by $\eps$.
Homogeneity changes the coefficient of $h_i$ to $c_i\eps^{d_i}$.
The bubble, Jacobi form and orthogonality constraints have the same covariance, so the first identity in \eqref{a:eq:scalingmatrix} follows from uniqueness in \eqref{eq:projected-linear-corrector}.
The other two follow by differentiation in $\ell$.
The translated tensor depends on $\xi/\eps$; two translation derivatives contribute the factor $\eps^{-2}$.

The evaluation of the pairing and the assembly of the coefficient matrices are given in Proposition~\ref{prop:endpoint-matrix-assembly}.
\end{proof}

\begin{proposition}
\label{prop:scalar-translated-sources}
Let $h$ belong to \eqref{eq:full-scalar-profile}, with $A\ne0$ and scalar coefficients independent of the entries of $A$.
Assume the pairings and their first two parameter derivatives are finite.
For either prescribed pair $(0,2)$ or $(1,0)$, the centered source in \eqref{eq:full-scalar-centered-source} has tangential harmonic degree two.
Its first translated sources have only degrees one and three; only the degree-two component of each second translated source can pair with the centered source.
At every centered bubble, there are scalar functions $\alpha,\gamma$ such that
\begin{align*}
D_\xi^2\widehat F_h(0,\eps)&=\alpha(\eps)|A|^2\Id+\gamma(\eps)A^2,\\
D_\xi\widehat F_h(0,\eps)&=0,\qquad
D_\xi D_\eps\widehat F_h(0,\eps)=0.
\end{align*}
For a unit generating matrix, choose orthonormal tangential coordinates in which
\[
A^2=\diag(\lambda_1,\ldots,\lambda_m),\qquad
\lambda_a\ge0,\qquad \sum_{a=1}^m\lambda_a=1.
\]
Thus $\lambda_a$ is the square of an eigenvalue of $A$, and the translation eigenvalue in direction $\mathbf e_a$, the $a$th coordinate unit vector, is $\alpha(\eps)+\lambda_a\gamma(\eps)$.
For rows $h_i$ formed with this $A$, define
\begin{equation}\label{eq:translation-coefficient-matrices}
\begin{split}
(\mathbf T_{\lambda_a})_{ij}:={}&2\mathcal M(\pa_a h_i,\pa_a h_j)
+\mathcal M(\pa_{aa}h_i,h_j)+\mathcal M(h_i,\pa_{aa}h_j),\\
&a=1,\ldots,m.
\end{split}
\end{equation}
For fixed row coefficients and a fixed scalar-flat or minimal-boundary equation, these matrices depend on $A$ only through the indicated eigenvalue and are affine in that eigenvalue.
If $\lambda_a\ne\lambda_b$, they determine
\[
\mathbf T_\lambda:=
\frac{\lambda_b-\lambda}{\lambda_b-\lambda_a}\mathbf T_{\lambda_a}
+\frac{\lambda-\lambda_a}{\lambda_b-\lambda_a}\mathbf T_{\lambda_b}.
\]
With $\mathbf c(\ell)$ as in \eqref{a:eq:scalingmatrix}, the explicit coefficient formulas are
\begin{equation}\label{eq:explicit-alpha-gamma}
\begin{aligned}
\alpha(\eps)&=|\Sph^{N-2}|\eps^{-2}
\mathbf c(\ell)^{\mathsf T}
\frac{\lambda_b\mathbf T_{\lambda_a}-\lambda_a\mathbf T_{\lambda_b}}
{\lambda_b-\lambda_a}\mathbf c(\ell),\\
\gamma(\eps)&=|\Sph^{N-2}|\eps^{-2}
\mathbf c(\ell)^{\mathsf T}
\frac{\mathbf T_{\lambda_b}-\mathbf T_{\lambda_a}}
{\lambda_b-\lambda_a}\mathbf c(\ell).
\end{aligned}
\end{equation}
The indices $i,j$ label homogeneous rows of the chosen family, so $\mathbf T_\lambda$ is a matrix acting on their coefficients.
If $A^2=I/m$, its translation Hessian determines only $\alpha(\eps)+\gamma(\eps)/m$.
The two coefficients can still be evaluated separately by forming the same rows with a unit generating matrix having two distinct squared eigenvalues, as in Proposition~\ref{prop:endpoint-matrix-assembly}.
The full inverse contribution is given by \eqref{eq:full-source-Hessian-pairing}, with the explicit source components \eqref{eq:full-scalar-first-source} and
\eqref{eq:full-scalar-second-source-projection}.
\end{proposition}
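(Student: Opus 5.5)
The plan is to exploit the $O(m)$-equivariance of the whole construction. Each row $t^pS_N^{(k)}(b,e)$ is built from $A$ by a natural polynomial formula: $Q_A$, $A\bar x$ and $\bar x\otimes\bar x$, with $s$-dependent coefficients. So for $R\in O(m)$ acting tangentially, the profile $h_{RAR^{\mathsf T}}$ is the pullback of $h_A$, and the source depends linearly on $A$. The model bubbles, the Jacobi form, the constraints $\mathfrak l_{z,\alpha}$ and hence $\cL_z^{-1}$ commute with tangential rotations about $\xi$.

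\textbf{Harmonic degrees.} By \eqref{eq:full-scalar-centered-source}, the centered source is $Q_A$ times a radial function of $(s,t)$, so it has tangential harmonic degree two. For the translated sources, I will differentiate $\widetilde S_{(\xi,\eps)}[h]$ in $\xi$ at $\xi=0$; only the bubble is translated, so this is $\widetilde S$ applied with $\pa_a\widetilde U$ in place of $\widetilde U$ (equivalently, the row is replaced by $-\pa_ah$). Each term is then a product of $Q_A$, $A\bar x$, $\bar x$ and radial factors of total odd $\bar x$-parity and tensor order matching a vector index. Decomposing $x_aQ_A$ and $(A\bar x)_a$ into spherical harmonics gives degrees one and three only. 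Second derivatives give even parity, hence degrees $0,2,4$. Since $\cL^{-1}$ preserves tangential harmonic degree (rotation invariance at $\xi=0$), only the degree-two part pairs with the centered source.

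\textbf{Gradient and mixed derivatives.} $D_\xi\widehat F_h(0,\eps)$ pairs degree one or three with degree two in the inverse form, together with raw terms of odd parity, so it vanishes. Equivalently, the reflection $\bar x\mapsto-\bar x$ fixes $h$ and maps $\xi\mapsto-\xi$, so $\widehat F_h$ is even in $\xi$. The map $\eps\mapsto$ (scaled profile) commutes with this reflection, which gives $D_\xi D_\eps\widehat F_h(0,\eps)=0$. This parity argument is cleaner than pairing, and I will use it, invoking the $C^2$ regularity of Proposition~\ref{prop:scalar-elimination} to differentiate.

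\textbf{Hessian structure.} The map $A\mapsto D_\xi^2\widehat F_h(0,\eps)$ is a symmetric-matrix-valued quadratic form in $A\in\operatorname{Sym}^2_0$, and it is $O(m)$-equivariant. By invariant theory, the only equivariant quadratic maps $\operatorname{Sym}^2_0\to\operatorname{Sym}^2$ are spanned by $|A|^2\Id$ and $A^2$; the candidate $(\tr A)A$ vanishes. This yields $\alpha|A|^2\Id+\gamma A^2$. The quadratic dependence must hold for each fixed scalar row data, which is where the hypothesis that the coefficients are independent of $A$ enters. In an eigenbasis the diagonal entry is $\alpha+\lambda_a\gamma$. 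I will then compute that entry directly by polarizing \eqref{a:eq:scalingmatrix}-type bilinear expansions. Writing $\widehat F_h(\xi e_a,\eps)$ as the quadratic form $\mathcal M$ applied to the translated rows $h_i(\cdot+\xi e_a)$ (translation moved from bubble to tensor) and differentiating twice produces $2\mathcal M(\pa_ah_i,\pa_ah_j)+\mathcal M(\pa_{aa}h_i,h_j)+\mathcal M(h_i,\pa_{aa}h_j)$, which is $\mathbf T_{\lambda_a}$. The scale factor $\eps^{-2}$ and $\mathbf c(\ell)$ come from Lemma~\ref{lem:polynomial-integrability-scaling}. Affinity in $\lambda$ follows from the structure just derived, and solving the two-point linear system gives \eqref{eq:explicit-alpha-gamma}.

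\textbf{Main obstacle.} The delicate point is the translation step. Translating the tensor rather than the bubble requires the cutoff limits and the $\cL_z^{-1}$ equivariance on the varying complement $\Sigma_z$. I would handle this with the transport $T_z$ from the proof of Proposition~\ref{prop:scalar-elimination}, together with the fact that translations map $\Sigma_0$ to $\Sigma_{(\xi,\eps)}$ exactly. A second point is that $\pa_a h_i$ is no longer trace-free-row shaped in the family; here the statement's remark that $\mathcal M$ applies to tangential derivatives of rows is used. The case $A^2=I/m$ is then just the specialization $\lambda=1/m$.
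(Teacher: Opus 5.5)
Your proposal is correct and follows essentially the paper's proof. You move the translation onto the tensor, against the fixed centered bubble and the fixed $\cL_{(0,1)}$. The gradient and mixed block come from reflection parity, the form $\alpha|A|^2\Id+\gamma A^2$ comes from the degree-two $O(m)$-covariants of a trace-free matrix, and $\mathbf T_{\lambda_a}$, its affinity, and \eqref{eq:explicit-alpha-gamma} come from polarization and homogeneous scaling.

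Two small slips need fixing. First, with the bubble moving, the first-order source (built from $h$ and $-\pa_a\widetilde U$) equals $\widetilde S[\pa_ah]-\pa_a\widetilde S[h]$, not $\widetilde S[-\pa_ah]$. This does not affect your degree count. It is, however, why only the fixed-bubble picture you adopt later may be paired with a fixed inverse. Second, the statement cites the explicit components \eqref{eq:full-scalar-first-source} and \eqref{eq:full-scalar-second-source-projection}, and your proposal does not derive them. They still have to be obtained by differentiating the row identities of Lemma~\ref{lem:scalar-centered-source}, as the paper does.
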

\begin{proof}
Translate the tensor against the fixed centered unit bubble.
In this proof, $\cL=\cL_{(0,1)}$ is the projected Jacobi operator at that bubble; translation changes the tensor and its source, while this operator and its domain $\Sigma_{(0,1)}$ remain fixed.
For every homogeneous tensor $h_i$ define
\[
S_i:=\widetilde S[h_i],\qquad S_i^{(a)}:=\widetilde S[\pa_a h_i],\qquad
S_i^{(ab)}:=\widetilde S[\pa_a\pa_b h_i],
\]
where $\widetilde S[h]$ is the normalized source defined before Proposition~\ref{sec:certificate-normalization}.
The Jacobi inverse is then fixed.
For $h_\xi(\bar x,t):=h(\bar x+\xi,t)$, linearity implies
\begin{equation}\label{eq:full-source-Hessian-pairing}
\pa_{\xi_a\xi_b}\langle \widetilde S[h_\xi],\cL^{-1}\widetilde S[h_\xi]\rangle\big|_{\xi=0}
=2\langle S^{(a)},\cL^{-1}S^{(b)}\rangle
+2\langle S^{(ab)},\cL^{-1}S\rangle.
\end{equation}
Polarizing \eqref{a:eq:F}, the $(i,j)$ coefficient in the second derivative in direction $a$ is the derivative of all four raw terms minus
\begin{equation}
c_N^{-1}\{2\langle S_i^{(a)},\cL^{-1}S_j^{(a)}\rangle
+\langle S_i^{(aa)},\cL^{-1}S_j\rangle
+\langle S_i,\cL^{-1}S_j^{(aa)}\rangle\}.
\end{equation}

Angular orthogonality separates different harmonic degrees, but does not discard multiplicities within a degree or cross terms between normal powers.
The projection of the second source must retain its full component that pairs with the centered source.

It suffices to differentiate a row $h:=t^pS_N^{(k)}(b,e)$.
Use $\mathsf r_k,\mathsf s_k,u,v,\mathsf d_k$ from Subsection~\ref{subsec:polynomial-identities}.
The centered source \eqref{eq:full-scalar-centered-source} has tangential angular degree two.
Let
\[
C_a:=x_aQ_A-\frac{2s}{m+2}(A\bar x)_a,\qquad
\Delta_{\bar x}C_a=0,
\]
and set
\begin{align*}
r_1&:=t^p\left[
\frac{2N(N-2)\mathsf r_k s^k}{D^2}
-\frac{(N-2)(u+2v)s^{k-1}}D+2c_N\mathsf d_k s^{k-2}\right],\\
r_3&:=t^p\biggl[
\frac{2N(N-2)(k\mathsf s_k-\mathsf r_k)s^{k-1}}{D^2}
-\frac{(N-2)\{2(k-1)(u+v)-v\}s^{k-2}}D\\
&\hspace{18mm}+2c_N(k-2)\mathsf d_k s^{k-3}\biggr].
\end{align*}
Then
\begin{equation}\label{eq:full-scalar-first-source}
\widetilde S[\pa_a h]=\widetilde U
\left\{\left(r_1+\frac{2s}{m+2}r_3\right)(A\bar x)_a+r_3C_a\right\}.
\end{equation}
This displays the degree-one and degree-three components separately.
Kernel orthogonality removes the Jacobi kernel pairing; it does not remove the whole degree-one component.
A nonzero degree-one component requires a projected inverse estimate, separately from the degree-at-least-two coercivity inequalities below.

For the second translated source, use vectors whose entries correspond to $A_{ab}$, $(A\bar x)_ax_b+(A\bar x)_bx_a$, $\delta_{ab}Q_A$, and $x_ax_bQ_A$.
The tensor, divergence, and double-divergence terms have coefficient vectors $\mathsf H^{(0)},\mathsf H^{(1)},\mathsf H^{(2)}$, respectively:
\begin{align*}
\mathsf H^{(0)}_A&:=2(\mathsf r_k-b)s^k,
&\mathsf H^{(0)}_B&:=\{4(k-1)\mathsf r_k+2b\}s^{k-1},\\
\mathsf H^{(0)}_I&:=\left\{2k\mathsf s_k-4\mathsf r_k-\frac{2(2b+e)}m\right\}s^{k-1},
&\mathsf H^{(0)}_x&:=\{4k(k-1)\mathsf s_k-8(k-1)\mathsf r_k+2e\}s^{k-2},\\
\mathsf H^{(1)}_A&:=2vs^{k-1},
&\mathsf H^{(1)}_B&:=\{2(k-1)u+(4k-6)v\}s^{k-2},\\
\mathsf H^{(1)}_I&:=\{2(k-1)u+2(k-2)v\}s^{k-2},
&\mathsf H^{(1)}_x&:=4(k-2)\{(k-1)(u+v)-v\}s^{k-3},\\
\mathsf H^{(2)}_A&:=2\mathsf d_k s^{k-2},& \mathsf H^{(2)}_B&:=4(k-2)\mathsf d_k s^{k-3},\\
\mathsf H^{(2)}_I&:=2(k-2)\mathsf d_k s^{k-3},& \mathsf H^{(2)}_x&:=4(k-2)(k-3)\mathsf d_k s^{k-4}.
\end{align*}
Set $f:=t^p\{N(N-2)D^{-2}\mathsf H^{(0)}-(N-2)D^{-1}\mathsf H^{(1)}+c_N\mathsf H^{(2)}\}$.
With entries $f_A,f_B,f_I,f_x$, one has
\[
\widetilde S[\pa_a\pa_bh]=\widetilde U\bigl[
f_AA_{ab}+f_B\{(A\bar x)_ax_b+(A\bar x)_bx_a\}
+f_I\delta_{ab}Q_A+f_xx_ax_bQ_A\bigr].
\]
Only its degree-two component parallel to $Q_A$ pairs with the centered source.
For $A\ne0$ that component is
\begin{equation}\label{eq:full-scalar-second-source-projection}
\widetilde UQ_A\left[
f_I\delta_{ab}+2f_B\frac{(A^2)_{ab}}{|A|^2}
+\frac{s f_x}{m+4}
\left(\delta_{ab}+4\frac{(A^2)_{ab}}{|A|^2}\right)\right].
\end{equation}
These identities follow by differentiating the radial and divergence identities for $t^pS_N^{(k)}(b,e)$ in Lemma~\ref{lem:scalar-centered-source}.
The last projection follows from \eqref{eq:common-spherical-moment}.
Terms with zero coefficient are omitted before evaluating powers of $s$, also for $k=0,1$.
For the full profile, multiply each row formula by $c_{kp}$ and sum; this retains all cross terms in the first/first and second/zeroth pairings of \eqref{eq:full-source-Hessian-pairing}.

For a profile generated by one generating matrix $A$, orthogonal equivariance and quadratic dependence on $A$ imply
\begin{equation}\label{eq:full-scalar-translation-equivariance}
D_\xi^2\widehat F_h(0,\eps)
=\alpha(\eps)|A|^2\Id+\gamma(\eps)A^2.
\end{equation}
Indeed these are the symmetric matrix covariants of degree two of a trace-free symmetric matrix: the remaining possible term $(\tr A)A$ vanishes.
By tangential parity, $D_\xi\widehat F_h(0,\eps)=0$ and $D_\xi D_\eps\widehat F_h(0,\eps)=0$.
Differentiating the polarized energy in direction $\mathbf e_a$ proves \eqref{eq:translation-coefficient-matrices}.
The invariant Hessian formula holds for every coefficient vector $\mathbf c$; polarization in $\mathbf c$ therefore proves the affine dependence of $\mathbf T_{\lambda_a}$ on $\lambda_a$.
By homogeneous scaling, for every squared eigenvalue,
\[
\alpha(\eps)+\lambda_a\gamma(\eps)
=|\Sph^{N-2}|\eps^{-2}\mathbf c(\ell)^{\mathsf T}
\mathbf T_{\lambda_a}\mathbf c(\ell).
\]
Solving the two equations corresponding to distinct $\lambda_a,\lambda_b$ proves \eqref{eq:explicit-alpha-gamma}.
If all squared eigenvalues coincide, their sum is one, so each is $1/m$ and only the stated combination is determined by that Hessian.
\end{proof}

\begin{corollary}
For $h=\sum c_{kp}t^pP_N^{(k)}$, the centered source has only angular degree two and the first translated sources have only angular degree three.
Consequently, the scalar correction to the translation Hessian uses only the degree-three first/first pairing and the degree-two second/zeroth pairing.
For the rank-two unit generating matrix, the two translation eigenvalues are
\[
\alpha(\eps)\quad\text{with multiplicity }N-3,\qquad
\alpha(\eps)+\tfrac12\gamma(\eps)\quad\text{with multiplicity }2,
\]
as in \eqref{a:eq:twoeigenvalues}.
\end{corollary}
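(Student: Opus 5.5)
The plan is to specialize Proposition~\ref{prop:scalar-translated-sources} to the divergence-free case $(b_{kp},e_{kp})=(b_k,e_k)$, where Lemma~\ref{a:lem:Achain} gives $u=v=\mathsf d_k=0$ for every row.

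\emph{Centered source.} In \eqref{eq:full-scalar-centered-source} only the first term survives, so $\widetilde S[h]=\widetilde U Q_A t^p N(N-2)\mathsf s_k s^kD^{-2}$. This is a radial function of $(s,t)$ times the degree-two harmonic $Q_A$, so it has pure angular degree two.

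\emph{First translated source.} Setting $u=v=\mathsf d_k=0$ in $r_1$ and $r_3$ gives
\[
r_1=2N(N-2)\mathsf r_k s^kt^pD^{-2},\qquad
r_3=2N(N-2)(k\mathsf s_k-\mathsf r_k)s^{k-1}t^pD^{-2}.
\]
The degree-one component in \eqref{eq:full-scalar-first-source} has coefficient $r_1+\tfrac{2s}{m+2}r_3$, so the task is to show
\[
\mathsf r_k+\frac{2(k\mathsf s_k-\mathsf r_k)}{m+2}=0
\quad\Longleftrightarrow\quad
m\,\mathsf r_k+2k\,\mathsf s_k=0 .
\]
I would derive this structurally rather than from the rational formulas for $b_k,e_k$. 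The tensor $\pa_a h$ has $\divg(\pa_ah)=\pa_a\divg h=0$ and $\tr\pa_ah=0$, so the degree-one part can be read off from the contraction $\bar x^{\mathsf T}(\pa_ah)\bar x$. Using the divergence-free identity $\divg(h\bar x)=\tr h+\bar x\cdot\divg h=0$, we have $\pa_b\bigl(x_bx_c\,\pa_a h_{bc}\bigr)=\pa_a\bigl(\bar x^{\mathsf T}h\bar x\bigr)-2(h\bar x)_a$, and the left side is itself a tangential divergence. Projecting onto degree one with the spherical moments \eqref{eq:common-spherical-moment} should produce exactly $m\mathsf r_k+2k\mathsf s_k=0$. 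As a fallback, I would check the identity directly by substituting \eqref{a:eq:Achain}. With $\mathsf r_k=b_k+e_k-(2b_k+e_k)/m$, the first divergence equation $u_k=0$ gives $2k+(m+2k)b_k-2(2b_k+e_k)/m=0$, which I expect to reduce to the claim after one linear combination with $v_k=0$. This single algebraic identity is the main obstacle; everything else is bookkeeping. Once it holds, the first translated sources are pure degree three, with the degree-three part given by $r_3C_a$.

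\emph{Consequence for the Hessian.} In \eqref{eq:full-source-Hessian-pairing}, angular orthogonality reduces the first/first pairing to its degree-three components, since the inverse $\cL^{-1}$ commutes with tangential rotations and therefore preserves angular degree. The second/zeroth pairing sees only the degree-two projection \eqref{eq:full-scalar-second-source-projection} of $\widetilde S[\pa_a\pa_bh]$.

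\emph{Eigenvalues.} For $A=A_{\mathrm r}$ we have $A^2=\tfrac12\diag(1,1,0,\dots,0)$ and $|A|=1$. Then \eqref{eq:full-scalar-translation-equivariance} gives the eigenvalue $\alpha+\tfrac12\gamma$ on the first two directions and $\alpha$ on the remaining $m-2=N-3$ directions, which proves the stated multiplicities.
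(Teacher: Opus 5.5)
Your proposal is correct and follows essentially the paper's argument. Specializing the translated-source computation to $u=v=\mathsf d_k=0$ leaves a degree-one coefficient proportional to $m\mathsf r_k+2k\mathsf s_k$. This vanishes because, for every $(b,e)$,
\[
\divg\bigl(t^pS_N^{(k)}(b,e)\bar x\bigr)=(m\mathsf r_k+2k\mathsf s_k)s^{k-1}t^pQ_A=(u_k+v_k)s^{k-1}t^pQ_A ,
\]
and the multiplicities are then read off from $\alpha\Id+\gamma A_{\mathrm r}^2$.

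The only slip is your displayed structural identity: its left side should be $x_bx_c\pa_ah_{bc}$ itself, not its divergence. With that correction, both your $\divg(h\bar x)=0$ route and your fallback close the step.
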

\begin{proof}
For the divergence-free specialization of \eqref{eq:full-scalar-profile}, with the normalization \eqref{a:eq:normalizedA}, the centered source is
\begin{equation}\label{a:eq:Asource}
\widetilde S[t^pP_N^{(k)}]=N(N-2)\mathsf s_k s^kt^pQ_A D^{-(N+2)/2},
\end{equation}
where $D=s+(t+1)^2$ in the scalar-flat problem and $D=1+s+t^2$ in the minimal-boundary problem.

For the divergence-free polynomial family, the first translated source has only angular degree three.
More explicitly, if
\[
C_a(\bar x):=x_aQ_A-\frac2{m+2}s(A\bar x)_a,\qquad
u_k^{\mathrm{trans}}:=2\left\{k\mathsf s_k-b_k-e_k+\frac{2b_k+e_k}m\right\},
\]
then
\begin{equation}\label{a:eq:sourcefirst}
x_bx_c\pa_a(P_N^{(k)})_{bc}=u_k^{\mathrm{trans}}s^{k-1}C_a(\bar x),\qquad \Delta_{\bar x}C_a=0.
\end{equation}
For $k=0$ the expression is zero.
Direct differentiation followed by the three identities in Lemma~\ref{a:lem:Achain} proves \eqref{a:eq:sourcefirst}; those identities cancel the possible angular-degree-one term.

Only the degree-two projection of $S^{(aa)}$ pairs with the centered source.
For a coordinate eigenvector of the normalized $A$, set
\begin{align*}
\mathsf r_k&:=b_k+e_k-\frac{2b_k+e_k}m,\\
T_{0k}&:=2k\mathsf s_k-4\mathsf r_k-\frac{2(2b_k+e_k)}m,\\
T_{xk}&:=4k(k-1)\mathsf s_k-8(k-1)\mathsf r_k+2e_k,\\
T_{vk}&:=8k\mathsf s_k-8k(1+b_k)-8\mathsf r_k+4b_k.
\end{align*}
On $|\bar x|=1$, the coefficient of $Q_A$ in the degree-two harmonic projection of $x_bx_c\pa_{aa}(P_N^{(k)})_{bc}$ is
\begin{equation*}
T_{0k}+\frac{T_{xk}}{m+4}
+(A^2)_{aa}\left(\frac{4T_{xk}}{m+4}+T_{vk}\right).
\end{equation*}
The angular norms used with \eqref{a:eq:Asource} and \eqref{a:eq:sourcefirst} are
\begin{equation}
\av Q_A^2=q_A,\qquad
\av C_a^2=\frac{q_A}{m+4}\left(1+\frac{2m}{m+2}(A^2)_{aa}\right).
\end{equation}

For a rank-two unit generating matrix, \eqref{eq:full-scalar-translation-equivariance} has the two eigenvalues
\begin{equation}\label{a:eq:twoeigenvalues}
\alpha(\eps),\qquad \alpha(\eps)+\frac12\gamma(\eps),
\end{equation}
with multiplicities $N-3$ and $2$, respectively.
\end{proof}
\section{Minima for the scalar-flat and minimal-boundary problems}\label{sec:certification}
For each of the four scalar-flat and minimal-boundary profiles, we compute the corrector pairings, assemble the energy and translation matrices, and select the coefficient $\tau_N$ for which $(0,1)$ is a negative nondegenerate minimum.
The computer-assisted reconstruction and sign checks occur in Proposition~\ref{prop:four-scalar-minima}.

Throughout this section $\ell=\log\eps$ is the dilation variable and $\tau$ is a coefficient of the fixed tensor.
We retain the matrices $\mathbf M,\mathbf T_\lambda$ and the pairing $\mathcal M(h_i,h_j)$ from Section~\ref{subsec:polynomial-parameter-derivatives}, with their normalization by $|\Sph^{N-2}|$.
For the sign tests, normalize by the angularly averaged squared $L^2$ norm $M_N:=|\Sph^{N-2}|^{-1}\int_{\R^N_+}\widetilde U^2$:
\begin{equation}
M_N^{\mathrm{II}}=\frac{B((N-1)/2,(N-3)/2)}{2(N-4)},\qquad
M_N^{\mathrm I}=\frac{\Gamma((N-1)/2)\sqrt\pi\Gamma((N-4)/2)}{4\Gamma(N-2)}.
\end{equation}
Both are positive on the ranges considered.
By Proposition~\ref{sec:certificate-normalization}, the exact normalization identities are
\begin{equation}\label{eq:complete-normalization-bridge}
F_h=c_N\widehat F_h\quad(\mathrm{II}),\qquad
F_h=c_N2^{N-2}\widehat F_h\quad(\mathrm I).
\end{equation}
These positive factors and the mass normalization preserve critical points and Hessian inertia.

\subsection{Corrector approximation and exact pairings}
\label{sec:exact-endpoint-pairings}
At the centered unit bubble, retain $\widetilde U=D^{-(N-2)/2}$, $\cB=\cB_{(0,1)}$, and $\cL=\cL_{(0,1)}$ from Section~\ref{sec:algebra}.
For each row $h_i$, the forcing term, exact projected corrector, and explicit approximation are denoted by
\[
S_i:=\widetilde S[h_i],\qquad Z_i:=\cL^{-1}S_i,\qquad X_i\in\Sigma_{(0,1)}.
\]
We use the normalized source and inverse in Proposition~\ref{sec:certificate-normalization}, with the Jacobi form \eqref{eq:common-Jacobi-form}.
The inverse is taken on the full Jacobi kernel complement, including any negative mode.
For the translated sources, use
\[
S_i^{(a)}:=\widetilde S[\pa_a h_i],\quad Z_i^{(a)}:=\cL^{-1}S_i^{(a)},\quad
X_i^{(a)},\qquad
S_i^{(ab)}:=\widetilde S[\pa_a\pa_bh_i],\quad
Z_i^{(ab)}:=\cL^{-1}S_i^{(ab)},\quad X_i^{(ab)}.
\]
Here $X_i^{(a)}$ and $X_i^{(ab)}$ are constructed for the indicated sources; they are not defined by differentiating $X_i$.
The Jacobi operator remains fixed, as in Proposition~\ref{prop:scalar-translated-sources}.

Write $J$ for the interior differential operator and $\mathcal B_\partial$ for the boundary differential operator:
\[
\begin{array}{c|c|c|c}
&D&J\psi&\mathcal B_\partial\psi\ \ (t=0)\\\hline
\mathrm{II}&s+(t+1)^2&-\Delta\psi&-\pa_t\psi-\dfrac{N}{1+s}\psi\\[4pt]
\mathrm I&1+s+t^2&-\Delta\psi-N(N+2)D^{-2}\psi&-\pa_t\psi
\end{array}
\]
The boundary residual is $\mathcal B_\partial X_i$.
Green's formula reads
\[
\cB(v,\psi)=\int_{\R^N_+}(Jv)\psi
+\int_{\partial\R^N_+}(\mathcal B_\partial v)\psi.
\]
For compatible boundary forcing $f$, define $\mathcal R_\perp f:=v|_{\partial\R^N_+}$, where $v\in\Sigma_{(0,1)}$ solves $Jv=0$ and $\mathcal B_\partial v=f$.
All pairings below are assumed finite.

\begin{lemma}
\label{lem:corrector-pairing-identity}
For sources compatible with the Jacobi kernel, let $Z_i:=\cL^{-1}S_i$ and let $X_i$ belong to the same projected space.
Then
\begin{equation}\label{eq:corrector-pairing-identity}
\langle S_i,Z_j\rangle=\langle S_i,X_j\rangle+\langle S_j,X_i\rangle-\cB(X_i,X_j)+\cB(Z_i-X_i,Z_j-X_j).
\end{equation}
In particular,
\[
\langle S_i,Z_i\rangle
=2\langle S_i,X_i\rangle-\cB(X_i,X_i)
+\cB(Z_i-X_i,Z_i-X_i).
\]
If each approximation also satisfies $JX_i=S_i$ in the interior, then
\[
J(Z_i-X_i)=0,\qquad
\mathcal B_\partial(Z_i-X_i)=-\mathcal B_\partial X_i,
\]
and the same identity becomes
\begin{equation}\label{eq:response-reconstruction}
\langle S_i,Z_j\rangle
=\frac12\int_{\R^N_+}(S_iX_j+S_jX_i)
-\frac12\int_{\partial\R^N_+}(X_i\mathcal B_\partial X_j+X_j\mathcal B_\partial X_i)
+\langle\mathcal B_\partial X_i,\mathcal R_\perp(\mathcal B_\partial X_j)\rangle_\partial.
\end{equation}
In this formula the last term is exactly $\cB(Z_i-X_i,Z_j-X_j)$.
The first identity holds for any symmetric invertible weak operator on a projected space; the second uses the displayed Green identity.
Neither requires positivity of $\cB$.
\end{lemma}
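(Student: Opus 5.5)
The first identity is pure bilinear algebra on $\Sigma_{(0,1)}$, and the Green identity then converts it to \eqref{eq:response-reconstruction}. Write $\langle S,\psi\rangle:=\int S\psi$. By definition of $Z_j=\cL^{-1}S_j$, $\cB(Z_j,\psi)=\langle S_j,\psi\rangle$ for every $\psi\in\Sigma_{(0,1)}$. Since the sources are compatible with the kernel, Lemma~\ref{lem:source-kernel-orthogonality} extends this to all $\psi\in\Sigma$ by the projection argument. We only need it on the projected space, because $X_i,X_j,Z_i,Z_j$ all lie there.

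Expand
\[
\cB(Z_i-X_i,Z_j-X_j)=\cB(Z_i,Z_j)-\cB(Z_i,X_j)-\cB(X_i,Z_j)+\cB(X_i,X_j),
\]
and use symmetry of $\cB$ to evaluate each term:
\[
\cB(Z_i,Z_j)=\langle S_j,Z_i\rangle=\langle S_i,Z_j\rangle,\qquad
\cB(Z_i,X_j)=\langle S_i,X_j\rangle,\qquad
\cB(X_i,Z_j)=\langle S_j,X_i\rangle.
\]
The equality $\langle S_j,Z_i\rangle=\langle S_i,Z_j\rangle$ is the symmetry of $\cL^{-1}$ inherited from $\cB$. Rearranging gives \eqref{eq:corrector-pairing-identity}, and setting $i=j$ gives the diagonal form. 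Only symmetry and invertibility were used, not positivity.

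For the second part, assume $JX_i=S_i$ in the interior. Apply Green's formula to $\cB(Z_i,\psi)=\langle S_i,\psi\rangle$ for smooth compactly supported test functions; after the kernel-compatibility extension, test functions need not lie in the projected space. This identifies $JZ_i=S_i$ weakly in the interior and $\mathcal B_\partial Z_i=0$ on the boundary. Hence $J(Z_i-X_i)=0$ and $\mathcal B_\partial(Z_i-X_i)=-\mathcal B_\partial X_i$. Since $Z_i-X_i\in\Sigma_{(0,1)}$, uniqueness in the definition of $\mathcal R_\perp$ gives $(Z_i-X_i)|_{\partial}=-\mathcal R_\perp(\mathcal B_\partial X_i)$.

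Apply Green's formula to $\cB(Z_i-X_i,Z_j-X_j)$ with $v=Z_i-X_i$. The interior term vanishes, and the boundary term is
\[
\int_\partial(-\mathcal B_\partial X_i)(Z_j-X_j)
=\langle\mathcal B_\partial X_i,\mathcal R_\perp(\mathcal B_\partial X_j)\rangle_\partial .
\]
Also $\cB(X_i,X_j)=\int S_iX_j+\int_\partial(\mathcal B_\partial X_i)X_j$. Symmetrizing in $i,j$ by averaging the two Green expansions, and using $\langle S_i,X_j\rangle+\langle S_j,X_i\rangle-\cB(X_i,X_j)$, gives exactly \eqref{eq:response-reconstruction}.

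The main obstacle is justifying Green's formula for these noncompact functions. We need enough decay of $X_i$, $\nabla X_i$ and of the boundary residuals that the flux on large half-spheres vanishes, and that the weak and strong boundary conditions agree. The plan is to cut off with $\chi_R$ and use the standing finiteness assumption together with the polynomial decay $O(|x|^{d_*+2-N})$ implied by $N>2d_*+2$. Then the flux terms are $O(R^{2d_*+2-N})\to0$, as in Proposition~\ref{prop:metric-bubble-expansion}.
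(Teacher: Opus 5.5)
Your proposal is correct and follows the paper's proof. You expand $\cB(Z_i-X_i,Z_j-X_j)$ using $\cB(Z_i,\psi)=\langle S_i,\psi\rangle$ and symmetry, then apply Green's formula to the error pair and average the two Green expansions of $\cB(X_i,X_j)$, justifying the integrations by parts with cutoffs and the decay available when $N>2d_*+2$; your extension of the weak equation to all of $\Sigma$ via kernel compatibility, which gives $\cB(Z_i-X_i,\psi)=-\int_{\partial\R^N_+}(\mathcal B_\partial X_i)\psi$ and needs decay only of $X_i$, spells out what the paper states in one line.
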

\begin{proof}
Expand $\cB(Z_i-X_i,Z_j-X_j)$ and use $\cB(Z_i,\psi)=\langle S_i,\psi\rangle$ and symmetry of $\cB$.
This proves \eqref{eq:corrector-pairing-identity} and its diagonal case.
If $JX_i=S_i$, the equations for $Z_i-X_i$ imply
\[
(Z_i-X_i)|_\partial=-\mathcal R_\perp(\mathcal B_\partial X_i).
\]
Applying Green's formula to the two errors, we obtain
\[
\cB(Z_i-X_i,Z_j-X_j)
=\langle\mathcal B_\partial X_i,
\mathcal R_\perp(\mathcal B_\partial X_j)\rangle_\partial.
\]
Applying it to $X_i,X_j$ and averaging the two orders yields
\[
\cB(X_i,X_j)=\frac12\int(S_iX_j+S_jX_i)
+\frac12\int_\partial
(X_i\mathcal B_\partial X_j+X_j\mathcal B_\partial X_i).
\]
Substitution into \eqref{eq:corrector-pairing-identity} proves \eqref{eq:response-reconstruction}.
The integrations by parts follow first with cutoffs and then by the integrability estimates of Lemma~\ref{lem:polynomial-integrability-scaling}.
\end{proof}

By \eqref{eq:response-reconstruction}, the scalar-flat and minimal-boundary calculations reduce to constructing $X$, evaluating the interior and boundary integrals, and computing the boundary response.
The construction of $X$ requires a finite-dimensional solvability statement.
Following the formulation of Khuri--Marques--Schoen~\cite[Proposition~4.1]{KhuriMarquesSchoen}, we specify an invariant polynomial space and determine the kernel and range of the operator on that space.

Let $H_q(\bar x)$ be a nonzero homogeneous harmonic polynomial of degree $q$.
Section~\ref{subsec:polynomial-parameter-derivatives} writes each source component in the form
\[
S=\frac{H_q\widetilde U}{D^2}V(D,y),\qquad
X:=\frac{H_q\widetilde U}{D}R(D,y),\qquad
y:=\begin{cases}\widehat t:=t+1,&\mathrm{II},\\t,&\mathrm I.\end{cases}
\]
Here $V$ is known from the source and $R$ is the unknown polynomial factor of $X$.
Regard $D,y$ as independent polynomial variables and set
\[
\mathcal P_d:=\operatorname{span}\{D^ay^b:a,b\in\mathbb Z_{\ge0},\ 2a+b\le d\}.
\]
Thus $D$ has weight two and $y$ has weight one.
Define the induced polynomial operator $\mathcal T_q$ by
\[
J\left(\frac{H_q\widetilde U}{D}R\right)
=:\frac{H_q\widetilde U}{D^2}\mathcal T_qR.
\]

\begin{lemma}
\label{lem:polynomial-corrector-solvability}
Let $0\le d<N$ be an integer.
Each operator in \eqref{eq:II-polynomial-operator} and \eqref{eq:I-polynomial-operator} preserves $\mathcal P_d$.
For $q=2,3$, every $V\in\mathcal P_d$ has a unique solution
\[
R\in\mathcal P_d,\qquad \mathcal T_qR=V.
\]
For $q=1$, the kernel consists exactly of the constants.
A solution exists, uniquely modulo constants, if and only if
\[
\begin{cases}
V(0,0)=0,&\mathrm{II},\\[3pt]
\displaystyle\int_{\mathbb R^N}H_1(\bar x)^2
(1+|x|^2)^{-N-1}V(1+|x|^2,t)\dd x=0,&\mathrm I.
\end{cases}
\]
Here the first condition is evaluation in the independent variables $(D,y)$; the second uses the polynomial extension to $t\in\mathbb R$.
The degree-one source components in \eqref{eq:full-scalar-first-source} satisfy the corresponding compatibility condition.
Under $N>2d_*+2$, this construction therefore produces an interior solution $X$ for every component needed for the energy and translation Hessian for these equations.
Its Jacobi projection belongs to $\Sigma_{(0,1)}$ and still satisfies $JX=S$.
\end{lemma}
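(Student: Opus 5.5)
The plan is to compute $\mathcal T_q$ explicitly on monomials $D^ay^b$ and to read off a triangular structure with respect to the weighted degree. For Type II, write $\widetilde U=D^{-(N-2)/2}$ with $D=s+\widehat t^2$. For Type I, use $D=1+s+t^2$.

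\emph{Step 1: the operator on monomials.} Since $H_q$ is harmonic and homogeneous, and $D,y$ depend on $\bar x$ only through $s$, Euler's identity $\bar x\cdot\nabla H_q=qH_q$ gives
\[
J\Bigl(\frac{H_q\widetilde U}{D}D^ay^b\Bigr)=\frac{H_q\widetilde U}{D^2}\,\mathcal T_q(D^ay^b).
\]
Here $\mathcal T_q(D^ay^b)$ is a combination of $D^ay^b$, $D^{a-1}y^{b+2}$, $D^{a+1}y^{b-2}$, and, in Type I, $D^{a-1}y^b$ coming from the constant $1$ in $D$ and from the potential $N(N+2)D^{-2}$. This uses $|\nabla D|^2=4(D-1)$ in Type I and $|\nabla D|^2=4D$ in Type II. The leading weighted part is the same in both cases, namely
\[
c(a,b,q)D^ay^b+b(b-1)D^{a+1}y^{b-2}+\bigl(\text{term in }D^{a-1}y^{b+2}\bigr).
\]
In Type II the term $-4(a-1)(\ldots)$ cancels exactly, because $D$ is homogeneous. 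Hence $\mathcal T_q$ maps $\mathcal P_d$ into $\mathcal P_d$, and it preserves the weight filtration up to terms of strictly lower weight (Type I) or is weight-homogeneous (Type II).

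\emph{Step 2: injectivity for $q=2,3$.} On each weight-$w$ piece, order the monomials by the power of $y$. The operator is then triangular, with diagonal entries of the form
\[
c(a,b,q)=(2a+b+q-1)\bigl(2a+b+q-N+\ldots\bigr)\cdot(\text{const}).
\]
This comes from the radial indicial computation, since $D^{a}\widetilde U D^{-1}$ behaves like $|x|^{2a+b-N}$. The diagonal entries vanish only when $w+q$ equals an indicial root, which is $0$ or $N-2$ shifted. Under $d<N$ and $q\ge2$ no such vanishing occurs, so $\mathcal T_q$ is injective on $\mathcal P_d$, and hence bijective because $\mathcal P_d$ is finite dimensional. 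In Type I the lower-weight perturbations keep the operator triangular with respect to the weight, so the same conclusion holds.

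\emph{Step 3: the case $q=1$.} The constant $R=1$ gives $X=H_1\widetilde U/D$. This is proportional to a translation kernel element $\partial_{\xi}\widetilde U$, so it lies in the kernel. By the triangular structure, the only zero diagonal entry is the one at $(a,b)=(0,0)$. The kernel is therefore exactly the constants, and the range has codimension one.

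\emph{Step 4: identifying the cokernel condition.} In Type II the leading weight-zero entry of $\mathcal T_1R$ vanishes for every $R$, because the operator is weight homogeneous and the weight-zero diagonal entry is zero. So the range is exactly $\{V:V(0,0)=0\}$. In Type I I would identify the cokernel by a formal self-adjointness argument. Extend to the whole space, where $t\mapsto -t$ symmetry is available and no boundary term appears. Then pair $J(\cdot)$ against the translation kernel $H_1(1+|x|^2)^{-N/2}$; this pairing annihilates the range, and writing it out gives the weighted integral condition in the statement. Dimension counting then yields sufficiency.

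\emph{Step 5: compatibility and projection.} For the degree-one components in \eqref{eq:full-scalar-first-source}, I would check compatibility using kernel orthogonality of the sources from Lemma~\ref{lem:source-kernel-orthogonality}, which holds in the normalized form by Proposition~\ref{sec:certificate-normalization}.

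The main obstacle is this step. Orthogonality holds on the half-space, with the boundary included, whereas the criteria in Step 4 are algebraic or whole-space conditions. I would first try to verify the Type II condition $V(0,0)=0$ directly from the explicit $r_1,r_3$, evaluated at $D=0$, $\widehat t=0$. For Type I, I would use evenness in $t$ of the relevant monomial combinations, or else verify the condition directly as a finite beta-integral identity.

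Finally, $X=O(|x|^{d+1-N})$ in gradient, so $X\in\Sigma$ when $N>2d_*+2$. Subtracting the kernel combination $\sum\Phi_\alpha(\mathbf G^{-1})\mathfrak l(X)$ via $\Pi_{(0,1)}$ preserves $JX=S$, since $J\Phi_\alpha=0$.
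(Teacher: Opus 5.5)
Your strategy is the paper's: triangular action on the monomials $D^ay^b$, the Type~I cokernel obtained by pairing with $H_1(1+|x|^2)^{-N/2}$ on $\R^N$, and projection off the Jacobi fields. The step that actually carries the proof is misdescribed, however.

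\textbf{Step 2 (diagonal entries).} The diagonal entries are not functions of the weight $w=2a+b$. Your indicial heuristic, a zero when $w+q$ hits a root near $N-2$, does not explain why $d<N$ excludes every zero, and taken literally it would not.

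Carry out the computation with $\rho^2:=D$ in Type~II, so that $X=H_qy^b\rho^{2a-N}$. For a homogeneous polynomial $P$ of degree $\ell$ in the shifted coordinates,
\[
\Delta(P\rho^\gamma)=\rho^\gamma\Delta P+\gamma(\gamma+2\ell+N-2)\rho^{\gamma-2}P .
\]
Take $P=H_qy^b$, $\ell=q+b$, $\gamma=2a-N$; note $\Delta P=b(b-1)H_qy^{b-2}$. This gives
\[
\mathcal T_q(D^ay^b)=2(N-2a)(a+b+q-1)D^ay^b-b(b-1)D^{a+1}y^{b-2}.
\]
In Type~I one adds only the lower-weight term $4a(a-1-N)D^{a-1}y^b$.

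There is therefore no $D^{a-1}y^{b+2}$ term. This matters: if such a term appeared alongside $D^{a+1}y^{b-2}$, ordering each weight block by the power of $y$ would make it tridiagonal rather than triangular. The diagonal vanishes only when $2a=N$, which is excluded because $2a\le d<N$; this is exactly where $d<N$ is used. Otherwise it vanishes only when $a+b+q=1$, which forces $q=1$ and $a=b=0$.

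\textbf{Step 5 (Type~I compatibility).} You leave this step open. The missing bridge between half-space orthogonality and the whole-space condition is a parity split.

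After substituting $s=D-1-t^2$, the degree-one polynomial is
\[
V=t^p\bigl(c_0s^k+c_1Ds^{k-1}+c_2D^2s^{k-2}\bigr),
\]
and the bracket is even in $t$.
\begin{itemize}
\item For odd $p$, the whole-space integral vanishes because the integrand is odd in $t$.
\item For even $p$, the integral equals twice $\int_{\R^N_+}S\,H_1\widetilde U/D$. Since $H_1\widetilde U/D$ is a combination of the translation Jacobi fields, this vanishes by Lemma~\ref{lem:source-kernel-orthogonality}. Apply that lemma row by row to $\partial_a(t^pS_N^{(k)})$, with angular orthogonality isolating the degree-one component.
\end{itemize}

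\textbf{Step 5 (Type~II compatibility).} Your proposed direct check works once made explicit. Each term of $V$ contains a factor $s$ or $D$, and $s=D-y^2$ vanishes at $(D,y)=(0,0)$, so $V(0,0)=0$.

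\textbf{Sufficiency in Type~I.} Your dimension count also needs $\Lambda\not\equiv0$ on $\mathcal P_d$. For example, $\Lambda(1)>0$ suffices.
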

\begin{proof}
In Type II, differentiating with $D=s+\widehat t^2$, we obtain
\begin{equation}\label{eq:II-polynomial-operator}
\mathcal T_qR=-4D^2R_{DD}+(2N-4q)DR_D-4\widehat tDR_{D\widehat t}
+2N\widehat tR_{\widehat t}-DR_{\widehat t\widehat t}+2N(q-1)R.
\end{equation}
In minimal Type I, the same computation with $D=1+s+t^2$ reads
\begin{equation}\label{eq:I-polynomial-operator}
\mathcal T_qR=-4D(D-1)R_{DD}+((2N-4q)D-4N)R_D
-4tDR_{Dt}+2NtR_t-DR_{tt}+2N(q-1)R.
\end{equation}
In particular, its action on a monomial is
\begin{align*}
\mathcal T_q(D^ay^b)={}&2(N-2a)(a+b+q-1)D^ay^b
-b(b-1)D^{a+1}y^{b-2}\\
&+\begin{cases}0,&\mathrm{II},\\
4a(a-1-N)D^{a-1}y^b,&\mathrm I.
\end{cases}
\end{align*}
Terms with zero coefficient are omitted.
All terms have weight at most $2a+b$, so $\mathcal P_d$ is invariant.
Order its monomials by decreasing $2a+b$, and then by decreasing $b$.
The matrix is square and triangular, with diagonal entries
\[
2(N-2a)(a+b+q-1),\qquad 2a+b\le d<N.
\]
They are positive for $q=2,3$.
Thus the matrix is invertible, and successive coefficient matching constructs the unique $R$.
For $q=1$ only the constant monomial has zero diagonal entry.
Since $\mathcal T_1(1)=0$, the kernel is exactly one-dimensional and the range has codimension one.

In Type II, no nonconstant monomial contributes a constant term to $\mathcal T_1R$.
Its range is therefore exactly the polynomials with zero constant coefficient, by the codimension just proved.
In minimal Type I, write the integral in the statement as $\Lambda(V)$ and put
\[
\Phi:=H_1(\bar x)(1+|x|^2)^{-N/2},\qquad J\Phi=0
\quad\text{on }\mathbb R^N.
\]
For every $R\in\mathcal P_d$, Green's formula on balls implies
\[
\Lambda(\mathcal T_1R)
=\int_{\mathbb R^N}\Phi J(\Phi R)\dd x
=\int_{\mathbb R^N}(J\Phi)\Phi R\dd x=0.
\]
The integrals converge and the boundary term at radius $r$ is $O(r^{d-N})$, which tends to zero as $r\to\infty$.
Since $\Lambda(1)>0$, the range is precisely $\ker\Lambda$.
This proves both solvability criteria.
Adding a constant to $R$ adds a multiple of the translation Jacobi field $H_1\widetilde U/D$ to $X$.

We next verify compatibility for the degree-one sources.
For a row $h:=t^pS_N^{(k)}(b,e)$ with $k\ge1$, \eqref{eq:full-scalar-first-source} shows that its degree-one source polynomial has the form
\[
V=t^p\bigl(c_0s^k+c_1Ds^{k-1}+c_2D^2s^{k-2}\bigr),
\]
with constants $c_0,c_1,c_2$ determined by that formula and any term with a negative power of $s$ absent.
The row $k=0$ has zero tangential derivative.
In Type II, substitute $s=D-y^2$ and $t=y-1$.
Every displayed term has zero constant coefficient in $(D,y)$, so $V(0,0)=0$.
In minimal Type I, substitute $s=D-1-t^2$.
If $p$ is odd, the integral defining $\Lambda(V)$ vanishes by parity.
If $p$ is even, it is twice the half-space pairing of this source component with $H_1\widetilde U/D$, and vanishes by Lemma~\ref{lem:source-kernel-orthogonality}.
Angular orthogonality removes the other harmonic components.
This argument applies to each normal power separately, and then to their sum.

The centered sources have $q=2$; first translated sources have $q=1,3$; only $q=2$ from the second translated sources pairs with a centered source.
For a row of degree $d_i$, their polynomial factors have weighted degree at most $d_i$ in the centered and degree-one cases, and at most $d_i-2$ in the other two cases, as follows from
\eqref{eq:full-scalar-centered-source}, \eqref{eq:full-scalar-first-source}, and \eqref{eq:full-scalar-second-source-projection}.
The same bounds hold for $R$ by invariance and uniqueness modulo constants.
Hence all the constructed approximations satisfy
\[
|\pa^jX(x)|\le C(1+|x|)^{d_*+2-N-j},\qquad j=0,1.
\]
Finite energy follows from $N>2d_*+2$.
Subtracting the Jacobi component places $X$ in $\Sigma_{(0,1)}$.
This subtraction changes neither the interior equation nor the boundary residual, since every Jacobi field satisfies both homogeneous equations.

Finally, differentiating and then setting $t=0$ determines the residual in Lemma~\ref{lem:corrector-pairing-identity}:
\[
\mathcal B_\partial X=-\frac{H_q\widetilde U}{D}
\begin{cases}
2R_D(D,1)+R_{\widehat t}(D,1),&\mathrm{II},\\
R_t(D,0),&\mathrm I.
\end{cases}
\]
In Type II the derivative of the bubble factor cancels the Robin term; in minimal Type I, $D_t=0$ at the boundary.
For example, for the Type-II row $h=tA$, we have
\[
S=N(N-2)tQ_A D^{-(N+2)/2},\qquad
R(D,\widehat t)=\frac{N-2}{4}(\widehat t-2),
\]
and therefore
\[
X=\frac{N-2}{4}(t-1)Q_A D^{-N/2},\qquad
\mathcal B_\partial X=-\frac{N-2}{4}Q_A(1+s)^{-N/2}.
\]
Here $JX=S$ and the harmonic degree is two, so no Jacobi projection is needed.
The nonzero residual is precisely the remaining boundary error.
\end{proof}

To evaluate the boundary term in Lemma~\ref{lem:corrector-pairing-identity}, normalize the harmonic factor by $\av H_q^2=1$.
The approximations just constructed have boundary residuals of the form
\[
\mathcal B_\partial X=-\frac{H_q\widetilde U}{D}P(D)
\quad\text{on }t=0,
\]
where $P$ is a polynomial and $D=1+s$ on the boundary in both equations.
Thus bilinearity reduces their response pairings to the monomials in $P$.
For nonnegative integers $a,b$, define
\begin{equation}\label{eq:boundary-response-definition}
\mathcal R_q(a,b):=\frac{
\langle H_q\widetilde U D^{a-1},
\mathcal R_\perp(H_q\widetilde U D^{b-1})\rangle_\partial}
{|\Sph^{N-2}|M_N}.
\end{equation}
This is the boundary-response contribution of two monomial residuals, after removing their common angular norm and the mass factor $M_N$.
Rotational symmetry makes it independent of the choice of normalized $H_q$.
For $q=1$, the individual entries are interpreted spectrally with the Jacobi mode omitted; the full residuals used in the reconstruction are compatible with that mode.
Orthogonality separates distinct harmonic degrees.

With $X$ known, the products $S_iX_j$ and $X_i\mathcal B_\partial X_j$ in the identity are finite sums of monomials times a power of $D$.
After the angular integration from \eqref{eq:common-spherical-moment}, the required interior integrals are
\begin{equation}\label{a:eq:moments}
M_{R,p}(\nu):=\frac12\int_0^\infty\!\int_0^\infty
s^{(m+R)/2-1}t^pD^{-\nu}\dd s\dd t.
\end{equation}
Here $R$ in the subscript is a radial exponent, and $p$ is a normal exponent.
These are unnormalized moments; any division by $M_N$ is written explicitly.
They also evaluate all four raw terms in $\mathcal M(h_i,h_j)$ from \eqref{eq:coefficient-matrix-pairing}.

\begin{lemma}\label{lem:endpoint-moments}
Put $a:=(m+R)/2$ and $b:=(p+1)/2$.
If $a>0$, $b>0$, and $\nu>a+b$, then
\begin{equation}\label{a:eq:momentII}
M_{R,p}(\nu)=
\begin{cases}
\frac12 B(a,\nu-a)B(p+1,2\nu-2a-p-1),&\mathrm{II},\\[3pt]
\dfrac{\Gamma(a)\Gamma(b)\Gamma(\nu-a-b)}{4\Gamma(\nu)},&\mathrm I.
\end{cases}
\end{equation}
For the boundary moment associated with \eqref{a:eq:moments}, it suffices that $a>0$ and $\nu>a$; then
\begin{equation}\label{a:eq:boundarymoment}
\frac12\int_0^\infty s^{a-1}(1+s)^{-\nu}\dd s
=\frac12B(a,\nu-a).
\end{equation}
\end{lemma}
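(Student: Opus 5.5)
The plan is to evaluate the integrals by elementary substitutions that reduce them to Euler beta and gamma integrals, treating the two equations separately because $D$ differs.

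\emph{Type II.} Here $D=s+(1+t)^2$. Fix $t$ and put $\lambda:=(1+t)^2$. Substitute $s=\lambda\sigma$ in the inner $s$-integral:
\[
\int_0^\infty s^{a-1}(s+\lambda)^{-\nu}\dd s=\lambda^{a-\nu}B(a,\nu-a).
\]
This is valid for $a>0$ and $\nu>a$. The remaining normal integral is $\int_0^\infty t^p(1+t)^{-2(\nu-a)}\dd t$. The substitution $t=u/(1-u)$ turns it into $B(p+1,2\nu-2a-p-1)$, which converges when $p+1>0$ and $2\nu-2a>p+1$, that is, $\nu>a+b$. With the prefactor $\tfrac12$ this gives the first line of \eqref{a:eq:momentII}.

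\emph{Minimal Type I.} Here $D=1+s+t^2$. I would use the Gamma representation
\[
D^{-\nu}=\Gamma(\nu)^{-1}\int_0^\infty r^{\nu-1}e^{-rD}\dd r.
\]
By Tonelli, since all integrands are positive, the order of integration can be exchanged. This gives $\int_0^\infty s^{a-1}e^{-rs}\dd s=\Gamma(a)r^{-a}$. It also gives $\int_0^\infty t^pe^{-rt^2}\dd t=\tfrac12\Gamma(b)r^{-b}$, using $t^2=w$. The remaining integral is $\int_0^\infty r^{\nu-a-b-1}e^{-r}\dd r=\Gamma(\nu-a-b)$, which is finite exactly when $\nu>a+b$. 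Collecting the factors $\tfrac12\cdot\tfrac12$ gives the stated formula with $4\Gamma(\nu)$.

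Alternatively, one can first integrate in $s$ with $\lambda=1+t^2$, giving $\lambda^{a-\nu}B(a,\nu-a)$. The substitution $t^2=w$ in the $t$-integral then yields $\tfrac12B(b,\nu-a-b)$, which is the same answer.

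\emph{Boundary moment.} The boundary moment \eqref{a:eq:boundarymoment} is the $\lambda=1$ case of the first substitution, and needs only $a>0$ and $\nu>a$.

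The only point needing care is the convergence bookkeeping: $a>0$ controls the singularity at $s=0$, $b>0$ controls $t=0$, and $\nu>a+b$ controls decay at infinity. Positivity of the integrands justifies all uses of Fubini and Tonelli. I expect no real obstacle beyond matching constants.
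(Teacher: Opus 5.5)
Your proposal is correct and follows the paper's proof. The Type II computation (inner substitution $s=(1+t)^2u$, then a beta integral in $t$) and the boundary formula are the same as in the paper, and the paper treats minimal Type I by exactly your ``alternative'' route: $s=(1+t^2)u$, then $v=t^2$, with $\Gamma(\nu-a)$ cancelling between the two beta factors. Your primary Type I argument via $D^{-\nu}=\Gamma(\nu)^{-1}\int_0^\infty r^{\nu-1}e^{-rD}\dd r$ is also valid, since Tonelli applies to the positive integrands and $\nu>a+b>0$. It reaches the gamma quotient directly, without that cancellation step.
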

\begin{proof}
In the scalar-flat case, substitute $s=(1+t)^2u$ in the inner integral of \eqref{a:eq:moments}.
Its value is $\frac12B(a,\nu-a)(1+t)^{2a-2\nu}$; the remaining integral in $t$ is the second beta factor.
In the minimal case, substitute $s=(1+t^2)u$ and then $v=t^2$.
The two beta factors simplify to the displayed gamma quotient.
Formula~\eqref{a:eq:boundarymoment} is the defining beta integral.
The stated inequalities are exactly the convergence conditions.
\end{proof}

For the sign calculations below, we normalize the minimal Type-I moments as follows.
The distinction between even and odd normal powers introduces the gamma ratio
\begin{equation}\label{eq:minimal-gamma-parameter}
w_N:=\frac{\Gamma((N-5)/2)}{\sqrt\pi\Gamma((N-4)/2)}.
\end{equation}
Let $k,r$ be nonnegative integers and write $\nu:=N-2+u$, where $u$ is an integer exponent shift for which the indicated moments converge.
Divide \eqref{a:eq:momentII} by $M_N^{\mathrm I}$ and use the gamma recurrence.
For $R=2k$ and $p=2r$ or $2r+1$, we obtain
\begin{equation}\label{eq:minimal-normalized-moments}
\begin{aligned}
\frac{M_{2k,2r}(N-2+u)}{M_N^{\mathrm I}}
&=\frac{(m/2)_k(1/2)_r((N-4)/2)_{u-k-r}}{(N-2)_u},\\
\frac{M_{2k,2r+1}(N-2+u)}{M_N^{\mathrm I}}
&=\frac{(m/2)_k r!((N-5)/2)_{u-k-r}}{(N-2)_u}w_N.
\end{aligned}
\end{equation}
Thus the normalized even moments are rational in $N$, whereas the odd moments have one factor $w_N$.
By applying the same recurrence to \eqref{a:eq:boundarymoment}, we find
\[
\frac{B(m/2+k,N-2+u-m/2-k)}{2M_N^{\mathrm I}}
=\frac{(N-5)w_N(m/2)_k(m/2-1)_{u-k}}{(N-2)_u}.
\]
Negative Pochhammer indices are interpreted by the gamma quotient on the convergent range.
All moments used below satisfy the convergence inequalities in Lemma~\ref{lem:endpoint-moments}.

It remains to evaluate the last term of the corrector identity, namely the responses $\mathcal R_q$ defined in \eqref{eq:boundary-response-definition}.
Compactification transforms the boundaries of the two compactified models into the same round sphere.
Its harmonics of total degree $j=n+q$ diagonalize the boundary response.
In Type II the coefficient is an inverse Robin eigenvalue; in minimal Type I it is the ratio of boundary value to outward normal derivative of the regular solution of the homogeneous interior equation.
This explains the only change between the two expansions in the next lemma.

\begin{lemma}
\label{lem:endpoint-boundary-response}
Let $q\in\{1,2,3\}$ and $a,b\ge0$ be integers with $N>\max\{4,2(a+b+q)\}$.
In Type II,
\begin{equation}\label{eq:II-response-series}
\mathcal R_q(a,b)=K_q(a,b)
\sum_{\substack{n\ge0\\n+q\ne1}}
\frac{(N-2+2q)_n(a+q)_n(b+q)_n}
{n!(N-1+q-a)_n(N-1+q-b)_n}
\frac{2n+N-2+2q}{N-2+2q}\frac1{n+q-1},
\end{equation}
where the angular overlaps supply the prefactor
\[
K_q(a,b):=\frac{B(m/2-a,m/2+q)B(m/2-b,m/2+q)}
{4M_N B(m/2+q,m/2+q)}.
\]
For minimal Type I, use $M_N=M_N^{\mathrm I}$ and replace $(n+q-1)^{-1}$ by $\beta_{n+q}$, with the response factor on the unit hemisphere
\begin{equation}\label{eq:minimal-response-factor}
\beta_j:=\frac{\Gamma((j-1)/2)\Gamma((j+N)/2)}
{2\Gamma(j/2)\Gamma((j+N+1)/2)},\qquad j\ge2.
\end{equation}
Both projected series converge.
The Type-II series reduces to finite beta sums and is rational in $N$.
For minimal Type I, the proof establishes a polynomial summation criterion for evaluating the even and odd parts exactly; its required instances are verified in Proposition~\ref{prop:four-scalar-minima}.
\end{lemma}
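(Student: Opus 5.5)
The plan is to transport the boundary response problem to the compactified model, where it diagonalizes in spherical harmonics, and then expand the monomial residuals in that eigenbasis.

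\emph{Type II: the Robin spectrum.} Under the inversion sending $\R^N_+$ with the bubble $U^{\mathrm{sf}}$ to the flat ball of radius $1/2$, a function $v$ becomes $v/\widetilde U$. The interior equation $\Delta v=0$ transforms to harmonicity in the ball. The Robin operator $\mathcal B_\partial$ becomes, up to the conformal factor $\widetilde U^{N/(N-2)}$ on the boundary sphere, the operator $\partial_r-$const. On harmonics of total degree $j$ the Dirichlet-to-Neumann map has eigenvalue proportional to $j$, and the constant equals the eigenvalue at $j=1$, namely the translation modes. Hence $\mathcal R_\perp$ acts on the degree-$j$ component of $f/\widetilde U^{N/(N-2)}$ as multiplication by a constant times $(j-1)^{-1}=(n+q-1)^{-1}$. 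The mode $j=1$ is omitted; this is the projection onto $\Sigma_{(0,1)}$, and compatibility of the full residuals makes it harmless.

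\emph{Type I: the hemisphere.} In minimal Type I, the compactification is the unit hemisphere and $J$ becomes the conformal Laplacian shifted by the bubble potential, i.e. $-\Delta_{\Sph^N}-N$. The regular solution with boundary data $Y_j$ on the equator is a Legendre/hypergeometric function of the polar angle. Its value-to-normal-derivative ratio at the equator is a ratio of hypergeometric values at $1/2$. These evaluate via Gauss's second summation to the gamma quotient $\beta_j$ in \eqref{eq:minimal-response-factor}. As a sanity check, $\beta_j$ should blow up at $j=1$, matching the Jacobi mode.

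\emph{Expanding the monomials.} Restricted to the boundary sphere, each function $H_q\widetilde U D^{a-1}$ divided by the conformal factor is $H_q(\bar x)(1+s)^{a-1+\text{shift}}$. In stereographic coordinates this is a zonal-times-$H_q$ function. Its Gegenbauer coefficients in degree $j=n+q$ are computed by a Funk--Hecke/Rodrigues integral, which I expect to produce Pochhammer products $(a+q)_n/(N-1+q-a)_n$ times a beta prefactor. The normalization of the degree-$j$ harmonics in the form $H_q\,C^{(\cdot)}_n$ gives the factor $(N-2+2q)_n/n!\cdot(2n+N-2+2q)/(N-2+2q)$. Multiplying the two coefficient sequences by the eigenvalue factor and dividing by $|\Sph^{N-2}|M_N$ yields \eqref{eq:II-response-series} with the stated $K_q(a,b)$.

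\emph{Convergence and closed forms.} For convergence, the terms behave like $n^{2a+2b+2q+\dots-2N}$ times $n^{-1}$ (respectively $\beta_{n+q}\sim n^{-1}$), and this is summable when $N>2(a+b+q)$. For the Type-II reduction to finite sums, $(n+q-1)^{-1}$ can be absorbed: writing the summand as a hypergeometric term, shifting parameters by one, and using that $a,b$ are integers, the tail terminates or telescopes via Gauss/Saalsch\"utz-type sums. This gives finitely many beta values and hence rational functions of $N$. For Type I, $\beta_{n+q}$ splits by the parity of $n+q$ into rational multiples of $1$ or of $w_N$. The "polynomial summation criterion" is this: a sum of the hypergeometric term times a rational function is evaluable when one can find a polynomial certificate in the sense of Gosper/Zeilberger. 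I would state the criterion and defer the instances to the verification.

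The main obstacle I expect is computing the Gegenbauer coefficients of the monomial residuals exactly, with correct normalizations; the spectral diagonalization itself is standard.
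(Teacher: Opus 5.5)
Your outline matches the paper's proof. You compactify to the radius-$1/2$ ball (Type II) or hemisphere (minimal Type I), expand the transformed residuals $H_qD^a$ in the boundary harmonics $[4v(1-v)]^{q/2}C_n^{(N-2+2q)/2}(2v-1)Y_q$, and compute overlaps by Rodrigues and beta integrals. You then use the inverse Robin eigenvalue $1/[2(n+q-1)]$ with the $j=1$ mode projected out, and in Type I a parity split with a Gosper-type certificate.

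Your evaluation of $\beta_j$ is a valid variant of the paper's. In the variable $(1-\cos\theta)/2$ the regular solution is $(\sin\theta)^j\,{}_2F_1\bigl(j-1,j+N;j+\tfrac N2;\tfrac{1-\cos\theta}{2}\bigr)$. Its parameters satisfy $c=(a+b+1)/2$, and so do those of its derivative, so Gauss's second theorem at $1/2$ gives exactly the stated gamma quotient. The paper instead uses the $\sin^2\theta$ form and a connection formula at $\sin^2\theta=1$. Note that the normalized minimal bubble gives the hemisphere of radius $1/2$, not the unit hemisphere, so the factor entering the series is $\beta_j/2$. This is what makes $K_q(a,b)$ the same in both cases.

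The step that does not hold as you wrote it is the Type II closed form. The series does not terminate, and Saalsch\"utz's theorem is not the relevant identity. For $q=1$ the claim that $(n+q-1)^{-1}$ ``can be absorbed'' is false.

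What actually works is the integer offset $A_*-C_a=a+q-1=:k$, where $A_*=N-2+2q$ and $C_a=N-1+q-a$. Then $(A_*)_n/(C_a)_n=(C_a+n)_k/(C_a)_k$ and $(a+q)_n/n!=(n+1)_k/k!$ are polynomials in $n$. For $q\ge2$ the second contains the factor $n+q-1$, which cancels. The summand becomes a polynomial times $(b+q)_n/(C_b)_n$, and a falling-factorial expansion reduces it to Gauss-type sums $\sum_n n^{\underline r}(B)_n/(C)_n$. For $q=1$ nothing cancels. After removing $n=0$ and dividing by $n$, one is left with $\sum_{n\ge1}(b+1)_n/((C_b)_n n)=\psi(C_b)-\psi(C_b-b-1)=\sum_{r=1}^{b+1}(C_b-r)^{-1}$. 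This is rational in $N$ only because $b+1$ is an integer.

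In Type I, a certificate on each parity class $n=2k+n_0$ yields only an indefinite sum. To conclude $\sum_k a_k=a_0r(0)$ you must also check that $r$ has no poles for $k\ge0$ and that $r(k)=O(k)$. You then need $a_k=O(k^{2(a+b+q)-N-1})$ so that $a_kr(k)\to0$; this is the correct decay exponent, since the factor $2n+A_*$ cancels the $n^{-1}$ and there is no $-2N$. The parity split is needed because only $\beta_{j+2}/\beta_j$ is rational in $j$. Also, $\beta_j$ lies in $w_N^{-1}\Q(N)$ for even $j$ and in $w_N\Q(N)$ for odd $j$, not in $\Q(N)$ and $w_N\Q(N)$. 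Together with $K_q\in w_N\Q(N)$, the two parity sums therefore land in $\Q(N)$ and $w_N^2\Q(N)$.
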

\begin{proof}

We first compute the spherical overlaps.
Under conformal compactification by the normalized bubble, the ball has radius $1/2$.
The residual $-H_q\widetilde U D^{a-1}$ becomes $-H_qD^a$, since a boundary forcing is divided by $\widetilde U^{N/(N-2)}$.
The common minus sign cancels in its quadratic corrector pairing.
Set $v:=s/(1+s)$, $A_*:=N-2+2q$, and set $Y_q:=H_q|_{\Sph^{N-2}}$, normalized by $\fint_{\Sph^{N-2}}Y_q^2=1$.
Then $H_q(\bar x)=s^{q/2}Y_q(\omega)$ for $\bar x\ne0$, with $\omega=\bar x/|\bar x|$.
A boundary harmonic basis is
\[
\psi_{n,q}:=[4v(1-v)]^{q/2}C_n^{A_*/2}(2v-1)Y_q(\omega).
\]
The radial boundary measure is $\tfrac12v^{m/2-1}(1-v)^{m/2-1}\,dv$ after dividing by $|\Sph^{N-2}|$.
Let $I_{n,a}$ be the overlap of $H_qD^a$ with $\psi_{n,q}$ and $\norm{\psi_{n,q}}^2$ the squared norm of $\psi_{n,q}$ in this measure.
By the Rodrigues formula and beta integration,
\begin{align}\label{eq:boundary-harmonic-overlaps}
I_{n,a}
&=2^{q-1}B(m/2-a,m/2+q)
\frac{(A_*)_n(a+q)_n}{n!(N-1+q-a)_n},\\
\norm{\psi_{n,q}}^2
&=2^{2q-1}B(m/2+q,m/2+q)
\frac{(A_*)_n}{n!}\frac{A_*}{2n+A_*}.\notag
\end{align}
The inverse Robin eigenvalue on this radius-$1/2$ ball is $1/[2(n+q-1)]$.
Thus
\[
\mathcal R_q(a,b)=M_N^{-1}
\sum_{\substack{n\ge0\\n+q\ne1}}
\frac{I_{n,a}I_{n,b}}{2(n+q-1)\norm{\psi_{n,q}}^2}.
\]
This is \eqref{eq:II-response-series}: the overlap quotient supplies one factor $1/2$, and the inverse radius factor supplies the other.
The term $n+q=1$ is omitted by projection.
The series converges when $N>2(a+b+q)$; every entry used at its stated dimension satisfies this strict condition.

We next evaluate the Type-II series by finite sums.
Set $C_a:=N-1+q-a$, $k:=a+q-1$, $B_*:=b+q$, and $C_b:=N-1+q-b$.
Then
\[
\frac{(A_*)_n}{(C_a)_n}=\frac{(C_a+n)_k}{(C_a)_k},\qquad
\frac{(a+q)_n}{n!}=\frac{(n+1)_k}{k!}.
\]
For $q\ge2$, cancel $n+q-1$, expand the remaining polynomial in falling factorials, and use
\begin{equation}
\sum_{n\ge0}n^{\underline r}\frac{(B_*)_n}{(C_b)_n}
=\frac{r!(B_*)_r(C_b-1)}{(C_b-B_*-r-1)_{r+1}}.
\end{equation}
To prove this identity, represent the ratio by a beta integral and sum $\sum n^{\underline r}x^n=r!x^r/(1-x)^{r+1}$ inside it.
For $q=1$, remove $n=0$ first.
Polynomial division leaves, in addition,
\[
\sum_{n\ge1}\frac{(B_*)_n}{(C_b)_n n}
=\psi(C_b)-\psi(C_b-B_*)=\sum_{r=1}^{B_*}\frac1{C_b-r}.
\]
Here $\psi:=\Gamma^{\prime}/\Gamma$ is the logarithmic derivative of the gamma function.
The same beta integral proves the first equality, and $B_*$ is an integer.
Thus all required scalar-flat corrector pairings are rational in $N$.
For example
\begin{equation}
\mathcal R_2(0,0)=\frac{N(N^2-2N-5)}{4(N-2)(N-1)}.
\end{equation}

For minimal Type I, the separated solution regular at the pole has radial factor
\[
(\sin\theta)^j\,{}_2F_1\left(\frac{j-1}2,\frac{j+N}2;
j+\frac N2;\sin^2\theta\right).
\]
By the connection formula at $\sin^2\theta=1$, the ratio of its boundary value to its outward derivative is \eqref{eq:minimal-response-factor}.
This is the response factor on the unit hemisphere.
The normalized minimal bubble compactifies to the hemisphere of radius $1/2$, where the value-to-normal-derivative ratio is $\beta_j/2$.
Its boundary measure and overlaps are the same as \eqref{eq:boundary-harmonic-overlaps}.
Thus the combined prefactor is again $1/(4M_N)$.

The spectral expansion therefore proves the stated substitution in \eqref{eq:II-response-series}, with the Jacobi mode removed by projection.

To sum this minimal-boundary series, use $A_*=N-2+2q$, $C_a=N-1+q-a$, and $C_b=N-1+q-b$.
For a minimal-boundary pairing, write $n:=2k+n_0$, where the starting indices are $n_0=2,1$ for $q=1$ and $n_0=0,1$ for $q=2,3$.
Let $a_k$ be the summand without $K_q(a,b)$.
By the gamma recurrence, its successive-term ratio is $\varrho(k)$:
\begin{equation}
\begin{aligned}
\varrho(k):=\frac{a_{k+1}}{a_k}
={}&\frac{(n+A_*)(n+A_*+1)(n+a+q)(n+a+q+1)(n+b+q)(n+b+q+1)}
{(n+1)(n+2)(n+C_a)(n+C_a+1)(n+C_b)(n+C_b+1)}\\
&\times\frac{2n+A_*+4}{2n+A_*}
\frac{(n+q-1)(n+q+N)}{(n+q)(n+q+N+1)}.
\end{aligned}
\end{equation}
To find a telescoping expression for the sum, factor this rational ratio as $A_0(k)C_0(k+1)/(B_0(k)C_0(k))$, put $B_1(k):=B_0(k-1)$, and seek $P_*(k)\in\Q(N)[k]$ satisfying
\begin{equation}\label{eq:minimal-polynomial-telescoper}
A_0(k)P_*(k+1)-B_1(k)P_*(k)=C_0(k).
\end{equation}
Given such a polynomial, put $r(k):=-B_1(k)P_*(k)/C_0(k)$.
Substituting this expression and the factorization into \eqref{eq:minimal-polynomial-telescoper}, we obtain
\begin{equation}\label{eq:minimal-telescoping-identity}
a_kr(k)-a_{k+1}r(k+1)=a_k.
\end{equation}
By the gamma-ratio asymptotic, $a_k=O(k^{2(a+b+q)-N-1})$ as $k\to\infty$.
Whenever $r$ is defined for $k\ge0$ and satisfies $r(k)=O(k)$, the strict condition $N>2(a+b+q)$ therefore implies $a_kr(k)\to0$ as $k\to\infty$.
Summing \eqref{eq:minimal-telescoping-identity} and using this limit, we obtain
\[
\sum_{k=0}^{\infty}a_k=a_0r(0).
\]
This is the polynomial summation criterion used in the corrector-pairing calculation.
For $q=2,a=b=0$, the two rational functions are
\[
r_{\rm even}(k):=\frac{(N+2k-1)^2}{(N-4)(N+4k+2)},\qquad
r_{\rm odd}(k):=\frac{(N+2k)^2}{(N-4)(N+4k+4)}.\qedhere
\]

\end{proof}

The case $q=2$, $a=b=0$ also determines the contribution of $h=tA$ to the reduced energy.
Fix the minimal-boundary equation and $N\ge15$.
For $q=2$ and $a=b=0$, the prefactor and the first term in each parity are
\begin{equation}
\begin{aligned}
K_2(0,0)&=
\frac{(N-5)(N-3)(N+1)(N+2)}{4N(N-1)(N-2)}w_N,\\
a_0^{\rm even}&=\beta_2,\qquad
a_0^{\rm odd}=\frac{4(N+4)}{(N+1)^2}\beta_3,\\
\beta_2&=
\frac{N(N-2)(N-4)}{(N-5)(N-3)(N-1)(N+1)w_N},\\
\beta_3&=
\frac{(N-5)(N-3)(N-1)(N+1)}{N(N-2)(N-4)(N+2)}w_N.
\end{aligned}
\end{equation}
These formulas follow from \eqref{eq:II-response-series}, \eqref{eq:minimal-response-factor}, and the gamma recurrence.
Multiplying by $r_{\rm even}(0)$ and $r_{\rm odd}(0)$, respectively, we obtain
\begin{equation}
\begin{aligned}
\mathcal R_2(0,0)
&=K_2(0,0)\bigl(a_0^{\rm even}r_{\rm even}(0)
+a_0^{\rm odd}r_{\rm odd}(0)\bigr)\\
&=\frac14+
\frac{(N-5)^2(N-3)^2}{(N-4)^2(N-2)^2}w_N^2.
\end{aligned}
\end{equation}
Thus the even parity contributes the rational term $1/4$, and the odd parity contributes the term containing $w_N^2$.

For the row $h=tA$, with $|A|=1$, Lemma~\ref{lem:polynomial-corrector-solvability} yields
\[
X:=\frac{N-2}{4}tQ_A D^{-N/2},\qquad
\mathcal B_\partial X=-\frac{N-2}{4}Q_A(1+s)^{-N/2}.
\]
Here $X|_\partial=0$, so the local boundary integral in
\eqref{eq:response-reconstruction} vanishes. The raw energy term and
the interior corrector term cancel: regarding $h$ as a one-row profile, the moment formulas imply
\[
\frac{\mathbf M^{\rm raw}_{11}}{M_N}=\frac14,
\qquad
\frac{c_N^{-1}}{|\Sph^{N-2}|M_N}
\int_{\R^N_+}\widetilde S[h]X=\frac14.
\]
The remaining boundary response, with $\av Q_A^2=2/[(N-1)(N+1)]$, therefore yields
\begin{equation}\label{eq:minimal-response-worked-energy}
\frac{\widehat F_{tA}(0,1)}{|\Sph^{N-2}|M_N}
=-\frac{N-2}{2(N+1)}\mathcal R_2(0,0).
\end{equation}
Since $h$ has degree one, its contribution to the leading scale coefficient $a_{\mathrm{I,nu}}$ in Proposition~\ref{prop:four-scalar-minima} is twice \eqref{eq:minimal-response-worked-energy}.

\subsection{Matrix assembly and parameter dependence}
\label{sec:endpoint-matrix-assembly}
We insert the polynomial correctors, the moments of Lemma~\ref{lem:endpoint-moments}, and the boundary responses of Lemma~\ref{lem:endpoint-boundary-response} into \eqref{eq:coefficient-matrix-pairing} to compute the energy and translation matrices.

\begin{proposition}
\label{prop:endpoint-matrix-assembly}
Fix one of the four families with $|A|=1$ and homogeneous rows $h_i$.
The moment and response calculations above determine the entries
\[
\mathbf M_{ij}=\mathbf M^{\rm raw}_{ij}
-\frac{c_N^{-1}}{|\Sph^{N-2}|}\langle S_i,Z_j\rangle
\]
by the finite coefficient formula \eqref{eq:endpoint-matrix-explicit}.
The same reconstruction for the first and second translated rows determines $\mathbf T_0,\mathbf T_{1/2}$.
These matrices determine $\widehat F_h(0,\eps)$ and $\alpha(\eps),\gamma(\eps)$ through \eqref{a:eq:scalingmatrix} and \eqref{eq:explicit-alpha-gamma}.
\end{proposition}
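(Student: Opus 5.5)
The plan is to assemble $\mathbf M_{ij}$ row by row from the corrector identity of Lemma~\ref{lem:corrector-pairing-identity}. For each homogeneous row $h_i=c_{kp}t^pS_N^{(k)}(b_{kp},e_{kp})$, the centered source $S_i$ is given by \eqref{eq:full-scalar-centered-source} in the form $H_2\widetilde U D^{-2}V_i(D,y)$ with $H_2=Q_A$. Substituting $s=D-y^2$ (Type II) or $s=D-1-t^2$ (minimal Type I) writes $V_i$ as an element of $\mathcal P_{d_i}$. Lemma~\ref{lem:polynomial-corrector-solvability} with $q=2$ then gives the unique $R_i\in\mathcal P_{d_i}$ solving $\mathcal T_2R_i=V_i$, hence an exact interior solution $X_i=Q_A\widetilde UD^{-1}R_i$ with $JX_i=S_i$. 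Since the harmonic degree is two, no Jacobi projection is needed. Its boundary residual has the form $\mathcal B_\partial X_i=-Q_A\widetilde UD^{-1}P_i(D)$, with $P_i=2\pa_DR_i(D,1)+\pa_{\widehat t}R_i(D,1)$ in Type II and $P_i=\pa_tR_i(D,0)$ in Type I.

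Inserting these into \eqref{eq:response-reconstruction} splits $\langle S_i,Z_j\rangle$ into three pieces, each handled by an earlier lemma:
\begin{enumerate}[label=\textup{(\alph*)},leftmargin=*]
\item the interior integrals $\int S_iX_j$, which after the angular average $\av Q_A^2=q_A$ from \eqref{eq:common-spherical-moment} become finite linear combinations of the moments $M_{R,p}(\nu)$ in \eqref{a:eq:moments}, evaluated by Lemma~\ref{lem:endpoint-moments};
\item the local boundary integrals $\int_\partial X_i\mathcal B_\partial X_j$, which are beta integrals \eqref{a:eq:boundarymoment};
\item the response term, which by bilinearity equals $|\Sph^{N-2}|M_Nq_A\sum_{a,b}[P_i]_a[P_j]_b\,\mathcal R_2(a,b)$, with $\mathcal R_2(a,b)$ from Lemma~\ref{lem:endpoint-boundary-response}.
\end{enumerate}
The four raw terms of \eqref{eq:coefficient-matrix-pairing} are computed directly from the row identities in Lemma~\ref{lem:scalar-centered-source} and the angular Gram formulas of Lemma~\ref{lem:angular-gram}. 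They reduce to the same moments. Collecting everything yields the finite coefficient formula \eqref{eq:endpoint-matrix-explicit}, a symmetric bilinear expression in the coefficient vectors of $R_i,P_i$ with moment and response weights. Finiteness of every moment used follows from $N>2d_*+2$, since the weighted degrees are at most $d_i+d_j\le 2d_*$.

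For the translation matrices, use the same recipe with the sources in \eqref{eq:full-source-Hessian-pairing} and the polarized formula \eqref{eq:translation-coefficient-matrices}.
\begin{enumerate}[label=\textup{(\alph*)},leftmargin=*]
\item \emph{First translated rows.} Decompose $S_i^{(a)}$ by \eqref{eq:full-scalar-first-source} into a degree-one part (harmonic factor $(A\bar x)_a$) and a degree-three part (harmonic factor $C_a$). The $q=3$ piece is solved uniquely. The $q=1$ piece is solvable because the compatibility condition was verified in Lemma~\ref{lem:polynomial-corrector-solvability}. There one subtracts the translation Jacobi field to land in $\Sigma_{(0,1)}$; this changes neither $J X$ nor the residual. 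The responses use $\mathcal R_1$ and $\mathcal R_3$, with the $n+q=1$ mode omitted.
\item \emph{Second translated rows.} Only the degree-two projection \eqref{eq:full-scalar-second-source-projection} pairs with $S_j$, and it is handled by the $q=2$ machinery already described.
\item \emph{Angular norms.} These are $q_A$ and $\av C_a^2$, as computed in the corollary, together with the analogous $\av (A\bar x)_a^2=(A^2)_{aa}$.
\end{enumerate}

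Everything is affine in $\lambda=(A^2)_{aa}$, so evaluating at $\lambda=0$ and $\lambda=1/2$ gives $\mathbf T_0$ and $\mathbf T_{1/2}$. For the generator $A_{\mathrm b}$ with equal eigenvalues, one forms the same rows with $A_{\mathrm r}$, whose squared eigenvalues are $0$ and $1/2$. This is legitimate because the row coefficients do not depend on $A$ and $\mathbf T_\lambda$ depends on $A$ only through $\lambda$. Finally \eqref{a:eq:scalingmatrix} and \eqref{eq:explicit-alpha-gamma} convert $\mathbf M$ and $\mathbf T_\lambda$ into $\widehat F_h(0,\eps)$, its $\ell$-derivatives, and $\alpha,\gamma$.

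The main obstacle is step (c) of the first paragraph, the exact evaluation of $\mathcal R_q(a,b)$ for all monomial pairs that occur.
\begin{enumerate}[label=\textup{(\alph*)},leftmargin=*]
\item In Type II this is the finite beta sum of Lemma~\ref{lem:endpoint-boundary-response}. For $q=1$ it is the digamma sum, which must be checked to be combined correctly with the compatible residuals.
\item In minimal Type I, each required pair needs a polynomial solution $P_*$ of \eqref{eq:minimal-polynomial-telescoper} with $r(k)=O(k)$, separately for each parity. I would solve this by linear algebra over $\Q(N)$ on polynomial ansätze of increasing degree, then verify the resulting identity symbolically.
\end{enumerate}
The existence of such telescopers for every needed $(q,a,b)$ is not guaranteed a priori. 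It is the step I expect to require computer-assisted verification, as deferred to Proposition~\ref{prop:four-scalar-minima}.
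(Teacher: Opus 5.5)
Your proposal follows essentially the paper's six-step reconstruction: polynomial correctors from Lemma~\ref{lem:polynomial-corrector-solvability}, the three terms of \eqref{eq:response-reconstruction} evaluated by the moments of Lemma~\ref{lem:endpoint-moments} and the responses $\mathcal R_q$, the reference generator $A_{\mathrm r}$ for $\mathbf T_0,\mathbf T_{1/2}$, and conversion through \eqref{a:eq:scalingmatrix} and \eqref{eq:explicit-alpha-gamma}; the telescopers you flag as the main obstacle are not actually needed here, since \eqref{eq:endpoint-matrix-explicit} only uses the convergent series $\mathcal R_q(a,b)$, and their exact summation is deferred to Proposition~\ref{prop:four-scalar-minima}. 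One slip: the degree-one angular norm is $\av (A\bar x)_a^2=\lambda_a/m$, not $(A^2)_{aa}$, because $\av\omega_b\omega_c=\delta_{bc}/m$.
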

\begin{proof}
We carry out the reconstruction in six steps, following the three terms of Lemma~\ref{lem:corrector-pairing-identity}.
\begin{enumerate}
\item Form the nonzero homogeneous rows
\[
h_i:=t^{p_i}S_N^{(k_i)}(b_{k_ip_i},e_{k_ip_i}),\qquad d_i:=2k_i+p_i,
\]
and keep their overall coefficients $c_{k_ip_i}$ in $\mathbf c$.
Use the assigned generating matrix for $\mathbf M$.
For the two reference translation matrices, retain all row coefficients and use
\[
A=A_{\mathrm r}=\frac1{\sqrt2}\diag(1,-1,0,\ldots,0),\qquad
A_{\mathrm r}^2=\diag(1/2,1/2,0,\ldots,0).
\]
The matrices $\mathbf T_0,\mathbf T_{1/2}$ correspond to the directions $\mathbf e_3,\mathbf e_1$, respectively.
Form $\partial_a h_i,\partial_{aa}h_i$ for $a=3,1$.

\item Evaluate the raw terms of \eqref{eq:coefficient-matrix-pairing} using the angular integrals \eqref{eq:common-spherical-moment} and the moments of Lemma~\ref{lem:endpoint-moments}.
For two divergence-free rows $t^pP_N^{(k)}$ and $t^{p'}P_N^{(l)}$, the last two raw terms vanish.
By the contractions in Lemma~\ref{lem:angular-gram},
\begin{equation}
\begin{aligned}
(\mathbf M^{\rm raw})_{(k,p),(l,p')}={}&
\frac{c_N^{-1}(N-2)^2}{2}R_{kl}M_{2(k+l)+2,p+p'}(N)\\
&-\frac14G_{kl}M_{2(k+l)-2,p+p'}(N-2)
-\frac{pp'}{4}C_{kl}M_{2(k+l),p+p'-2}(N-2).
\end{aligned}
\end{equation}
Here $G_{kl}$ comes from tangential differentiation, $pp'C_{kl}$ from normal differentiation, and $(N-2)^2R_{kl}$ from contraction with the bubble gradient.
The umbilic rows retain all four raw terms and use the same angular and radial--normal integration.
The moments are unnormalized: when using \eqref{eq:minimal-normalized-moments}, restore the factor $M_N^{\mathrm I}$.
Omit any term with zero coefficient before evaluating its moment.

\item Compute the centered and translated sources from \eqref{eq:full-scalar-centered-source}, \eqref{eq:full-scalar-first-source}, and \eqref{eq:full-scalar-second-source-projection}.
Apply Lemma~\ref{lem:polynomial-corrector-solvability} to each harmonic component.
In particular, the centered sources have degree two, so
\[
X_i=Q_A D^{-N/2}R_i,\qquad
JX_i=Q_A D^{-(N+2)/2}\mathcal T_2R_i=S_i.
\]
The coefficients of $R_i$ are determined by this equation.
To evaluate the boundary terms, write the residual from that lemma as
\[
\mathcal B_\partial X_i=-Q_A D^{-N/2}P_i(D),\qquad
P_i(D):=
\begin{cases}2R_{i,D}(D,1)+R_{i,\widehat t}(D,1),&\mathrm{II},\\
R_{i,t}(D,0),&\mathrm I.\end{cases}
\]

\item Evaluate the three terms of \eqref{eq:response-reconstruction}.
The first two use ordinary moments, and the last uses $\mathcal R_2$ from Lemma~\ref{lem:endpoint-boundary-response}.
Before writing the coefficient formula, express $R_i$ and $\mathcal T_2R_i$ in the independent variables $(D,t)$; in Type II this means substituting $\widehat t=t+1$ after applying $\mathcal T_2$.
For any polynomial $G$, our coefficient notation is
\[
G(D,t)=\sum_{a,p\ge0}g_{ap}D^at^p
\quad\Rightarrow\quad [D^at^p]G:=g_{ap}.
\]
For a polynomial in $D$ alone, $[D^a]G$ similarly denotes the coefficient of $D^a$.
Thus $a,b$ below are nonnegative integer powers of $D$, while $p$ is a nonnegative integer power of $t$; the row indices remain $i,j$.
In particular, the response contribution is
\[
\sum_{a,b\ge0}([D^a]P_i)([D^b]P_j)\mathcal R_2(a,b).
\]
All sums run only over the nonzero coefficients of the indicated polynomials.
Restriction to $t=0$ leaves $D$ as an independent variable until boundary integration.
Since $\av Q_A^2=q_A=2/[m(m+2)]$, the matrix entry is
\begin{equation}\label{eq:endpoint-matrix-explicit}
\begin{aligned}
\mathbf M_{ij}=\mathbf M^{\rm raw}_{ij}-c_N^{-1}q_A\biggl\{
&\frac12\sum_{a,p\ge0}[D^at^p]
\bigl[(\mathcal T_2R_i)R_j+(\mathcal T_2R_j)R_i\bigr]
M_{4,p}(N+1-a)\\
&+\frac14\sum_{a\ge0}[D^a]\bigl[(R_iP_j+R_jP_i)|_{t=0}\bigr]
B\left(\frac m2+2,N-a-\frac m2-2\right)\\
&+M_N\sum_{a,b\ge0}([D^a]P_i)([D^b]P_j)
\mathcal R_2(a,b)\biggr\}.
\end{aligned}
\end{equation}
All sums are finite.
Integrating the product $S_iX_j$ produces the moment $M_{4,p}(N+1-a)$ in the first line.
The minus sign in $\mathcal B_\partial X_j$ cancels the minus sign of the local boundary term in the identity; polarization and the boundary beta integral each contribute $1/2$, which explains the factor $1/4$ in the second line.
The last line restores the mass factor removed in the definition of $\mathcal R_2$.
Finally, subtracting the inverse pairing from the raw entry proves \eqref{eq:endpoint-matrix-explicit} in the normalization of \eqref{eq:coefficient-matrix-entries}.

\item Apply the same calculation to the translated source pairings in \eqref{eq:full-source-Hessian-pairing}.
The first translated sources have harmonic degrees one and three; only the degree-two component of the second translated source pairs with a centered source.
In degree $q$, replace the three quantities in the centered calculation by
\[
M_{2q,p}(N+1-a),\qquad
B\left(\frac m2+q,N-a-\frac m2-q\right),\qquad
\mathcal R_q(a,b),
\]
and use the squared spherical norm of its angular factor in place of $q_A$.
Writing $A^2=\diag(\lambda_1,\ldots,\lambda_m)$, the relevant norms are
\[
\av (A\bar x)_a^2=\frac{\lambda_a}{m},\qquad
\av C_a^2=\frac{q_A}{m+4}
\left(1+\frac{2m}{m+2}\lambda_a\right),\qquad
\av Q_A^2=q_A.
\]
Orthogonality eliminates pairings between different harmonic degrees.
By retaining both the first/first and second/zeroth source pairings and including their raw terms, we obtain the two reference matrices from \eqref{eq:translation-coefficient-matrices}.

\item Restore the scale by replacing $c_i$ with $\eps^{d_i}c_i$ and multiplying the translation quadratic forms by $\eps^{-2}$.
For $A^2=\diag(\lambda_1,\ldots,\lambda_m)$ and $\mathbf c(\ell)=(\eps^{d_i}c_i)_i$, the formulas in Section~\ref{subsec:polynomial-parameter-derivatives} read
\begin{align*}
\mathbf T_{\lambda_a}
&=(1-2\lambda_a)\mathbf T_0+2\lambda_a\mathbf T_{1/2},\\
\widehat F_h(0,\eps)
&=|\Sph^{N-2}|\mathbf c(\ell)^{\mathsf T}\mathbf M\mathbf c(\ell),\\
\alpha(\eps)&=|\Sph^{N-2}|\eps^{-2}
\mathbf c(\ell)^{\mathsf T}\mathbf T_0\mathbf c(\ell),\\
\gamma(\eps)&=2|\Sph^{N-2}|\eps^{-2}
\mathbf c(\ell)^{\mathsf T}(\mathbf T_{1/2}-\mathbf T_0)\mathbf c(\ell).
\end{align*}
\end{enumerate}
\end{proof}

Define the normalized energy and its first two logarithmic scale derivatives by
\[
E(\tau):=\frac{\widehat F_{h_{N,\tau}}(0,1)}{|\Sph^{N-2}|M_N},\qquad
p(\tau):=\frac{\partial_\ell\widehat F_{h_{N,\tau}}(0,1)}
{|\Sph^{N-2}|M_N},\qquad
K(\tau):=\frac{\partial_\ell^2\widehat F_{h_{N,\tau}}(0,1)}
{|\Sph^{N-2}|M_N}.
\]
Since the coefficient vector has the form $\mathbf c(\tau):=\mathbf c_0+\tau\mathbf c_1$, these are quadratic polynomials in $\tau$.
Write $p(\tau)=:a\tau^2+b\tau+c$; the scale-stationarity equation is $p(\tau)=0$.
Likewise, for a squared eigenvalue $\lambda$ of $A$, define the normalized translation value
\[
T_\lambda(\tau):=M_N^{-1}\mathbf c(\tau)^{\mathsf T}
\mathbf T_\lambda\mathbf c(\tau),\qquad
\mathbf T_\lambda=(1-2\lambda)\mathbf T_0+2\lambda\mathbf T_{1/2}.
\]
The vector $\mathbf c_0$ contains the unselected rows, and $\mathbf c_1$ contains the distinguished row with its defining normalization: $tA$ in the nonumbilic cases and $c_{02}t^2A$ in the umbilic cases.
The different umbilic values of $c_{02}$ are therefore retained in $\mathbf c_1$.
Its support has homogeneous degree $d_0=1$ in the nonumbilic cases and $d_0=2$ in the umbilic cases.
The dependence on $\tau$ and tangential reflection symmetry imply the following properties.

\begin{lemma}
\label{lem:endpoint-hessian-structure}
For each of the four scalar-flat and minimal-boundary families,
\begin{equation}\label{eq:coefficient-degree-identities}
[\tau^2]E(\tau)=\frac{a}{2d_0},\qquad
[\tau^2]K(\tau)=2d_0a.
\end{equation}
The translation gradient and the mixed translation--dilation block vanish at center zero at every scale.
For $A^2=\diag(\lambda_1,\ldots,\lambda_m)$, the normalized translation Hessian at unit scale has the form
\begin{equation}\label{eq:invariant-translation-hessian}
\begin{aligned}
T:=\frac{D_\xi^2\widehat F_h(0,1)}{|\Sph^{N-2}|M_N}
&=T_0I+2(T_{1/2}-T_0)A^2\\
&=\diag(T_{\lambda_1},\ldots,T_{\lambda_m}),\qquad
0\le\lambda_a\le\tfrac12.
\end{aligned}
\end{equation}
For each fixed $\lambda$, $T_\lambda$ is affine in $\tau$.
\end{lemma}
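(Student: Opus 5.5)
The plan rests on the scaling formulas \eqref{a:eq:scalingmatrix} and the structure of $\mathbf c(\tau)=\mathbf c_0+\tau\mathbf c_1$. In every family, $\mathbf c_1$ is supported on rows of a single homogeneous degree $d_0$: the row $tA$ when $d_0=1$, or $c_{02}t^2A$ when $d_0=2$. Hence $\mathbf D\mathbf c_1=d_0\mathbf c_1$.

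First, read off the $\tau^2$ coefficients at $\ell=0$. Write $m_{11}:=\mathbf c_1^{\mathsf T}\mathbf M\mathbf c_1/M_N$. From the first line of \eqref{a:eq:scalingmatrix}, $[\tau^2]E=m_{11}$. From the second line, $[\tau^2]p=\mathbf c_1^{\mathsf T}(\mathbf D\mathbf M+\mathbf M\mathbf D)\mathbf c_1/M_N$. Using the symmetry of $\mathbf M$ and $\mathbf D\mathbf c_1=d_0\mathbf c_1$, this equals $2d_0m_{11}$, so $a=2d_0m_{11}$. From the third line, $[\tau^2]K=(d_0^2+2d_0^2+d_0^2)m_{11}=4d_0^2m_{11}=2d_0a$. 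Together with $[\tau^2]E=a/(2d_0)$, this gives \eqref{eq:coefficient-degree-identities}. The only thing to check is that $\mathbf D$ is diagonal in the same row basis in which $\mathbf c_1$ is supported. This holds because the distinguished row is itself homogeneous; when another row of degree $d_0$ is also present, it simply sits in $\mathbf c_0$.

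Second, the vanishing of $D_\xi\widehat F_h(0,\eps)$ and of $D_\xi D_\eps\widehat F_h(0,\eps)$ at every scale is already stated in Proposition~\ref{prop:scalar-translated-sources}. It follows from tangential parity: the reflection $\bar x\mapsto-\bar x$ preserves each row $t^pS_N^{(k)}$, since these are even in $\bar x$, and it preserves the centered bubbles and the Jacobi constraints. Therefore $\xi\mapsto\widehat F_h(\xi,\eps)$ is even, and I would simply invoke that proposition, noting that the normalization \eqref{eq:certificate-functional-normalization} preserves this.

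Third, for the translation Hessian, combine \eqref{eq:full-scalar-translation-equivariance} with \eqref{eq:explicit-alpha-gamma} and step 6 of Proposition~\ref{prop:endpoint-matrix-assembly}. At $\eps=1$, and after dividing by $|\Sph^{N-2}|M_N$, this gives $\alpha=T_0$ and $\gamma=2(T_{1/2}-T_0)$. With $|A|=1$, the Hessian is $T_0I+2(T_{1/2}-T_0)A^2$, which is diagonal in the eigenbasis of $A$ with entries $T_0+2\lambda_a(T_{1/2}-T_0)=T_{\lambda_a}$, by the affine formula for $\mathbf T_\lambda$. The constraint $0\le\lambda_a\le\frac12$ holds for $A_{\mathrm r},A_{\mathrm c}$ directly, and for $A_{\mathrm b}$ by the preceding proposition on the squared eigenvalues.

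The main obstacle is the last claim that $T_\lambda$ is affine in $\tau$. As written, $\mathbf c^{\mathsf T}\mathbf T_\lambda\mathbf c$ is quadratic in $\tau$, so affineness requires $\mathbf c_1^{\mathsf T}\mathbf T_\lambda\mathbf c_1=0$. I would prove this by showing that the translation Hessian of the single distinguished row vanishes. Translating $t^{d_0}A$, or $tA$, leaves it unchanged because it is independent of $\bar x$. So its first and second tangential derivatives are zero, all four raw terms of \eqref{eq:translation-coefficient-matrices} vanish, and so do the translated sources $\widetilde S[\pa_a h]$ and $\widetilde S[\pa_{aa}h]$ entering \eqref{eq:full-source-Hessian-pairing}. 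Hence the $\tau^2$ term of each $\mathbf T_\lambda$-form vanishes, and $T_\lambda(\tau)$ is affine in $\tau$.
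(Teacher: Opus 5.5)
Your proposal is correct and follows the paper's proof essentially step for step. The $\tau^2$ coefficients come from $\mathbf D\mathbf c_1=d_0\mathbf c_1$ in the scaling formulas, and the gradient and mixed block vanish by the reflection $\bar x\mapsto-\bar x$; the Hessian form follows from $\mathbf T_\lambda=(1-2\lambda)\mathbf T_0+2\lambda\mathbf T_{1/2}$ with the listed squared eigenvalues in $[0,\tfrac12]$, and affineness in $\tau$ holds because the distinguished row is independent of $\bar x$ (your vanishing-tangential-derivative argument is just the matrix form of the paper's remark that the pure lowest block's corrected energy is translation independent).
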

\begin{proof}
By the scaling formulas of Section~\ref{subsec:polynomial-parameter-derivatives}, with $\mathbf D=\diag(d_i)$, we have
\begin{align*}
E(\tau)&=M_N^{-1}\mathbf c(\tau)^{\mathsf T}
  \mathbf M\mathbf c(\tau),\\
p(\tau)&=M_N^{-1}\mathbf c(\tau)^{\mathsf T}
(\mathbf D\mathbf M+\mathbf M\mathbf D)\mathbf c(\tau),\\
K(\tau)&=M_N^{-1}\mathbf c(\tau)^{\mathsf T}
(\mathbf D^2\mathbf M+2\mathbf D\mathbf M\mathbf D
 +\mathbf M\mathbf D^2)\mathbf c(\tau).
\end{align*}
Since $\mathbf D\mathbf c_1=d_0\mathbf c_1$, the definitions imply
\[
a=2d_0M_N^{-1}\mathbf c_1^{\mathsf T}\mathbf M\mathbf c_1,
\qquad
[\tau^2]K(\tau)=4d_0^2M_N^{-1}
\mathbf c_1^{\mathsf T}\mathbf M\mathbf c_1.
\]
These are \eqref{eq:coefficient-degree-identities}.
The pure lowest block is a constant matrix times a normal monomial, so its corrected energy is independent of tangential translation.
The quadratic coefficient in $\tau$ of its translation Hessian therefore vanishes.

Each of the four scalar-flat and minimal-boundary families is even under $\bar x\mapsto-\bar x$.
By reflection invariance of the corrected functional, $F_h(-\xi,\eps)=F_h(\xi,\eps)$, proving both translation stationarity and the zero mixed block used in the coefficient criterion below.
By the affine matrix formula of Proposition~\ref{prop:scalar-translated-sources},
\[
T_{\lambda_a}(\tau)
=(1-2\lambda_a)T_0(\tau)+2\lambda_a T_{1/2}(\tau).
\]
For $A_{\mathrm r}$ and $A_{\mathrm c}$, one has $\lambda_a\in\{0,1/2\}$.
For $A_{\mathrm b}$,
\[
\lambda_1=\cdots=\lambda_{m_+}=\frac{m_-}{m_+m},\qquad
\lambda_{m_++1}=\cdots=\lambda_m=\frac{m_+}{m_-m}.
\]
Both values lie in $[0,1/2]$.
Thus every actual translation eigenvalue is a convex combination of $T_0,T_{1/2}$, also in the minimal umbilic case.

\end{proof}

\subsection{Selection of \texorpdfstring{$\tau$}{tau} and uniform minima}
The next lemma reduces stationarity and the energy and Hessian signs to polynomial inequalities in the coefficients.

\begin{lemma}\label{lem:eleven-signs}
Suppose the translation gradient vanishes at center zero at every scale, and the translation Hessian at unit scale has the form \eqref{eq:invariant-translation-hessian}.
Write $p(\tau)=a\tau^2+b\tau+c$, $\Delta:=b^2-4ac$.
For each $f\in\{-E,K,T_0,T_{1/2}\}$ write $f=:f_0+f_1\tau+f_2\tau^2$ and set
\begin{equation}
A_f:=f_1-\frac{f_2b}{a},\qquad
B_f:=f_0-\frac{f_2c}{a},\qquad
C_f:=A_f b-2aB_f .
\end{equation}
If the eleven quantities
\begin{equation}\label{eq:eleven-signs}
-a,\quad b,\quad\Delta,\quad
A_f,\ C_f\quad(f=-E,K,T_0,T_{1/2})
\end{equation}
are strictly positive, then the larger root
\begin{equation}\label{eq:larger-coefficient-root}
\tau_N:=\frac{-b-\sqrt{\Delta}}{2a}>0
\end{equation}
gives a negative nondegenerate full minimum at $(\xi,\eps)=(0,1)$.
\end{lemma}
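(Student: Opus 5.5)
The plan is to reduce every sign to one linear inequality in $\tau$, evaluated at the root $\tau_N$. On the zero set of $p$, each quadratic $f$ collapses to an affine function of $\tau$, and $\tau_N$ lies to the right of the vertex of $p$.

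First I check that $\tau_N>0$. Since $-a>0$, $b>0$ and $\Delta>0$, the numerator $-b-\sqrt\Delta$ is negative and $2a<0$. Hence
\[
\tau_N=\frac{b+\sqrt\Delta}{2|a|}>\frac{b}{2|a|}=-\frac{b}{2a}>0 .
\]
The root is real and simple, and $p(\tau_N)=0$. Because $p$ is the normalized $\partial_\ell\widehat F$ at $(0,1)$, this means the scale derivative vanishes there.

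Next, for each $f\in\{-E,K,T_0,T_{1/2}\}$, I use $a\tau_N^2=-b\tau_N-c$ to eliminate $\tau_N^2$:
\[
f(\tau_N)=f_0+f_1\tau_N+f_2\tau_N^2=A_f\tau_N+B_f .
\]
Since $A_f>0$ and $\tau_N>-b/(2a)$, this gives
\[
f(\tau_N)>A_f\Bigl(-\frac{b}{2a}\Bigr)+B_f=\frac{-A_fb+2aB_f}{2a}=\frac{C_f}{2|a|}>0 .
\]
Consequently $E(\tau_N)<0$, $K(\tau_N)>0$, $T_0(\tau_N)>0$ and $T_{1/2}(\tau_N)>0$.

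To assemble the full Hessian, I use the hypotheses. The translation gradient vanishes at center zero for every scale, so its scale derivative vanishes as well, and the mixed translation--dilation block is zero at $(0,1)$. The full gradient is therefore zero, and the Hessian in $(\xi,\ell)$ is block diagonal. The scale block is $|\Sph^{N-2}|M_N K(\tau_N)>0$. By the remark after \eqref{eq:certificate-functional-normalization}, the $\ell$ and $\eps$ second derivatives agree at $\eps=1$, so this sign carries over to the $\eps$ variable. By \eqref{eq:invariant-translation-hessian}, the translation block is $\diag(T_{\lambda_a})$ with $T_{\lambda_a}=(1-2\lambda_a)T_0+2\lambda_aT_{1/2}$ and $\lambda_a\in[0,\tfrac12]$. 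Each entry is a convex combination of two positive numbers, hence positive. The critical value is $|\Sph^{N-2}|M_NE(\tau_N)<0$. Finally, \eqref{eq:complete-normalization-bridge} relates $F_h$ to $\widehat F_h$ by a positive constant, which transfers all of these conclusions to $F_h$.

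None of these steps is a real obstacle. The only points needing care are two: the inequality $\tau_N>-b/(2a)$, which is the reason $C_f$ is defined with the vertex value, and confirming that vanishing of the translation gradient at every scale kills the mixed block.
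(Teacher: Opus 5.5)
Your proof is correct and follows the paper's argument. You reduce each $f$ modulo $p$ to $A_f\tau+B_f$ and evaluate at $\tau_N$, then assemble the block-diagonal Hessian from translation stationarity at every scale and the convex-combination form of the translation eigenvalues. The only cosmetic difference is that the paper writes the exact value $f(\tau_N)=(C_f+A_f\sqrt\Delta)/(-2a)$, whereas you bound it below by dropping the positive term $A_f\sqrt\Delta/(2|a|)$.
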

\begin{proof}
By the identity $f-(f_2/a)p=A_f\tau+B_f$,
\begin{equation}
f(\tau_N)=\frac{C_f+A_f\sqrt{\Delta}}{-2a}>0.
\end{equation}
Thus $E<0$ and $K,T_0,T_{1/2}>0$ at the selected root.
Each translation eigenvalue is a convex combination of $T_0,T_{1/2}$.
Translation stationarity at every scale implies that the mixed block is zero, and $p(\tau_N)=0$ supplies scale stationarity.
Hence $(0,1)$ is a negative nondegenerate local minimum.
\end{proof}

\begin{proposition}\label{prop:four-scalar-minima}
The coefficient \eqref{eq:larger-coefficient-root} gives a negative nondegenerate full minimum at $(0,1)$ for each of the four scalar-flat and minimal-boundary families in its entire integer dimension range:
\[
\mathrm{II,nu}:N\ge15,\qquad \mathrm{II,u}:N\ge22,\qquad
\mathrm{I,nu}:N\ge15,\qquad \mathrm{I,u}:N\ge21.
\]
The tensor coefficients are held fixed under all bubble variations.
\end{proposition}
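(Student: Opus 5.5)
By Lemma~\ref{lem:eleven-signs}, it suffices to show that for each family the eleven quantities in \eqref{eq:eleven-signs} are strictly positive at every integer $N$ in the stated range. The hypotheses of that lemma are available: translation stationarity and the mixed block come from Lemma~\ref{lem:endpoint-hessian-structure}, and the Hessian has the form \eqref{eq:invariant-translation-hessian}.

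\emph{Step 1: the matrices as exact functions of $N$.} For each family, form the rows $h_i$ and assemble $\mathbf M$, $\mathbf T_0$, $\mathbf T_{1/2}$ by the six steps of Proposition~\ref{prop:endpoint-matrix-assembly}.
\begin{enumerate}
\item The raw entries come from the moments of Lemma~\ref{lem:endpoint-moments}.
\item The interior correctors $R_i$ come from the triangular solve of Lemma~\ref{lem:polynomial-corrector-solvability}.
\item The boundary responses $\mathcal R_q(a,b)$ come from Lemma~\ref{lem:endpoint-boundary-response}.
\end{enumerate}
In Type II these responses are finite beta sums, so every entry, after division by $M_N$, is a rational function of $N$. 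In minimal Type I we would apply the telescoping criterion \eqref{eq:minimal-polynomial-telescoper} to each required pair $(a,b,q)$. This requires finding a polynomial $P_*\in\Q(N)[k]$ by undetermined coefficients and checking that $r(k)=O(k)$ with no poles for $k\ge0$. The normalized entries then become polynomials in $w_N$ with rational coefficients, of degree at most two, as in the worked example \eqref{eq:minimal-response-worked-energy}. Consequently $E,p,K,T_0,T_{1/2}$ are quadratics in $\tau$ whose coefficients lie in $\Q(N)$ or in $\Q(N)[w_N]$. The eleven quantities are then explicit algebraic expressions in these coefficients.

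\emph{Step 2: signs for large $N$.} We would expand each quantity asymptotically in $N$, using the known expansion $w_N\sim (2/(\pi N))^{1/2}(1+O(N^{-1}))$, and check that the leading term is positive. To make this uniform, we would bound $w_N$ between two rational functions of $N$, for instance from Gautschi/Kershaw inequalities for $\Gamma(x+\tfrac12)/\Gamma(x)$. Each quantity is a polynomial in $w_N$ of bounded degree, so positivity on the resulting interval of $w_N$ reduces to positivity of finitely many rational functions of $N$. After clearing denominators, these are polynomials in $N$; we would shift $N=N_0+n$ and check that all coefficients are positive. This handles $N\ge N_0$ for a moderate threshold $N_0$.

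\emph{Step 3: finitely many remaining dimensions.} Each $N$ between the lower endpoint ($15$, $21$ or $22$) and $N_0$ is checked by exact rational arithmetic, with interval arithmetic for $w_N$. The computer-assisted verification is documented in the appendices.

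\emph{Main obstacle.} We expect the hardest step to be the minimal umbilic family near $N=21$. There the margins are smallest and the coefficients $c^{\mathrm I}$ are fixed decimals, so several quantities, especially $\Delta$ and $C_{T_0}$, are close to zero. The $w_N$-enclosures in Step 2 may then be too coarse. If so, we would first try to tighten them with higher-order Kershaw bounds. Failing that, we would raise $N_0$ and extend the direct finite check of Step 3.
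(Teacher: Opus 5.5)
Your reduction matches the paper's. You use Lemma~\ref{lem:eleven-signs} with the structure supplied by Lemma~\ref{lem:endpoint-hessian-structure}, the matrices from the six steps of Proposition~\ref{prop:endpoint-matrix-assembly}, Type-II responses as finite beta sums, and minimal responses by polynomial telescopers. (The paper builds one telescoper per parity of $n$, so eighty identities for forty entries.) This yields forms in $\Q(N)[\tau,w_N]$.

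You diverge from the paper in the sign certification, and there your Step~2 fails as written. You cannot usefully enclose $w_N$ between two rational functions of $N$. Since $w_N\sim\sqrt{2/(\pi N)}$, a rational upper bound is eventually $\gtrsim1$ and a rational lower bound is $\lesssim N^{-1}$, so the enclosure degenerates as $N\to\infty$. Moreover, the Gautschi/Kershaw bounds you cite are not rational: they are square roots of linear functions of $N$, and they leave the factor $\pi^{-1/2}$ untouched.

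A rational enclosure of $w_N^2$ alone does not directly help either. The normalized odd normal moments and the boundary moments each carry one factor $w_N$ (see \eqref{eq:minimal-normalized-moments} and the boundary-moment formula after it). So the forms involve $w_N$ itself, not only $w_N^2$, and the eleven expressions have degree up to four in $w_N$.

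The paper handles exactly this point in four steps:
\begin{enumerate}
\item Lemma~\ref{lem:gamma-recurrence} uses the exact recurrence $w_{N+2}=\frac{N-5}{N-4}w_N$ to prove $\frac{d}{d^2+1/8}<\frac\pi2w_N^2<\frac1d$ with $d=N-\frac{11}2$. This has relative error $O(N^{-2})$, sharper than the $O(N^{-1})$ from Gautschi.
\item It certifies rational constants $c_\pm$ bracketing $\sqrt{2/\pi}$ via Machin's formula.
\item It changes variables to $y=\sqrt d$, so that both $N=y^2+\frac{11}2$ and the enclosure $l(y)<w_N<u(y)$ in \eqref{eq:gamma-rational-enclosure} are rational in $y$.
\item The Bernstein coefficients of Lemma~\ref{lem:polynomial-positivity-criterion} on $[l(y),u(y)]$ then reduce positivity to finitely many rational functions of $y$. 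That is the correct version of your ``finitely many rational functions of $N$''.
\end{enumerate}

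After this repair, your scheme would be a valid alternative: a shifted-coefficient certificate for $N\ge N_0$, plus exact checks below $N_0$. Individual dimensions are easy to evaluate, since $w_N$ is rational for even $N$ and a rational multiple of $1/\pi$ for odd $N$, e.g.\ $w_{21}=4096/(6435\pi)$.

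The paper needs neither asymptotics nor a finite list of dimensions. Its shifted certificates start at the endpoints themselves: $N=15+X$ and $N=22+X$ in the scalar-flat cases, and $y=77/25+X$ and $y=393/100+X$ in the minimal cases. Since $(77/25)^2<\frac{19}2$ and $(393/100)^2<\frac{31}2$, these cover every real $N\ge15$ and $N\ge21$ respectively. No root is evaluated at any specific dimension.

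Your route gives flexibility if a global certificate fails near an endpoint. The paper's route gives a single argument on the whole real range. Your guess about the minimal umbilic family is borne out: its certificate reaches $N=21$ with only $15.4449<15.5$ to spare.
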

\begin{proof}
The symbolic reconstruction and sign certificates described below are the computer-assisted steps of this proposition.
Apply the six steps of Proposition~\ref{prop:endpoint-matrix-assembly} to the defining coefficients.
After division by $M_N$, the resulting matrices determine the five forms $E,p,K,T_0,T_{1/2}$.
The full first and second translated sources are included.
Their symmetry and coefficient dependence are given by Lemma~\ref{lem:endpoint-hessian-structure}.

For the degree-six nonumbilic profiles, the nine symmetric entries have $q=2$, $0\le a\le b\le2$, or $q=3$, $0\le a\le b\le1$.
For the two umbilic profiles, of degrees eight and nine, a common containing set has $q=1,2$, $0\le a\le b\le4$, or $q=3$, $0\le a\le b\le3$.
The largest $2(a+b+q)$ in this set is $20$.
Thus the required convergence conditions hold for $N\ge21$, including the minimal-boundary example; the scalar-flat umbilic profile is used for $N\ge22$.
Entries absent from the actual sources have zero coefficients.

For the minimal boundary pairings, we use the rational functions specified in Appendix~\ref{app:verification}.
The finite checks verify the polynomial summation identity, absence of poles on $k\ge0$, and growth condition used in the proof of Lemma~\ref{lem:endpoint-boundary-response}.
The forty unordered entries ($q=1,2$ with $0\le a\le b\le4$, and $q=3$ with $0\le a\le b\le3$) require eighty polynomial identities, whose coefficients are specified in Appendix~\ref{app:verification}.
Their verified pole and growth conditions, together with the summand decay in Lemma~\ref{lem:endpoint-boundary-response}, justify the terminal limit in \eqref{eq:minimal-telescoping-identity}.
At $N=15$ only the lower degree entries actually present in its profile are evaluated.
The gamma recurrence then puts the normalized parity sums in $\Q(N)+w_N^2\Q(N)$.
Combining them with the normalized bulk moments, we obtain corrected forms in $\Q(N)[\tau,w_N]$, of degree at most two in each variable.
The translation forms are affine in $\tau$.

The leading stationarity coefficients are, respectively,
\begin{align*}
a_{\mathrm{II,nu}}&=-\frac{N^2-2N-2}{2(N-1)(N+1)},\\
a_{\mathrm{II,u}}&=
-\frac{32(N-3)(3N^3-18N^2+21N+14)}
{3(N-6)(N-5)(N-2)^2(N-1)(N+1)},\\
a_{\mathrm{I,nu}}&=-\frac{N-2}{4(N+1)}
-\frac{(N-5)^2(N-3)^2}{(N-4)^2(N-2)(N+1)}w_N^2,\\
a_{\mathrm{I,u}}&=-\frac{4c_{02}^2(N-3)}{3(N-6)(N-2)}.
\end{align*}
For the scalar-flat families every quantity in \eqref{eq:eleven-signs} is rational in $N$.
Its numerator and denominator have nonnegative coefficients and positive constant terms after $N=15+X$ or $N=22+X$, respectively.
For example,
\[
-a_{\mathrm{II,nu}}
=\frac{X^2+28X+193}{2X^2+60X+448}>0
\quad(N=15+X,\ X\ge0).
\]
The other rational identities are specified in Appendix~\ref{app:sign-witnesses}, including a complete representative coefficient-positivity certificate.

For the minimal families the five forms are polynomials in the explicit gamma ratio $w_N$.
Lemma~\ref{lem:gamma-recurrence} bounds that ratio uniformly using its exact recurrence.
After substituting in the eleven quantities and expanding in the Bernstein basis, the coefficients are rational in $y=\sqrt{N-11/2}$.
Their numerator and denominator coefficients are nonnegative after $y=77/25+X$ in the nonumbilic case and $y=393/100+X$ in the umbilic case, with positive constant terms.
Since
\[
(77/25)^2<15-11/2,\qquad (393/100)^2<21-11/2,
\]
these identities cover both complete dimension ranges.
Lemma~\ref{lem:eleven-signs} proves all four assertions.

\end{proof}
\section{Negative mean curvature}\label{sec:TI-well}
For each $N\ge9$, we construct a degree-three family whose corrected energy has a negative nondegenerate full maximum for every sufficiently large $L:=-\kappa>0$.
It produces both umbilic and nonumbilic boundary.
The parameter $L$ is fixed during differentiation in the bubble parameters and along the blow-up sequence.

The restriction on polynomial degree explains the additional harmonic cubic.
At $N=9$, the localization condition $N>2d_*+2$ requires $d_*\le3$.
We first impose the boundary conditions used by our umbilic profile, $h(\bar x,0)=\partial_t h(\bar x,0)=0$.
In an $S$-profile these conditions remove the $p=0,1$ terms.
Since a nonzero row $t^pS_N^{(k)}$ has degree $2k+p$, the remaining rows of degree at most three have $k=0$ and therefore depend only on $t$.
Their corrected energy is independent of the tangential center, so they cannot give a nondegenerate full extremum.

To retain the factor $t^2$ and total degree three, we add the Hessian of a harmonic cubic.
This tensor has tangential degree one, so it introduces the required tangential dependence.

\subsection{The cubic family and whole-space calculation}
Set $m=N-1$ and use the rank-two generating matrix $A:=A_{\mathrm c}=\frac1{\sqrt2}\diag(0,1,-1,0,\ldots,0)$.
Its nonzero entries occupy the second and third coordinates; this ensures the orthogonality to the derivatives of the cubic tensor proved below.
Define
\begin{equation}\label{eq:scalar-cubic-seed}
\begin{aligned}
H_3(\bar x)&:=x_1s-\frac{m+2}{3}x_1^3,\\
B(\bar x)&:=\frac12\nabla_{\bar x}^2H_3
=x_1I+\mathbf e_1\otimes\bar x+\bar x\otimes\mathbf e_1
-(m+2)x_1\mathbf e_1\otimes\mathbf e_1,\\
\beta_N&:=\frac16\sqrt{\frac{N-8}{(N-2)(N+1)}}.
\end{aligned}
\end{equation}
Define the homogeneous tensor components
\[
h_1:=tA,\qquad h_2:=t^2A,\qquad
h_3:=-\frac{t^3}{3L}A+\beta_Nt^2B.
\]
Our common family is
\begin{equation}\label{a:eq:hLmain}
h^-_{N,L,\tau,\theta}
:=h_2+\tau h_3+\theta h_1
=t^2\left[\left(1-\frac{\tau t}{3L}\right)A
+\tau\beta_NB(\bar x)\right]+\theta tA.
\end{equation}
When $N,L$ are fixed, write $h^-_{\tau,\theta}:=h^-_{N,L,\tau,\theta}$; write $h^-$ when the coefficients are fixed as well.
\begin{lemma}
\label{lem:cubic-tensor-geometry}
For $N\ge9$, let $A,H_3,B$ be as above and fix $L>0$ and $\tau,\theta\in\R$.
The tensor $B$ has tangential degree one and satisfies
\begin{equation}
\Delta H_3=0,\quad \tr B=0,\quad \divg B=0,\quad
\Delta B=0,\quad \bar x^{\mathsf T}B\bar x=3H_3.
\end{equation}
With $B_a:=\partial_aB$, its derivative Gram matrix is
\begin{equation}\label{eq:scalar-cubic-Gram}
\langle A,B_a\rangle=0,\qquad
(\langle B_a,B_b\rangle)_{ab}
=\diag(m(m-1),2,\ldots,2).
\end{equation}
The common family \eqref{a:eq:hLmain} is tangential, trace free and divergence free, of degree at most three.
Its boundary jet is
\begin{equation}\label{eq:negative-boundary-jet}
h^-(\bar x,0)=0,\qquad
\partial_t h^-(\bar x,0)=\theta A,\qquad
\pi_{\exp(\mu h^-)}=-\frac{\mu\theta}{2}A,\quad H=0.
\end{equation}
Thus the boundary is totally geodesic when $\theta=0$ and nonumbilic when $\theta\ne0$ for every nonzero metric coefficient $\mu$.
At the same value of $\tau$, the umbilic and nonumbilic profiles are related by
\begin{equation}\label{eq:negative-boundary-class-relation}
h^-_{\tau,\theta}-h^-_{\tau,0}
=\theta tP_N^{(0)}
=\theta tS_N^{(0)}(0,0).
\end{equation}
\end{lemma}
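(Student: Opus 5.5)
The plan is to verify the lemma by direct computation, treating the identities for $B$ first and then the properties of the family \eqref{a:eq:hLmain}.

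For $H_3=x_1s-\frac{m+2}{3}x_1^3$, we have $\Delta(x_1s)=2(m+2)x_1$ and $\Delta x_1^3=6x_1$, so $\Delta H_3=0$. Differentiating twice gives $\partial_{ab}(x_1s)=2(x_1\delta_{ab}+\delta_{1a}x_b+\delta_{1b}x_a)$ and $\partial_{ab}x_1^3=6x_1\delta_{1a}\delta_{1b}$, which confirms the displayed formula for $B=\frac12\nabla^2H_3$. The remaining identities then follow quickly:
\begin{itemize}
\item[(i)] $\tr B=\frac12\Delta H_3=0$.
\item[(ii)] $(\divg B)_b=\frac12\partial_b\Delta H_3=0$, since the Hessian of a function is symmetric.
\item[(iii)] $\Delta B=\frac12\nabla^2\Delta H_3=0$.
\item[(iv)] $\bar x^{\mathsf T}B\bar x=3H_3$, by Euler's identity for the degree-three homogeneous polynomial $H_3$.
\end{itemize}

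For the Gram matrix, note that $B$ is linear, so $B_a=\partial_aB=\delta_{a1}I+\mathbf e_1\otimes\mathbf e_a+\mathbf e_a\otimes\mathbf e_1-(m+2)\delta_{a1}\mathbf e_1\otimes\mathbf e_1$.
\begin{itemize}
\item[(i)] For $a=1$, $B_1=\diag(-(m-1),1,\ldots,1)$, so $|B_1|^2=(m-1)^2+(m-1)=m(m-1)$.
\item[(ii)] For $a\ge2$, $B_a=\mathbf e_1\otimes\mathbf e_a+\mathbf e_a\otimes\mathbf e_1$, which has norm squared $2$. It is orthogonal to $B_1$ because $B_1$ is diagonal, and orthogonal to $B_b$ for $b\ne a$ because the supports are disjoint.
\item[(iii)] $\langle A,B_a\rangle=0$. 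For $a\ge2$, $B_a$ is off-diagonal while $A$ is diagonal. For $a=1$, the product equals $\frac1{\sqrt2}(B_{1,22}-B_{1,33})=0$, because $A$ has its support in coordinates $2,3$, where $B_1$ is a multiple of the identity. This is the reason for choosing $A_{\mathrm c}$.
\end{itemize}

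For the family, every term is tangential, and $\tr A=\tr B=0$. The divergence is taken only in tangential variables: $A$ is constant and $\divg B=0$, so the coefficients in $t$ contribute nothing. The total degree is at most three, since $t^3A$ and $t^2B$ have degree three. Setting $t=0$ kills every term, and $\partial_t$ at $t=0$ leaves only $\theta A$. Since $h^-(\bar x,0)=0$, the metric $\exp(\mu h^-)$ restricts to the Euclidean metric on the boundary, and $\partial_t\exp(\mu h^-)|_{t=0}=\mu\theta A$. Proposition~\ref{prop:polynomial-geometric-jets} then gives $L=-\frac{\mu\theta}{2}A$, and $H=0$ by Lemma~\ref{lem:polynomial-boundary-geometry} (trace-free). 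Hence $\pi=L$, and the boundary is totally geodesic for $\theta=0$ and nonumbilic for $\theta\ne0$, $\mu\ne0$, since $A\ne0$. The relation \eqref{eq:negative-boundary-class-relation} is immediate from the definition together with $P_N^{(0)}=S_N^{(0)}(0,0)=A$.

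No step is a real obstacle. The only places requiring care are the Gram matrix entry $m(m-1)$ and the sign convention $L=-\frac12\partial_tg$.
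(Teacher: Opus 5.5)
Your proof is correct and follows the same direct-computation route as the paper. The paper's steps are: harmonicity of $H_3$ for the trace, divergence and Laplacian identities of $B=\frac12\nabla^2H_3$; Euler's identity for $\bar x^{\mathsf T}B\bar x=3H_3$; the explicit matrices $B_1=\diag(-(m-1),1,\ldots,1)$ and $B_a=\mathbf e_1\otimes\mathbf e_a+\mathbf e_a\otimes\mathbf e_1$ for the Gram matrix and for orthogonality to $A_{\mathrm c}$; and $L_{ab}=-\frac12\partial_tg_{ab}$ with $\tr A=0$ for the boundary jet. You supply a few details the paper leaves implicit, such as $\partial_t\exp(\mu h^-)|_{t=0}=\mu\theta A$ because $h^-$ vanishes on the boundary, but the argument is the same.
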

\begin{proof}
The formula for $H_3$ implies $\Delta H_3=0$.
Its Hessian is twice $B$, so its trace and divergence vanish; $\Delta B=0$ follows also from its linearity.
By Euler's identity for a homogeneous cubic, $\bar x^{\mathsf T}B\bar x=3H_3$.
$B_1=\diag(-(m-1),1,\ldots,1)$, and for $a>1$ the only nonzero entries of $B_a$ are $(1,a)$ and $(a,1)$, both equal to one.
These formulas prove the identities for every $m\ge8$.

The factors depending only on $t$ preserve the tangential divergence and trace conditions.
Every term of $h^-$ vanishes at $t=0$, and only $\theta tA$ contributes to its first normal derivative there.
The product-coordinate formula $L_{ab}=-\tfrac12\partial_tg_{ab}$, together with $\tr A=0$, proves \eqref{eq:negative-boundary-jet}.
Finally, \eqref{eq:negative-boundary-class-relation} follows from $P_N^{(0)}=A=S_N^{(0)}(0,0)$.
\end{proof}

Each $B_a$ is a constant symmetric trace-free matrix, so the first translation derivatives use the calculation for $t^2A$ with $A$ replaced by $B_a$.

Recenter with $\widehat t:=t-L$ and set
\[
\Omega_L:=\{(\bar x,\widehat t)\in\R^N:\widehat t\ge-L\},\qquad
D:=1+s+\widehat t^2,\qquad \widetilde U:=D^{-(N-2)/2}.
\]
The rows $h_i$ are still the homogeneous polynomials in the original variable $t=\widehat t+L$.
Scaling is performed in the original half-space coordinate $t$; the shifted coordinate $\widehat t$ is used to evaluate the pairing at the centered unit bubble $z_0=(0,1)$.
Use the notation of Section~\ref{sec:exact-endpoint-pairings} for the interior and boundary operators:
\[
J:=-\Delta-N(N+2)D^{-2},\qquad
\mathcal B_\partial:=-\partial_{\widehat t}+\frac{NL}{D}
\quad\text{on }\partial\Omega_L.
\]
Their Jacobi form and projected weak operator are denoted by $\mathcal B_L$ and $\mathcal L_L$:
\[
\mathcal B_L(v,\psi)
:=\int_{\Omega_L}\bigl(\nabla v\cdot\nabla\psi-N(N+2)D^{-2}v\psi\bigr)
+NL\int_{\partial\Omega_L}D^{-1}v\psi.
\]
The inverse of $\mathcal L_L$ is taken on the complement of the translation and scale Jacobi fields.
For every row, write
\[
S_i:=\widetilde S[h_i]=(h_i)_{ab}\widetilde U_{ab},\qquad
Z_i:=\mathcal L_L^{-1}S_i,\qquad JX_i=S_i.
\]
The source has this simple form because all the rows are divergence free.
The exact corrector $Z_i$ satisfies $\mathcal B_\partial Z_i=0$.
We construct $X_i$ on the whole space and restrict it to $\Omega_L$; its boundary residual need not vanish.
All these sources and constructed correctors have tangential angular degree two or three.
The translation and scale Jacobi fields have degree one and zero, respectively, and the projection weights are tangentially radial.
The restrictions of $X_i$ therefore already satisfy the fixed-bubble orthogonality constraints.
In these angular sectors the projected equation is the displayed interior equation with the actual homogeneous Robin boundary condition.

Retain the positive normalization
\begin{equation}\label{eq:negative-energy-normalization}
\widehat F_h:=c_N^{-1}2^{2-N}F_h,\qquad
F_h=c_N2^{N-2}\widehat F_h.
\end{equation}
The geometric bubble is $2^{(N-2)/2}\widetilde U$, so this is exactly the normalization used for minimal Type I in Section~\ref{sec:algebra}.
Define $\mathcal M_L$ and $\mathbf M_L$ with the same sphere-area factor as \eqref{eq:coefficient-matrix-pairing}:
\begin{equation}\label{eq:negative-corrected-form}
\begin{aligned}
|\Sph^{N-2}|\mathcal M_L(h_i,h_j):={}&
\frac1{2c_N}\int_{\Omega_L}
(h_i\nabla\widetilde U)\cdot(h_j\nabla\widetilde U)
-\frac14\int_{\Omega_L}\langle\partial h_i,\partial h_j\rangle
              \widetilde U^2\\
&-c_N^{-1}\langle S_i,Z_j\rangle,\qquad
(\mathbf M_L)_{ij}:=\mathcal M_L(h_i,h_j).
\end{aligned}
\end{equation}
The first two integrals, divided by $|\Sph^{N-2}|$, give $\mathbf M^{\rm raw}_{L,ij}$.
The divergence terms in \eqref{eq:coefficient-matrix-pairing} vanish.
By homogeneous scaling,
\begin{equation}\label{eq:negative-matrix-scaling}
\widehat F_{h^-_{\tau,\theta}}(0,\eps)
=|\Sph^{N-2}|\mathbf c(\ell)^{\mathsf T}\mathbf M_L\mathbf c(\ell),
\qquad
\mathbf c(\ell):=
\begin{pmatrix}\theta\eps\\\eps^2\\\tau\eps^3\end{pmatrix},
\quad\ell=\log\eps.
\end{equation}
Write $\mathcal M_{\mathbb R^N}$ for the corresponding whole-space pairing and $\mathbf M_{\mathbb R^N}$ for its matrix on the same rows $h_1,h_2,h_3$.
In this pairing the inverse is the whole-space Jacobi inverse on the kernel complement.
The whole-space pairing is evaluated on these same $L$-dependent rows.
Define the positive constant
\[
C_N:=\frac{N-3}{3(N-6)(N-2)}
\int_{\mathbb R^N}\widetilde U^2.
\]

\begin{proposition}
\label{prop:cubic-whole-space-energy}
For $N>8$, the whole-space pairing on the cubic family is
\[
\frac{|\Sph^{N-2}|}{C_N}\mathbf M_{\mathbb R^N}
=-\begin{pmatrix}
0&0&0\\
0&1&-1\\
0&-1&\displaystyle\frac{49}{48}
+\frac{3N-7}{4(N-8)(N-3)L^2}
\end{pmatrix}.
\]
Consequently its centered energy after dilation is
\begin{equation}\label{a:eq:normalfullenergy}
|\Sph^{N-2}|\mathbf c(\ell)^{\mathsf T}
\mathbf M_{\mathbb R^N}\mathbf c(\ell)
=-C_N\left[\eps^4-2\tau\eps^5+
\left(\frac{49}{48}+\frac{3N-7}{4(N-8)(N-3)L^2}\right)
            \tau^2\eps^6\right].
\end{equation}
\end{proposition}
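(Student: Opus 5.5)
Since every row is divergence free, the whole-space pairing reduces to the two raw integrals in \eqref{eq:negative-corrected-form} minus the corrector pairing. We work on all of $\R^N$ in the shifted variable $\widehat t$, with $\widetilde U=D^{-(N-2)/2}$ and $D=1+s+\widehat t^2$. The rows, however, are polynomials in $t=\widehat t+L$, so we expand $t^p=(\widehat t+L)^p$ and split the computation by tangential angular degree.

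\emph{Step 1: vanishing of the $h_1$ row and column.} The row $h_1=tA$ equals $\widehat tA+LA$. The term $LA$ is a constant trace-free tensor, and its whole-space pairing vanishes identically: a constant tensor is a Lie derivative of the flat metric by a linear vector field, and on $\R^N$ the round model is stationary with no boundary. By Corollary~\ref{cor:boundary-gauge-directions}, such a gauge direction pairs to zero with everything. The term $\widehat tA$ is likewise $\mathcal L_X g$ for the linear field $X=\widehat t A\bar x$ plus a correction. I would check directly that $\widehat t A$ generates, up to a conformal Killing piece, a gauge direction $\mathcal L_X\delta$ with $X_a=\widehat t(A\bar x)_a$, $X_N=-\tfrac12 Q_A$. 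The tangential block of $\mathcal L_X\delta$ is $2\widehat tA$, the mixed block is $(A\bar x)_a-(A\bar x)_a=0$, and the normal block is $0$. So $\widehat tA=\tfrac12\mathcal L_X\delta$ exactly, and on the whole space the corrected form annihilates it, in the sense of the cutoff limit \eqref{eq:geometric-hessian-cutoff-limit}. I expect the cutoff justification (decay $N>2\cdot3+2$) to be routine. This gives the zero first row and column.

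\emph{Step 2: the $h_2,h_3$ block.} The same gauge observation shows that only the part of $t^2A$ and $t^3A$ not generated by such $X$ contributes. Concretely, I would write
\[
t^2=\widehat t^2+2L\widehat t+L^2,\qquad t^3=\widehat t^3+3L\widehat t^2+3L^2\widehat t+L^3,
\]
and drop the $\widehat t$-linear and constant parts. This leaves $h_2\simeq\widehat t^2A$ and
\[
h_3\simeq -\tfrac{1}{3L}\widehat t^3A-\widehat t^2A+\beta_N(\widehat t+L)^2B,
\]
which explains the off-diagonal entry $-(-1)=+1$ after the sign. For the $\widehat t^2A$ and $\widehat t^3A$ pieces (angular degree two), I would construct the whole-space correctors exactly with the operator \eqref{eq:I-polynomial-operator} for $q=2$, now on $\R^N$. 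The Rodrigues/moment formulas of Lemma~\ref{lem:endpoint-moments}, extended to $t\in\R$ (even moments doubled, odd moments zero), then give rational expressions in $N$ times $\int\widetilde U^2$, with no boundary residual. The $B$ piece has angular degree three, so by angular orthogonality and \eqref{eq:scalar-cubic-Gram} ($\langle A,B_a\rangle=0$) it decouples from the $A$ pieces. It contributes only to the $(3,3)$ entry, via its own $q=3$ corrector.

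\emph{Step 3: assembly.} Parity in $\widehat t$ kills the cross pairing between $\widehat t^3A$ and $\widehat t^2A$, so the $L^{-2}$ term in the $(3,3)$ entry comes solely from $\tfrac1{9L^2}\mathcal M(\widehat t^3A,\widehat t^3A)$. Similarly, $(\widehat t+L)^2B$ reduces by gauge removal, together with the $\widehat t$-parity of the whole-space bubble, to $\widehat t^2B$ plus gauge terms. I would then verify three values: $\mathcal M(\widehat t^2A,\widehat t^2A)=-C_N/|\Sph^{N-2}|$; $\beta_N^2\mathcal M(\widehat t^2B,\widehat t^2B)=-\tfrac1{48}C_N/|\Sph^{N-2}|$, which is where the choice of $\beta_N$ and the factor $N-8$ enter; and $\tfrac1{9}\mathcal M(\widehat t^3A,\widehat t^3A)=-\tfrac{3N-7}{4(N-8)(N-3)}C_N/|\Sph^{N-2}|$. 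Then \eqref{a:eq:normalfullenergy} follows from \eqref{eq:negative-matrix-scaling}. The main obstacle is Step 2: computing these three corrected pairings exactly. In particular, the $q=3$ corrector for $\widehat t^2B$ requires solving $\mathcal T_3R=V$ on $\R^N$ and tracking the Beta-moment ratios to the constant $1/48$. I would first do the $\widehat t^2A$ case as a calibration against $C_N$, since its normalization defines $C_N$.
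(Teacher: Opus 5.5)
Your proposal is correct, but for the vanishing entries it takes a genuinely different route from the paper.

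\textbf{How the paper proceeds.} It computes the full corrected matrices on the bases $\{\widehat t^iA\}_{i=0}^3$ and $\{\widehat t^iB\}_{i=0}^2$ from explicit polynomial correctors and beta moments. It finds them diagonal, with zero entries for $i\le1$ (``the raw terms and the scalar correction cancel for constant and linear normal powers''), and only then expands $t=\widehat t+L$.

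\textbf{How you proceed.} You recognize $\widehat t^pA$ and $\widehat t^pB$, $p\le1$, as infinitesimal diffeomorphisms of the flat metric, and kill their rows and columns by diffeomorphism invariance at the stationary round model. Your field $X=(\widehat tA\bar x,-\tfrac12Q_A)$ is right. For $B$ and $\widehat tB$ one can take $(\tfrac12\nabla_{\bar x}H_3,0)$ and $(\tfrac12\widehat t\nabla_{\bar x}H_3,-\tfrac12H_3)$, respectively.

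\textbf{What your route buys.}
\begin{enumerate}
\item It explains the paper's cancellations structurally. Indeed, the paper's corrector $Q_AR_{2,1}\widetilde U/D$ for $\widehat tA$ is exactly $-\tfrac12X(\widetilde U)$.
\item It gives for free the vanishing of cross terms that parity does not kill, such as $\mathcal M_{\mathbb R^N}(A,\widehat t^2A)$, $\mathcal M_{\mathbb R^N}(\widehat tA,\widehat t^3A)$ and $\mathcal M_{\mathbb R^N}(B,\widehat t^2B)$.
\item It shows why the first row of $\mathbf M_L$ need not vanish on $\Omega_L$: the combined field $(tA\bar x,-\tfrac12Q_A)$ for $h_1$ is normal, not tangent, on $\{t=0\}$.
\end{enumerate}

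\textbf{What it costs.} Corollary~\ref{cor:boundary-gauge-directions} is stated for smooth vector fields on the compact model. Your polynomial fields are not conformal Killing fields, so they do not extend smoothly through the point at infinity. Moreover, their cutoffs $\mathcal L_{\chi_RX}\delta$ are not tangential, so the cutoff identification \eqref{eq:geometric-hessian-cutoff-limit} does not apply verbatim; the remark after the corollary warns about exactly this. The step can be closed by a density argument on the sphere, since all completed variations involved lie in $H^1$ when $N>8$. However, you have to carry that argument out; it is not just ``routine decay''.

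\textbf{What the paper's route buys.} Its direct computation avoids this analytic issue. It also yields the explicit $R_{2,p}$ and $R_{3,p}$, which are reused for the boundary residuals in Subsection~\ref{sec:cubic-comparison}.

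\textbf{What still has to be done, and a minor correction.} Your three target values are correct, but they still require the paper's computation: solve $\mathcal T_2R=N(N-2)\widehat t^{\,p}$ for $p=2,3$ and $\mathcal T_3R=N(N-2)\widehat t^{\,2}$, then evaluate the moments. Separately, at the centered bubble the $A$ and $B$ sectors decouple by tangential parity alone. The identity $\langle A,B_a\rangle=0$ is needed only for the translation calculation in Lemma~\ref{lem:cubic-translation-block}.
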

\begin{proof}
We first construct the correctors, then evaluate their pairings and the raw terms.
For the normal monomials in the recentered coordinate, the source equations are
\begin{align*}
\widetilde S[\widehat t^pA]&=N(N-2)\widehat t^pQ_A\widetilde U D^{-2},\\
\widetilde S[\widehat t^pB]&=N(N-2)\widehat t^p(3H_3)\widetilde U D^{-2}.
\end{align*}
Thus their tangential angular degrees are two and three, respectively.
In the notation of Lemma~\ref{lem:polynomial-corrector-solvability}, seek
\[
X_{\widehat t^pA}:=Q_A R_{2,p}\frac{\widetilde U}{D},\qquad
X_{\widehat t^pB}:=3H_3 R_{3,p}\frac{\widetilde U}{D},\qquad
\mathcal T_qR_{q,p}=N(N-2)\widehat t^p.
\]
Here the first index of $R_{q,p}$ is the harmonic degree, and the second is the normal power in the source.
The operator is \eqref{eq:I-polynomial-operator}, with $t$ replaced by $\widehat t$.
Lemma~\ref{lem:polynomial-corrector-solvability} gives unique polynomial solutions of weighted degree at most $p$.
Coefficient matching gives, for $q=2$ and $0\le p\le3$,
\begin{equation}
\begin{aligned}
R_{2,0}&=(N-2)/2,&R_{2,1}&=(N-2)\widehat t/4,\\
R_{2,2}&=[s+(2N-3)\widehat t^2+3]/12,&
R_{2,3}&=[s\widehat t+(N-1)\widehat t^3+2\widehat t]/8,
\end{aligned}
\end{equation}
and, for $q=3$ and $0\le p\le2$,
\begin{equation}
R_{3,0}=(N-2)/4,\qquad
R_{3,1}=(N-2)\widehat t/6,\qquad
R_{3,2}=[s+(3N-5)\widehat t^2+2]/24.
\end{equation}
In these formulas $s=D-1-\widehat t^2$, so they are polynomials in the independent variables $(D,\widehat t)$.
Their correctors are $O(r^{5-N})$ at worst, with gradients $O(r^{4-N})$, where $r:=(s+\widehat t^2)^{1/2}$.
They are smooth and have finite Dirichlet energy for $N>8$.
Their tangential angular degrees make them orthogonal to the whole-space Jacobi kernel, so these are the exact whole-space correctors.

For later use, expand $t=\widehat t+L$ explicitly.
By linearity, the approximations for the actual rows are
\begin{equation}\label{eq:negative-row-approximations}
\begin{aligned}
X_1&:=Q_A\frac{\widetilde U}{D}(R_{2,1}+LR_{2,0}),\\
X_2&:=Q_A\frac{\widetilde U}{D}(R_{2,2}+2LR_{2,1}+L^2R_{2,0}),\\
X_3&:=-\frac{Q_A\widetilde U}{3LD}
(R_{2,3}+3LR_{2,2}+3L^2R_{2,1}+L^3R_{2,0})\\
&\hspace{1cm}+3\beta_NH_3\frac{\widetilde U}{D}
(R_{3,2}+2LR_{3,1}+L^2R_{3,0}).
\end{aligned}
\end{equation}
Each satisfies $JX_i=S_i$ on the whole space and hence on $\Omega_L$.

To evaluate the integrals, put $V_N:=\int_{\mathbb R^N}\widetilde U^2$ within this calculation.
By beta integration,
\begin{equation}
\frac1{V_N}\int_{\mathbb R^N}s^a\widehat t^{2b}
 D^{-(N-2+\nu)}
=\frac{((N-1)/2)_a(1/2)_b((N-4)/2)_{\nu-a-b}}{(N-2)_\nu}.
\end{equation}
Here $a,b,\nu$ are nonnegative integers and the identity holds whenever the integral converges.
Negative Pochhammer indices denote reciprocal finite products.
Odd normal moments vanish.
All moments below converge for $N>8$.
For the cubic tensor, write $\bar x=\sqrt{s}\,\omega$ with $\omega\in\mathbb S^{m-1}$ and average over $\omega$ at fixed $s$:
\begin{align*}
\fint_{\mathbb S^{m-1}}|B|^2
&=\frac{(m-1)(m+2)}m s,&
|\nabla B|^2&=(m-1)(m+2),\\
\fint_{\mathbb S^{m-1}}|B\bar x|^2
&=\frac{2(m-1)}m s^2,&
\fint_{\mathbb S^{m-1}}(\bar x^{\mathsf T}B\bar x)^2
&=\frac{6(m-1)}{m(m+4)}s^3.
\end{align*}
These follow from \eqref{eq:scalar-cubic-seed} and the spherical moments in Lemma~\ref{lem:angular-gram}.
For the matrix family use the same moments with $|A|=1$.

On the whole space, the correction in
\eqref{eq:negative-corrected-form} is $\langle S_i,X_j\rangle$.
Substitution of the polynomial solutions and moments in that formula yields the complete corrected matrices on the indicated ordered bases:
\begin{align}
\frac{|\Sph^{N-2}|}{V_N}\bigl(\mathcal M_{\mathbb R^N}(\widehat t^iA,\widehat t^jA)\bigr)_{i,j=0}^3
&=\diag\left(0,0,-\frac{N-3}{3(N-6)(N-2)},
-\frac{3(3N-7)}{4(N-8)(N-6)(N-2)}\right),
\\
\frac{|\Sph^{N-2}|}{V_N}\bigl(\mathcal M_{\mathbb R^N}(\widehat t^iB,\widehat t^jB)\bigr)_{i,j=0}^2
&=\diag\left(0,0,-\frac{(N-3)(N+1)}{4(N-8)(N-6)}\right).
\end{align}
Cross pairings between the two bases vanish by tangential parity.
Within each basis, the raw terms and the scalar correction cancel for constant and linear normal powers, so those terms introduced by recentering contribute zero to the corrected pairing.

The degree-two normal term contributes $-C_N$.
By the definition of $\beta_N$ and the second matrix,
\begin{equation}
-\frac{|\Sph^{N-2}|\beta_N^2}{C_N}
\mathcal M_{\mathbb R^N}(t^2B,t^2B)=\frac1{48}.
\end{equation}
The remaining contribution to the $(3,3)$ entry, divided by $-C_N$, is
\[
-\frac{|\Sph^{N-2}|}{9C_NL^2}
\mathcal M_{\mathbb R^N}(\widehat t^3A,\widehat t^3A)
=\frac{3N-7}{4(N-8)(N-3)L^2}.
\]
Expanding $t=\widehat t+L$ in the two matrices proves the stated matrix on $h_1,h_2,h_3$.
Inserting $\mathbf c(\ell)$ proves
\eqref{a:eq:normalfullenergy}.
\end{proof}

\subsection{Corrector comparison and matrix coefficients}
\label{sec:cubic-comparison}
We now compare $\mathbf M_L$ with the whole-space matrix.
The restrictions of \eqref{eq:negative-row-approximations} solve the interior equations exactly, and
\[
J(Z_i-X_i)=0,\qquad
\mathcal B_\partial(Z_i-X_i)=-\mathcal B_\partial X_i.
\]
Lemma~\ref{lem:corrector-pairing-identity} therefore applies with $\mathcal B=\mathcal B_L$.
Here we bound its error pairing using uniform coercivity and the boundary trace inequality.

\begin{lemma}
\label{a:lem:uniformnegative}
For $L\ge0$ and a Dirichlet-space function $v$ of tangential harmonic degree $q\ge2$,
\begin{equation}
\mathcal B_L(v,v)\ge\frac{2(q-1)}{N+2q}
 \int_{\Omega_L}|\nabla v|^2.
\end{equation}
If $JX_i=S_i$ and $Z_i=\mathcal L_L^{-1}S_i$ in these angular sectors, then
\[
\|Z_i-X_i\|_{\mathcal B_L}
\le C(N)\|\mathcal B_\partial X_i\|_{L^{2(N-1)/N}(\partial\Omega_L)}.
\]
Consequently,
\[
|\mathcal B_L(Z_i-X_i,Z_j-X_j)|
\le C(N)\|\mathcal B_\partial X_i\|_{L^{2(N-1)/N}}
\|\mathcal B_\partial X_j\|_{L^{2(N-1)/N}}.
\]
The constants are independent of $L$.
The error bounds also hold for finite sums of components of degrees at least two.
\end{lemma}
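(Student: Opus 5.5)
The plan is a ground-state (Picone) substitution adapted to each tangential angular sector, followed by a standard energy estimate for the error. Since $L\ge0$, the Robin term $NL\int_{\partial\Omega_L}D^{-1}v^2$ is nonnegative. It will be used to absorb a negative boundary term produced by the substitution.

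\emph{Step 1: reduction to one sector.} Write $v=Y(\omega)f(s,\widehat t)$, where $Y$ is a tangential spherical harmonic of degree $q\ge2$. Both $\int|\nabla v|^2$ and $\mathcal B_L(v,v)$ split orthogonally over tangential harmonic components, because the weights $D^{-2}$ and $D^{-1}$ are tangentially radial. It therefore suffices to prove the inequality for a single such product, with a constant that is monotone in $q$. Finite sums of components of degree at least two then follow by orthogonality.

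\emph{Step 2: ground-state identity.} In the fixed sector, take the positive profile
\[
\psi_0:=s^{q/2}D^{-\alpha},\qquad \alpha:=\tfrac{N-2}2+q,
\]
so that $\psi:=Y\psi_0=H_qD^{-\alpha}$ is the Kelvin-type image of a degree-$q$ spherical harmonic. Directly, or from the monomial formula for $\mathcal T_q$, one checks
\[
-\Delta\psi=(N+2q)(N+2q-2)D^{-2}\psi .
\]
Writing $v=w\psi$, the Picone identity on $\Omega_L$ gives
\[
\int_{\Omega_L}|\nabla v|^2=\int\psi^2|\nabla w|^2+\int(-\Delta\psi)\psi w^2+\int_{\partial\Omega_L}w^2\psi\,\partial_\nu\psi .
\]
On $\widehat t=-L$, with $\nu=-\partial_{\widehat t}$, we have $\partial_\nu\psi=-(N-2+2q)LD^{-1}\psi$. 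Hence
\[
\int|\nabla v|^2\ge(N+2q)(N+2q-2)\int D^{-2}v^2-(N+2q-2)L\int_{\partial}D^{-1}v^2 .
\]
In the sector everything reduces to the positive radial profile $\psi_0$, so the nodal set of $H_q$ causes no trouble. The identity is justified first for compactly supported smooth $f$ vanishing near $s=0$ to the appropriate order, and then by density.

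\emph{Step 3: convex splitting.} Write $\int|\nabla v|^2=\theta\int|\nabla v|^2+(1-\theta)\int|\nabla v|^2$ and apply Step~2 to the second part with
\[
1-\theta=\frac N{N+2q-2}.
\]
This choice makes the boundary terms cancel exactly against $NL\int D^{-1}v^2$. The interior coefficient becomes
\[
N(N+2q)-N(N+2)\ge0 \qquad(q\ge1).
\]
Therefore
\[
\mathcal B_L(v,v)\ge\frac{2(q-1)}{N+2q-2}\int|\nabla v|^2\ge\frac{2(q-1)}{N+2q}\int|\nabla v|^2,
\]
uniformly in $L\ge0$.

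\emph{Step 4: error bounds.} The difference $e_i:=Z_i-X_i$ lies in the sectors of degree at least two. These sectors are orthogonal to the translation and scale Jacobi fields, so the projection plays no role. The difference satisfies $Je_i=0$ and $\mathcal B_\partial e_i=-\mathcal B_\partial X_i$, hence
\[
\mathcal B_L(e_i,\varphi)=-\int_{\partial\Omega_L}(\mathcal B_\partial X_i)\varphi
\]
for test functions $\varphi$ in those sectors. Test with $\varphi=e_i$. Step~3 bounds the left side below by a multiple of $\|\nabla e_i\|_2^2$. The right side is controlled by the trace inequality \eqref{eq:interior-boundary-dual-bounds}, whose constant on a half-space is invariant under the translation $\widehat t\mapsto\widehat t+L$ and hence independent of $L$. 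Combining the two gives
\[
\|e_i\|_{\mathcal B_L}\le C(N)\|\mathcal B_\partial X_i\|_{L^{2(N-1)/N}}.
\]
Since $\mathcal B_L$ is positive on these sectors, the Cauchy--Schwarz inequality for $\mathcal B_L$ yields the bilinear bound.

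The main obstacle is Step~2. One must verify the exact eigen-identity for $\psi$, including the constant, and justify the integration by parts near $s=0$ and at infinity on $\Omega_L$. The delicate bookkeeping is the sign and size of the boundary flux $-(N+2q-2)L$, which must be no larger than $(N+2q-2)/N$ times the Robin weight so that the splitting in Step~3 closes.
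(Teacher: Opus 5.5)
Your proof is correct. Like the paper's proof, it is a ground-state (Picone) argument in a fixed angular sector, but you choose a different comparison function and dispose of the Robin term differently.

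The paper's route:
\begin{itemize}
\item The paper takes $\phi=s^{q/2}D^{-N/2}$, i.e.\ $H_q$ times the bubble.
\item This $\phi$ satisfies $\mathcal B_\partial\phi=0$ exactly for every $L$, and $J_q\phi=2N(q-1)D^{-1}\phi$.
\item So the ground-state identity has no boundary term and gives directly $\mathcal B_L(v,v)\ge 2N(q-1)\int_{\Omega_L} v^2/D$.
\item The gradient bound then uses $D\ge1$ to compare $D^{-2}$ with $D^{-1}$. That step is where the weaker constant $2(q-1)/(N+2q)$ comes from.
\end{itemize}

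Your route:
\begin{itemize}
\item You use $\psi=H_qD^{-(N+2q-2)/2}$. It solves $-\Delta\psi=(N+2q)(N+2q-2)D^{-2}\psi$, equivalently $J\psi=4(q-1)(N+q)D^{-2}\psi$; at $L=0$ it satisfies the Neumann condition.
\item Its flux on $\widehat t=-L$ is nonzero, namely $-(N+2q-2)LD^{-1}\psi$.
\item Your weight $1-\theta=N/(N+2q-2)$ is exactly what makes this flux cancel the Robin term $NL\int_{\partial\Omega_L}D^{-1}v^2$. What remains is $\theta\int|\nabla v|^2+2N(q-1)\int D^{-2}v^2$.
\end{itemize}
I checked the eigen-identity, the flux sign and coefficient, and the cancellation; all are right. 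Your reduction of $v=Y(\omega)f$ to $v=w\psi$ with $w=f/\psi_0$ tangentially radial correctly sidesteps the nodal set of $H_q$.

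What each buys: the paper's choice handles the boundary through the comparison function itself, so no coefficient matching is needed. Yours requires that exact match but gives the sharper $L$-uniform constant $2(q-1)/(N+2q-2)$ and never uses $D\ge1$. Your Step~4 coincides with the paper's error argument: Green's identity, the translation-invariant half-space trace inequality, coercivity, and Cauchy--Schwarz for $\mathcal B_L$.
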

\begin{proof}
The positive radial function $\phi:=s^{q/2}D^{-N/2}$ satisfies
\[
J_q\phi=2N(q-1)D^{-1}\phi,\qquad
\mathcal B_\partial\phi=0,
\]
where $J_q$ is the radial operator induced by $J$ in angular degree $q$.
By the ground-state identity,
\[
\mathcal B_L(v,v)\ge2N(q-1)\int_{\Omega_L}v^2/D.
\]
Since $D\ge1$ and the Robin term is nonnegative,
\[
\int|\nabla v|^2
\le\mathcal B_L(v,v)+N(N+2)\int v^2/D^2
\le\frac{N+2q}{2(q-1)}\mathcal B_L(v,v).
\]
Apply the identity first away from $s=0$ and then pass to the limit by density.
The estimate is uniform in $L$, including $L=0$.

For the error, Green's formula implies
\[
\mathcal B_L(Z_i-X_i,\psi)
=-\int_{\partial\Omega_L}(\mathcal B_\partial X_i)\psi.
\]
Take $\psi=Z_i-X_i$.
H\"older's inequality and the half-space trace inequality bound the right side by
\[
C(N)\|\mathcal B_\partial X_i\|_{L^{2(N-1)/N}}
\|\nabla(Z_i-X_i)\|_{L^2}.
\]
Coercivity proves the norm estimate, and Cauchy--Schwarz in $\mathcal B_L$ proves the pairing estimate.
For sums of angular components, orthogonality and the lower bound $2(q-1)/(N+2q)\ge2/(N+4)$ give the same conclusion.
\end{proof}

For the umbilic family, only the $(2,3)$ block of $\mathbf M_L$ is needed.
Define its normalized coefficients by
\begin{equation}\label{eq:cubic-finite-coefficients}
a_L:=-\frac{|\Sph^{N-2}|}{C_N}(\mathbf M_L)_{22},\qquad
b_L:=\frac{|\Sph^{N-2}|}{C_N}(\mathbf M_L)_{23},\qquad
c_L:=-\frac{|\Sph^{N-2}|}{C_N}(\mathbf M_L)_{33}.
\end{equation}
Thus the exact finite-half-space energy is
\[
\widehat F_{h^-_{\tau,0}}(0,\eps)
=-C_N(a_L\eps^4-2b_L\tau\eps^5+c_L\tau^2\eps^6).
\]

\begin{lemma}
\label{lem:cubic-half-space-comparison}
For each fixed $N\ge9$, as $L\to\infty$,
\begin{align*}
a_L&=1+O_N(L^{6-N}),\qquad b_L=1+O_N(L^{6-N}),\\
c_L&=\frac{49}{48}+\frac{3N-7}{4(N-8)(N-3)L^2}
+O_N(L^{6-N}+L^{8-N}).
\end{align*}
The comparison with \eqref{a:eq:normalfullenergy} holds uniformly for bounded $\tau$ and $\eps$ in compact subintervals of $(0,\infty)$, also after two derivatives in $\ell=\log\eps$.
All errors tend to zero for fixed $N\ge9$; the slowest rate at $N=9$ is $L^{-1}$.
\end{lemma}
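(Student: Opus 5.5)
The approach is to compare each entry of $\mathbf M_L$ with the corresponding entry of $\mathbf M_{\mathbb R^N}$ from Proposition~\ref{prop:cubic-whole-space-energy}, using the pairing identity of Lemma~\ref{lem:corrector-pairing-identity} with the whole-space approximations \eqref{eq:negative-row-approximations}. For rows $h_i,h_j\in\{h_2,h_3\}$ write
\[
|\Sph^{N-2}|(\mathbf M_L)_{ij}
=|\Sph^{N-2}|(\mathbf M_{\mathbb R^N})_{ij}-\mathcal E^{\rm raw}_{ij}
+c_N^{-1}\bigl(\mathcal E^{\rm int}_{ij}-\mathcal E^{\partial}_{ij}-\mathcal B_L(Z_i-X_i,Z_j-X_j)\bigr).
\]
Here $\mathcal E^{\rm raw}_{ij}$ collects the two raw integrals over the complement $\{\widehat t<-L\}$. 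The term $\mathcal E^{\rm int}_{ij}$ collects $\tfrac12\int_{\{\widehat t<-L\}}(S_iX_j+S_jX_i)$, which is the part of the whole-space value $\langle S_i,X_j\rangle$ lost by restriction. The local boundary term $\mathcal E^\partial_{ij}=\tfrac12\int_{\partial\Omega_L}(X_i\mathcal B_\partial X_j+X_j\mathcal B_\partial X_i)$ appears because $X_i$ is not an exact Robin solution. The identity \eqref{eq:response-reconstruction} holds on $\Omega_L$ because $JX_i=S_i$ there and the restricted $X_i$ already lie in the projected space, by the angular-degree argument preceding \eqref{eq:negative-energy-normalization}. It then remains to estimate each error by a power of $L$ and divide by $C_N$.

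For the domain-truncation terms, every integrand is a polynomial of degree at most $2d_*=6$ in $(\bar x,t)$, with the $L$-dependence entering only through $t=\widehat t+L$. It multiplies $\widetilde U^2$ or $\widetilde U^2D^{-1}$ and similar factors, so it is bounded by $C(1+L)^{k}(1+|x|)^{6-k-2(N-2)-2}$. On $\{\widehat t<-L\}$ one has $|x|\ge L$, and radial integration gives $O(L^{6+2-2N+N})$ up to the powers of $L$ in the coefficients. Bookkeeping the $L$-powers (the row $h_3$ carries $L^{-1}t^3$, and $X_i$ carries up to $L^3$ times a lower-degree $R$) should give $O(L^{6-N})$ for $a_L,b_L$ and an extra $L^{8-N}$ in $c_L$, consistent with the statement. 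For $\mathcal E^\partial$, on $\widehat t=-L$ the residual $\mathcal B_\partial X_i=-\partial_{\widehat t}X_i+NLD^{-1}X_i$ is explicit. With $D=1+s+L^2$ and the polynomial $R$, one gets $|X_i|+|\mathcal B_\partial X_i|\lesssim L^{c}(1+s+L^2)^{-(N-2)/2-1+\deg/2}$. Integrating over $\R^{N-1}$ gives the same decay order, $L^{6-N}$ up to the $L^{8-N}$ correction in the $h_3$ entry.

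The remaining term, the boundary response $\mathcal B_L(Z_i-X_i,Z_j-X_j)$, is the main obstacle, since the Robin coefficient $NL/D$ grows with $L$ and no explicit spectral formula is available. I would invoke Lemma~\ref{a:lem:uniformnegative}. All components have harmonic degree two or three, so the bound is uniform in $L$:
\[
|\mathcal B_L(Z_i-X_i,Z_j-X_j)|\le C(N)\|\mathcal B_\partial X_i\|_{L^{2(N-1)/N}(\partial\Omega_L)}\|\mathcal B_\partial X_j\|_{L^{2(N-1)/N}(\partial\Omega_L)}.
\]
The residual norm is computed by scaling $\bar x=L\bar y$. A residual of size $L^{\kappa}(1+s+L^2)^{-\rho}$ has $L^{2(N-1)/N}$-norm of order $L^{\kappa-2\rho+N/2}$. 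This should again give a product of order $L^{6-N}$ (respectively $L^{8-N}$). The delicate point is that the leading $NLD^{-1}X_i$ and $\partial_{\widehat t}X_i$ parts could individually be larger. I would therefore first check whether they partially cancel (the Robin term is designed for the true bubble), and otherwise accept the cruder power, which the stated rates appear to permit.

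Finally, uniformity in bounded $\tau$ and in $\eps$ on compact sets, including two $\ell$-derivatives, follows from \eqref{eq:negative-matrix-scaling}: $\widehat F$ is the quadratic form $\mathbf c(\ell)^{\mathsf T}\mathbf M_L\mathbf c(\ell)$ with a fixed matrix, and $\ell$-derivatives only insert $\mathbf D$. Hence the entrywise estimates transfer directly. Dividing by $C_N$ and comparing with Proposition~\ref{prop:cubic-whole-space-energy} gives the three asymptotics, with slowest rate $L^{8-9}=L^{-1}$ at $N=9$.
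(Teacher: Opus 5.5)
Your proposal is correct and follows essentially the same route as the paper. You use Lemma~\ref{lem:corrector-pairing-identity} with the whole-space approximations to split $\mathbf M_L-\mathbf M_{\mathbb R^N}$ into tails over $\{\widehat t<-L\}$ (bounded using $r\ge L$ there), the local boundary integral, and the error pairing. The error pairing is controlled by the $L$-uniform coercivity of Lemma~\ref{a:lem:uniformnegative} together with the rescaling $\bar x=L\zeta$ of the residual norms, and \eqref{eq:negative-matrix-scaling} then transfers the estimates to the $\ell$-derivatives.

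The paper computes the residuals $\mathcal B_\partial X_i$ exactly via the bubble--Robin cancellation, but the cancellation you hoped for is not needed for the stated rates. On $\partial\Omega_L$ one has $|\widehat t|=L$ and $D\ge L^2$, so $NLD^{-1}X_i$ and the corresponding piece of $\partial_{\widehat t}X_i$ are each bounded by $N|X_i|/L$. This bound has the same scaling in $L$ as the exact residual. Also, $\mathcal E^\partial_{ij}$ should enter your decomposition with a plus sign; this is immaterial for the estimates.
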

\begin{proof}
We estimate the boundary contribution to each corrector pairing and then the integrals over the omitted region $\widehat t<-L$.
For a polynomial factor $R(D,\widehat t)$, cancellation of the bubble derivative with the Robin term yields
\[
\mathcal B_\partial\left(H_qR\frac{\widetilde U}{D}\right)
=-H_qD^{-N/2}\bigl(2\widehat tR_D+R_{\widehat t}\bigr)
  \big|_{\widehat t=-L}.
\]
In particular, the cubic part of $X_3$ is $\beta_NX_{t^2B}$, where
\[
X_{t^2B}:=3H_3\frac{\widetilde U}{D}
(R_{3,2}+2LR_{3,1}+L^2R_{3,0}).
\]
Its boundary residual is
\begin{equation}
\mathcal B_\partial X_{t^2B}
=-\frac{N-3}{12}L(3H_3)(1+s+L^2)^{-N/2}.
\end{equation}
For the two normal-matrix terms, the same differentiation yields
\begin{align*}
\mathcal B_\partial X_{t^2A}
&=-\frac{N-3}{6}LQ_A(1+s+L^2)^{-N/2},\\
\mathcal B_\partial X_{t^3A}
&=-\frac18\bigl[s+(N-3)L^2+2\bigr]Q_A(1+s+L^2)^{-N/2}.
\end{align*}
Here $X_{t^2A}=X_2$ and $X_{t^3A}$ is obtained from the first term of $X_3$ by removing its coefficient $-1/(3L)$.

For $L\ge1$, the substitution $\bar x=L\zeta$ in these formulas and their boundary traces yields
\begin{align*}
\|\mathcal B_\partial X_{t^2B}\|_{L^{2(N-1)/N}(\partial\Omega_L)}
+\|X_{t^2B}\|_{L^{2(N-1)/(N-2)}(\partial\Omega_L)}
&=O_N(L^{4-N/2}),\\
\|\mathcal B_\partial X\|_{L^{2(N-1)/N}(\partial\Omega_L)}
+\|X\|_{L^{2(N-1)/(N-2)}(\partial\Omega_L)}
&=O_N(L^{3-N/2})
\end{align*}
for $X=X_{t^2A}$ and $X=(3L)^{-1}X_{t^3A}$.
The rescaled traces are dominated by polynomial multiples of $(1+|\zeta|^2)^{-N/2}$; their indicated powers are integrable for $N>8$.

For each of these components, put $Z_h:=\mathcal L_L^{-1}\widetilde S[h]$.
Lemma~\ref{lem:corrector-pairing-identity} gives the diagonal reconstruction
\[
\langle \widetilde S[h],Z_h\rangle-\int_{\Omega_L}\widetilde S[h]X_h
=-\int_{\partial\Omega_L}X_h\mathcal B_\partial X_h
+\mathcal B_L(Z_h-X_h,Z_h-X_h).
\]
The local boundary integral is controlled by the two trace norms.
Lemma~\ref{a:lem:uniformnegative} controls the error pairing by the square of the residual norm.
Thus the total boundary correction is $O_N(L^{8-N})$ for $t^2B$ and $O_N(L^{6-N})$ for the normal-matrix terms.
The bilinear identity gives the same bounds for cross terms within each angular sector; the two sectors are orthogonal.

It remains to compare the interior integrals with their whole-space values.
On $\widehat t<-L$, put $r=(s+\widehat t^2)^{1/2}\ge L$.
The cubic tensor is $O(r^3)$, its gradient $O(r^2)$, its source $O(r^{3-N})$, and its corrector $O(r^{5-N})$.
Hence each omitted quadratic integral is bounded by
\[
C(N)\int_L^\infty r^{7-N}\dd r=O_N(L^{8-N}).
\]
For $t^2A$ and $t^3A/(3L)$, the corresponding bounds have an additional factor $L^{-1}$, since $r\ge L$; their quadratic tails are therefore $O_N(L^{6-N})$.

Combining the boundary and tail estimates with Proposition~\ref{prop:cubic-whole-space-energy} proves the three coefficient estimates in \eqref{eq:cubic-finite-coefficients}.
Finally, \eqref{eq:negative-matrix-scaling} puts all scale dependence in the polynomial vector $\mathbf c(\ell)$, with the operator and domain fixed.
The same estimates therefore hold after two scale derivatives on the stated compact sets.
\end{proof}

The translation calculation uses the derivative matrices $B_a:=\partial_aB$ from \eqref{eq:scalar-cubic-Gram}.
Write their Gram matrix as
\[
G^B:=(\langle B_a,B_b\rangle)_{a,b=1}^m
=\diag(m(m-1),2,\ldots,2).
\]
Here $a,b$ are tangential coordinate indices; the indices $i,j$ of $h_i$ and $(\mathbf M_L)_{ij}$ continue to label the three tensor rows.
\begin{lemma}
\label{lem:cubic-translation-block}
For fixed $N\ge9$, $L>0$, and $\tau,\theta\in\mathbb R$, the family
\eqref{a:eq:hLmain} has the exact translation dependence
\begin{equation}\label{eq:cubic-exact-translation-energy}
\widehat F_{h^-_{\tau,\theta}}(\xi,\eps)
=\widehat F_{h^-_{\tau,\theta}}(0,\eps)
-C_Na_L\tau^2\beta_N^2\eps^4\xi^{\mathsf T}G^B\xi.
\end{equation}
It is stationary in translation at every centered scale, and its mixed translation--scale block vanishes.
Its corrected translation Hessian at scale one is
\begin{equation}\label{a:eq:KphysicalT}
D_\xi^2\widehat F_{h^-_{\tau,\theta}}(0,1)
=-2C_Na_L\tau^2\beta_N^2\diag(m(m-1),2,\ldots,2).
\end{equation}
It is negative definite if $a_L>0$ and $\tau\ne0$.
\end{lemma}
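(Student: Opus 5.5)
The plan is to exploit the special structure of the translated tensor. Translating the bubble center to $\xi$ is equivalent, by the invariance of the corrected functional under tangential translations, to keeping the bubble centered and replacing $h^-$ by $h^-_\xi(\bar x,t):=h^-(\bar x+\xi,t)$. Since $A$ is constant and $B$ is linear in $\bar x$, the translate is exactly
\[
h^-_\xi=h^-+\tau\beta_Nt^2\sum_a\xi_aB_a ,
\]
with no higher-order terms in $\xi$. Each $B_a$ is a constant symmetric trace-free matrix, so the extra term is a row of the form $t^2\widetilde A$ with $\widetilde A:=\tau\beta_N\sum_a\xi_aB_a$. By \eqref{eq:scalar-cubic-Gram}, $\langle A,B_a\rangle=0$, and by the $B_a$ formulas $|\widetilde A|^2=\tau^2\beta_N^2\xi^{\mathsf T}G^B\xi$. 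The corrected energy is a quadratic form in the tensor, so
\[
\widehat F(\xi)=\widehat F(0)+2\,\mathcal M_L\bigl(h^-,t^2\widetilde A\bigr)+\mathcal M_L\bigl(t^2\widetilde A,t^2\widetilde A\bigr),
\]
with the $\eps$-scaling handled as in \eqref{eq:negative-matrix-scaling}: after rescaling, the row $t^2\widetilde A$ carries $\eps^2$ and $\xi$ appears as $\xi/\eps$. This gives $\eps^4\cdot\eps^{-2}$ in physical variables. I would check the exact power against the stated $\eps^4$, adjusting whether $\xi$ is measured in rescaled units.

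The first step is the quadratic term. The pairing $\mathcal M_L(t^2\widetilde A,t^2\widetilde A)$ depends on $\widetilde A$ only through $|\widetilde A|^2$: rotate $\widetilde A$ by an orthogonal map of $\R^m$, which preserves $\Omega_L$, the bubble and the Robin operator, and use that the pairing is a rotation-invariant quadratic form on the irreducible space $\operatorname{Sym}^2_0$. Hence it equals $|\widetilde A|^2\mathcal M_L(t^2A,t^2A)$ for unit $A$. This is $-C_Na_L|\widetilde A|^2/|\Sph^{N-2}|$ by \eqref{eq:cubic-finite-coefficients}, and the sphere factor is absorbed as in \eqref{eq:negative-matrix-scaling}. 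This produces the stated term $-C_Na_L\tau^2\beta_N^2\eps^4\xi^{\mathsf T}G^B\xi$.

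The second step is to show that the cross term vanishes identically in $\xi$. The rows of $h^-$ are $tA$, $t^2A$, $t^3A$ and $t^2B$.
\begin{itemize}
\item For the matrix rows, the pairing with $t^pB_a$ is bilinear and invariant, so by Schur it is proportional to $\langle A,B_a\rangle=0$.
\item For $t^2B$, the sources and correctors have tangential angular degree three, while those of $t^2B_a$ have degree two. The raw integrands $(h\nabla\widetilde U)\cdot(h'\nabla\widetilde U)$ and $\langle\partial h,\partial h'\rangle\widetilde U^2$ are odd under $\bar x\mapsto-\bar x$, because $B$ is odd and $B_a$ is even. The inverse pairing also vanishes, since $\mathcal L_L^{-1}$ preserves angular sectors.
\end{itemize}
Parity is the cleanest route: under $\bar x\mapsto-\bar x$ the rows $A$ and $B_a$ are even and $B$ is odd, but this alone leaves the $A$–$B_a$ cross terms, which are handled by Schur. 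This yields \eqref{eq:cubic-exact-translation-energy} exactly.

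The remaining claims then follow directly. Translation stationarity and the vanishing of the mixed block follow because the $\xi$-dependence is a pure quadratic form multiplying a function of $\eps$. Differentiating twice at $\eps=1$ gives \eqref{a:eq:KphysicalT}, and negative definiteness follows from $G^B>0$ (since $m\ge8$), $C_N>0$, $a_L>0$ and $\tau\ne0$.

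The main obstacle is the invariance step for the $L$-dependent half-space pairing. One must confirm that the exact projected inverse $\mathcal L_L^{-1}$ commutes with tangential rotations, and that the kernel constraints are rotation-invariant, so that Schur's lemma applies to the full corrected form and not only to the raw terms. I would first verify that the projection weights are tangentially radial and that the Jacobi kernel spans a representation containing no degree-two piece, which the preceding text already asserts.
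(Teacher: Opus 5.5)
Your argument is essentially the paper's proof. It uses the exact translation formula coming from the linearity of $B$. It uses orthogonal invariance (your Schur argument) to reduce $\mathcal M_L(t^2\widetilde A,t^2\widetilde A)$ to $|\widetilde A|^2(\mathbf M_L)_{22}$ and to kill the cross terms with the $A$-rows via $\langle A,B_a\rangle=0$. It uses tangential parity for the cross term with $t^2B$.

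The one point to fix is the $\eps$ bookkeeping you left open. Your ``$\eps^4\cdot\eps^{-2}$'' double-counts. Rescaling to the unit bubble gives $h^-(\xi+\eps\bar y,\eps t)=h^-(\eps\bar y,\eps t)+\tau\beta_N\eps^2t^2\sum_a\xi_aB_a$, and $\xi$ stays unscaled because $B$ is linear. Equivalently, the $t^2B$ row carries $\eps^3$ and the shift by $\xi/\eps$ removes one power. Either way the added row carries exactly $\eps^2$, so its quadratic pairing carries $\eps^4$, as in \eqref{eq:cubic-exact-translation-energy}.

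This correction does not affect stationarity, the vanishing mixed block, or the unit-scale Hessian \eqref{a:eq:KphysicalT}.
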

\begin{proof}
Orthogonal invariance makes the corrected pairing of fixed normal profiles times constant trace-free matrices proportional to their Frobenius inner product.
Since $|A|=1$,
\[
\mathcal M_L(t^2B_a,t^2B_b)
=\langle B_a,B_b\rangle(\mathbf M_L)_{22}.
\]
Tangential parity eliminates pairings of a normal-matrix profile with the linear tensor $B$.
The exact translation formula is
\[
h^-(\bar x+\xi,t)=h^-(\bar x,t)
+\tau\beta_Nt^2\sum_a\xi_aB_a.
\]
After rescaling to the unit bubble, the added term is $\tau\beta_N\eps^2t^2\sum_a\xi_aB_a$.
Its cross pairings with the normal-matrix terms vanish because $\langle A,B_a\rangle=0$, and its cross pairing with the linear tensor $B$ vanishes by parity.
Its quadratic pairing is therefore
\[
|\Sph^{N-2}|\tau^2\beta_N^2\eps^4
(\mathbf M_L)_{22}\,\xi^{\mathsf T}G^B\xi.
\]
Since $|\Sph^{N-2}|(\mathbf M_L)_{22}=-C_Na_L$, this proves
\eqref{eq:cubic-exact-translation-energy}.
Differentiation proves stationarity, the zero mixed block, and \eqref{a:eq:KphysicalT}.
\end{proof}

\subsection{Selection of \texorpdfstring{$\tau$}{tau} and strict local maximum}
As in Section~\ref{sec:endpoint-matrix-assembly}, let $E,p,K$ denote the energy and its first two logarithmic scale derivatives, now divided by the positive factor $C_N$:
\[
E_\theta(\tau):=\frac{\widehat F_{h^-_{\tau,\theta}}(0,1)}{C_N},\qquad
p_\theta(\tau):=\frac{\partial_\ell\widehat F_{h^-_{\tau,\theta}}(0,1)}{C_N},\qquad
K_\theta(\tau):=\frac{\partial_{\ell\ell}\widehat F_{h^-_{\tau,\theta}}(0,1)}{C_N}.
\]
Here the derivatives are evaluated at $\ell=0$.
With $\mathbf c:=(\theta,1,\tau)^{\mathsf T}$ and $\mathbf D:=\diag(1,2,3)$, they are the quadratic forms in
\eqref{a:eq:scalingmatrix}, multiplied by $|\Sph^{N-2}|/C_N$.
For the umbilic family this reads
\begin{align*}
E_0(\tau)&=-(a_L-2b_L\tau+c_L\tau^2),\\
p_0(\tau)&=-2(2a_L-5b_L\tau+3c_L\tau^2),\\
K_0(\tau)&=-(16a_L-50b_L\tau+36c_L\tau^2).
\end{align*}
Thus selecting scale one reduces to solving $p_0(\tau)=0$.
For sufficiently large $L$, the comparison lemma makes $c_L>0$ and $\Delta_L:=25b_L^2-24a_Lc_L>0$.
Define the larger root by
\begin{equation}\label{eq:cubic-unit-coefficient}
\tau_0:=\frac{5b_L+\sqrt{\Delta_L}}{6c_L}
\to\frac{8(5+1/\sqrt2)}{49}
\quad\text{as }L\to\infty.
\end{equation}

\begin{theorem}\label{thm:cubic-maximum}
For every $N\ge9$ there is $L_N>0$ such that, for $L\ge L_N$, the family \eqref{a:eq:hLmain} with $\theta=0$ and coefficient
\eqref{eq:cubic-unit-coefficient} has a strict nondegenerate local maximum of negative value at $(0,1)$:
\[
\widehat F_{h^-_{\tau_0,0}}(0,1)<0,\qquad
D\widehat F_{h^-_{\tau_0,0}}(0,1)=0,\qquad
D^2\widehat F_{h^-_{\tau_0,0}}(0,1)\prec0.
\]
Its exponential metric has totally geodesic boundary.
\end{theorem}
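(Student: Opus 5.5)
The plan is to reduce everything to the three coefficients $a_L,b_L,c_L$ of Lemma~\ref{lem:cubic-half-space-comparison} and the exact translation structure of Lemma~\ref{lem:cubic-translation-block}, and then argue by continuity from the limiting values $a_\infty=b_\infty=1$, $c_\infty=49/48$.

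\textbf{Step 1 (scale stationarity and the selected root).} With $\theta=0$ the scale forms are $E_0,p_0,K_0$ as displayed before \eqref{eq:cubic-unit-coefficient}. In the limit the discriminant of $p_0$ is $\Delta_\infty=25-24\cdot\frac{49}{48}=\frac12>0$ and $c_\infty>0$. By Lemma~\ref{lem:cubic-half-space-comparison}, for $L$ large $c_L>0$ and $\Delta_L>0$, so $\tau_0$ in \eqref{eq:cubic-unit-coefficient} is a well-defined simple root of $p_0$. It depends continuously on $(a_L,b_L,c_L)$ and converges to $\tau_\infty=8(5+1/\sqrt2)/49$. This gives $\partial_\ell\widehat F=0$ at $(0,1)$.

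\textbf{Step 2 (signs of the energy and the scale Hessian).} I will evaluate the limiting polynomials at $\tau_\infty$. Using $p_0(\tau_0)=0$, i.e.\ $3c\tau^2=5b\tau-2a$, eliminate the $\tau^2$ terms. This gives
\[
E_0=-\tfrac13(a-b\tau),\qquad K_0=-(16a-50b\tau+12(5b\tau-2a))=8a-10b\tau.
\]
In the limit $\tau_\infty=(40+4\sqrt2)/49\approx0.932$. Then $a-b\tau_\infty>0$, so $E<0$. Also $K=8-10\tau_\infty\approx-1.32<0$, which is a negative scale second derivative. Both limits are strict, so they persist for $L\ge L_N$. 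Since $\partial_{\ell\ell}$ and $\partial_{\eps\eps}$ agree at a stationary scale, this gives negativity in $\eps$.

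\textbf{Step 3 (translations and mixed block).} By Lemma~\ref{lem:cubic-translation-block}, translation stationarity and the vanishing mixed block hold at every centered scale. The translation Hessian is $-2C_Na_L\tau_0^2\beta_N^2\diag(m(m-1),2,\dots,2)$. This is negative definite because $a_L\to1>0$, $\tau_0\to\tau_\infty\ne0$, and $\beta_N>0$ for $N\ge9$. The full Hessian is therefore block diagonal and negative definite, and the point is a strict nondegenerate local maximum with negative value.

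\textbf{Step 4 (boundary geometry).} Totally geodesic boundary follows from \eqref{eq:negative-boundary-jet} with $\theta=0$, via Lemma~\ref{lem:cubic-tensor-geometry} and Lemma~\ref{lem:polynomial-boundary-geometry}.

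The only genuinely delicate input is the uniformity of the comparison in Lemma~\ref{lem:cubic-half-space-comparison}, with rate $L^{-1}$ at $N=9$. Once that is granted, the main obstacle is the elementary but sign-critical computation in Step 2. I will double-check it by substituting $\tau_\infty$ directly, without the elimination, to confirm $K_\infty<0$ and $E_\infty<0$.
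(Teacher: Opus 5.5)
Your proposal is correct and follows essentially the paper's route: choose $L_N$ via Lemma~\ref{lem:cubic-half-space-comparison}, use $p_0(\tau_0)=0$ to reduce $E_0$ to $-(a_L-b_L\tau_0)/3$, and take the translation and mixed blocks from Lemma~\ref{lem:cubic-translation-block}. The only difference is in how the scale sign is obtained. The paper places $(a_L,b_L,c_L)$ in an explicit box $\le 1/1000$ around $(1,1,49/48)$ and uses the exact identity $K_0(\tau_0)=-2\tau_0\sqrt{\Delta_L}$ (equivalent to your $8a_L-10b_L\tau_0$). You instead evaluate at the limit $\tau_\infty$ and argue by continuity, which is fine.
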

\begin{proof}
Lemma~\ref{lem:cubic-half-space-comparison} allows us to choose $L_N$ so that for every $L\ge L_N$,
\begin{equation}
|a_L-1|,\quad |b_L-1|,\quad |c_L-49/48|\le1/1000.
\end{equation}
Elementary rational inequalities on this box give
\[
a_Lc_L-b_L^2\ge269/16000>0,\qquad
\Delta_L\ge401501/10^6>0.
\]
The selected coefficient satisfies $p_0(\tau_0)=0$, and
\[
E_0(\tau_0)=-\frac{a_L-b_L\tau_0}{3}<0,\qquad
K_0(\tau_0)=-2\tau_0\sqrt{\Delta_L}<0.
\]
Indeed $a_Lc_L>b_L^2$ implies $\sqrt{\Delta_L}<b_L$ and $\tau_0<b_L/c_L$, so $a_L-b_L\tau_0>0$.
Since $a_L>0$ and $\tau_0>0$, Lemma~\ref{lem:cubic-translation-block} gives negative definiteness of the translation block and vanishing of the mixed block.
Together with the scale calculation, this proves the strict nondegenerate local maximum of negative value.

Lemma~\ref{lem:cubic-tensor-geometry} gives the boundary assertion.
At the origin the tensor and its first derivatives vanish, and its degree-two term is $t^2A$.
Hence
\[
W_{\exp(\mu h^-)}{}_{aNbN}(0)
=-\mu\frac{N-3}{N-2}A_{ab}\ne0\qquad(\mu\ne0).
\]
The degree-three terms do not change this identity.
\end{proof}

The same matrix determines the stationary equation when $\theta\ne0$:
\begin{equation}\label{eq:negative-perturbed-quadratic}
\begin{aligned}
p_\theta(\tau)=\frac{|\Sph^{N-2}|}{C_N}\bigl\{
&6(\mathbf M_L)_{33}\tau^2
+(10(\mathbf M_L)_{23}+8\theta(\mathbf M_L)_{13})\tau\\
&+4(\mathbf M_L)_{22}+6\theta(\mathbf M_L)_{12}
      +2\theta^2(\mathbf M_L)_{11}\bigr\}.
\end{aligned}
\end{equation}
This follows directly from the degrees $1,2,3$ and
\eqref{eq:negative-matrix-scaling}. Denote by $\tau(\theta)$ the
nearby larger root that equals $\tau_0$ at $\theta=0$.

\begin{corollary}\label{thm:normal-odd-maximum}
For each fixed $N\ge9$ and $L\ge L_N$, there is $\theta_0(N,L)>0$ such that every $0<|\theta|<\theta_0(N,L)$, with $\tau=\tau(\theta)$ from \eqref{eq:negative-perturbed-quadratic}, gives a strict nondegenerate local
maximum of negative value at $(0,1)$:
\[
\widehat F_{h^-_{\tau(\theta),\theta}}(0,1)<0,\qquad
D\widehat F_{h^-_{\tau(\theta),\theta}}(0,1)=0,\qquad
D^2\widehat F_{h^-_{\tau(\theta),\theta}}(0,1)\prec0.
\]
The boundary is nonumbilic throughout the uncut boundary core.
\end{corollary}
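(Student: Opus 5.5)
This is a perturbation argument in $\theta$ from Theorem~\ref{thm:cubic-maximum}: every sign condition proved at $\theta=0$ is strict, so each survives for small $|\theta|$.

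\textbf{Step 1: the quadratic data are continuous in $\theta$.} Fix $N\ge9$ and $L\ge L_N$. By \eqref{eq:negative-matrix-scaling}, the centered energy is the quadratic form $|\Sph^{N-2}|\mathbf c(\ell)^{\mathsf T}\mathbf M_L\mathbf c(\ell)$ with $\mathbf c=(\theta\eps,\eps^2,\tau\eps^3)$. The matrix $\mathbf M_L$ is independent of $\theta$ and $\tau$. Hence $E_\theta(\tau)$, $p_\theta(\tau)$ and $K_\theta(\tau)$ are polynomials in $(\theta,\tau)$, and $p_\theta$ is given explicitly by \eqref{eq:negative-perturbed-quadratic}.

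\textbf{Step 2: the root $\tau(\theta)$ and the scale direction.} At $\theta=0$, the leading coefficient of $p_0$ is $-6c_L\ne0$, and the discriminant $\Delta_L$ is positive, so $\tau_0$ is a simple root. The implicit function theorem (or the quadratic formula) then gives a smooth branch $\tau(\theta)$ with $p_\theta(\tau(\theta))=0$ and $\tau(0)=\tau_0$. From Theorem~\ref{thm:cubic-maximum} we have the strict inequalities
\[
E_0(\tau_0)<0,\qquad K_0(\tau_0)<0.
\]
By continuity these persist for $|\theta|<\theta_0(N,L)$. This gives stationarity in scale, a negative energy value, and a negative scale Hessian.

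\textbf{Step 3: the translation block.} Proposition~\ref{prop:scalar-elimination} gives continuous $C^2$ dependence of $F_h$ on the coefficients of $h$, so the full Hessian of $\widehat F_{h^-_{\tau(\theta),\theta}}$ at $(0,1)$ depends continuously on $\theta$. Stationarity in translation and vanishing of the mixed block at every $\theta$ come from parity, which I would check as follows.
\begin{itemize}
\item The term $\theta tA$ is even under $\bar x\mapsto-\bar x$, while $B$ is odd. The added tensor $\theta tA$ pairs with the $\xi$-dependent piece $\tau\beta_N\eps^2t^2\sum_a\xi_aB_a$ only through $\langle A,B_a\rangle=0$, by the Frobenius proportionality used in Lemma~\ref{lem:cubic-translation-block}.
\item The pairing of $\theta tA$ with $t^2B$ vanishes by parity.
\end{itemize}
Hence \eqref{eq:cubic-exact-translation-energy} holds verbatim for $\theta\ne0$. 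The translation Hessian \eqref{a:eq:KphysicalT} then remains negative definite, since $a_L>0$ and $\tau(\theta)\ne0$ for small $\theta$, and the mixed block is zero.

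I expect the orthogonality in Step 3 to be the main obstacle. It needs the angular-sector argument of Lemma~\ref{lem:cubic-translation-block} (same normal profile times constant trace-free matrices) to extend to the pairing of the different normal profiles $t$ and $t^2$. That holds because the corrected pairing on the half-space is rotation invariant. If the argument were to fail, the fallback is plain continuity of the full Hessian: at $\theta=0$ the Hessian is negative definite, so it stays negative definite for small $|\theta|$. Stationarity in translation would still follow from the $\bar x\mapsto-\bar x$ symmetry of the even part when $B$ is absent, but with $B$ present the symmetry is the reflection $x_1\mapsto-x_1$ composed with the appropriate sign changes.

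\textbf{Step 4: the boundary is nonumbilic.} This is \eqref{eq:negative-boundary-jet}. For $\mu\ne0$ and $\theta\ne0$ the trace-free second fundamental form is
\[
\pi=-\tfrac{\mu\theta}{2}A\ne0
\]
at every boundary point of the uncut region.
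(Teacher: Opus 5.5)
Your proposal is correct and follows essentially the same route as the paper. The steps match: the implicit-function theorem at the simple root $\tau_0$ (where $p_0'(\tau_0)=-2\sqrt{\Delta_L}$), continuity of $E_\theta$ and $K_\theta$, the exact translation formula of Lemma~\ref{lem:cubic-translation-block}, and nonumbilicity from \eqref{eq:negative-boundary-jet}; that lemma is already stated for every $\theta\in\R$, and the orthogonality you flag as the main obstacle is exactly its $O(m)$-invariance argument with $\langle A,B_a\rangle=0$. Your fallback is therefore unnecessary, and its parity suggestion would not work as stated, since $B$ is also odd under $x_1\mapsto-x_1$: stationarity in $\xi_1$ comes from that cross-term cancellation, not from any reflection symmetry.
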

\begin{proof}
At $\theta=0$ the stationary root is simple:
\[
p'_0(\tau_0)=-2\sqrt{\Delta_L}\ne0.
\]
The implicit-function theorem provides a smooth nearby larger root $\tau(\theta)$, still selecting scale one.
The leading quadratic coefficient in \eqref{eq:negative-perturbed-quadratic} is independent of $\theta$ and negative.
The quadratic forms defined by $\mathbf M_L$ depend continuously on $(\tau,\theta)$, so $E_\theta(\tau(\theta))<0$ and $K_\theta(\tau(\theta))<0$ for sufficiently small $|\theta|$.
Lemma~\ref{lem:cubic-translation-block} remains exact for every $\theta$, so the translation block is negative definite and the mixed block is zero.
This proves the strict local maximum.

Nonumbilicity follows from \eqref{eq:negative-boundary-jet}.
The degree-one perturbation has zero second derivatives, so the nonzero linearized Weyl component persists for small metric coefficient.
Freeze a nonzero $\theta$ and its selected $\tau(\theta)$ before forming the concentrating sequence.
\end{proof}
Appendix~\ref{app:normal-weyl-bounds} gives explicit sufficient curvature bounds by a separate construction.

\section{Localization on a fixed metric}\label{sec:common-analytic}
The placement of disjoint perturbations at shrinking scales follows \cite{Brendle,BrendleMarques,AlmarazBlowup}.
We give the estimates for the polynomial profiles used here, with their nonzero scalar corrections; the same argument applies to strict minima and strict maxima.

\subsection{The projected equation}
We use the Lyapunov--Schmidt argument of \cite[Propositions~2.8--2.9]{AlmarazBlowup}.
We record the estimates needed for the interior critical term and either sign of the boundary parameter.

Fix $(\sigma,\eta)$ and use the energy \eqref{eq:common-functional-expanded} on $\R^N_+$.
For block metrics of determinant one and mean curvature zero, define the nonlinear weak residual $\mathscr R_g(u)\in\Sigma^*$ by
\begin{align*}
\mathscr R_g(u)[\psi]
:={}&\int_{\R^N_+}
(\inner{du}{d\psi}_g+c_NR_gu\psi)\dd x
-\frac{N(N-2)\sigma}{4}\int_{\R^N_+}(u_+)^{(N+2)/(N-2)}\psi\dd x\\
&-\frac{(N-2)\eta}{2}\int_{\pa\R^N_+}(u_+)^{N/(N-2)}\psi\dd\bar x.
\end{align*}
Thus
\[
\mathscr R_g=\tfrac12DE_g,\qquad \mathscr R_{g_{\mathrm{euc}}}(U_z)=0.
\]
Constants may depend on the fixed curvature parameter $\kappa$.

The full nonlinear weak equation is
\begin{equation}\label{eq:full-nonlinear-weak-equation}
\mathscr R_g(u)[\psi]=0\qquad\text{for every }\psi\in\Sigma.
\end{equation}
For $u\in\Sigma$, define the symmetric bilinear form
\begin{align*}
\cB_{g,u}(v,\psi)
:={}&\frac12D^2E_g(u)[v,\psi]
=\int_{\R^N_+}
(\inner{dv}{d\psi}_g+c_NR_gv\psi)\dd x\\
&-\frac{N(N+2)\sigma}{4}\int_{\R^N_+}
(u_+)^{4/(N-2)}v\psi\dd x
-\frac{N\eta}{2}\int_{\pa\R^N_+}
(u_+)^{2/(N-2)}v\psi\dd\bar x,
\end{align*}
for $v,\psi\in\Sigma$.
At a flat bubble, $\cB_{g_{\mathrm{euc}},U_z}=\cB_z$.
Its restriction to $\Sigma_z$ is the linearization on the complement.

Throughout this subsection, let $N>6$ and $\mathcal O\Subset\R^{N-1}\times(0,\infty)$.
Assume
\[
\sup_{z\in\overline{\mathcal O}}\|\cL_z^{-1}\|_{\mathrm{op}}<\infty.
\]
Let $g$ be a smooth uniformly elliptic block metric, Euclidean outside a compact set, with $\det g=1$, $H_g=0$, and
\begin{equation}\label{eq:metric-smallness-natural-constraint}
\|g-g_{\mathrm{euc}}\|_{L^\infty}+\|R_g\|_{L^{N/2}}\le\alpha_1.
\end{equation}
For a projected family $V_{g,z}:=U_z+w_{g,z}$, write
\begin{equation*}
\mathcal F_g(z):=E_g(V_{g,z}).
\end{equation*}

\begin{proposition}\label{prop:precise-natural-constraint}
For sufficiently small $\alpha_1$, there is a locally unique family
\begin{equation}\label{eq:nonlinear-projected-family}
V_{g,z}=U_z+w_{g,z},\qquad w_{g,z}\in\Sigma_z,\qquad
\sup_{z\in\mathcal O}\|w_{g,z}\|_\Sigma\le C\alpha_1,
\end{equation}
continuous on $\overline{\mathcal O}$ and $C^1$ on $\mathcal O$, satisfying
\begin{equation*}
\mathscr R_g(V_{g,z})[\psi]=0\qquad(\psi\in\Sigma_z).
\end{equation*}
Every critical point of $\mathcal F_g$ gives a smooth positive solution of \eqref{eq:full-nonlinear-weak-equation}.
\end{proposition}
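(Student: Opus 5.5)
The plan is the standard Lyapunov--Schmidt construction: solve the projected equation by a contraction argument on $\Sigma_z$, then show that critical points of the reduced energy remove the finitely many Lagrange multipliers.

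\textbf{Fixed-point setup.} For $z\in\overline{\mathcal O}$ and $w\in\Sigma_z$ we want $\mathscr R_g(U_z+w)|_{\Sigma_z}=0$. Expand
\[
\mathscr R_g(U_z+w)=\mathscr R_g(U_z)+\cB_{g,U_z}(w,\cdot)+\mathcal N_{g,z}(w),
\]
and rewrite the equation as
\[
w=-\cL_z^{-1}\bigl[\mathscr R_g(U_z)+(\cB_{g,U_z}-\cB_z)(w,\cdot)+\mathcal N_{g,z}(w)\bigr]\big|_{\Sigma_z}.
\]

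\textbf{Estimates.} There are three terms to control.
\begin{enumerate}
\item The error term. Since $\mathscr R_{g_{\mathrm{euc}}}(U_z)=0$,
\[
\mathscr R_g(U_z)[\psi]=\int\inner{dU_z}{d\psi}_{g-g_{\mathrm{euc}}}+c_N\int R_gU_z\psi .
\]
The bound \eqref{eq:interior-boundary-dual-bounds}, H\"older with $R_g\in L^{N/2}$, and $U_z\in L^{2N/(N-2)}$ give $\|\mathscr R_g(U_z)\|_{\Sigma^*}\le C\alpha_1$, uniformly on $\overline{\mathcal O}$.
\item The linear perturbation. The same argument gives $\|\cB_{g,U_z}-\cB_z\|\le C\alpha_1$.
\item The nonlinearity. $\mathcal N$ comes from the superlinear terms $(u_+)^{(N+2)/(N-2)}$ and $(u_+)^{N/(N-2)}$. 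Their exponents exceed one, and Sobolev and trace embeddings give $\|\mathcal N(w)\|\le C(\|w\|^{\min(2,p)}+\dots)$. The Lipschitz constant on a ball of radius $r$ is $o(1)$ as $r\to0$; this is where $N>6$ enters, through the H\"older continuity of $u\mapsto u_+^{4/(N-2)}$.
\end{enumerate}
With the uniform bound on $\cL_z^{-1}$, the map is a contraction on the ball of radius $C\alpha_1$ in $\Sigma_z$. This gives a locally unique $w_{g,z}$.

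\textbf{Regularity in $z$.} Continuity in $z$ follows because the fixed-point map depends continuously on $z$ after transporting by $T_z=\Pi_z|_{\Sigma_{z_0}}$, as in the proof of Proposition~\ref{prop:scalar-elimination}. For $C^1$ regularity, apply the implicit function theorem to
\[
(z,v)\mapsto \mathscr R_g(U_z+T_zv)\circ T_z\in\Sigma_{z_0}^* .
\]
This map is $C^1$ since the nonlinearity is $C^1$ on $\Sigma$ (exponents larger than $1$), and its $v$-derivative is an invertible perturbation of $A_z$.

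\textbf{Critical points.} Write
\[
\mathscr R_g(V_{g,z})=\sum_\beta c_\beta(z)\,\mathfrak l_{z,\beta}
\]
as functionals. The multipliers exist because the residual annihilates $\Sigma_z=\bigcap\ker\mathfrak l_{z,\beta}$. Differentiating $\mathcal F_g$ gives
\[
\partial_{z_\alpha}\mathcal F_g=2\,\mathscr R_g(V_{g,z})[\Phi_{z,\alpha}+\partial_{z_\alpha}w_{g,z}]
=2\sum_\beta c_\beta\bigl(\mathfrak l_{z,\beta}(\Phi_{z,\alpha})+\mathfrak l_{z,\beta}(\partial_{z_\alpha}w)\bigr).
\]
Since $\mathfrak l_{z,\beta}(w_{g,z})\equiv0$, we have $\mathfrak l_{z,\beta}(\partial_\alpha w)=-(\partial_\alpha\mathfrak l_{z,\beta})(w)=O(\alpha_1)$. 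The coefficient matrix is therefore $\mathbf G_z+O(\alpha_1)$, which is invertible. Hence $D\mathcal F_g(z)=0$ forces $c\equiv0$, and \eqref{eq:full-nonlinear-weak-equation} holds.

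\textbf{Positivity and smoothness.} For positivity, test the equation with $V_-$. The negative-part terms on the right vanish, so
\[
\int|\nabla V_-|_g^2+c_N\int R_gV_-^2=0,
\]
and the $R_g$ term is $\le C\alpha_1\|V_-\|^2$ by Sobolev. Thus $V_-=0$ for small $\alpha_1$. Elliptic regularity (Moser iteration for the critical Neumann problem, following Cherrier) gives smoothness. The strong maximum principle and Hopf lemma then give $V>0$; for the boundary, note $\partial_\nu V=\tfrac{N-2}{2}\eta V^{N/(N-2)}\ge0$ is compatible at a zero with Hopf's strict sign.

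\textbf{Main obstacle.} I expect the hardest step to be the $C^1$ dependence together with the uniform Lipschitz control of the nonlinearity on the varying spaces $\Sigma_z$. I would handle it by transporting everything to $\Sigma_{z_0}$ via $T_z$.
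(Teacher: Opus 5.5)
Your proposal is correct and follows the paper's proof essentially step for step. It uses the same contraction for $w\mapsto w-\cL_z^{-1}\mathscr R_g(U_z+w)$ on a ball of radius $C\alpha_1$ in $\Sigma_z$, the same implicit-function theorem after identifying $\Sigma_z$ with $\Sigma_{z_0}$ via the projections, the same multiplier argument with matrix $\mathbf G_z+O(\alpha_1)$, and the same positivity proof (testing with $V_-$, then power iteration, elliptic regularity and the boundary point lemma).

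One slip in the boundary-positivity step needs fixing. At a boundary zero of $V\ge0$ the boundary condition gives $\partial_\nu V=0$ for either sign of $\eta$; your ``$\ge0$'' fails when $\eta<0$. This is \emph{incompatible} with Hopf's strict inequality $\partial_\nu V<0$, and that contradiction is what gives $V>0$ up to the boundary. The argument also needs $V\not\equiv0$, which follows from $\|w_{g,z}\|_\Sigma\le C\alpha_1$ and should be stated.
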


\begin{proof}
By the Sobolev and trace inequalities, uniformly on $\overline{\mathcal O}$,
\begin{align*}
\|\mathscr R_g(U_z)\|_{\Sigma^*}&\le C\alpha_1,\\
\bigl|\cB_{g,U_z+w}(v,\psi)-\cB_z(v,\psi)\bigr|
&\le C\bigl(\alpha_1+\|w\|_\Sigma^{2/(N-2)}\bigr)
\|v\|_\Sigma\|\psi\|_\Sigma.
\end{align*}
Here $w,v,\psi\in\Sigma_z$ and $\|w\|_\Sigma\le1$.
The critical power maps are $C^1$ between the corresponding Sobolev and dual spaces.
The uniform inverse bound makes
\[
w\mapsto w-\cL_z^{-1}\mathscr R_g(U_z+w)
\]
a contraction on a ball of radius $C\alpha_1$.
This proves existence, local uniqueness and \eqref{eq:nonlinear-projected-family}; the linearized projected inverse remains uniformly bounded.
Identifying the spaces $\Sigma_z$ by \eqref{eq:explicit-Sigma-projection}, the asserted parameter dependence follows from the implicit-function theorem.

For the natural constraint, write $\mathscr R_g(V_{g,z})=\sum_\alpha b_\alpha\mathfrak l_{z,\alpha}$.
Differentiating $\mathfrak l_{z,\alpha}(w_{g,z})=0$, we find
\[
\pa_{z_\beta}\mathcal F_g
=2\sum_\alpha b_\alpha
\bigl[(\mathbf G_z)_{\alpha\beta}-(\pa_{z_\beta}\mathfrak l_{z,\alpha})(w_{g,z})\bigr].
\]
The matrix in brackets is $\mathbf G_z+O(\alpha_1)$, hence invertible.
Thus every critical point satisfies $b=0$ and \eqref{eq:full-nonlinear-weak-equation}.

By the smallness condition \eqref{eq:metric-smallness-natural-constraint}, there is $c>0$ such that
\begin{equation}\label{eq:common-coercivity}
\int_{\R^N_+}(|\nabla v|_g^2+c_NR_gv^2)\dd x
\ge c\|v\|_\Sigma^2\qquad(v\in\Sigma).
\end{equation}
At a critical point, set $V:=V_{g,z}$ and $V_-:=\max\{-V,0\}$.
Both nonlinear terms vanish against $V_-$ because they involve $V_+$.
Testing the full equation with $-V_-$ yields
\[
0=\int_{\R^N_+}(|\nabla V_-|_g^2+c_NR_gV_-^2)\dd x
\ge c\|V_-\|_\Sigma^2.
\]
Hence $V\ge0$, independently of the signs of the curvature coefficients, and closeness to $U_z$ excludes $V\equiv0$.
Since
\[
V^{4/(N-2)}\in L^{N/2}_{\mathrm{loc}},\qquad
V^{2/(N-2)}|_{\pa\R^N_+}\in L^{N-1}_{\mathrm{loc}},
\]
cutoff power iteration, absorbing the small-norm tails of these coefficients, establishes every finite local integrability exponent.
Local elliptic estimates then imply $C^{1,\alpha}$ regularity.
The strong maximum principle and boundary point lemma imply $V>0$ up to the boundary; smoothness follows by elliptic bootstrapping.
\end{proof}

\subsection{Localization and concentration}
\label{subsec:fixed-metric-transfer}
Fix
\[
(\sigma,\eta)=(1,\kappa),\quad \kappa\in\R,
\qquad\text{or}\qquad (\sigma,\eta)=(0,2).
\]
Let $h$ be a tangential symmetric polynomial tensor on $\R^N_+$ with
\[
h_{iN}=0,\qquad \tr h=0,\qquad
\deg h\le d_*,\qquad N>\max\{6,2d_*+2\}.
\]
Fix a compact $K\Subset\R^{N-1}\times(0,\infty)$ and an open relatively compact neighborhood $\mathcal O$ of $K$ on which to apply Proposition~\ref{prop:precise-natural-constraint}.
For $\mu_0>0$ and integers $j\ge j_0$, set
\begin{equation*}
\begin{gathered}
p_j:=2^{-j}\mathbf e_1,\quad \rho_j:=2^{-j-5},\quad
\lambda_j:=\exp(-j^3),\quad
\mu_j:=\mu_0\exp(-j^2)\lambda_j^{d_*},\\
h_j^{\mathrm{loc}}(x):=\mu_j\chi(2|x-p_j|/\rho_j)
h((x-p_j)/\lambda_j),\qquad
h^{\mathrm{loc}}:=\sum_jh_j^{\mathrm{loc}},\qquad g:=\exp(h^{\mathrm{loc}}),
\end{gathered}
\end{equation*}
Define the dilation and rescaled metric by
\[
D_j(x):=p_j+\lambda_jx,\qquad g_j:=\lambda_j^{-2}D_j^*g.
\]
Thus the coefficient matrices satisfy $g_j(x)=g(p_j+\lambda_jx)$.
The constants $\mu_0$ and $j_0$ will be chosen depending on $K$ and then held fixed as $j\to\infty$.
In particular,
\[
R_{g_j}(x)=\lambda_j^2R_g(p_j+\lambda_jx),\qquad
\|R_{g_j}\|_{L^{N/2}}=\|R_g\|_{L^{N/2}}.
\]

\begin{lemma}\label{lem:localized-energy}
For sufficiently small $\mu_0$ and large $j_0$, the metric $g$ is smooth and minimal, with positive conformal quadratic form.
Its perturbation is compactly supported and vanishes to infinite order at the accumulation point.
The projected solutions $U_z+w_{j,z}$ from Proposition~\ref{prop:precise-natural-constraint} satisfy
\begin{equation}\label{eq:fixed-corrector-convergence}
\sup_{z\in K}\|\mu_j^{-1}w_{j,z}-v_z[h]\|_\Sigma\to0\qquad\text{as }j\to\infty
\end{equation}
and
\begin{equation}\label{eq:fixed-reduced-convergence}
E_{g_j}(U_z+w_{j,z})=E_{g_{\mathrm{euc}}}(U_z)+\mu_j^2F_h(z)+o(\mu_j^2)
\quad\text{as }j\to\infty,\quad\text{uniformly on }K.
\end{equation}
\end{lemma}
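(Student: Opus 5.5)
The plan is to treat the geometric assertions about $g$ first, then rescale each piece to a unit neighbourhood and compare with the single polynomial profile $\mu_jh$, reducing everything to Proposition~\ref{prop:scalar-elimination} and Proposition~\ref{prop:precise-natural-constraint}.

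\emph{Smoothness and geometry.} The supports $B_{\rho_j}(p_j)$ are pairwise disjoint, since $|p_j-p_{j+1}|=2^{-j-1}\gg\rho_j$. On the $j$th ball, the $C^k$ norm of $h_j^{\mathrm{loc}}$ is at most
\[
C_k\mu_j\lambda_j^{-d_*-k}\rho_j^{-k}\le C_k\mu_0e^{-j^2}\lambda_j^{-k}2^{jk},
\]
because $h$ is a polynomial of degree at most $d_*$ evaluated on $|y|\le\rho_j/\lambda_j$. With $\lambda_j=e^{-j^3}$ this bound is not summable, since $\lambda_j^{-k}=e^{kj^3}$ beats $e^{-j^2}$. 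I would therefore check the exponents. Derivatives of $h((x-p_j)/\lambda_j)$ of order $k$ are of size $\lambda_j^{-k}(\rho_j/\lambda_j)^{d_*-k}$ when $k\le d_*$, and vanish when $k>d_*$. Hence every $C^k$ norm is at most
\[
C\mu_0e^{-j^2}\lambda_j^{d_*}\lambda_j^{-d_*}\rho_j^{d_*-k}\le Ce^{-j^2}2^{jk}.
\]
This decays faster than any power of $2^{-j}\sim\mathrm{dist}(p_j,0)$. That gives smoothness, compact support, and vanishing to infinite order at $0$.

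The remaining geometric properties follow from earlier lemmas. Each piece is tangential and trace free, so $\det g=1$, and by Lemma~\ref{lem:polynomial-boundary-geometry} we get $H_g=0$. Since $\|g-g_{\mathrm{euc}}\|_{L^\infty}\le C\mu_0$, and $\|R_g\|_{L^{N/2}}$ is small by scale invariance of that norm together with the bound $\mu_j\lambda_j^{-d_*}$, condition \eqref{eq:metric-smallness-natural-constraint} holds for small $\mu_0$. The coercivity \eqref{eq:common-coercivity} then gives a positive conformal quadratic form.

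\emph{Rescaled comparison.} In the coordinates of $g_j$ we have $g_j=\exp(\mu_j\chi(2\lambda_j|x|/\rho_j)h(x))$ near the origin, plus the other pieces. The other pieces are supported at distance at least $c2^{-j}/\lambda_j\to\infty$, and their amplitudes are tiny. So $g_j=\exp(\mu_jh_{R_j})+\text{(far remainder)}$ with $R_j:=\rho_j/(2\lambda_j)\to\infty$.

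I would then expand $E_{g_j}(U_z+w)$ jointly in $(\mu,w)$ as in \eqref{eq:metric-scalar-quadratic-expansion}. The remainder must be uniform in the cutoff radius, so it has to be controlled by polynomial-weighted norms: the cubic term in $\mu$ is bounded by $\mu_j^3\int|h_{R_j}|^2|\partial h_{R_j}|^2U_z^2$. That integral may grow like $R_j^{3d_*+2-N}$, or be bounded if $N>3d_*+2$. In general it is $O(\mu_j^3R_j^{\max(0,3d_*+2-N)})$, and this must be $o(\mu_j^2)$. It is, because
\[
\mu_jR_j^{d_*}\lesssim\mu_0e^{-j^2}\rho_j^{d_*}\to0.
\]
The same observation, that $\mu_j|h_{R_j}|\lesssim e^{-j^2}$ uniformly, controls every higher-order term.

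The far pieces contribute to the residual only through $U_z$ evaluated at distance greater than $c2^{-j}/\lambda_j$. Their contribution is $O(\mu_0e^{-k^2}\cdot\text{tail})$, which I expect to be $o(\mu_j^2)$ because $U_z\sim|x|^{2-N}$ there.

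\emph{Corrector and energy.} Next I apply the contraction of Proposition~\ref{prop:precise-natural-constraint} to $w=\mu_j(v_z[h_{R_j}]+\tilde w)$. Using $\|\mathscr R_{g_j}(U_z)-\mu_j\cdot2\Lambda_z[h_{R_j}]\|_{\Sigma^*}=o(\mu_j)$, uniform invertibility of $\cL_z$ gives
\[
\|\mu_j^{-1}w_{j,z}-v_z[h_{R_j}]\|_\Sigma\to0.
\]
Lemma~\ref{lem:source-kernel-orthogonality} gives $v_z[h_{R_j}]\to v_z[h]$, which yields \eqref{eq:fixed-corrector-convergence}. Substituting into the expansion and using that $w_{j,z}$ is the projected critical point, the energy equals $E_{g_{\mathrm{euc}}}(U_z)+\mu_j^2F_{h_{R_j}}(z)+o(\mu_j^2)$. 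Since $F_{h_{R_j}}\to F_h$ by Proposition~\ref{prop:scalar-elimination}, this gives \eqref{eq:fixed-reduced-convergence}.

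The main obstacle is making the $o(\mu_j^2)$ remainders uniform while the cutoff radius $R_j$ grows. I would handle it by measuring every term with the weights $(1+|x|)^{d_*-N}$ and exploiting $\mu_jR_j^{d_*}\to0$ super-exponentially.
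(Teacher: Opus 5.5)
\textbf{There is a gap in how you handle the far pieces.} Your architecture matches the paper's: rescale by $D_j$, identify $g_j$ with $\exp(\mu_j h_{R_j})$ on the central ball, expand the residual at $U_z$, run the contraction, and Taylor-expand the energy. Working with $h_{R_j}$ and then using $v_z[h_{R_j}]\to v_z[h]$ and $F_{h_{R_j}}\to F_h$ is a legitimate variant of the paper's direct tail estimates.

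The problem is your claim that the far pieces enter ``only through $U_z$''. That is true for the residual $\mathscr R_{g_j}(U_z)$, but false for the linearized operator and for the quadratic part of the energy. After rescaling, a piece $h_k^{\mathrm{loc}}$ with $k<j$ still has amplitude of order $\mu_0e^{-k^2}\rho_k^{d_*}$, which does not depend on $j$, so calling it ``tiny'' is misleading. Its scale-invariant curvature norm is likewise unchanged. Hence $\|D\mathscr R_{g_j}(U_z)-D\mathscr R_{g_{\mathrm{euc}}}(U_z)\|_{\mathrm{op}}$ is only $O(\mu_0)$, not $o(1)$.

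This breaks your contraction step. The projected residual at $U_z+\mu_jv_z[h]$ contains the term $\mu_j(\cL_{j,z}-\cL_z)v_z[h]$. Uniform invertibility alone then yields only
\[
\|\mu_j^{-1}w_{j,z}-v_z[h]\|_\Sigma\le C\mu_0+o(1),
\]
not $o(1)$. The same $O(\mu_0\|w\|_\Sigma^2)$ defect appears in $\cB_{g_j}(w,w)-\cB_z(w,w)$ in the energy expansion.

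The missing ingredient is the paper's strong convergence. For each fixed $v\in\Sigma$,
\[
[D\mathscr R_{g_j}(U_z)-D\mathscr R_{g_{\mathrm{euc}}}(U_z)]v\to0\quad\text{in }\Sigma^*.
\]
To prove it, show it first for compactly supported $v$: the far pieces lie in $|x|\gtrsim R_j$, and the central piece has size $\mu_jR_j^{d_*}\to0$. Then extend by density using the uniform $O(\mu_0)$ operator bound. This gives uniformity on the compact set $\{v_z[h]:z\in K\}$. With it, the cross terms become $o(\mu_j)$ in the corrector equation and $o(\mu_j^2)$ in the energy. The remaining $O(\mu_0)$ norm perturbation is harmless: it is absorbed into invertibility, and it only multiplies the already small error $w_{j,z}-\mu_jv_z[h]=o(\mu_j)$.

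\textbf{The quantitative tail balance is left unverified, with the wrong target.} In $\Sigma^*$, the far contribution to the residual is of order $R_j^{-(N-2)/2}$, where $R_j\simeq\rho_j/\lambda_j$. This is generally not $o(\mu_j^2)$; that would require $N>4d_*+2$. What is needed, and true, is $R_j^{-(N-2)/2}=o(\mu_j)$, that is,
\[
b_j:=\frac{R_j^{-(N-2)/2}}{\mu_j}=\mu_0^{-1}\rho_j^{-(N-2)/2}\exp\bigl(j^2-\zeta_0j^3\bigr)\to0,
\qquad \zeta_0:=\frac{N-2}{2}-d_*>0 .
\]
This is exactly where $N>2d_*+2$ and the choice $\lambda_j=e^{-j^3}$ enter. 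It then gives the energy tail $R_j^{2-N}=\mu_j^2b_j^2=o(\mu_j^2)$, and the far part of $2\mathscr R_{g_j}(U_z)[w]$ is $O(\mu_jb_j\|w\|_\Sigma)=o(\mu_j^2)$.

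\textbf{A factor of $2$ is off.} The residual expansion should read $\mathscr R_{g_j}(U_z)=\mu_j\Lambda_z[h_{R_j}]+o(\mu_j)$, since $\mathscr R_g=\tfrac12DE_g$ and $\Lambda_z=\tfrac12\partial_\mu DE$. With your factor $2$, the contraction would converge to $2v_z[h]$ rather than $v_z[h]$.
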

\begin{proof}

Here and below the normal components of $h^{\mathrm{loc}}$ are zero.
The supports are disjoint.
Since $h$ has degree at most $d_*$ and $\lambda_j\le\rho_j$,
\begin{equation*}
\|h_j^{\mathrm{loc}}\|_{C^k}\le C_k\mu_0\exp(-j^2)\rho_j^{d_*-k}\qquad(k\ge0).
\end{equation*}
For every fixed $k,q$,
\[
\|h_j^{\mathrm{loc}}\|_{C^k}=o(|p_j|^q)\qquad\text{as }j\to\infty.
\]
Thus $h^{\mathrm{loc}}$ extends smoothly by zero at the accumulation point and vanishes there to infinite order.
Since $h^{\mathrm{loc}}$ is compactly supported and tangential trace-free,
\[
\det g=1,\qquad H_{g}=0,\qquad
g=g_{\mathrm{euc}}\quad\text{outside a compact set}.
\]
By the curvature expansion used in Proposition~\ref{prop:metric-bubble-expansion} and the disjoint supports,
\[
\|g-g_{\mathrm{euc}}\|_\infty+\|R_{g}\|_{L^{N/2}}
\le C\mu_0.
\]
Indeed,
\[
\|\pa^2h_j^{\mathrm{loc}}\|_{L^{N/2}}\le C\mu_0\exp(-j^2)\rho_j^{d_*},
\]
and the quadratic derivative terms are smaller.
The Sobolev inequality then implies \eqref{eq:common-coercivity} for sufficiently small $\mu_0$.

After the rescaling $x\mapsto p_j+\lambda_jx$, the metric coefficients are $g_j(x)=g(p_j+\lambda_jx)$.
Put
\begin{equation}
\begin{aligned}
R_j&:=\rho_j/\lambda_j,\qquad
\zeta_0:=(N-2)/2-d_*>0,\\
a_j&:=\mu_jR_j^{d_*}=\mu_0\exp(-j^2)\rho_j^{d_*},\qquad
b_j:=R_j^{-(N-2)/2}/\mu_j.
\end{aligned}
\end{equation}
Here $R_j$ is the rescaled cutoff radius, $a_j$ bounds the metric perturbation on its support, and $\mu_jb_j$ bounds the remote bubble tail.
The strict degree inequality implies
\[
b_j=\mu_0^{-1}\rho_j^{-(N-2)/2}\exp(j^2-\zeta_0j^3)\to0
\qquad\text{as }j\to\infty.
\]
Then
\[
R_j\to\infty,\qquad a_j\to0,\qquad
b_j\to0\qquad\text{as }j\to\infty.
\]
Moreover,
\[
g_j=\exp(\mu_jh)\quad\text{in }|x|<R_j/2.
\]
Every other perturbation lies in $|x|\ge R_j$.

Uniformly for $z\in K$,
\begin{equation}\label{eq:fixed-residual-expansion}
\mathscr R_{g_j}(U_z)=\mu_j\Lambda_z[h]+o(\mu_j)
\quad\text{in }\Sigma^*\quad\text{as }j\to\infty.
\end{equation}
Here $\Lambda_z[h]$ includes all three terms of \eqref{eq:polynomial-source-functional}.
More precisely,
\[
\sup_{z\in K}\|\mathscr R_{g_j}(U_z)-\mu_j\Lambda_z[h]\|_{\Sigma^*}
\le C\mu_j(R_j^{-\zeta_0}+a_j+b_j)=o(\mu_j).
\]
The source and cutoff-derivative terms in \eqref{eq:cutoff-source-difference} have dual norm at most $C\mu_jR_j^{-\zeta_0}$ by \eqref{eq:cutoff-source-dual-convergence}, uniformly on $K$.
The nonlinear metric terms on the active support have dual norm at most $C\mu_ja_j$.
Uniformly on $K$, the bubble tails obey
\[
\|U_z\|_{L^{2N/(N-2)}(|x|>R)}
+\|\nabla U_z\|_{L^2(|x|>R)}
\le CR^{-(N-2)/2}.
\]
Together with the invariant curvature norm these control the remote residual by $CR^{-(N-2)/2}$ and its quadratic energy by $CR^{2-N}$.
Off the active support, Sobolev and H\"older inequalities therefore give $CR_j^{-(N-2)/2}=C\mu_jb_j$.
The metric boundary residual is zero, since $g_{NN}=1$, $g_{aN}=0$ and $H_g=0$.

The linearized operators satisfy
\[
\sup_{z\in K}\|D\mathscr R_{g_j}(U_z)-D\mathscr R_{g_{\mathrm{euc}}}(U_z)\|_{\mathrm{op}}
\le C\mu_0,
\]
and, for every $v\in\Sigma$,
\[
\sup_{z\in K}
\|[D\mathscr R_{g_j}(U_z)-D\mathscr R_{g_{\mathrm{euc}}}(U_z)]v\|_{\Sigma^*}
\to0\qquad\text{as }j\to\infty.
\]
The latter follows first for compactly supported smooth $v$ from local coefficient convergence, and then by density and the uniform operator bound.
The convergence is uniform on compact subsets of $\Sigma$.
The small operator-norm bound preserves invertibility, while strong convergence identifies the limiting corrector.

Proposition~\ref{prop:precise-natural-constraint}, applied to the entire rescaled metric $g_j$, gives $V_{j,z}=U_z+w_{j,z}$ with
\[
\sup_{z\in K}\|w_{j,z}\|_\Sigma=O(\mu_j).
\]
This follows from \eqref{eq:fixed-residual-expansion} and the uniform contraction constant.
The map
\[
v_z[h]=-\cL_z^{-1}(\Lambda_z[h]|_{\Sigma_z})
\]
is continuous on $K$ with compact image in $\Sigma$.
Define the restricted linearization by
\[
\cL_{j,z}v:=\bigl(D\mathscr R_{g_j}(U_z)[v]\bigr)|_{\Sigma_z},
\qquad v\in\Sigma_z.
\]
Since $\cL_zv_z[h]=-\Lambda_z[h]|_{\Sigma_z}$,
\begin{equation}
\begin{aligned}
&\sup_{z\in K}\|\mathscr R_{g_j}(U_z+\mu_jv_z[h])|_{\Sigma_z}\|_{\Sigma_z^*}
\le\sup_{z\in K}\|\mathscr R_{g_j}(U_z)-\mu_j\Lambda_z[h]\|_{\Sigma^*}\\
&\quad+\mu_j\sup_{z\in K}\|(\cL_{j,z}-\cL_z)v_z[h]\|_{\Sigma_z^*}
+C\mu_j^{1+2/(N-2)}
=o(\mu_j)\quad\text{as }j\to\infty.
\end{aligned}
\end{equation}
The first term tends to zero after division by $\mu_j$ by \eqref{eq:fixed-residual-expansion}.
Strong convergence is uniform on the compact image of $z\mapsto v_z[h]$ and controls the second term; the last term is the critical-power remainder.
The contraction estimate with its uniform inverse bound yields
\[
\sup_{z\in K}\|w_{j,z}-\mu_jv_z[h]\|_\Sigma=o(\mu_j)
\qquad\text{as }j\to\infty,
\]
which proves \eqref{eq:fixed-corrector-convergence}.

By Proposition~\ref{prop:metric-bubble-expansion}, the cutoff tensor has no first-order energy term.
Estimating the quadratic polynomial tail, cubic metric terms and exterior contribution separately, we obtain
\[
\sup_{z\in K}
|E_{g_j}(U_z)-E_{g_{\mathrm{euc}}}(U_z)-\mu_j^2\mathcal Q[h](z)|
\le C\mu_j^2(R_j^{-2\zeta_0}+a_j+b_j^2).
\]
The estimate includes all four terms of $\mathcal Q[h]$.
Indeed their exterior radial integrals are bounded by $C\int_{R_j/2}^\infty r^{2d_*+1-N}\dd r =O(R_j^{-2\zeta_0})$.
Cutoff derivative contributions have the same annular order, including the two divergence terms; cubic metric terms gain the factor $a_j$.
Hence
\[
E_{g_j}(U_z)=E_{g_{\mathrm{euc}}}(U_z)+\mu_j^2\mathcal Q[h](z)+o(\mu_j^2)
\]
as $j\to\infty$, uniformly on $K$.
For $\|w\|_\Sigma\le1$, the power inequalities imply the Taylor estimate
\[
\left|E_{g_j}(U_z+w)-E_{g_j}(U_z)-2\mathscr R_{g_j}(U_z)[w]
-(D\mathscr R_{g_j}(U_z)[w])(w)\right|
\le C\|w\|_\Sigma^{2+2/(N-2)}.
\]
Together with \eqref{eq:fixed-corrector-convergence} and strong operator convergence, this yields \eqref{eq:fixed-reduced-convergence}.
Indeed,
\[
2\Lambda_z[h](v_z[h])+\cB_z(v_z[h],v_z[h])
=-\Lambda_z[h](\cL_z^{-1}\Lambda_z[h]).
\]
The convergence in \eqref{eq:fixed-reduced-convergence} is uniform in $z$; no convergence of the reduced-energy Hessians is required below.

\end{proof}

\begin{theorem}[Transfer to a fixed metric]\label{thm:fixed-metric-transfer}
For $h$ and the curvature pair fixed above, suppose $F_h$ has a strict isolated local extremum at $z_*$ with $F_h(z_*)<0$.
Then a single smooth metric $g$ on the closed ball $B^N$, with positive conformal quadratic form, admits positive smooth functions $u_j$ satisfying
\[
R_{u_j^{4/(N-2)}g}=N(N-1)\sigma,\qquad
H_{u_j^{4/(N-2)}g}=\eta,\qquad
\max_{\pa B^N}u_j\to\infty\quad\text{as }j\to\infty.
\]
For large $j$, their energy values $E_g(u_j)$ are below the energy of the corresponding flat bubble.
The background may be chosen minimal, and totally geodesic if $\pa_th_{ab}=0$ on the boundary.
If $\exp(\mu h)$ has nonzero trace-free second fundamental form, respectively nonzero ambient Weyl curvature, at a fixed rescaled point for every sufficiently small $\mu\ne0$, the background has the same property near
each concentration center.
\end{theorem}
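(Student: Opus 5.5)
We combine Lemma~\ref{lem:localized-energy} with Proposition~\ref{prop:precise-natural-constraint} and a degree or strict-extremum argument for the reduced energy, and then compactify.

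First choose a compact neighbourhood $K$ of $z_*$, contained in $\mathcal O$, on which $z_*$ is the unique extremum. By strictness and isolation, $F_h(z_*)<\min_{\partial K}F_h$ for a minimum; set $\delta:=\min_{\partial K}F_h-F_h(z_*)>0$. For a maximum, reverse the inequalities. Take $\mu_0,j_0$ as in Lemma~\ref{lem:localized-energy}, so the localized metric $g=\exp(h^{\mathrm{loc}})$ is smooth, minimal, with positive conformal form, and agrees near each $p_j$ (after rescaling) with $\exp(\mu_jh)$ on $|x|<R_j/2$. Let $\mathcal F_j(z):=E_{g_j}(U_z+w_{j,z})$. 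By \eqref{eq:fixed-reduced-convergence},
\[
\mu_j^{-2}\bigl(\mathcal F_j(z)-E_{g_{\mathrm{euc}}}(U_z)\bigr)\to F_h(z)
\]
uniformly on $K$. The flat energy $E_{g_{\mathrm{euc}}}(U_z)$ is independent of $z$ by translation--dilation invariance. Hence for large $j$, $\mathcal F_j$ (continuous on $K$) satisfies $\mathcal F_j(z_*)<\min_{\partial K}\mathcal F_j$ by a margin of order $\delta\mu_j^2/2$. It therefore attains an interior minimum (respectively maximum) $z_j\in\operatorname{int}K$. Because $\mathcal F_j$ is $C^1$ on $\mathcal O$, this is a critical point, and Proposition~\ref{prop:precise-natural-constraint} gives a smooth positive solution $V_j:=U_{z_j}+w_{j,z_j}$ of the rescaled equation on $\R^N_+$ with metric $g_j$. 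Note that only uniform $C^0$ convergence is used, consistent with the remark after the lemma.

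Undo the scaling: $u_j(x):=\lambda_j^{-(N-2)/2}V_j((x-p_j)/\lambda_j)$ solves \eqref{eq:intro-yamabe} on $(\R^N_+,g)$ with the same $(\sigma,\eta)$, since both equations are scale-critical. Since $z_j\in K$ has scale bounded above and below and $\|w_{j,z_j}\|_\Sigma=O(\mu_j)$, elliptic estimates show $V_j$ is uniformly close to $U_{z_j}$ in $C^0$ near the boundary point $\bar\xi_j$. Thus $\max_{\partial}u_j\ge c\lambda_j^{-(N-2)/2}\to\infty$. The energy satisfies $E_g(u_j)=\mathcal F_j(z_j)\le E_{g_{\mathrm{euc}}}(U)+\tfrac12\mu_j^2F_h(z_*)<E_{g_{\mathrm{euc}}}(U)$ for a minimum. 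For a maximum, $\mathcal F_j(z_j)\le E_{g_{\mathrm{euc}}}(U)+\mu_j^2(\max_K F_h+o(1))$; choosing $K$ small keeps $\max_K F_h<0$, so the same conclusion holds.

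Transfer to the ball by a conformal diffeomorphism $\Phi:\overline{B^N}\setminus\{P\}\to\overline{\R^N_+}$ (inverse stereographic/inversion), placing the accumulation point $0$ of the $p_j$ at an interior boundary point away from $P$. Since $h^{\mathrm{loc}}$ is compactly supported, $\Phi^*g$ is $\Phi^*g_{\mathrm{euc}}=\phi^{4/(N-2)}g_{B}$ near $P$. Define the ball metric $\hat g:=\phi^{-4/(N-2)}\Phi^*g$, which is smooth and equals the flat ball metric near $P$, and set $\hat u_j:=\phi\,u_j\circ\Phi$. Conformal covariance of $(L^{\mathrm{conf}},B^{\mathrm{conf}})$ makes $\hat u_j$ a solution. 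The main obstacle is smoothness of $\hat u_j$ at $P$: $u_j$ decays like $|x|^{2-N}$ because $V_j-U_{z_j}\in\Sigma$ and the metric is Euclidean near infinity, so $\hat u_j\in H^1$ near $P$. The zero-capacity argument from Proposition~\ref{prop:curvature-completion} then extends the weak equation across $P$, and elliptic regularity plus the maximum principle give smoothness and positivity. Positivity of the conformal form transfers because the energy is conformally invariant. Minimality holds since $H_g=0$ and the flat ball metric has constant mean curvature, and we may compose with a further conformal factor; alternatively we may state the boundary mean curvature of $\hat g$ as that of the round ball away from the support. Total geodesicity follows from Lemma~\ref{lem:polynomial-boundary-geometry} when $\partial_th=0$ at $t=0$. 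Finally, $\pi_g$ and the Weyl tensor are conformally covariant up to positive factors. Near $p_j$ the metric is a homothetic image of $\exp(\mu_jh)$ with $\mu_j\ne0$ small, so the assumed nonvanishing at a fixed rescaled point transfers to points near each concentration center.
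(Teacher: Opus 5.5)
Your main line is the paper's own: the strict boundary gap on $K$, uniform convergence of $\mu_j^{-2}(\mathcal F_j-E_{g_{\mathrm{euc}}}(U_z))$ to $F_h$, an interior extremum, the natural constraint, rescaling, and the zero-capacity extension across the omitted point. There is, however, a genuine gap in the compactification. It does not give the boundary geometry claimed in the statement.

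\textbf{The gap.} You map conformally onto the \emph{flat} ball and set $\hat g=\phi^{-4/(N-2)}\Phi^*g$. Since $g$ has block form ($g_{NN}=1$, $g_{aN}=0$) and $H_g=0$, the unit normal of $\Phi^*g$ is the same as that of $\Phi^*g_{\mathrm{euc}}$. The conformal law for $H$ then involves only the normal derivative of the same factor $\phi^{-1}$. Hence $H_{\hat g}$ equals $H_{g_B}$, so $H_{\hat g}\equiv1$ on all of $\partial B^N$, not $0$. Likewise, where $\partial_th=0$ only the trace-free part $\pi$ is conformally invariant, so $\hat g$ is umbilic with $L_{\hat g}=\hat g^\top$, not totally geodesic.

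Thus ``Minimality holds since $H_g=0$ and the flat ball metric has constant mean curvature'' and ``Total geodesicity follows from Lemma~\ref{lem:polynomial-boundary-geometry}'' are false for your $\hat g$. The phrase ``compose with a further conformal factor'' names exactly the missing step without supplying it.

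\textbf{How to fix it.} The paper compactifies with $\varpi=(2/(1+|x|^2))^{(N-2)/2}$, i.e.\ onto the hemisphere. There $\partial_t\varpi=0$ on $t=0$, so $L_{\bar g}=\varpi^{2/(N-2)}L_g$, and minimality and total geodesicity pass over directly. Alternatively, repair your version as follows:
\begin{itemize}
\item multiply $\hat g$ by $\psi^{4/(N-2)}$, where $\partial_\nu\psi=-\tfrac{N-2}{2}\psi$ on $\partial B^N$, for instance $\psi=\exp(-\tfrac{N-2}{4}|y|^2)$;
\item replace $\hat u_j$ by $\psi^{-1}\hat u_j$.
\end{itemize}
This removes the constant mean curvature and preserves $\pi$, the equation with the same $(\sigma,\eta)$, the coercivity, and the divergence of $\max_{\partial B^N}$.

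\textbf{Two smaller points.}
\begin{itemize}
\item Membership $V_j-U_{z_j}\in\Sigma$ does not by itself give pointwise $|x|^{2-N}$ decay. What you actually need, $\hat u_j\in H^1$ near $P$, follows from the Hardy estimate in the proof of Proposition~\ref{prop:curvature-completion}; this is how the paper argues.
\item $C^0$ closeness of $V_j$ to $U_{z_j}$ near the boundary requires an $\varepsilon$-regularity (Moser) argument. The paper's route is cheaper. By the trace inequality, the trace of $V_{j,z_j}$ is within $O(\mu_j)$ in $L^{2(N-1)/(N-2)}$ of the trace of $U_{z_j}$ on a fixed boundary ball, and that norm is bounded below uniformly for $z_j\in K$. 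This already forces $\max V_j\ge c>0$ there, hence $\max u_j\ge c\lambda_j^{-(N-2)/2}$.
\end{itemize}
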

\begin{proof}
Choose a closed ball $K$ about $z_*$ such that
\[
\inf_{\pa K}F_h>F_h(z_*)\quad\text{for a minimum},\qquad
\sup_{\pa K}F_h<F_h(z_*)\quad\text{for a maximum},
\]
and $z_*$ is the unique point of $K$ with this extremal value.
Apply Lemma~\ref{lem:localized-energy} to this $K$.
The flat-bubble energy is independent of $z$, so the uniform expansion \eqref{eq:fixed-reduced-convergence} and the strict boundary gap imply that the exact reduced energy has an interior extremum at $z_j\in K$.
Every subsequential limit of these extremizers attains the extremal value of $F_h$ on $K$.
Since $z_*$ is the unique such point, $z_j\to z_*$ as $j\to\infty$.
Proposition~\ref{prop:precise-natural-constraint} then implies
\[
\nabla_z\mathcal F_{g_j}(z_j)=0,\qquad
\mathscr R_{g_j}(V_{j,z_j})=0,\qquad V_{j,z_j}>0.
\]
Rescaling, we obtain
\[
v_j(x):=\lambda_j^{-(N-2)/2}
V_{j,z_j}((x-p_j)/\lambda_j)
\]
on the same fixed metric $g$.
The negative value of $F_h(z_*)$ makes its energy smaller than the flat bubble energy for large $j$.

Set
\[
\varpi(x):=\left(\frac2{1+|x|^2}\right)^{(N-2)/2},\qquad
\bar g:=\varpi^{4/(N-2)}g,\qquad u_j:=\varpi^{-1}v_j.
\]
Since $g$ is Euclidean near infinity, $\bar g$ extends smoothly to the hemisphere, which we identify smoothly with $B^N$.
By conformal covariance, $u_j$ solves the prescribed-curvature equation away from the omitted boundary point.
The Hardy estimates in the proof of Proposition~\ref{prop:curvature-completion}, with $\varpi$ in place of $U_z$, imply $u_j\in H^1$ on the compactification.
Cutoffs near the omitted point have squared gradient integral $O(r^{N-2})$, so its zero $H^1$ capacity extends the weak equation across that point.
In a smooth chart for $\bar g$, the local iteration and boundary regularity argument in Proposition~\ref{prop:precise-natural-constraint} imply smoothness and positivity there.
Coercivity and umbilicity are conformally invariant, and $\pa_t\varpi=0$ on the boundary preserves minimality.
The metric $\bar g$ is the metric denoted by $g$ in the theorem.

As $j\to\infty$, the trace of $V_{j,z_j}$ converges in $L^{2(N-1)/(N-2)}$ on every fixed boundary ball to the trace of $U_{z_*}$.
Since $U_{z_*}$ is positive, the $L^{2(N-1)/(N-2)}$ norm of this trace on one fixed boundary ball is bounded below by a positive constant.
By rescaling and using the bounds for $\varpi$ near $p_j$, we obtain
\[
\max_{\pa B^N}u_j\ge c\lambda_j^{-(N-2)/2}\to\infty\qquad\text{as }j\to\infty.
\]
The cutoff is constant on each fixed rescaled compact set for large $j$; hence the assumed local tensor and curvature nonvanishing properties hold for the exact metric.
In particular, at the boundary centers of the rescaled perturbations, if $\pa_th(\xi_*,0)\ne0$, then $\xi_j\to\xi_*$ as $j\to\infty$ implies, for all sufficiently large $j$,
\[
\pa_th^{\mathrm{loc}}(p_j+\lambda_j(\xi_j,0))
=\frac{\mu_j}{\lambda_j}\pa_th(\xi_j,0)\ne0
\qquad(j\gg1).
\]
Injectivity of the differential of the matrix exponential and $H_g=0$ therefore give $\pi_g(p_j+\lambda_j(\xi_j,0))\ne0$.
On the boundary,
\[
\pa_t\chi(2|x-p_j|/\rho_j)=0,\qquad
\pa_th=0\quad\Rightarrow\quad\pa_th_j^{\mathrm{loc}}=0.
\]
This proves the geometric assertions.
\end{proof}
\section{Proofs of the main results}\label{sec:applications}
The following table summarizes the transfer hypotheses verified in the proofs below.
All quantities in a row are evaluated in the same ambient dimension $N$.
Corollary~\ref{cor:explicit-negative-bounds} is proved independently in Appendix~\ref{app:normal-weyl-bounds}.
\begin{table}[ht]
\centering\small\setlength{\tabcolsep}{4pt}
\begin{tabular}{@{}lcccccc@{}}\toprule
$(\sigma,\eta)$; boundary & Dimension & $d_*$ & $N-2d_*-2$ & $\partial_t h(0,0)$ & Extremum & $W_g$\\\midrule
$(0,2)$, nonumbilic & $N\ge15$ & $6$ & $1$ & $\tau_NA$ & minimum & $\ne0$\\
$(1,0)$, nonumbilic & $N\ge15$ & $6$ & $1$ & $\tau_NA$ & minimum & $\ne0$\\
$(0,2)$, umbilic & $N\ge22$ & $8$ & $4$ & $0$ & minimum & $\ne0$\\
$(1,0)$, umbilic & $N\ge21$ & $9$ & $1$ & $0$ & minimum & $\ne0$\\
$(1,-L)$, umbilic & $N\ge9$ & $3$ & $1$ & $0$ & maximum & $\ne0$\\
$(1,-L)$, nonumbilic & $N\ge9$ & $3$ & $1$ & $\theta A_{\mathrm c}$ & maximum & $\ne0$\\\bottomrule
\end{tabular}
\caption{Hypotheses of the fixed-metric construction.
All extrema have negative value and use the corrected coefficient and projected inverse of Proposition~\ref{prop:scalar-elimination}.
The four minima use the positive normalization identity \eqref{eq:complete-normalization-bridge}; cubic maxima use \eqref{eq:negative-corrected-form}.
The fourth column lists the least value of $N-2d_*-2$ in the indicated dimension range; $W_g$ is the ambient Weyl tensor at the point used in the curvature calculation.
In the last two rows, $L\ge L_N$ is fixed.
In each umbilic row the first normal jet vanishes on the entire boundary.}
\label{tab:transfer-interface}
\end{table}
\begin{proof}[Proof of cases~\textup{(i)--(ii)} of Theorem~\ref{thm:main}]
Use Proposition~\ref{prop:four-scalar-minima} with the corresponding tensor.
Table~\ref{tab:transfer-interface} verifies $N>\max\{6,2d_*+2\}$.
For the nonumbilic families $\partial_t h(0,0)=\tau_NA\ne0$; for the umbilic families $\partial_t h(\bar x,0)=0$ identically.
The exponential metric therefore has umbilic or nonumbilic boundary as asserted.
In the nonumbilic case continuity of the first normal jet also verifies the active-center condition when the concentrating critical parameters move slightly during localization.

We check nonzero ambient Weyl curvature independently of the energy signs.
With $m=N-1$, the leading normal Weyl component of a degree-six nonumbilic tensor is a positive multiple of $A$, since its coefficient is
\[
\frac{c_{10}(m+2b_1)-(m-2)c_{02}}{m-1}>0,
\qquad m+2b_1=\frac{m(m+4)(m-2)}{m^2+2m-4}>0.
\]
Both coefficient columns have $c_{10}>0$ and $c_{02}<0$.
For the scalar-flat umbilic tensor it is $2\tau_N(N-3)/(N-2)>0$; its terms of degree at least three do not affect the curvature at the origin.
For the minimal umbilic tensor it is $-c_{02}\tau_N(N-3)/(N-2)>0$.
In the umbilic cases the first metric jet vanishes, so the curvature of $\exp(\mu h)$ at the origin is exactly linear in $\mu$.
In the nonumbilic cases the nonzero linear terms dominate for sufficiently small $\mu\ne0$.

By Theorem~\ref{thm:fixed-metric-transfer}, there is a fixed smooth background and a positive blow-up sequence with all the stated properties.
\end{proof}

\begin{proof}[Proof of Theorem~\ref{thm:main}, case~\textup{(iii)}]
For the nonumbilic case use Corollary~\ref{thm:normal-odd-maximum} with one fixed nonzero $\theta$ and its selected $\tau(\theta)$.
For the umbilic case use Theorem~\ref{thm:cubic-maximum}.
In both cases $d_*=3$, the full critical point is a negative nondegenerate maximum and $N>8$.
Their boundary and Weyl conditions were proved with the tensors.
Fix $N$ and then any $L\ge L_N$, where $L_N$ is chosen in Theorem~\ref{thm:cubic-maximum}.
By the fixed-parameter inverse and Theorem~\ref{thm:fixed-metric-transfer}, we obtain the asserted sequence with $\kappa=-L$.
For the negative nonumbilic family, \eqref{eq:negative-boundary-jet} and the rescaling in Section~\ref{subsec:fixed-metric-transfer} imply, throughout the uncut boundary core,
\[
\pi=-\frac{\theta\mu_j}{2\lambda_j}A_{\mathrm c}\ne0 .
\]
It remains nonzero under the small tangential displacement of the concentration center caused by the other localized perturbations.
The fixed coefficients $L,\theta,\tau(\theta)$ do not vary with $j$.

\end{proof}

\begin{proposition}\label{prop:curvature-persistence}
Suppose $z_*$ is a negative nondegenerate local extremum of $F_h$:
\[
F_h(z_*)<0,\qquad DF_h(z_*)=0,\qquad
D^2F_h(z_*)\succ0\ \text{or}\ D^2F_h(z_*)\prec0.
\]
Such an extremum persists under a sufficiently small change of a fixed finite $\kappa$.
An extremum at the scalar-flat normalization $(0,2)$ also persists for the curvature pair $(\sigma,2)$ with all sufficiently small $\sigma>0$.
The metric construction of Theorem~\ref{thm:fixed-metric-transfer} applies for each fixed parameter in these intervals, while preserving whether the boundary of its exponential metric is umbilic or nonumbilic.
\end{proposition}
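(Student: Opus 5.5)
The argument is a perturbation statement about the finite-dimensional function $z\mapsto F_h(z)$, so I will reduce it to two facts. First, $F_h$ depends continuously in $C^2_{\mathrm{loc}}$ of $z$ on the auxiliary curvature parameter. Second, the degree and boundary data of $h$ do not involve that parameter. Once these hold, a negative nondegenerate extremum persists: $D^2F_h(z_*)$ is definite and $DF_h(z_*)=0$, so the implicit function theorem applied to $DF_h$ gives a nearby critical point $z_*(\kappa)$, depending continuously on $\kappa$. At that point the Hessian keeps its sign and the value stays negative for parameters close enough. Nondegeneracy also makes the extremum strict and isolated, which is the hypothesis of Theorem~\ref{thm:fixed-metric-transfer}.

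\textbf{Step 1: finite $\kappa$.} I will treat the Type-I family with $\kappa$ in a compact interval $I\ni\kappa_0$. The bubble $U_{\kappa,\xi,\eps}$, its parameter derivatives, the weights $U_z^{4/(N-2)}$ and $U_z^{2/(N-2)}$, and the Gram matrix $\mathbf G_z$ are all smooth in $(\kappa,z)$. The tail bounds $|\pa_z^\alpha\pa_x^jU_z|\le C(1+|x|)^{2-N-j}$ hold uniformly for $\kappa\in I$. By the Almaraz--Wang lemma, the full kernel is $\mathcal K_z$ for every finite $\kappa$. Proposition~\ref{prop:projected-inverses}, in its form for compact finite-curvature intervals, then gives uniformly bounded projected inverses. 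I will transport the inverses to a fixed space $\Sigma_{(\kappa_0,z_0)}$ via $T$, exactly as in the proof of Proposition~\ref{prop:scalar-elimination}, now including $\kappa$ among the parameters. This makes $(\kappa,z)\mapsto F_h$ jointly $C^2$ in $z$, with $C^2_z$ derivatives continuous in $\kappa$. Proposition~\ref{prop:scalar-elimination} already records this continuous $C^2$ dependence on auxiliary parameters.

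\textbf{Step 2: the limit $\sigma\downarrow0$ at $\eta=2$.} This is the main obstacle, because the limit is singular. Rescaling $(\sigma,2)$ to $(1,2/\sqrt\sigma)$ sends $\kappa\to+\infty$, and the Type-I bubble degenerates. I would avoid the rescaling and work directly with the unnormalized pair $(\sigma,2)$. The bubble solving $-\Delta u=\tfrac{N(N-2)}4\sigma u^{\frac{N+2}{N-2}}$ with Robin coefficient $2$ can be written as $c_\sigma U_{\kappa_\sigma,\xi,\eps}$ with $\kappa_\sigma=2/\sqrt\sigma$. After a translation in $t$, this family converges smoothly and with uniform decay to $U^{\mathrm{sf}}_{\xi,\eps}$. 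The Jacobi form \eqref{eq:common-Jacobi-form} depends on $\sigma$ only through the interior potential $\sigma U_z^{4/(N-2)}$, which tends to $0$ in $L^{N/2}$, and through the boundary weight, which converges in $L^{N-1}$.

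Two further points need care. Nondegeneracy in the limit holds because the scalar-flat kernel is $\mathcal K_z$, by \cite[Proposition~2.7]{AlmarazBlowup}. The orthogonality functionals also change type, from interior to boundary weights. I would handle this with Corollary~\ref{cor:boundary-gauge-directions}: since $\Lambda_z[h]$ annihilates $\mathcal K_z$, the value $F_h$ is independent of the chosen complement. I can therefore use the boundary constraints uniformly for all small $\sigma$ and invert $\cL_z$ on that complement. A perturbation argument then gives uniform invertibility for small $\sigma$: the limiting operator is invertible and the perturbation is small in operator norm. Convergence of the source $\Lambda_z[h]$ and of $\mathcal Q[h]$ in $C^2_z$ follows from dominated convergence, using the same majorant $C(1+|x|)^{2d_*+2-2N}$. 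Together these give $F_h^{(\sigma,2)}\to F_h^{(0,2)}$ in $C^2_{\mathrm{loc}}$.

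\textbf{Step 3: transfer and boundary type.} For each fixed parameter in the interval I apply Theorem~\ref{thm:fixed-metric-transfer} with the same $h$, which is untouched by the perturbation. The boundary is therefore still umbilic when $\pa_th(\bar x,0)=0$, and nonumbilic when $\pa_th(0,0)\ne0$. The Weyl nonvanishing is also unchanged. For the nonumbilic case I check that the active-center condition $\pa_th(\xi_*(\kappa),0)\ne0$ still holds at the moved critical point. This follows by continuity of $\xi_*$ in the parameter.
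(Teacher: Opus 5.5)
Your proposal is correct and follows the paper's proof essentially step for step: smooth parameter dependence, Neumann-series inverses on transported complements, polynomial domination and the implicit function theorem for finite $\kappa$; for $\sigma\downarrow0$, the unnormalized $(\sigma,2)$ bubbles with the scalar-flat boundary constraints, justified by complement independence (Corollary~\ref{cor:boundary-gauge-directions}); and finally transfer with the fixed tensor. One small correction: the $(\sigma,2)$ bubbles $\bigl(\eps/(|\bar x-\xi|^2+(t+\eps)^2+\sigma\eps^2/4)\bigr)^{(N-2)/2}$ converge to $U^{\mathrm{sf}}_{\xi,\eps}$ only after reparametrizing the scale, since $\sigma^{(N-2)/4}$ times this bubble equals $U_{2/\sqrt\sigma,\xi,\sqrt\sigma\eps/2}$; a translation in $t$ would move the boundary and is not the right fix. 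This identity, together with the fact that the energy is multiplied by $\sigma^{(N-2)/2}$, is also what lets you apply Theorem~\ref{thm:fixed-metric-transfer} at $(1,2/\sqrt\sigma)$ and then rescale the conformal factors back to $(\sigma,2)$.
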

\begin{proof}
Fix a finite parameter $\kappa_0$ at which the stated extremum exists.
The bubbles, Jacobi fields and their weighted Gram matrices depend smoothly on $(\kappa,\xi,\eps)$.
On a compact parameter set the restricted Jacobi operators are continuous in operator norm.
Invertibility at the fixed parameter and a Neumann series ensure bounded inverses nearby, after identifying the complements by their explicit projections.
Polynomial sources and all their parameter derivatives through order two are dominated in the relevant dual norm by the same integrable powers when $N>2d_*+2$.
The raw quadratic integrals obey the same domination.
Resolvent identities, or two differentiations of the augmented inverse, therefore establish $C^2$ dependence of $F_h$ on $(\xi,\eps)$ and continuous dependence of these derivatives on $\kappa$.
The implicit-function theorem provides a critical point $z_*(\kappa)$ near $z_*(\kappa_0)=z_*$ and preserves its negative value and Hessian inertia.
The tensor and its selected coefficient remain fixed; only $\kappa$ and the critical bubble parameter vary.
To continue from the scalar-flat equation, define
\begin{equation*}
V_{\sigma,\xi,\eps}(x):=
\left(\frac{\eps}{|\bar x-\xi|^2+(t+\eps)^2+\sigma\eps^2/4}\right)^{(N-2)/2}.
\end{equation*}
Direct differentiation gives
\[
\begin{cases}
-\Delta V_\sigma=\dfrac{N(N-2)\sigma}{4}V_\sigma^{(N+2)/(N-2)}
&\text{in }\R^N_+,\\
\pa_\nu V_\sigma=(N-2)V_\sigma^{N/(N-2)}
&\text{on }\pa\R^N_+,
\end{cases}
\qquad V_0=U^{\mathrm{sf}}.
\]
The curvature pair is $(\sigma,2)$.
The scalar-flat boundary-weighted constraints have a positive-definite Gram matrix on the nearby Jacobi fields.
Operator perturbation, polynomial domination and the resolvent identities prove $C^2$ convergence of the quadratic functional as $\sigma\downarrow0$.
The implicit-function theorem preserves the critical point, its negative value and its Hessian inertia.
For $\sigma>0$,
\[
\sigma^{(N-2)/4}V_{\sigma,\xi,\eps}
=U_{2/\sqrt{\sigma},\,\xi,\,\sqrt{\sigma}\eps/2}.
\]
Under this normalization the energy is multiplied by $\sigma^{(N-2)/2}$.
By Corollary~\ref{cor:boundary-gauge-directions}, the scalar correction pairing is independent of the choice of complement.
Thus Theorem~\ref{thm:fixed-metric-transfer} applies after rescaling $\eps$; rescaling the conformal factors back gives the curvature pair $(\sigma,2)$.
Umbilicity or nonumbilicity is preserved because the tensor and its first normal derivative are fixed.
\end{proof}

\begin{proof}[Proof of Corollary~\ref{thm:positive-end-main}]
Proposition~\ref{prop:curvature-persistence} continues each negative nondegenerate scalar-flat minimum used in Theorem~\ref{thm:main}, case~\textup{(i)} to
\[
\eta=2,\qquad 0<\sigma<\sigma_0,
\]
with the same polynomial degree and the same umbilicity or nonumbilicity condition.
For each fixed $\sigma$, Theorem~\ref{thm:fixed-metric-transfer} gives a fixed background and a blow-up sequence.
Rescaling a resulting conformal metric $\hat{g}$ gives
\[
R_{\sigma\hat{g}}=\sigma^{-1}R_{\hat{g}}=N(N-1),\qquad
H_{\sigma\hat{g}}=\sigma^{-1/2}H_{\hat{g}}=\frac2{\sqrt \sigma}.
\]
Hence every fixed $\kappa>2/\sqrt{\sigma_0}$ occurs, in every $N\ge15$ in the nonumbilic class and every $N\ge22$ in the umbilic class.

For the minimal profiles, the same proposition gives an interval of finite $\kappa$ about zero and preserves the negative nondegenerate minimum and whether the boundary is umbilic or nonumbilic.
Applying the fixed-metric theorem for each fixed parameter proves the first assertion, including umbilic dimension $21$.
The parameter neighborhoods may depend on the dimension and on whether the boundary is umbilic or nonumbilic; the background may also depend on the prescribed parameter.
\end{proof}
\appendix
\section{Defining coefficients}\label{a:app:coefficients}
All profiles use the rows in \eqref{eq:full-scalar-profile}.
Table~\ref{a:tab:N15coeff} lists $c_{kp}$ for the two nonumbilic profiles; $b_{kp}=b_k$ and $e_{kp}=e_k$ are fixed by \eqref{a:eq:Achain}.
Table~\ref{tab:umbilic-coefficients} lists the products $c_{kp}/2$, $c_{kp}b_{kp}/2$, and $c_{kp}e_{kp}$ for the umbilic profiles.
For each nonzero row these three numbers determine $c_{kp},b_{kp},e_{kp}$ uniquely.
A zero row means $c_{kp}=0$ and contributes no tensor.
Daggers in the boundary rows denote the products determined by \eqref{eq:longitudinal-boundary-rule}, with both products zero when $c_{k0}=0$.
All displayed decimals are exact rational numbers, and the selected coefficient $\tau_N$ is fixed under bubble variations.

\begin{table}[ht]\centering\small
\begin{tabular}{@{}rrll@{}}\toprule
$k$&$p$&$c^{\mathrm{II},15}_{kp}$&$c^{\mathrm I,15}_{kp}$\\\midrule
0&1&$1$&$1$\\
1&0&$0.600805704576$&$0.588694426561$\\
0&2&$-0.688195625242$&$-0.674322706788$\\
1&1&$-0.332840224149$&$-0.332609859528$\\
0&3&$0.857655983231$&$0.74725306618$\\
2&0&$-0.001642472821$&$-0.002929852861$\\
1&2&$0.297652759311$&$0.315801923682$\\
0&4&$-0.349568862318$&$-0.256816464616$\\
2&1&$0.013035810584$&$0.012536223758$\\
1&3&$-0.116353102596$&$-0.121177058855$\\
0&5&$0.109154660929$&$0.070875477698$\\
3&0&$-0.000020710015$&$-0.000023805009$\\
2&2&$-0.002612307955$&$-0.00323049507$\\
1&4&$0.012088121034$&$0.014963018057$\\
0&6&$-0.009486633223$&$-0.008850141533$\\
\bottomrule\end{tabular}
\caption{The two degree-six nonumbilic coefficient columns.
The $(0,1)$ entry is replaced by $\tau$.}
\label{a:tab:N15coeff}
\end{table}

\begin{table}[p]\centering\small
\setlength{\tabcolsep}{3pt}\renewcommand{\arraystretch}{1.13}
\begin{tabular}{@{}rr rrr rrr@{}}\toprule
& & \multicolumn{3}{c}{Type II, umbilic}
& \multicolumn{3}{c}{Minimal Type I, umbilic}\\
\cmidrule(lr){3-5}\cmidrule(l){6-8}
$k$&$p$&$c_{kp}/2$&$c_{kp}b_{kp}/2$&$c_{kp}e_{kp}$
&$c_{kp}/2$&$c_{kp}b_{kp}/2$&$c_{kp}e_{kp}$\\\midrule
0&2&$-1$&$0$&$0$&$-0.6079397023$&$0$&$0$\\
0&3&$1$&$0$&$0$&$1$&$0$&$0$\\
0&4&$-1.1566$&$0$&$0$&$-0.7023612638$&$0$&$0$\\
1&2&$0.2041$&$0.0418$&$0$&$0.3081795754$&$0.2052085285$&$0$\\
2&0&$0$&$\dagger$&$\dagger$&$-0.001153284$&$\dagger$&$\dagger$\\
0&5&$0.8322$&$0$&$0$&$0.1916996655$&$0$&$0$\\
1&3&$-0.3857$&$-0.0396$&$0$&$-0.4429846842$&$-0.1820276564$&$0$\\
0&6&$-0.4163$&$0$&$0$&$-0.0047752815$&$0$&$0$\\
1&4&$0.3253$&$0.02$&$0$&$0.3021109048$&$0.0956548791$&$0$\\
2&2&$-0.0196$&$-0.0113$&$-0.0032$&$-0.0316493463$&$-0.0501286591$&$-0.0489756125$\\
3&0&$0$&$\dagger$&$\dagger$&$0.0001828883$&$\dagger$&$\dagger$\\
0&7&$0.1398$&$0$&$0$&$0.0055544718$&$0$&$0$\\
1&5&$-0.1449$&$-0.0082$&$0$&$-0.1166127162$&$-0.0346667298$&$0$\\
2&3&$0.0209$&$0.0087$&$0.0001$&$0.0290833177$&$0.0344969346$&$0.021109162$\\
0&8&$-0.0235$&$0$&$0$&$-0.0064656888$&$0$&$0$\\
1&6&$0.0286$&$0.0042$&$0$&$0.0256527482$&$0.0102167471$&$0$\\
2&4&$-0.0067$&$-0.0035$&$-0.0007$&$-0.0100109297$&$-0.0119194289$&$-0.0067485925$\\
3&2&$0.0002$&$0.0003$&$0.0004$&$0.0004314443$&$0.0014244969$&$0.0032292081$\\
4&0&$0$&$\dagger$&$\dagger$&$-0.000004563$&$\dagger$&$\dagger$\\
0&9&$0$&$0$&$0$&$0.0008075652$&$0$&$0$\\
1&7&$0$&$0$&$0$&$-0.0019941292$&$-0.001247346$&$0$\\
2&5&$0$&$0$&$0$&$0.0009633182$&$0.0014344316$&$0.0008961195$\\
3&3&$0$&$0$&$0$&$-0.0000891208$&$-0.0002884905$&$-0.0005158777$\\
\bottomrule\end{tabular}
\caption{Exact coefficient products for the umbilic profiles in \eqref{eq:fixed-umbilic-families}.
Only the $(0,2)$ row is multiplied by $\tau$.
The scalar-flat profile has sixteen nonzero rows of degree at most eight, and the minimal-boundary profile has twenty-three of degree at most nine.}
\label{tab:umbilic-coefficients}
\end{table}
\FloatBarrier
\section{Uniform sign certificates}\label{app:sign-witnesses}
The eleven algebraic signs in \eqref{eq:eleven-signs} are proved on each complete dimension range.
The scalar-flat formulas are rational in $N$.
The minimal formulas contain one gamma ratio, which we first bound uniformly from its recurrence.

\subsection{Gamma-ratio estimates}
\begin{lemma}\label{lem:gamma-recurrence}
For every real $N>6$, put $d:=N-11/2$ and let $w_N$ be \eqref{eq:minimal-gamma-parameter}.
Then
\begin{equation}\label{eq:gamma-recurrence-inequality}
\frac{d}{d^2+1/8}<\frac{\pi}{2}w_N^2<\frac1d.
\end{equation}
For $N\ge15$ and $y:=\sqrt d$, this implies
\begin{equation}\label{eq:gamma-rational-enclosure}
l(y):=c_-\frac{1-1/(16y^4)}y<w_N<\frac{c_+}y=:u(y),
\quad
c_-:=\frac{7978845608}{10^{10}},\quad
c_+:=\frac{7978845609}{10^{10}}.
\end{equation}
\end{lemma}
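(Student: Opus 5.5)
The plan is to rewrite $w_N^2$ as a ratio of Gamma functions at half-shifted arguments and then use the standard recurrence-plus-convexity bounds for $\Gamma(x+1/2)/\Gamma(x)$. Put $x:=(N-5)/2$. Then $w_N=\Gamma(x)/(\sqrt\pi\,\Gamma(x+1/2))$. Set
\[
f(x):=\frac{\Gamma(x+1/2)^2}{\Gamma(x)^2},\qquad \frac{\pi}{2}w_N^2=\frac{1}{2f(x)} .
\]
Since $d=N-11/2=2x-1/2$, the claim \eqref{eq:gamma-recurrence-inequality} is equivalent to
\[
\frac d2<f(x)<\frac{d^2+1/8}{2d},\qquad\text{i.e.}\qquad x-\tfrac14<f(x)<x-\tfrac14+\frac{1}{16(2x-1/2)} .
\]
For $N>6$ we have $x>1/2$, so $d>1/2>0$ and all quantities are positive.

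The engine is the exact recurrence
\[
f(x)\,f(x+1/2)=x^2\,\frac{\Gamma(x+1/2)^2}{\Gamma(x+1/2)^2}\cdot\frac{\Gamma(x+1)^2}{x^2\Gamma(x)^2}\cdot\frac{1}{\Gamma(x+1/2)^2}\cdot\Gamma(x+1/2)^2=x^2 .
\]
This follows from $\Gamma(x+1)=x\Gamma(x)$. Log-convexity of $\Gamma$ also shows that $\phi(x):=\log f(x)-\log x$ behaves regularly, and $f(x)/x\to1$. Combining the recurrence with the asymptotic, I would iterate: if $g(x):=f(x)-(x-1/4)$, then $f(x+1/2)=x^2/f(x)$. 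This transfers any bound at $x+1/2$ to one at $x$ and back. Letting the number of steps go to infinity and using $f(x)=x-1/4+1/(32x)+O(x^{-2})$ (Stirling) pins down the limit.

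Concretely, I would write the lower bound as
\[
\frac{f(x)}{x-1/4}=\prod_{k\ge0}\frac{(x+k-1/4)(x+k+1/4)}{(x+k)^2}\cdot\Big(\cdots\Big)^{\pm1}.
\]
Equivalently, I would use the Gautschi/Kershaw-type product $\prod_k\bigl(1-\tfrac{1}{16(x+k)^2}\bigr)^{-1}$. Each factor is $>1$ or $<1$ as required, and this gives the strict lower bound. The upper bound comes the same way by comparing with $x-1/4+1/(16d)$ and checking a single-step rational inequality:
\[
u(x)u(x+1/2)\ge x^2 \quad\text{for}\quad u(x)=x-\tfrac14+\frac{1}{16(2x-1/2)}.
\]
This is a polynomial inequality after clearing denominators, verified for $x>1/2$. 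Together with the limit $f/u\to1$, a monotone-iteration argument yields $f<u$. The same one-step check with $\ell(x)=x-1/4$ uses $\ell(x)\ell(x+1/2)=x^2-1/16<x^2$, which yields $f>\ell$. The main obstacle is making this iteration rigorous. The direction of the inequality alternates under $f\mapsto x^2/f$, so I would apply the recurrence twice, $f(x)=\frac{x^2}{(x+1/2)^2}f(x+1)$. This gives a same-sign comparison and a convergent product to the limit.

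For \eqref{eq:gamma-rational-enclosure}, take square roots. The upper bound $w_N<\sqrt{2/(\pi d)}$ follows, and $\sqrt{2/\pi}=0.79788456080\ldots$ lies strictly between $c_-$ and $c_+$. This gives $w_N<c_+/y$. For the lower bound,
\[
w_N>\sqrt{2/\pi}\,\frac{1}{y}\Bigl(1+\frac{1}{8y^4}\Bigr)^{-1/2}\ge \sqrt{2/\pi}\,\frac{1-1/(16y^4)}{y}.
\]
This uses $(1+u)^{-1/2}\ge1-u/2$ with $u=1/(8y^4)$, and then $c_-<\sqrt{2/\pi}$ gives the claim. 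The assumption $N\ge15$ only ensures $y>0$ with $16y^4>1$, so the lower bound is positive and meaningful.
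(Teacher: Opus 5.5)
Your overall strategy is the paper's. Applying the half-step identity twice gives $f(x)=\frac{x^2}{(x+\frac12)^2}f(x+1)$, which is exactly the recurrence $w_{N+2}=\frac{N-5}{N-4}w_N$ used there. The paper then shows that the normalized sequences $f_N/U(d)$ and $f_N/L(d)$ are strictly monotone along $N\mapsto N+2$ and tend to one.

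The gap is that the finite inequalities you propose to verify do not close this argument. Put $r:=f/\ell$ and $s:=f/u$. Your one-step facts $\ell(x)\ell(x+\frac12)<x^2$ and $u(x)u(x+\frac12)>x^2$ say only that $r(x)r(x+\frac12)>1$ and $s(x)s(x+\frac12)<1$. Both are compatible with $r(x)<1$ or $s(x)>1$, since the partner factor can compensate. That is precisely the alternation you noticed. Passing to two steps removes it only if you check the sign of the two-step ratios themselves. With $d=2x-\frac12$, these are
\begin{gather*}
\frac{r(x)}{r(x+1)}=\frac{x^2\,\ell(x+1)}{(x+\frac12)^2\,\ell(x)}=1+\frac{1}{16(x-\frac14)(x+\frac12)^2}>1,\\
\frac{s(x)}{s(x+1)}=\frac{x^2\,u(x+1)}{(x+\frac12)^2\,u(x)}=1-\frac{18}{(d+2)(2d+3)^2(8d^2+1)}<1.
\end{gather*}
Equivalently, you need the one-step defects
\[
\frac{x^2}{\ell(x)\ell(x+\frac12)}=\Bigl(1-\frac{1}{16x^2}\Bigr)^{-1},\qquad \frac{x^2}{u(x)u(x+\frac12)}=1-\frac{9}{(8d^2+1)(8(d+1)^2+1)}
\]
to be strictly decreasing and strictly increasing in $x$, respectively. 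It is the monotonicity of the defect, not its size relative to one, that gives a same-sign comparison. Together with $r,s\to1$ (for which $f(x)/x\to1$ suffices), these yield $r>1$ and $s<1$ strictly. These two ratio identities are the whole content of the paper's proof. In your proposal they are replaced by ``a monotone-iteration argument'' that is never carried out.

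Also discard the product formulas in your ``Concretely'' paragraph. Iterating the half-step relation gives $r(x)=\prod_{k\ge0}\rho(x+k)/\rho(x+k+\frac12)$ with $\rho(y):=(1-\frac{1}{16y^2})^{-1}$, so the exponents alternate. The non-alternating product $\prod_{k\ge0}(1-\frac{1}{16(x+k)^2})^{-1}$ equals $\Gamma(x-\frac14)\Gamma(x+\frac14)/\Gamma(x)^2$, not $f(x)/(x-\frac14)$. So the sign of its factors tells you nothing about $f$.

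Your square-root step matches the paper's. Finally, $\sqrt{2/\pi}-c_-\approx3\times10^{-12}$, so quoting a decimal expansion is a proof only if you also certify the value of $\pi$. The paper does this with Machin's formula and an explicit integer inequality.
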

\begin{proof}
The functional equation for $\Gamma$ implies the exact recurrence
\[
w_{N+2}=\frac{N-5}{N-4}w_N.
\]
For $f_N:=\pi w_N^2/2$ it becomes $f_{N+2}/f_N=((d+1/2)/(d+3/2))^2$.
By log convexity of $\Gamma$, with $x:=(N-5)/2>1/2$,
\[
\frac1{\pi x}\le w_N^2\le\frac1{\pi(x-1/2)} .
\]
Multiplying by $\pi d/2$ and applying the squeeze theorem, we obtain $df_N\to1$ as $N\to\infty$.
Define $U(d):=1/d$ and $L(d):=d/(d^2+1/8)$.
Direct algebra shows that
\begin{align*}
\frac{f_{N+2}/U(d+2)}{f_N/U(d)}
&=1+\frac2{d(2d+3)^2}>1,\\
\frac{f_{N+2}/L(d+2)}{f_N/L(d)}
&=1-\frac{18}{(d+2)(2d+3)^2(8d^2+1)}<1 .
\end{align*}
For each fixed initial $N>6$, both normalized sequences tend to one along $N+2k$ as $k\to\infty$.
The first increases strictly and the second decreases strictly, proving \eqref{eq:gamma-recurrence-inequality}.
This argument applies from every real $N>6$, so it requires no separate initial checks for the two integer parities.

Taking square roots and applying $(1+t)^{-1/2}>1-t/2$ for $t>0$ proves \eqref{eq:gamma-rational-enclosure}.
For the fixed constants, put
\[
S_m(x):=\sum_{j=0}^{m}\frac{(-1)^jx^{2j+1}}{2j+1},\qquad
\pi_- :=\frac{314159265358979}{10^{14}},\quad
\pi_+ :=\frac{314159265358980}{10^{14}}.
\]
The alternating-series bounds and Machin's identity $\pi=16\arctan(1/5)-4\arctan(1/239)$ give
\[
\pi_-<16S_9(1/5)-4S_2(1/239)<\pi
<16S_{10}(1/5)-4S_3(1/239)<\pi_+.
\]
The two outer inequalities follow by clearing the denominators of the displayed finite sums.
Finally, the integer inequalities
\[
7978845608^2\cdot314159265358980
<2\cdot10^{34}
<7978845609^2\cdot314159265358979
\]
give $c_-^2\pi_+<2<c_+^2\pi_-$ and hence $c_-<\sqrt{2/\pi}<c_+$.
This is a finite arithmetic proof of the constant bounds and requires no supplementary verification file.
\end{proof}
The gamma ratio arises from exactly evaluated integrals and corrector pairings.

\subsection{Polynomial sign certificates}
\begin{lemma}
\label{lem:polynomial-positivity-criterion}
Let $P/Q$ be a rational function.
If, after $y=y_0+X$, both $P$ and $Q$ have nonnegative coefficients and positive constant terms, then
\[
\frac{P(y)}{Q(y)}>0\qquad(y\ge y_0).
\]
More generally, let $l(y)<u(y)$ and let $f(y,w)$ be polynomial in $w$.
Write $f(y,l(y)+v(u(y)-l(y)))=:\sum_{i=0}^d c_i(y)v^i$.
If every rational function
\[
B_j(y):=\sum_{i=0}^j c_i(y)\frac{\binom ji}{\binom di},\qquad 0\le j\le d,
\]
has such positive shifted numerator and denominator coefficients, then
\[
f(y,w)>0\qquad(y\ge y_0,\ l(y)\le w\le u(y)).
\]
\end{lemma}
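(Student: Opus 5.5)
The first assertion is immediate. For $y\ge y_0$ put $X:=y-y_0\ge0$. Every coefficient of $P(y_0+X)$ is nonnegative and its constant term is positive, so $P(y_0+X)\ge P(y_0)>0$; in particular $P(y)>0$. The same argument gives $Q(y)>0$, and hence $P/Q>0$ on $[y_0,\infty)$. This already covers the case where $B_j$ is treated as a rational function: a shifted-positive numerator and denominator make $B_j(y)>0$ for all $y\ge y_0$.

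For the second assertion I would use the standard Bernstein reduction on the unit interval in the variable $v$. Fix $y\ge y_0$ and $w\in[l(y),u(y)]$. Since $u(y)-l(y)>0$, the value
\[
v:=\frac{w-l(y)}{u(y)-l(y)}
\]
lies in $[0,1]$, and by definition $f(y,w)=\sum_{i=0}^dc_i(y)v^i$. The monomial-to-Bernstein conversion will be used in the form
\[
v^i=\sum_{j=i}^d\frac{\binom ji}{\binom di}\binom dj v^j(1-v)^{d-j}.
\]
To prove it, expand $v^i=v^i(v+(1-v))^{d-i}$ and use the identity $\binom{d-i}{j-i}=\binom dj\binom ji/\binom di$. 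Substituting this and exchanging the two finite sums gives
\[
f(y,w)=\sum_{j=0}^d\binom dj v^j(1-v)^{d-j}\sum_{i=0}^jc_i(y)\frac{\binom ji}{\binom di}
=\sum_{j=0}^dB_j(y)\binom dj v^j(1-v)^{d-j}.
\]

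Each $B_j(y)$ is positive by the first part. The Bernstein basis functions are nonnegative on $[0,1]$ and sum to one, so $f(y,w)$ is a convex combination of the positive numbers $B_0(y),\dots,B_d(y)$. Hence $f(y,w)\ge\min_jB_j(y)>0$.

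The only point requiring care is the binomial bookkeeping in the conversion identity; the argument has no genuine obstacle. It also uses that $c_i(y)$ is well defined: $l,u$ are rational in $y$ with $u>l$, so each $c_i$ is a rational function and each $B_j$ is a rational function of the stated form.
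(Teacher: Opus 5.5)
Your proposal is correct and follows the paper's proof essentially verbatim: positivity of a shifted polynomial with nonnegative coefficients and positive constant term for the first assertion, then the monomial-to-Bernstein identity (which you derive from $v^i\bigl(v+(1-v)\bigr)^{d-i}$, the ``binomial theorem'' step the paper cites), nonnegativity of the Bernstein basis on $[0,1]$, and the fact that the basis sums to one. Your binomial bookkeeping $\binom{d-i}{j-i}=\binom dj\binom ji/\binom di$ and the interchange of the finite sums are both correct.
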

\begin{proof}
Every polynomial with nonnegative coefficients and positive constant term is positive on $X\ge0$, proving the first assertion.
The monomial-to-Bernstein identity gives
\begin{equation}\label{eq:gamma-Bernstein-tests}
\sum_{i=0}^d c_i(y)v^i
=\sum_{j=0}^d B_j(y)\binom djv^j(1-v)^{d-j}.
\end{equation}

Indeed $v^i=\sum_{j=i}^d\binom ji\binom di^{-1}
\binom djv^j(1-v)^{d-j}$ by the binomial theorem.
For $0\le v\le1$, these basis polynomials are nonnegative and sum to one.
The positive coefficients $B_j(y)$ therefore give the strict positivity asserted in the second part.
\end{proof}

\begin{table}[ht]\centering\small
\begin{tabular}{@{}lclc@{}}\toprule
Family & Full range & Substitution, $X\ge0$ & Rational pairs\\\midrule
Scalar-flat nonumbilic & $N\ge15$ & $N=15+X$ & $11$\\
Scalar-flat umbilic & $N\ge22$ & $N=22+X$ & $11$\\
Minimal nonumbilic & $N\ge15$ & $y=77/25+X$ & $43$\\
Minimal umbilic & $N\ge21$ & $y=393/100+X$ & $26$\\\bottomrule
\end{tabular}
\caption{Uniform certificates, with $y=\sqrt{N-11/2}$.
Each pair consists of a numerator and denominator with nonnegative shifted coefficients and strictly positive constant terms.}
\label{tab:sign-verification-ranges}
\end{table}
In the computer-assisted proof of Proposition~\ref{prop:four-scalar-minima}, each scalar-flat rational expression in \eqref{eq:eleven-signs} is reduced with a positive leading denominator coefficient.
The finite checks establish the shifted coefficient conditions in Lemma~\ref{lem:polynomial-positivity-criterion} for the $22$ rational pairs in the first two rows of Table~\ref{tab:sign-verification-ranges}.

For the minimal families, \eqref{eq:coefficient-degree-identities} gives
\[
f_2/a=-1/(2d_0),\ 2d_0,\ 0,\ 0
\quad(f=-E,K,T_0,T_{1/2}).
\]
Thus the eleven expressions have degree at most four in $w$, with no new gamma denominators.
Substitute $N=y^2+11/2$ and the interval from Lemma~\ref{lem:gamma-recurrence}.
The same declared finite checks verify the shifted coefficient conditions for all $43+26$ Bernstein pairs.
The strict inequalities $(77/25)^2<19/2$ and $(393/100)^2<31/2$ give coverage from $N=15$ and $N=21$.
Their locations and reconstruction conventions are in Appendix~\ref{subsec:certificate-index}.

For the first minimal nonumbilic test, the Bernstein conversion is explicit:
\[
f(w):=-a_{\mathrm{I,nu}}=u_0+u_2w^2,\qquad
u_0:=\frac{N-2}{4(N+1)},\qquad
u_2:=\frac{(N-5)^2(N-3)^2}{(N-4)^2(N-2)(N+1)}.
\]
Write $l:=l(y)$ and $u:=u(y)$, with $N=y^2+11/2$.
Substituting $w=l+v(u-l)$ in \eqref{eq:gamma-Bernstein-tests}, we obtain
\[
B_0=u_0+u_2l^2,\qquad B_1=u_0+u_2lu,\qquad B_2=u_0+u_2u^2.
\]
Indeed $f(l+v(u-l))=B_0(1-v)^2+2B_1v(1-v)+B_2v^2$.
All three coefficients are positive for $N\ge15$, since $u_0,u_2,l,u>0$.
The remaining tests are certified by the shifted arrays described above.

The following is one complete printed certificate; the other exact arrays remain part of the electronic supplement.
For the degree-eight scalar-flat umbilic family, set $N=22+X$.
The first of the eleven tests has the complete certificate
\[
-a=\frac{P(X)}{Q(X)},\qquad X\ge0,
\]
\begin{align*}
P(X)&:=96 X^{4} + 7584 X^{3} + 224160 X^{2}\\
&\quad+2938336 X + 14414464,\\
Q(X)&:=3 X^{6} + 351 X^{5} + 17061 X^{4}\\
&\quad+440961 X^{3} + 6391368 X^{2} + 49253520 X + 157651200
\end{align*}
so $P,Q>0$ on the entire half-line.
The same exact coefficient test applies to each entry below; the two degrees count the complete stored numerator and denominator.
\begin{center}\small
\begin{tabular}{@{}lrr@{}}\toprule
Test & Numerator degree & Denominator degree\\\midrule
$-a$&4&6\\
$b$&11&13\\
$\Delta$&32&36\\
$A_{-E}$&11&13\\
$C_{-E}$&32&36\\
$A_K$&11&13\\
$C_K$&32&36\\
$A_{T_0}$&10&12\\
$C_{T_0}$&29&33\\
$A_{T_{1/2}}$&10&12\\
$C_{T_{1/2}}$&29&33\\
\bottomrule\end{tabular}\end{center}
Every shifted numerator and denominator has nonnegative coefficients and a positive constant.
Thus Lemma~\ref{lem:eleven-signs} applies for every real $N\ge22$, without evaluating a root at $N=22$ or at any other individual dimension.
The complete arrays and their reverse reconstruction are supplied with the uniform certificates.

As a specialization of the same uniform family, consider dimension twenty-one.
Take $N=21$ and the balanced generating matrix $A_{\rm b}=\operatorname{diag}(I_{10},-I_{10})/\sqrt{20}$, so that $A_{\rm b}^2=I_{20}/20$.
Use all twenty-three rows of Table~\ref{tab:umbilic-coefficients}, with only the $(0,2)$ row multiplied by $\tau$.
Its source includes both $h_{ij}\widetilde U_{ij}$ and $(\partial_i h_{ij})\widetilde U_j$, with its double-divergence contribution in the normalized source $\widetilde S[h]$ of Proposition~\ref{sec:certificate-normalization}.
Both divergence contributions in the raw energy are retained as well.
A divergence-free replacement would change these source and energy terms.

In the notation of \eqref{eq:coefficient-degree-identities}, write the coefficient vector as $\mathbf c_0+\tau\mathbf c_1$.
Evaluating the stationarity form from Lemma~\ref{lem:endpoint-hessian-structure} at $N=21$ gives the exact quadratic
\begin{align*}
p_{21}(\tau)&=-\frac{8c_{02}^2}{95}\tau^2+b_{21}\tau+c_{21},
&c_{02}&=-1.2158794046,\\
b_{21}&:=2M_{21}^{-1}\mathbf c_0^{\mathsf T}
(\mathbf D\mathbf M+\mathbf M\mathbf D)\mathbf c_1,\\
c_{21}&:=M_{21}^{-1}\mathbf c_0^{\mathsf T}
(\mathbf D\mathbf M+\mathbf M\mathbf D)\mathbf c_0.
\end{align*}
The table of defining rows and the exact corrector-pairing formulas specify $\mathbf c_0,\mathbf c_1,\mathbf D,\mathbf M$ without numerical choices; all are evaluated at $N=21$, with $M_{21}=M_{21}^{\mathrm I}$.
Here $w_{21}=4096/(6435\pi)$.

The coefficient is the larger root \eqref{eq:larger-coefficient-root}.
Its required signs follow from the same recurrence and polynomial identities used for every $N\ge21$; there is no separate root verification at $N=21$.
For this generating matrix every translation eigenvalue equals
\[
\lambda_{\rm tr}
:=T_0+\frac{T_{1/2}-T_0}{10}
=\frac9{10}T_0+\frac1{10}T_{1/2}.
\]
The uniform proof gives $T_0,T_{1/2}>0$, hence this actual translation eigenvalue is positive.
Lemma~\ref{lem:endpoint-hessian-structure} supplies the zero gradient and mixed block, and Lemma~\ref{lem:source-kernel-orthogonality} and Proposition~\ref{prop:metric-bubble-expansion} give the degree-nine source and
metric tails $O(R^{-1/2})$ and $O(R^{-1})$, respectively.
\section{Corrector estimates and verification data}\label{app:verification}
\subsection{Angular coercivity and residual estimates}
Write $\Sigma^{(q)}$ for functions whose tangential angular dependence is a degree-$q$ spherical harmonic on $\Sph^{m-1}$.
At the centered unit bubble, $\cB^{\mathrm{min}}$ denotes the Jacobi form for the minimal-boundary equation $(\sigma,\eta)=(1,0)$, and $\cB^{\mathrm{sf}}$ that for $(0,2)$.
The divergence-free polynomial family generated by $A$ uses only $q=2,3$, which are orthogonal to the bubble kernel.
The enlarged scalar family also has a degree-one first translated source.
Its inverse is taken on the kernel complement, not on the whole degree-one sector.

\begin{lemma}\label{a:lem:angular}
At the centered unit bubble the following inequalities hold for $v\in\Sigma^{(q)}$, $q\ge2$:
\begin{align}
\cB^{\mathrm{sf}}(v,v)&\ge \frac{2(q-1)}{N+2q-2}\int|\nabla v|^2,
&\cB^{\mathrm{sf}}(v,v)&\ge2(q-1)\int_{\pa\R^N_+}\frac{v^2}{1+s},\label{a:eq:IIcoerc}\\
\cB^{\mathrm{min}}(v,v)&\ge c_q\int|\nabla v|^2,
&c_q&:=\frac{4(q-1)(N+q)}{(N+2q)(N+2q-2)}.\label{a:eq:Icoerc}
\end{align}
Moreover,
\begin{equation}\label{a:eq:weightedtrace}
\int|\nabla v|^2\ge(N+2q-2)\int_{\pa\R^N_+}\frac{v^2}{1+s}.
\end{equation}
\end{lemma}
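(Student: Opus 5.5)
The plan is to use ground-state (Picone) substitutions with explicit positive supersolutions in each angular sector, exactly as in the proof of Lemma~\ref{a:lem:uniformnegative}. Write $v=H_q(\omega)f(s,t)$; by orthogonality it suffices to treat a single harmonic. For a positive $\phi$ of the same angular type, the identity
\[
\int|\nabla v|^2=\int\phi^2|\nabla(v/\phi)|^2+\int\frac{-\Delta\phi}{\phi}v^2+\int_{\pa\R^N_+}\frac{-\pa_t\phi}{\phi}v^2
\]
holds. I would justify it away from $s=0$ and conclude by density, since $q\ge1$ makes the singular set negligible.

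For the scalar-flat form, I take $\phi:=s^{q/2}D^{-(N-2)/2-q}$ with $D=s+(1+t)^2$, which is the Kelvin-type harmonic extension. It is harmonic, so the interior potential vanishes. On $t=0$ one has $-\pa_t\phi/\phi=(N-2+2q)/(1+s)$. This yields \eqref{a:eq:weightedtrace} directly. Substituting it into $\cB^{\mathrm{sf}}=\int|\nabla v|^2-N\int_\partial v^2/(1+s)$ gives
\[
\cB^{\mathrm{sf}}\ge\Bigl(1-\frac{N}{N+2q-2}\Bigr)\int|\nabla v|^2,
\]
which is the first inequality in \eqref{a:eq:IIcoerc}. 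The weighted form of the same identity gives $\cB^{\mathrm{sf}}\ge(N+2q-2-N)\int_\partial v^2/(1+s)$, which is the second.

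For the minimal form I use two substitutions. The first is $\phi_1:=s^{q/2}D^{-N/2}$ with $D=1+s+t^2$. As recorded in Lemma~\ref{a:lem:uniformnegative} at $L=0$, it satisfies $J_q\phi_1=2N(q-1)D^{-1}\phi_1$ and $\pa_t\phi_1=0$ on $t=0$. This gives $\cB^{\mathrm{min}}\ge2N(q-1)\int v^2/D$. The second is the harmonic-type $\phi_2:=s^{q/2}D^{-(N-2)/2-q}$, for which $-\Delta\phi_2/\phi_2$ is a multiple $\alpha\ge0$ of $D^{-2}$, its Neumann data vanishes, and $\int|\nabla v|^2\ge\alpha\int v^2/D^2$.

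I would then interpolate between these bounds. Write
\[
\cB^{\mathrm{min}}=\int|\nabla v|^2-N(N+2)\int v^2D^{-2}.
\]
Using $D^{-2}\le D^{-1}$ together with a convex combination of the two substitution bounds, and optimizing the weight, should produce the sharp constant $c_q=4(q-1)(N+q)/[(N+2q)(N+2q-2)]$. I expect this optimization to be the main obstacle, because the crude version from Lemma~\ref{a:lem:uniformnegative} only gives $2(q-1)/(N+2q)$. If the optimization does not close, I would instead compute the spectrum exactly on the hemisphere. After compactification, $\cB^{\mathrm{min}}$ becomes $\int|\nabla\varphi|^2-N\int\varphi^2$ with Neumann condition, and $\int|\nabla v|^2$ becomes $\int|\nabla\varphi|^2+\tfrac{N(N-2)}4\int\varphi^2$ on the unit hemisphere. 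Degree-$q$ tangential harmonics have Neumann eigenvalues $j(j+N-1)$ with $j\ge q$, so the best constant is
\[
\min_{j\ge q}\frac{j(j+N-1)-N}{j(j+N-1)+N(N-2)/4}.
\]
At $j=q$ this equals $\frac{(q-1)(q+N)}{(q+N/2)(q+N/2-1)}=c_q$, and the ratio increases in $j$. This spectral route is the one I would ultimately rely on. The same compactification on the flat ball of radius $1/2$ reconfirms the scalar-flat constants through the Robin eigenvalues $2j$.
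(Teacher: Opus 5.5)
Your argument is correct. In the minimal-boundary case, the spectral computation you say you would ultimately rely on is the paper's proof. The paper works on the hemisphere of radius $1/2$, where $\cB^{\mathrm{min}}(v,v)=\int(|\nabla f|^2-4Nf^2)$ and the lowest eigenvalue in tangential degree $q$ is $4q(q+N-1)$; this is your unit-hemisphere computation rescaled.

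In the scalar-flat case your route differs. The paper transfers to the flat ball of radius $1/2$ and uses that the Steklov eigenvalues available in tangential degree $q$ are at least $2q$. You instead use the ground-state (Picone) identity with the Kelvin transform $H_qD^{-(N-2)/2-q}$. That function is exactly the half-space image of the degree-$q$ Steklov eigenfunction, so both arguments prove the same sharp inequality. Yours needs only the ground-state identity and density away from the axis $s=0$. The paper's needs the conformal transfer to the ball and completeness of the Steklov basis, but in return it makes the sharpness and the ordering in total degree transparent.

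The interpolation you flagged as the main obstacle is not needed. For $\phi_2=s^{q/2}D^{-(N-2)/2-q}$ in the minimal case, with $D=1+s+t^2$, direct differentiation gives
\[
-\Delta\bigl(H_qD^{-(N-2)/2-q}\bigr)=(N+2q)(N+2q-2)D^{-2}\,H_qD^{-(N-2)/2-q}.
\]
So your $\alpha$ equals $(N+2q)(N+2q-2)$, and
\[
\cB^{\mathrm{min}}(v,v)\ge\Bigl(1-\frac{N(N+2)}{(N+2q)(N+2q-2)}\Bigr)\int|\nabla v|^2=c_q\int|\nabla v|^2,
\]
because $(N+2q)(N+2q-2)-N(N+2)=4(q-1)(N+q)$.

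This $\phi_2$ is the half-space image of the $j=q$ Neumann eigenfunction on the hemisphere, which is why it reproduces the sharp spectral constant; $\phi_1$ plays no role. Your substitution method alone therefore proves all three inequalities uniformly, without appealing to the spectrum.
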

\begin{proof}
At the scalar-flat unit bubble, set $f:=v/U^{\mathrm{sf}}_{0,1}$ and regard $f$ as a function on the Euclidean ball of radius $1/2$ after conformal inversion.
Conformal covariance gives
\begin{equation}\label{a:eq:ballforms}
\cB^{\mathrm{sf}}(v,v)=\int_{B_{1/2}}|\nabla f|^2-2\int_{\pa B_{1/2}}f^2,
\qquad
\|v\|_\Sigma^2=\int_{B_{1/2}}|\nabla f|^2+(N-2)\int_{\pa B_{1/2}}f^2.
\end{equation}
Moreover,
\[
\int_{\pa\R^N_+}\frac{v^2}{1+s}=\int_{\pa B_{1/2}}f^2.
\]
The ball has Steklov eigenvalues $2q$, and tangential angular degree $q$ contains no lower total spherical degree.
Its Dirichlet energy is therefore at least $2q$ times the boundary norm; the zero-boundary-trace part contributes nonnegative energy.
The three scalar-flat inequalities, including \eqref{a:eq:weightedtrace}, follow by substitution.
At $\kappa=0$, set $f:=(1+s+t^2)^{(N-2)/2}v$ and regard $f$ as a function on the hemisphere of radius $1/2$, with
\[
\cB(v,v)=\int(|\nabla f|^2-4Nf^2),\qquad
\|v\|_\Sigma^2=\int(|\nabla f|^2+N(N-2)f^2).
\]
The first spherical eigenvalue available in angular degree $q$ is $4q(q+N-1)$.
Taking the ratio of the two forms yields \eqref{a:eq:Icoerc}.
\end{proof}

\begin{lemma}
\label{lem:projected-degree-one}
At $(\sigma,\eta)=(1,0)$ and $z=(0,1)$, restrict to $\Sigma^{(1)}\cap\Sigma_z$, with precisely the constraints \eqref{eq:common-orthogonality-functionals}.
Then
\begin{equation}
\cB^{\mathrm{min}}(v,v)\ge c_1^\perp\int|\nabla v|^2,
\qquad c_1^\perp:=\frac4{N+4},\qquad v\in\Sigma^{(1)}\cap\Sigma_{(0,1)}.
\end{equation}
The weighted trace estimate \eqref{a:eq:weightedtrace} holds in $\Sigma^{(1)}$ with its factor $N$.
\end{lemma}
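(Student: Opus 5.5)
The plan is to repeat the conformal transfer of Lemma~\ref{a:lem:angular}, now inside the degree-one sector. The new ingredient is that the constraints \eqref{eq:common-orthogonality-functionals} remove exactly the first Neumann eigenspace in that sector.

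\emph{Transfer to the hemisphere.} For $v\in\Sigma^{(1)}$ put $f:=(1+s+t^2)^{(N-2)/2}v$ and regard $f$ as a function on the hemisphere $\mathcal H$ of radius $1/2$, as in the proof of \eqref{a:eq:Icoerc}. There
\[
\cB^{\mathrm{min}}(v,v)=\int_{\mathcal H}\bigl(|\nabla f|^2-4Nf^2\bigr),\qquad
\|v\|_\Sigma^2=\int_{\mathcal H}\bigl(|\nabla f|^2+N(N-2)f^2\bigr).
\]
No boundary term appears: $\kappa=0$ and $\pa_t(1+s+t^2)=0$ at $t=0$, so the boundary is minimal. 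The relevant spectral problem is therefore the Neumann Laplacian on $\mathcal H$.

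\emph{Spectrum in the degree-one sector.} Neumann eigenfunctions on $\mathcal H$ are restrictions of spherical harmonics of $\Sph^N$ that are even under reflection across the equator. Their eigenvalues on the sphere of radius $1/2$ are $4j(j+N-1)$, where $j$ is the total degree. Tangential degree one forces $j\ge1$.
\begin{itemize}
\item At $j=1$ the even harmonics are the coordinate functions $X_a$, $1\le a\le m$, with eigenvalue $4N$. These are precisely the transforms $\Phi_{z_0,a}/U_{z_0}$ of the translation Jacobi fields.
\item At $j=2$ with tangential degree one, $X_aX_N$ is odd across the equator, while $X_aX_{N+1}$ is even. The latter has eigenvalue $8(N+1)$.
\end{itemize}
By the discussion after \eqref{eq:common-orthogonality-functionals}, the interior constraints $\mathfrak l_{z_0,a}(v)=0$ are exactly $L^2(\mathcal H)$-orthogonality of $f$ to $X_a$. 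The scale constraint is automatic in degree one.

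\emph{Coercivity estimate.} The min--max principle gives $\int|\nabla f|^2\ge 8(N+1)\int f^2$ on $\Sigma^{(1)}\cap\Sigma_{(0,1)}$. Since $(\lambda-4N)/(\lambda+N(N-2))$ is increasing in $\lambda$, the ratio of the two forms is bounded below by its value at $\lambda=8(N+1)$:
\[
\frac{8(N+1)-4N}{8(N+1)+N(N-2)}=\frac{4(N+2)}{(N+2)(N+4)}=\frac4{N+4}=c_1^\perp .
\]
Because $\cB^{\mathrm{min}}$ and $\|v\|_\Sigma^2$ are the two transferred forms, this ratio bound is exactly the asserted inequality.

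\emph{Weighted trace estimate.} Use the scalar-flat ball picture \eqref{a:eq:ballforms} on the whole of $\Sigma^{(1)}$, with no constraint. On the ball of radius $1/2$ the Steklov eigenvalue in tangential degree one is $2$, so
\[
\int|\nabla v|^2\ge(2+N-2)\int_{\pa B_{1/2}}f^2=N\int_{\pa\R^N_+}\frac{v^2}{1+s}.
\]
This is \eqref{a:eq:weightedtrace} with $q=1$.

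\emph{Main obstacle.} The delicate step is confirming that the weighted constraints, with weight $U^{4/(N-2)}$, coincide with $L^2$-orthogonality on the cap, so that exactly the $j=1$ eigenspace is removed and nothing more. One must also check that the parity argument correctly excludes $X_aX_N$ from the Neumann spectrum. The remaining work is the arithmetic of the eigenvalue ratio above, together with a density argument near $s=0$ to justify the min--max on $\Sigma^{(1)}$.
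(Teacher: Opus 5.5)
Your proposal is correct and follows the paper's proof essentially step for step. You transfer to the radius-$1/2$ hemisphere and identify the constraints with $L^2$-orthogonality to the translation modes $X_a$. The next Neumann eigenvalue $8(N+1)$ then gives the ratio $4/(N+4)$, and the trace bound comes from the ball Steklov comparison with lowest eigenvalue $2$ in tangential degree one. The parity check on $X_aX_N$ that you flag as an obstacle is correct but not needed, since both degree-two candidates have eigenvalue $8(N+1)$ anyway.
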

\begin{proof}
Under the hemisphere transformation in the preceding proof, the constraints are exactly $L^2$ orthogonality to the spherical degree-one Jacobi fields.
Tangential angular degree one is orthogonal to the constant spherical mode; its spherical degree-one part consists of the tangential translations, which the constraints remove.
The remaining Neumann spectrum on the round hemisphere of radius $1/2$ starts no lower than $8(N+1)$.
Therefore the ratio of the two forms is at least
\[
\frac{8(N+1)-4N}{8(N+1)+N(N-2)}=\frac4{N+4}.
\]
The trace inequality follows from the ball Dirichlet comparison in \eqref{a:eq:ballforms}, which is valid also for $q=1$; its proof does not use positivity of the scalar-flat Jacobi form.
\end{proof}

\begin{lemma}
Let $Z,X\in\Sigma^{(q)}$, $q\ge2$, and suppose their residual satisfies
\[
\cB(Z-X,\psi)=\int R\psi+\int_{\pa\R^N_+}R_b\psi
\qquad(\psi\in\Sigma^{(q)}).
\]
Assume the weighted square integrals on the right sides below are finite.
At the scalar-flat unit bubble, with $D=s+(1+t)^2$,
\begin{equation}
\|Z-X\|_{\cB}\le
\left[\frac4{(N-2)^2}\frac{N+2q-2}{2(q-1)}\int DR^2\right]^{1/2}
+\left[\frac1{2(q-1)}\int_{\pa\R^N_+}(1+s)R_b^2\right]^{1/2}.
\end{equation}
At the minimal unit bubble, with $D=1+s+t^2$ and $c_q$ from \eqref{a:eq:Icoerc},
\begin{equation}
\|Z-X\|_{\cB}\le
\left[\frac4{(N-2)^2c_q}\int DR^2\right]^{1/2}
+\left[\frac1{c_q(N+2q-2)}\int_{\pa\R^N_+}(1+s)R_b^2\right]^{1/2}.
\end{equation}
The latter estimate also holds for $Z,X\in\Sigma^{(1)}\cap\Sigma_{(0,1)}$, with the constraints of Lemma~\ref{lem:projected-degree-one}, a residual tested on that complement, and $c_q=c_1^\perp$, $N+2q-2=N$.
\end{lemma}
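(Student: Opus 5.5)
The plan is to test the defining identity with $\psi:=Z-X\in\Sigma^{(q)}$, which is admissible because the identity holds for every test function in that sector. This gives
\[
\|Z-X\|_{\cB}^2=\cB(Z-X,Z-X)=\int R\,(Z-X)+\int_{\pa\R^N_+}R_b\,(Z-X).
\]
Here $\|w\|_{\cB}:=\cB(w,w)^{1/2}$ is a genuine norm on $\Sigma^{(q)}$ for $q\ge2$ by Lemma~\ref{a:lem:angular}. The interior and boundary terms are then treated separately with weighted Cauchy--Schwarz. The goal is to bound each by a constant times $\|Z-X\|_{\cB}$ and divide, which produces the stated sum of two square roots.

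For the interior term, write $w:=Z-X$ and estimate
\[
\Bigl|\int Rw\Bigr|\le\Bigl(\int DR^2\Bigr)^{1/2}\Bigl(\int w^2/D\Bigr)^{1/2}.
\]
Next use a weighted Hardy inequality of the form $\int w^2/D\le\frac4{(N-2)^2}\int|\nabla w|^2$. On the whole space, where $D\ge|x|^2$ in the minimal case, this is the classical Hardy inequality with the constant $(N-2)^2/4$. On the half-space one has to check that it still holds. For the minimal problem I would either even-reflect across $t=0$ (the weight is even in $t$ and the Neumann-type form allows this) or use Hardy's inequality centered at the point $(0,0)$ of $\pa\R^N_+$, since $|x|^2\le D$. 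In the scalar-flat case I would center the Hardy inequality at $(0,-1)$, where $|x-(0,-1)|^2=D$; this is an exterior point, so the half-space Hardy inequality with constant $(N-2)^2/4$ applies. Finally, the coercivity in Lemma~\ref{a:lem:angular} turns $\int|\nabla w|^2$ into a multiple of $\cB(w,w)$: the factor is $(N+2q-2)/(2(q-1))$ in the scalar-flat case and $1/c_q$ in the minimal case. This reproduces the first square root in both displays.

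For the boundary term, use
\[
\Bigl|\int R_bw\Bigr|\le\Bigl(\int(1+s)R_b^2\Bigr)^{1/2}\Bigl(\int\frac{w^2}{1+s}\Bigr)^{1/2}.
\]
In the scalar-flat case, the second inequality of \eqref{a:eq:IIcoerc} gives $\int w^2/(1+s)\le\cB(w,w)/(2(q-1))$ directly. In the minimal case, I would combine \eqref{a:eq:weightedtrace} with \eqref{a:eq:Icoerc} to get $\int w^2/(1+s)\le\cB(w,w)/(c_q(N+2q-2))$. Adding the two estimates and dividing by $\|w\|_{\cB}$ gives both claimed bounds.

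For the degree-one statement, the same chain applies with $w\in\Sigma^{(1)}\cap\Sigma_{(0,1)}$. This requires $X$ to lie in the projected complement, so that $w$ is an admissible test function. One then uses Lemma~\ref{lem:projected-degree-one}, which gives the coercivity constant $c_1^\perp$ and the trace factor $N$. The main obstacle is the interior Hardy step with exactly the constant $4/(N-2)^2$ on the half-space with weight $D$. Justifying it via the shifted center (scalar-flat case) or reflection (minimal case) is where I would be most careful. A density argument with cutoffs is needed to make the testing legitimate under the assumed finiteness of the weighted integrals.
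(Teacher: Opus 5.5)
Your proposal is correct and follows the paper's proof essentially step for step. You test with $v=Z-X$, apply weighted Cauchy--Schwarz, use Hardy's inequality with pole $(0,-1)$ in the scalar-flat case (nonpositive boundary flux) and radial Hardy with $D\ge s+t^2$ in the minimal case, then invoke the coercivity and weighted-trace bounds of Lemma~\ref{a:lem:angular} (or Lemma~\ref{lem:projected-degree-one} in degree one) and divide by $\|Z-X\|_{\cB}$, with the case $\|Z-X\|_{\cB}=0$ being immediate.
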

\begin{proof}
For the scalar-flat case, Hardy's inequality with pole $(0,-1)$ gives
\[
\int v^2/D\le\frac4{(N-2)^2}\int|\nabla v|^2.
\]
To obtain it, integrate the vector field $(x+e_N)/D$; its boundary flux is nonpositive.
At the minimal bubble use radial Hardy and $D\ge s+t^2$.
Apply weighted Cauchy--Schwarz to the residual identity with $v=Z-X$.
The interior term is controlled by Hardy and the appropriate coercivity constant.
The boundary term is controlled by \eqref{a:eq:IIcoerc} in the scalar-flat case and by \eqref{a:eq:Icoerc} and \eqref{a:eq:weightedtrace} in the minimal case.
Division by $\|v\|_{\cB}$ proves the estimates when this norm is nonzero; the zero case is immediate.
For the minimal degree-one complement, use Lemma~\ref{lem:projected-degree-one}, whose trace factor is $N$.
\end{proof}

For the minimal degree-one estimate, replace a trial by its projection \eqref{eq:explicit-Sigma-projection} unless it already satisfies the constraints.
Kernel compatibility leaves its source pairing unchanged.

\subsection{Verification of Proposition~\ref{prop:four-scalar-minima}}
\label{subsec:certificate-index}
The electronic supplement contains the program \texttt{verify\_proposition\_5\_8.py} for the computer-assisted part of Proposition~\ref{prop:four-scalar-minima}.
It contains the defining coefficients and all reconstruction and verification routines, with no project imports or previously computed models or sign certificates as inputs.
The program requires Python~3.10 or later and SymPy~1.14.0, with assertions enabled.
Place it beside \texttt{main.tex} and run
\begin{verbatim}
python verify_proposition_5_8.py --manuscript main.tex
\end{verbatim}
The program first compares its $69$ defining rows with the coefficient tables in Appendix~\ref{a:app:coefficients}.
The computations use the embedded exact rows; the manuscript is read only for this comparison.

The scalar-flat boundary responses are constructed from the finite sums in Lemma~\ref{lem:endpoint-boundary-response}, including the projected degree-one terms.
For the minimal responses, the program constructs both parity telescopers for the forty entries with $q=1,2$, $0\le a\le b\le4$, or $q=3$, $0\le a\le b\le3$.
In each of the eighty cases, it verifies the successive-term ratio and the polynomial identity \eqref{eq:minimal-polynomial-telescoper}, forms $r(k):=-B_0(k-1)P_*(k)/C_0(k)$, and checks $r(0)$.
It also checks symmetry of the responses and the two explicit values at $q=2$, $a=b=0$ in Lemma~\ref{lem:endpoint-boundary-response}.

The pole checks cover the whole convergence range.
Substitute $N=\max\{4,2(a+b+q)\}+X$ with $X>0$.
After positive normalization, the denominators of the coefficients of $P_*$ have nonnegative coefficients in $X$ and are positive for $X>0$.
After common powers of $k$ are canceled, the denominator of $r(k)$ has nonnegative coefficients in $(X,k)$ and a constant coefficient in $k$ that is positive for $X>0$.
Thus $r(k)$ is defined for every $k\ge0$ in this range.
The program checks that its numerator degree exceeds its denominator degree by at most one, so $r(k)=O(k)$ as $k\to\infty$.
Together with the decay established after \eqref{eq:minimal-telescoping-identity}, this bound proves that the terminal term tends to zero; no truncated spectral sum is used.

The program then reconstructs the four scalar-flat and minimal-boundary families from their defining rows.
The first and second translated sources retain all divergence terms in \eqref{eq:polynomial-source-functional}, with the normalization in Proposition~\ref{sec:certificate-normalization}.
Polynomial correctors, volume moments, local boundary terms, and exact boundary responses determine the five normalized forms $E,p,K,T_0,T_{1/2}$ of Proposition~\ref{prop:endpoint-matrix-assembly}.
Both inverse pairings in \eqref{eq:full-source-Hessian-pairing} are included, with the degree-one source projected off the Jacobi kernel.
The executable names are \texttt{E}, \texttt{p}, \texttt{K}, \texttt{T\_0}, and \texttt{T\_half}; its symbols \texttt{N}, \texttt{tau}, and \texttt{w} represent $N$, $\tau$, and $w_N$.
Only the distinguished row receives the multiplier $\tau$, which is held fixed under translation and dilation.

The eleven tests in \eqref{eq:eleven-signs} are derived from these reconstructed forms.
For the scalar-flat families, the program verifies positivity of the numerator and denominator coefficients after $N=15+X$ or $N=22+X$.
For the minimal families, it checks the rational constants and recurrence identities used in Lemma~\ref{lem:gamma-recurrence}, applies the resulting enclosure for $w_N$, and constructs the Bernstein coefficients in Appendix~\ref{app:sign-witnesses}.
Their numerator and denominator coefficients are nonnegative after the shifts in Table~\ref{tab:sign-verification-ranges}, and their constant coefficients are positive.
Every dimension shift and Bernstein conversion is reversed exactly and compared with the reconstructed expression.
The $11+11+43+26=91$ rational pairs therefore establish the required signs throughout the four integer dimension ranges, without numerical root selection or dimension sampling.

The output \texttt{verification\_results.json} records the reconstructed forms, telescopers, sign certificates, software versions, and manuscript and program hashes.
It is an execution record, not an input to the computation.
The program stops at a failed identity or sign check and refuses execution with assertions disabled.
The summation limits, angular decomposition, and root criterion are justified in the text; the nonlinear localization is proved in Section~\ref{sec:common-analytic}.

For the dimension-twenty-one specialization, substitute $N=21$ and $w=w_{21}=4096/(6435\pi)$ in the reconstructed form $p$.
Its coefficients in increasing powers of $\tau$ are $c_{21},b_{21},-8c_{02}^2/95$, as in Appendix~\ref{app:sign-witnesses}.

\section{Explicit bounds for negative mean curvature}
\label{app:normal-weyl-bounds}
We prove Corollary~\ref{cor:explicit-negative-bounds} using tensors $h(\bar x,t):=f(t)H^W(\bar x)$, where $H^W$ is the quadratic Weyl tensor defined below.
Throughout this appendix $\kappa=-L<0$, $m:=N-1$, $x=({\bar x},t)\in\R^m\times[0,\infty)$, and
\[
D_L:=1+|{\bar x}|^2+(t-L)^2,\qquad w_L:=D_L^{-(N-2)}.
\]
We use the normalization $\widehat F$ of \eqref{eq:negative-energy-normalization}.

\subsection{Weyl tensors and the translation Hessian}
Split the $m$ tangential coordinates into two sets of sizes $p,q\ge2$, $p+q=m$.
Prescribe the sectional components of an algebraic curvature tensor by
\[
W_{ijij}:=s_{ij}:=\begin{cases}
q/[(p-1)(m-1)],&i,j\text{ in the first set},\\
p/[(q-1)(m-1)],&i,j\text{ in the second set},\\
-1/(m-1),&i,j\text{ in different sets},
\end{cases}\qquad i\ne j.
\]
Impose the curvature symmetries and set the other components to zero.
Put
\[
H^W_{ab}:=W_{aibj}x_ix_j,
\qquad H^W_{ii}=\sum_{j\ne i}s_{ij}x_j^2,
\qquad H^W_{ij}=-s_{ij}x_ix_j\quad(i\ne j),
\]
with zero normal components.
\begin{lemma}
For the component construction above, with $m=p+q$ and $p,q\ge2$, $W$ is a nonzero algebraic Weyl tensor and
\begin{equation}\label{eq:weyl-polynomial-identities}
\tr H^W=0,\qquad \divg H^W=0,\qquad H^W{\bar x}=0,\qquad
\Delta_{\bar x}H^W=0.
\end{equation}
The map $v\mapsto\partial_vH^W$ is injective.
\end{lemma}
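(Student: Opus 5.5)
First I would verify that $W$ is an algebraic Weyl tensor. A tensor whose only nonzero components are the sectional ones $W_{ijij}=s_{ij}$, extended by the curvature symmetries, is of "diagonal" type. Such a tensor automatically satisfies the first Bianchi identity, because every Bianchi triple $W_{ijkl}+W_{iklj}+W_{iljk}$ with $\{i,j,k,l\}$ not of the form $\{i,j,i,j\}$ has all three terms zero. So the Weyl condition reduces to vanishing Ricci contraction: $\sum_{j\ne i}s_{ij}=0$ for every $i$.

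For $i$ in the first set, this sum is $(p-1)\cdot q/[(p-1)(m-1)]-q/(m-1)=0$, and the second set is symmetric. Thus $W$ is Weyl, and it is nonzero since $s_{ij}\ne0$.

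Next I would check the four identities in \eqref{eq:weyl-polynomial-identities} by direct computation from the explicit formulas for $H^W$.

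The trace is
\[
\tr H^W=\sum_i\sum_{j\ne i}s_{ij}x_j^2=\sum_j x_j^2\sum_{i\ne j}s_{ij}=0,
\]
by the Ricci condition and symmetry of $s$. The radial identity $H^W\bar x=0$ is immediate from antisymmetry of $W$ in its last two indices: $(H^W\bar x)_a=W_{aibj}x_ix_jx_b=0$.

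For the divergence, $\partial_bH^W_{ab}=W_{abbj}x_j+W_{aibb}x_i$. The first term is a Ricci contraction (up to sign) and vanishes; the second vanishes by antisymmetry in the last pair. For the Laplacian, $\Delta_{\bar x}H^W_{ab}=2W_{aibi}$ is twice the Ricci tensor, hence zero. A component check with the diagonal formulas gives the same results and can serve as a cross-check.

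Finally, for injectivity of $v\mapsto\partial_vH^W$, I would compute $(\partial_vH^W)_{ab}=W_{aibj}(v_ix_j+x_iv_j)$. It is cleanest to evaluate the entries directly. For $i\ne j$ the off-diagonal entry is $(\partial_vH^W)_{ij}=-s_{ij}(v_ix_j+x_iv_j)$. If this vanishes identically in $\bar x$, then for each pair $i\ne j$ the coefficient of $x_j$ gives $s_{ij}v_i=0$. Since every $s_{ij}\ne0$ (here we need $p,q\ge2$ so the denominators are finite and positive), each $v_i=0$.

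I expect no serious obstacle here. The only delicate point is the bookkeeping of index conventions and signs in the curvature symmetries. That is why I would rely on the abstract contraction arguments (Ricci-flatness plus antisymmetry) and use the explicit components only for the Ricci sum and for the injectivity step.
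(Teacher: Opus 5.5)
Your proposal is correct and follows the paper's argument essentially step for step: Bianchi holds because the curvature operator is diagonal on $\mathbf e_i\wedge\mathbf e_j$; the zero row sums of $s_{ij}$ give vanishing Ricci contraction; the four identities follow by contracting and differentiating $H^W_{ab}=W_{aibj}x_ix_j$ using the curvature symmetries and Ricci-flatness; and injectivity comes from the off-diagonal entries $-s_{ij}(v_ix_j+v_jx_i)$ together with $s_{ij}\ne0$. The only point to tidy is the Bianchi check for an index multiset of the form $\{i,i,j,j\}$, which you leave out: there the three terms do not vanish individually but cancel, e.g.\ $W_{ijij}+W_{iijj}+W_{ijji}=s_{ij}+0-s_{ij}=0$.
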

\begin{proof}
The curvature operator is diagonal on $\mathbf e_i\wedge\mathbf e_j$, so Bianchi holds; each row sum is zero, so its Ricci contraction vanishes.
Thus $W$ is a nonzero algebraic Weyl tensor.
The four identities in \eqref{eq:weyl-polynomial-identities} follow by contraction and differentiation of $H^W_{ab}=W_{aibj}x_ix_j$, using the curvature symmetries and vanishing Ricci contraction.
For $i\ne j$, $\partial_vH^W_{ij}=-s_{ij}(v_ix_j+v_jx_i)$.
Since every $s_{ij}$ is nonzero, $\partial_vH^W=0$ implies $v=0$.

\end{proof}

\begin{lemma}\label{lem:weyl-normal-translation}
Let $h({\bar x},t):=f(t)H^W({\bar x})$, where $f$ is a polynomial and $N>2(2+\deg f)+2$.
At the centered unit bubble,
\begin{align}
\widehat F_h(0,1)
&=-\frac14\int_{\R^N_+}w_L
\bigl(f^2|\nabla_{\bar x}H^W|^2+(f')^2|H^W|^2\bigr),
\label{eq:weyl-normal-energy}\\
\partial_{\xi_a\xi_b}\widehat F_h(0,1)
&=-\frac12\int_{\R^N_+}w_L(f')^2
\langle\partial_aH^W,\partial_bH^W\rangle.
\label{eq:weyl-normal-translation}
\end{align}
If $f'\not\equiv0$, the translation Hessian is negative definite.
The translation gradient and the mixed translation--scale block vanish.
\end{lemma}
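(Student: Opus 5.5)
The plan is to show that the scalar correction vanishes identically for these tensors, so that $\widehat F_h$ reduces to the raw quadratic coefficient $\mathcal Q[h]$ of Proposition~\ref{prop:metric-bubble-expansion}, and then to evaluate that coefficient.

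\textbf{Step 1: the source vanishes.} Work at the Type-I bubble with $\kappa=-L$, normalized as $\widetilde U=D_L^{-(N-2)/2}$. By \eqref{eq:weyl-polynomial-identities}, $h$ is tangential, trace free and divergence free, so the source in Lemma~\ref{lem:source-kernel-orthogonality} reduces to $h_{ab}\pa_{ab}\widetilde U$. The centered bubble has
\[
\pa_{ab}\widetilde U=\Bigl[\tfrac{N(N-2)x_ax_b}{D_L^2}-\tfrac{(N-2)\delta_{ab}}{D_L}\Bigr]\widetilde U .
\]
The first term is killed by $H^W\bar x=0$, and the second by $\tr H^W=0$. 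Hence $\widetilde S[h]=0$, the corrector is zero, and $\widehat F_h=c_N^{-1}2^{2-N}\mathcal Q[h]$.

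\textbf{Step 2: the raw terms.} The two divergence terms of $\mathcal Q[h]$ drop out. The first term also vanishes: $\nabla\widetilde U$ is a multiple of $(\bar x,t-L)$, and $h$ is tangential with $h\bar x=0$, so $h\nabla\widetilde U=0$. What remains is
\[
-\tfrac14\int|\pa h|^2\widetilde U^2,\qquad |\pa h|^2=f^2|\nabla_{\bar x}H^W|^2+(f')^2|H^W|^2,
\]
where the mixed term is absent because $f$ depends only on $t$ and $H^W$ only on $\bar x$. Since $\widetilde U^2=w_L$ and the normalization of $\widehat F$ removes $c_N^{-1}2^{2-N}$, this gives \eqref{eq:weyl-normal-energy}. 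Integrability holds because the integrand is $O(r^{2\deg h-2-2(N-2)})$ and the hypothesis gives $N>2\deg h+2$.

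\textbf{Step 3: the translation Hessian.} Replace $h$ by the translate $h_\xi=f(t)H^W(\bar x+\xi)$ with the bubble fixed. Because $H^W$ is quadratic,
\[
H^W(\bar x+\xi)=H^W(\bar x)+\pa_\xi H^W+H^W(\xi),
\]
where $\pa_\xi H^W:=\sum_a\xi_a\pa_aH^W$ is linear in $\bar x$. The translate is still trace free and, since $\Delta_{\bar x}H^W=0$, still divergence free. However, $h_\xi\bar x\neq0$ in general, so the source and the corrector reappear. The plan is to show that they first enter at order $|\xi|^3$.
\begin{itemize}
\item The linear part $\pa_\xi H^W$ satisfies $\bar x^{\mathsf T}(\pa_\xi H^W)\bar x=\pa_\xi(\bar x^{\mathsf T}H^W\bar x)-2\xi^{\mathsf T}H^W\bar x=0$. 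Its contraction $(\pa_\xi H^W)\bar x=-H^W\xi$ is linear in $\bar x$ and lies in the degree-one angular sector. The source is $h_{ab}\pa_{ab}\widetilde U$, which involves only $\bar x^{\mathsf T}h\bar x$ and $\tr h$; both vanish, so the source is zero to first order in $\xi$.
\item The source is therefore $O(|\xi|^2)$, and the corrector pairing, which is quadratic in the source, is $O(|\xi|^4)$. It does not affect the Hessian.
\end{itemize}
The Hessian thus comes from the raw terms. Differentiating $-\frac14\int|\pa h_\xi|^2w_L$ twice at $\xi=0$ gives
\[
-\tfrac12\int w_L\bigl[(f')^2\langle\pa_aH^W,\pa_bH^W\rangle+f^2\langle\nabla\pa_aH^W,\nabla\pa_bH^W\rangle\bigr]
\]
plus cross terms with the constant part $H^W(\xi)$. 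These cross terms must be checked to cancel against the first raw term, which now involves $h_\xi\nabla\widetilde U\neq0$. I expect this bookkeeping, reconciling the statement, which has only the $(f')^2$ term, with these extra pieces, to be the main obstacle.

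\textbf{Plan for the obstacle.} I would first verify the cancellation by parity and the identities $\Delta H^W=0$ and $H^W\bar x=0$. If that fails, I would instead argue by diffeomorphism invariance, using Corollary~\ref{cor:boundary-gauge-directions} to remove tangential gauge pieces.

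\textbf{Remaining claims.} Negativity follows from injectivity of $v\mapsto\pa_vH^W$: for $v\neq0$, $\pa_vH^W\not\equiv0$, so when $f'\not\equiv0$ the integral $\int w_L(f')^2|\pa_vH^W|^2$ is positive. The vanishing gradient and mixed block follow from the reflection $\bar x\mapsto-\bar x$, under which $h$ is even.
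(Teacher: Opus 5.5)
\textbf{What matches the paper.} Your Steps 1 and 2, the bound showing the translated source is $O(|\xi|^2)$ (so the corrector pairing does not affect the Hessian), the reflection argument and the injectivity argument all agree with the paper.

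\textbf{Where the gap is.} The gap is the step you flag as the main obstacle, and the cancellation you propose there is not the one that occurs. Since $H^W(\bar x+\xi)(\bar x+\xi)=0$, one has $h_\xi\bar x=-h_\xi\xi$. Hence the kinetic term $\frac1{2c_N}\int|h_\xi\nabla\widetilde U|^2$ is $O(|\xi|^2)$, with Hessian
\[
\frac{(N-2)^2}{c_N}\int_{\R^N_+}w_LD_L^{-2}f^2\bigl((H^W)^2\bigr)_{ab}
=4(N-1)(N-2)\int_{\R^N_+}w_LD_L^{-2}f^2\bigl((H^W)^2\bigr)_{ab}.
\]
This term is weighted by $f^2$. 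Your cross terms with $H^W(\xi)$, namely $-\frac12\int w_L(f')^2\langle H^W,\pa_{ab}H^W\rangle$, are weighted by $(f')^2$, so they cannot cancel it for general $f$. In fact they vanish on their own: each entry of $H^W$ is a harmonic quadratic and $\pa_{ab}H^W$ is constant, so the tangential spherical average is zero. The term that must cancel the kinetic Hessian is the other extra piece in your display, $-\frac12\int w_Lf^2\langle\nabla\pa_aH^W,\nabla\pa_bH^W\rangle$. Without this cancellation the stated formula, which contains only the $(f')^2$ term, is not proved.

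\textbf{The missing ingredient.} The cancellation is a quantitative identity, valid on each slice $t=\mathrm{const}$, between two integrals with different radial weights. Parity together with $\Delta H^W=0$ and $H^W\bar x=0$ cannot produce it. Write $H^W_{ij}=A_{ijrs}x_rx_s$ with $A_{ijrs}:=(W_{irjs}+W_{isjr})/2$, and use the fourth-moment identity
\[
\int_{\R^m}w_LD_L^{-2}x_rx_sx_ux_v\dd\bar x
=\frac{\int_{\R^m}w_L\dd\bar x}{4(N-1)(N-2)}
(\delta_{rs}\delta_{uv}+\delta_{ru}\delta_{sv}+\delta_{rv}\delta_{su}).
\]
Then:
\begin{enumerate}
\item Since $A_{ijrr}=W_{irjr}=0$ (the Ricci contraction of $W$ vanishes), the kinetic Hessian becomes $2\int w_Lf^2A_{acrs}A_{bcrs}$.
\item On the other side, $\langle\nabla\pa_aH^W,\nabla\pa_bH^W\rangle=4A_{ijak}A_{ijbk}$.
\item The pair symmetry $A_{ijrs}=A_{rsij}$ identifies the two contractions, so the terms cancel exactly.
\end{enumerate}
The cancellation depends on the constant $4(N-1)(N-2)$ from the radial beta integral matching $(N-2)^2/c_N$, which is why no purely symmetry-based argument gives it.

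\textbf{The fallback does not work.} Corollary~\ref{cor:boundary-gauge-directions} concerns Lie derivatives of the model metric $g_z$. By contrast, $h_\xi-h$ is, to first order, a Lie derivative of the perturbation. Moreover, the translation Hessian is genuinely nonzero (it is the $(f')^2$ term), so no invariance can remove the $f^2$ piece wholesale.
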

\begin{proof}
By \eqref{eq:weyl-polynomial-identities}, the centered kinetic term and scalar source vanish.
This proves \eqref{eq:weyl-normal-energy}.
For $h_\xi({\bar x},t):=f(t)H^W({\bar x}+\xi)$, radial transversality implies $h_\xi {\bar x}=-h_\xi\xi$ and ${\bar x}^Th_\xi {\bar x}=\xi^Th_\xi\xi$.
Hence the translated source is $O(|\xi|^2)$, and its inverse pairing has zero Hessian at $\xi=0$.
Differentiating the raw terms, we obtain
\begin{align*}
\partial_{\xi_a\xi_b}\widehat F_h(0,1)
={}&-\frac14\int w_L f^2\partial_{ab}|\nabla_{\bar x}H^W|^2
+4(N-1)(N-2)\int w_L D_L^{-2}f^2((H^W)^2)_{ab}\\
&-\frac14\int w_L(f')^2\partial_{ab}|H^W|^2.
\end{align*}
The first two integrals cancel.
To see this, write $H^W_{ij}=A_{ijrs}x_rx_s$, where $A_{ijrs}:=(W_{irjs}+W_{isjr})/2$, and use
\[
\int_{\R^m}w_LD_L^{-2}x_rx_sx_ux_v\dd {\bar x}
=\frac{\int_{\R^m}w_L\dd {\bar x}}{4(N-1)(N-2)}
(\delta_{rs}\delta_{uv}+\delta_{ru}\delta_{sv}+\delta_{rv}\delta_{su}).
\]
This identity follows from the radial beta integral.
The cancellation follows from the pair symmetries and zero traces of $A$.
Finally,
\[
\partial_{ab}|H^W|^2
=2\langle\partial_aH^W,\partial_bH^W\rangle
+2\langle H^W,\partial_{ab}H^W\rangle.
\]
The last term has zero radial average because $H^W$ is harmonic quadratic and $\partial_{ab}H^W$ is constant.
This proves \eqref{eq:weyl-normal-translation}; definiteness follows from injectivity.
Tangential parity at every scale proves the remaining assertions.
\end{proof}

\subsection{Proof of the explicit bounds}
\begin{proof}[Proof of Corollary~\ref{cor:explicit-negative-bounds}]
Both cases below have $N\ge9$, so the tangential moments used here converge, including the ratio with denominator $N-7$.
Define $C_W>0$ by
\begin{equation}\label{eq:weyl-normal-tangential-moments}
\int_{\R^m}w_L|\nabla_{\bar x}H^W|^2\dd {\bar x}
=:C_W[1+(t-L)^2]^{-(N-5)/2}.
\end{equation}
The degree-two harmonic identity and the radial beta integral also imply
\[
\frac{\int_{\R^m}w_L|H^W|^2\dd {\bar x}}
{\int_{\R^m}w_L|\nabla_{\bar x}H^W|^2\dd {\bar x}}
=\frac{1+(t-L)^2}{2(N-7)}.
\]
In each of the following two cases, $C_W\begin{pmatrix}A&B\\B&C\end{pmatrix}$ is the weighted gradient Gram matrix of the two displayed polynomials.
It is positive definite: a linear combination with zero gradient would be constant, whereas its positive tangential degree forces both coefficients to vanish.

\emph{Nonumbilic backgrounds.}
Let $N\ge9$, $d:=N-8$, and
\[
I_j:=\int_{-L}^{\infty}{\widehat t}^j(1+{\widehat t}^2)^{-(d+3)/2}\dd {\widehat t},
\qquad I:=I_0,\quad J:=I_1.
\]
Direct integration shows that
\[
J=\frac{(1+L^2)^{-(d+1)/2}}{d+1}>0,
\qquad dI_2=I-(d+1)LJ.
\]
For the polynomials $H^W,tH^W$, \eqref{eq:weyl-normal-tangential-moments} therefore yields
\[
A=I,\qquad B=LI+J,\qquad
C=\left(L^2+\frac3{2d}\right)I
+\left(1-\frac3{2d}\right)LJ.
\]
In particular,
\begin{equation}
25B^2-24AC
=\left(L^2-\frac{36}{d}\right)I^2
+\left(26+\frac{36}{d}\right)LIJ+25J^2>0
\quad\text{if }L\ge\frac6{\sqrt d}.
\end{equation}
Choose $h:=(1-t/L)H^W$, and set $b:=B/L$, $c:=C/L^2$ and $\Delta:=25b^2-24Ac$.
Homogeneous scaling gives
\[
\widehat F_h(0,\eps)
=-\frac{C_W}{4}\bigl(A\eps^4-2b\eps^5+c\eps^6\bigr).
\]
Since $b,c>0$ and $0<\Delta<25b^2$, its two positive critical scales are
\[
\eps_\pm:=\frac{5b\pm\sqrt\Delta}{6c}.
\]
At the upper root, with $\ell=\log\eps$,
\[
\partial_{\ell\ell}\widehat F_h(0,\eps_+)
=-\frac{C_W}{2}\eps_+^5\sqrt\Delta<0.
\]
Lemma~\ref{lem:weyl-normal-translation}, applied to the scaled polynomial, gives a negative-definite translation block.
The Gram matrix is positive, so the value is negative.
Thus $(0,\eps_+)$ is a negative nondegenerate full maximum.

To obtain nonzero trace-free second fundamental form at the concentration center, replace $h$ by $h-2\theta tA_0$, where $A_0\ne0$ is a fixed symmetric trace-free tangential matrix and $\theta>0$ is sufficiently small.
The corrected functional depends $C^2$ on the bubble parameters and quadratically on the tensor coefficients.
The implicit-function theorem preserves the negative full maximum.
Tangential parity keeps its center at zero.
Since $H^W(0)=0$, the second fundamental form of $\exp(\mu h)$ there is $\mu\theta A_0\ne0$.
The degree-two Weyl term is unchanged, so the curvature of $\exp(\mu h)$ remains nonzero for sufficiently small $\mu\ne0$.
The degree is three and $N>8$, so Theorem~\ref{thm:fixed-metric-transfer} proves the nonumbilic assertion of Corollary~\ref{cor:explicit-negative-bounds}.

\emph{Umbilic backgrounds.}
Let $N\ge11$, $d:=N-10$, and now set
\[
I_j:=\int_{-L}^{\infty}{\widehat t}^j(1+{\widehat t}^2)^{-(d+5)/2}\dd {\widehat t},
\qquad I:=I_0,\quad J:=I_1.
\]
By successive integration by parts,
\begin{align*}
J&=\frac{(1+L^2)^{-(d+3)/2}}{d+3},&
I_2&=\frac{I-(d+3)LJ}{d+2},\\
I_3&=\frac{2+(d+3)L^2}{d+1}J,&
I_4&=\frac{3I_2-(d+3)L^3J}{d}.
\end{align*}
For $H^W,t^2H^W$, the Gram entries are
\begin{align*}
A={}&I,\qquad B=\left(L^2+\frac1{d+2}\right)I+\frac{d+1}{d+2}LJ,\\
C={}&\left(L^4+\frac{8L^2}{d+2}+\frac5{d(d+2)}\right)I\\
&+\frac{L\{(d^3+2d^2+3d-10)L^2+7d^2+4d-15\}}
{d(d+1)(d+2)}J.
\end{align*}
Their scale discriminant is
\begin{equation}\label{eq:weyl-even-discriminant}
9B^2-8AC=c_{20}I^2+c_{11}IJ+c_{02}J^2,
\end{equation}
where
\begin{align*}
c_{20}&:=L^4-\frac{46L^2}{d+2}-\frac{31d+80}{d(d+2)^2},\\
c_{11}&:=\frac{2L}{d(d+1)(d+2)^2}
\bigl[(5d^4+20d^3+17d^2+34d+80)L^2
-19d^3-54d^2+37d+120\bigr],\\
c_{02}&:=\frac{9L^2(d+1)^2}{(d+2)^2}.
\end{align*}
Put $c_*:=23+4\sqrt{35}$ and suppose $L^2\ge c_*/d$.
Then $46<c_*<47$ and $c_*^2=46c_*+31$.
For $X:=L^2-c_*/d\ge0$,
\[
c_{20}=X^2+\left(\frac{2c_*}{d}-\frac{46}{d+2}\right)X
+\frac{(92c_*+44)d+4c_*^2}{d^2(d+2)^2}>0.
\]
The bracket in $c_{11}$ equals
\begin{align*}
&(5d^4+20d^3+17d^2+34d+80)X
+(5c_*-19)d^3+(20c_*-54)d^2\\
&\hspace{12mm}+(17c_*+37)d+34c_*+120+80c_*/d>0.
\end{align*}
Thus all three coefficients in \eqref{eq:weyl-even-discriminant} are positive, and $9B^2-8AC>0$.

Choose $h:=(1-t^2/L^2)H^W$.
With $r:=\eps^2$, $b:=B/L^2$, $c:=C/L^4$ and $\Delta:=9b^2-8Ac$, its centered energy is
\[
\widehat F_h(0,\eps)=-\frac{C_W}{4}(Ar^2-2br^3+cr^4).
\]
The upper critical value and its scale Hessian are
\[
r_+:=\frac{3b+\sqrt\Delta}{4c}>0,\qquad
\partial_{\ell\ell}\widehat F_h(0,\sqrt{r_+})
=-2C_Wr_+^3\sqrt\Delta<0.
\]
The translation block is negative definite by Lemma~\ref{lem:weyl-normal-translation}, and positivity of the Gram matrix makes the value negative.
The boundary is totally geodesic because $\partial_t h({\bar x},0)=0$, and the degree-two part has nonzero linearized Weyl curvature at the origin.
Since $\deg h=4$ and $N>10$, Theorem~\ref{thm:fixed-metric-transfer} proves the umbilic assertion of Corollary~\ref{cor:explicit-negative-bounds}.
\end{proof}


\begin{thebibliography}{99}
\bibitem{AlmarazCompactness}
S.~Almaraz,
\emph{A compactness theorem for scalar-flat metrics on manifolds with boundary},
Calc. Var. Partial Differential Equations \textbf{41} (2011), 341--386.

\bibitem{AlmarazBlowup}
\bysame,
\emph{Blow-up phenomena for scalar-flat metrics on manifolds with boundary},
J. Differential Equations \textbf{251} (2011), 1813--1840.

\bibitem{AlmarazWang}
S.~Almaraz and S.~Wang,
\emph{A compactness theorem for conformal metrics with constant scalar curvature and constant boundary mean curvature in dimension three},
Calc. Var. Partial Differential Equations \textbf{64} (2025), Article No.~35.

\bibitem{Brendle}
S.~Brendle,
\emph{Blow-up phenomena for the Yamabe equation},
J. Amer. Math. Soc. \textbf{21} (2008), 951--979.

\bibitem{BrendleMarques}
S.~Brendle and F.~C. Marques,
\emph{Blow-up phenomena for the Yamabe equation II},
J. Differential Geom. \textbf{81} (2009), 225--250.

\bibitem{ChenRuanSun}
X.~Chen, Y.~Ruan, and L.~Sun,
\emph{The Han--Li conjecture in constant scalar curvature and constant
boundary mean curvature problem on compact manifolds},
Adv. Math. \textbf{358} (2019), Paper No.~106854, 56 pp.

\bibitem{ChenWu}
X.~Chen and N.~Wu,
\emph{Blow-up phenomena for the constant scalar curvature and constant
boundary mean curvature equation},
J. Differential Equations \textbf{269} (2020), 9432--9470.

\bibitem{Cherrier}
P.~Cherrier,
\emph{Probl\`emes de Neumann non lin\'eaires sur les vari\'et\'es riemanniennes},
J. Funct. Anal. \textbf{57} (1984), 154--206.

\bibitem{DisconziKhuri}
M.~M. Disconzi and M.~A. Khuri,
\emph{Compactness and non-compactness for the Yamabe problem on manifolds
with boundary},
J. Reine Angew. Math. \textbf{724} (2017), 145--201.

\bibitem{EscobarAnnals}
J.~F. Escobar,
\emph{Conformal deformation of a Riemannian metric to a scalar flat metric
with constant mean curvature on the boundary},
Ann. of Math. (2) \textbf{136} (1992), 1--50; Addendum, \textbf{139} (1994), 749--750.

\bibitem{EscobarJDG}
\bysame,
\emph{The Yamabe problem on manifolds with boundary},
J. Differential Geom. \textbf{35} (1992), 21--84.

\bibitem{EscobarIndiana}
\bysame,
\emph{Conformal deformation of a Riemannian metric to a constant scalar curvature metric with constant mean curvature on the boundary},
Indiana Univ. Math. J. \textbf{45} (1996), 917--943.

\bibitem{GongKimMussoWeiCompactness}
L.~Gong, S.~Kim, M.~Musso, and J.~Wei,
\emph{On Critical Dimensions for Compactness in the Boundary Yamabe Problem, II},
in preparation.

\bibitem{HanLiDuke}
Z.-C.~Han and Y.~Y.~Li,
\emph{The Yamabe problem on manifolds with boundary: Existence and
compactness results},
Duke Math. J. \textbf{99} (1999), 489--542.

\bibitem{HanLiCAG}
\bysame,
\emph{The existence of conformal metrics with constant scalar curvature and
constant boundary mean curvature},
Comm. Anal. Geom. \textbf{8} (2000), 809--869.

\bibitem{HoShin}
P.~T. Ho and J.~Shin,
\emph{Blow-up phenomena for the constant scalar curvature and constant
boundary mean curvature equation (after Chen and Wu)},
NoDEA Nonlinear Differential Equations Appl. \textbf{32} (2025), Article No.~130.

\bibitem{KhuriMarquesSchoen}
M.~A. Khuri, F.~C. Marques, and R.~M. Schoen,
\emph{A compactness theorem for the Yamabe problem},
J. Differential Geom. \textbf{81} (2009), 143--196.

\bibitem{KimMussoWei}
S.~Kim, M.~Musso, and J.~Wei,
\emph{Compactness of scalar-flat conformal metrics on low-dimensional
manifolds with constant mean curvature on boundary},
Ann. Inst. H. Poincar\'e C Anal. Non Lin\'eaire \textbf{38} (2021), 1763--1793.
\end{thebibliography}
\end{document}